\theoremstyle{plain}
\newtheorem{thm}{Theorem}[section]
\newtheorem{prop}[thm]{Proposition}
\newtheorem{lem}[thm]{Lemma}
\newtheorem{remark}[thm]{Remark}
\numberwithin{equation}{section}
\numberwithin{figure}{section}
\numberwithin{algorithm}{section}
\def\bv{\mathbf{v}}
\newcommand{\cT}{\mathcal{T}}
\title[A structure-preserving numerical scheme for optimal control of mixing]{A Structure-Preserving Numerical Scheme for Optimal Control and Design of Mixing in Incompressible Flows}
\author[Weiwei Hu, Ziqian Li, Yubiao Zhang and Enrique Zuazua]{Weiwei Hu$^1$}
\address{$^1$Department of Mathematics, University of Georgia, Athens, GA 30602, USA}
\author{Ziqian Li$^2$$^3$}
\address{$^2$Chair for Dynamics, Control, Machine Learning and Numerics, Department of Mathematics, Friedrich-Alexander-Universit\"{a}t Erlangen-N\"{u}rnberg, 91058 Erlangen, Germany}
\address{$^3$School of Mathematics, Jilin University, Changchun, Jilin 130012, China}
\author{Yubiao Zhang$^3$}
\author{Enrique Zuazua$^2$$^4$$^5$}
\address{$^4$Chair of Computational Mathematics, Deusto University, 48007 Bilbao, Basque Country, Spain}
\address{$^5$Departamento de Matem\'aticas, Universidad Aut\'onoma de Madrid, 28049 Madrid, Spain}
\email{Weiwei.Hu@uga.edu, ziqian.li@fau.de, yubiao\_zhang@yeah.net}
\email{enrique.zuazua@fau.de}
\begin{document}

\keywords{%
Mixing, structure-preserving, finite-volume method, Crank-Nicolson scheme, optimal control, optimal stirring, flow design.}

\begin{abstract}
We develop a structure-preserving computational framework for optimal mixing control in incompressible flows. Our approach exactly conserves the continuous system's key invariants (mass and $L^2$-energy), while also maintaining discrete state--adjoint duality at every time step. These properties are achieved by integrating a centered finite-volume discretization in space with a time-symmetric Crank--Nicolson integrator for both the forward advection and its adjoint, all inside a gradient-based optimization loop. The result is a numerical solver that is faithful to the continuous optimality conditions and efficiently computes mixing-enhancing controls. In our numerical tests, the optimized time-dependent stirring produces a nearly exponential decay of a chosen mix-norm, achieving orders-of-magnitude faster mixing than any single steady flow. To our knowledge, this work provides the first evidence that enforcing physical structure at the discrete level can lead to both exact conservation and highly effective mixing outcomes in optimal flow design.
\end{abstract}

\maketitle

\section{Introduction}
\subsection{Problem statement}\label{sec:problem}
Understanding mass transport and fluid mixing by advective flows poses fundamental yet long-standing challenges in both analysis and applications. Examples where such problems arise when studying the oceanic circulation \cite{wang2023doubling}, the spreading of environmental pollutants \cite{zhang2025longrange} and chemical stimuli \cite{vergassola2007infotaxis}, as well as microfluidic mixing \cite{stroock2002chaotic}, to name but a few. Developing efficient and operational protocols for fluid transport and mixing has been widely investigated in both academic and industrial communities \cite{kelley2011separating}.

This paper contributes to this field by developing a structure-preserving numerical scheme for optimal control in mixing, or, in other words, for the optimal design of optimal mixers to maximize the mixing rate.

We consider  passive scalar fields $\theta= \theta(t, x)$, advected by incompressible velocity fields $v=v(t,x)$, in a bounded domain  $\Omega \subset \mathbb R^d$ (with, typically, $d=2,3$), whose dynamics is described by the model (with $T>0$)
\begin{align}\label{eq:state}
    \partial_t \theta  + \text{div}\,( \mathbf{v} \theta) =0
    \text{ in }  (0,T) \times \Omega;
    ~~\theta|_{t=0} = \theta^0.
\end{align}

Assume that the velocity field $\mathbf{v}$ satisfies the incompressibility and no-penetration boundary conditions in $\Omega$, i.e.,
  \begin{align}\label{cond:v}
   \text{div}\, \mathbf{v}=0 \text{ in }  \Omega 
   \text{; }~
   \mathbf{v}\cdot \mathbf{n}_{\Omega} = 0 \text{ on }  \partial\Omega,
    \end{align}
where $\mathbf{n}_{\Omega}$ denotes the outward normal vector of the domain $\Omega$. 
Under \eqref{cond:v}, the conservative form \eqref{eq:state} is equivalent to
the nonconservative form
\[
    \partial_t \theta + \mathbf{v}\cdot\nabla\theta = 0
    \quad\text{in } (0,T)\times\Omega,
\]
and we shall use these two formulations interchangeably.

For each $\theta^0 \in L^2(\Omega)$ and $\mathbf{v} \in L^2((0,T); C^1(\overline{\Omega}; \mathbb R^d) )$ verifying \eqref{cond:v}, equation \eqref{eq:state}  has a unique solution in $C([0,T];L^2(\Omega))$ (see Lemma \ref{20251209-lemma-WellPosednessOfTransport}). 

Roughly,  mixing refers to the process under which the solution $\theta$ approaches its space-like average as time $t$ increases. 
For simplicity and without loss of generality (given the conservation of the solution's space-like mean as time evolves), it is sufficient to consider initial data with zero mean. 

In the sequel, we assume
\begin{align}\label{20260105-AssumptionsOnInitialData}
    \theta^0 \in L^{\infty}(\Omega)
    \text{ and } 
    \int_{\Omega} \theta^0(x)\,dx = 0,
\end{align}
and study how $\theta(t)$ converges to $0$ as a measure of mixing efficiency. Under these assumptions, the solution $\theta(t,x)$ remains uniformly bounded, which in turn simplifies the analysis and allows the optimal control problem to be posed and treated rigorously within an appropriate functional framework.   

To quantify mixing, we adopt negative Sobolev norms,  which are widely used in the literature and initiated by Mathew, Mezi\'{c}, and Petzold in \cite{mathew2005multiscale}  (with a negative Sobolev index $-1/2$). In fact, any norm that quantifies the weak convergence in $L^2$ can be used as a mix-norm (e.g.~\cite{lin2011optimal, lunasin2012optimal}). Specifically, we use the homogeneous Sobolev norm $\dot{H}^{-1}(\Omega)$, the dual norm of zero-mean functions in $H^1(\Omega)$, which, when restricted to the subspace $L^2(\Omega)$, takes the following form:
 \begin{align}\label{neg_norm}
\|f\|_{ \dot{H}^{-1}(\Omega) } = \sup
\Big\{
    \int_{\Omega}  f \phi dx
    ~:~  \int_{\Omega} \phi dx = 0, ~\int_{\Omega} | \nabla \phi |^2 dx  \leq 1
\Big\},
~\forall\, f \in  L^2(\Omega).
\end{align}
A smaller value  $\Vert \theta \Vert_{\dot{H}^{-1}(\Omega)}$ corresponds to better mixing, as it corresponds to a transfer of energy from low to high frequencies. 

Other measures, such as the geometric mixing scale,  the measure of the interface of the mixtures, as well as entropy, are employed to quantify the mixing process and the related control problems (e.g.~\cite{chakravarthy1996mixing, d1999control,vikhansky2002enhancement, vikhansky2002control, alberti2016exponential,yao2017mixing,crippa2019polynomial, elgindi2019universal}). A more detailed review can be found in \cite{thiffeault2012using}. But, in the present work, we focus on the $\dot{H}^{-1}(\Omega)$-norm above since it is better suited for an optimal control treatment.

Admissible incompressible velocity fields live in an infinite-dimensional function space. In many practical settings, however, particularly in actuator-constrained mixer design, the flow cannot be chosen freely: the mixing velocity must be realized as an optimal (typically time-dependent) combination of a finite set of prescribed stirring profiles. This reduced-actuation viewpoint is standard in the fluid mechanics literature; see, for instance, work on optimized stirring flows \cite{lin2011optimal} and related finite-mode mixing constructions such as random cellular flows \cite{navarro2025exponential}.

Thus, in the sequel, we assume the velocity field $\mathbf{v}$ to be generated by a finite set of basis flows. More precisely, define the finite-dimensional space of velocity fields 
\begin{align}\label{20251004-ControlConstraint}
     U := \text{span}\, \{ \mathbf{b}_1, \ldots, \mathbf{b}_m \},
\end{align}
where $\{ \mathbf{b}_i \}_{i=1}^m\subset C^1( \overline{\Omega}; \mathbb R^d )$ are orthonormal in $L^2(\Omega; \mathbb R^d)$ and satisfy \eqref{cond:v} for $i=1,...,m.$ 
Such a finite-dimensional restriction of the velocity field is standard in optimal control of mixing studies, reflecting practical actuator limitations (see, e.g., \cite{zheng2023numerical}). 

We consider the class of velocity fields $\mathbf{v}$ in $L^2((0,T);U)$ taking the form
\begin{align}
\label{eq:v_finite}
    \mathbf{v}(t) = \sum_{i=1}^m v_i(t) \mathbf{b}_i, ~t\in(0,T)
\end{align}
for some coefficients  $(v_1,\ldots,v_m)$ in $ L^2((0,T); \mathbb R^m)$. 
Our goal is to derive efficient numerical solvers to compute the optimal mixer within this class. 

Although formulated as an optimal control problem, the control $\mathbf v(\cdot)\in L^2((0,T); U)$ can be interpreted as the design of an efficient incompressible stirring field within a prescribed actuation subspace $U$. 

There are multiple possible choices for the basis functions $\mathbf{b}_i$. In 2D cases, a typical example, often employed in the analysis of mixing, consists of cellular flows \cite{Fannjiang2006Quenching, crippa2017cellular, hu2025cellular} in square domains: 
\begin{align}
\label{eq:cellular}
&\mathbf{b}_i(x_1,x_2)= 
 \begin{bmatrix}
      -i\pi\sin (i\pi x_1)\cos(i\pi x_2)          \\[0.3em]
        i\pi\cos (i\pi x_1)\sin( i\pi x_2) 
     \end{bmatrix}^T,
    ~i=1,2, \ldots, m.
\end{align}
Another example is Doswell frontogenesis \cite{doswell1984kinematic, Morales2019adjoint} in circular domains, whose first basis mode can be written as
\begin{align}
\label{eq:v_Doswell}
&\mathbf{b}_1(x_1,x_2)= 
 \begin{bmatrix}
      -x_2g(r(x_1,x_2))          \\[0.3em]
        x_1g(r(x_1,x_2)) 
     \end{bmatrix}^T,
\end{align}
where
\begin{equation*}
    g(r(x_1,x_2)):=\frac{1}{r(x_1,x_2)}\ \overline{v}\ {\text{sech}}^2{(r(x_1,x_2))} \tanh{(r(x_1,x_2))},
\end{equation*}
with $r(x_1,x_2):=\sqrt{x_1^2+x_2^2}$ and $\overline{v}$ a constant parameter ($\overline{v}:=2.59807$ in \cite{doswell1984kinematic, Morales2019adjoint}). We construct a multi-scale version of this Doswell flow basis as the second basis $\mathbf{b}_2$ for our experiments (see Section~\ref{sec:Doswell} for details). 

These two prototypical families of basis flows (cellular flows and Doswell frontogenesis flows) will serve as illustrative test cases in the numerical experiments of Section~\ref{sec:experiment}. However, both the numerical scheme and the accompanying analysis developed in this paper are fully general and are not tied to these particular examples.

We now introduce the following optimal control problem, aiming to achieve effective mixing
\begin{align}\label{eq:cost}
\min_{ \mathbf{v} \in  L^2( (0,T); U)} J(\bv) 
:=  \min_{ \mathbf{v} \in  L^2( (0,T); U)}
\bigg[
\frac{1}{2} \| \theta(T; \mathbf{v}) \|^2_{ \dot{H}^{-1}(\Omega) } 
+ \frac{\gamma}{2}\int_0^T \|\bv(t)\|^2_{L^2(\Omega; \mathbb R^d)}\,dt
\bigg],
\end{align}
subject to  \eqref{eq:state}--\eqref{cond:v} and \eqref{eq:v_finite}, where $\gamma>0$ is a regularization parameter.

Note that in this setting, the initial datum $\theta^0$ has been fixed. Therefore, the optimal mixer depends on the initial datum. We could also consider the problem of building the optimal mixer for all possible initial data. This would lead to a more complex min-max problem:
\begin{align}\label{eq:costminmax}
 \min_{ \mathbf{v} \in  L^2( (0,T); U)} \, \max_{||\theta^0||_{L^2(\Omega)} =1}
\bigg[
\frac{1}{2} \| \theta(T; \mathbf{v}) \|^2_{ \dot{H}^{-1}(\Omega) } 
+ \frac{\gamma}{2}\int_0^T \|\bv(t)\|^2_{L^2(\Omega; \mathbb R^d)}\,dt
\bigg].
\end{align}
The latter corresponds to an optimal functional mixer, in the sense of being the best possible, for all possible configurations of the initial data. Rather different and more complex from an analytical perspective, we will focus on the first problem, \eqref{eq:cost}, in which the initial datum is assumed to be given and fixed. We refer to \cite{privat2015optimal} for an analysis of the problem of the optimal observers for wave and heat processes, in which the analog of \eqref{eq:costminmax} is considered, and the significant differences to the case where the initial data are fixed are discussed.

To better interpret the negative Sobolev norm, we observe that the functional $J$ admits the following equivalent representation  (see Appendix~\ref{Appendix-NewFormOfJ} for the proof):
\begin{align}\label{20251010-AnotherFormOfCostFunctional}
\text{(OP)}  \quad
 J(\bv)  = & 
\frac{1}{2} \big\langle \theta(T; \mathbf{v}), \eta( \theta (T; \mathbf{v}) )
\big\rangle_{L^2(\Omega)} 
\nonumber\\
& \quad + \frac{\gamma}{2}\int_0^T \|\bv(t)\|^2_{L^2(\Omega; \mathbb R^d)}\,dt,
~\mathbf{v} \in  L^2( (0,T); U), \hspace{0.3in} 
\end{align}
where $\eta = \eta( \theta (T; \mathbf{v}) ) \in H^1(\Omega)$ is the solution of the elliptic equation (see Appendix~\ref{Appendix-WellPosenessOfEllpiticEquation} for the well-posedness)
\begin{align}\label{eq:eta}
-\Delta \eta=\theta(T; \mathbf{v}) 
\text{ in }  \Omega; 
~~  \partial_{ \mathbf{n}_{\Omega}} \eta = 0
\text{ on }  \partial\Omega;
~~ \int_{\Omega} \eta dx = 0.
\end{align}

A control $\mathbf{v}^* \in L^2((0,T);U)$ is said to be optimal for (OP) if it is a minimizer, namely, when $$J(\mathbf{v}^*) = \min_{\mathbf{v} \in L^2((0,T);U)} J(\mathbf{v}).$$ The proof of existence of an optimal control for (OP) is straightforward, and is presented in Appendix~\ref{Appendix-ExistenceOfOptimalControl}, for the sake of completeness. 

To solve the problem (OP) numerically, we aim to develop a numerical scheme based on the gradient of $J$ which can be characterized as (see Appendix~\ref{Appendix-Gradient} for the proof): 
\begin{align}\label{20251006-GraidentOfJ}
    J'( \mathbf{v} ) = P_U  (\gamma \mathbf{v} + \theta \nabla \rho),
    \text{ in }  L^2( (0,T); U),
    ~\mathbf{v}  \in   L^2( (0,T); U),
\end{align}
where $P_U$ is the orthogonal projection of $L^2(\Omega; \mathbb R^d)$ onto $U$, and $\rho \in C([0,T]; H^1(\Omega))$ satisfies the adjoint equation
\begin{align}\label{eq:adjoint}
    \partial_t \rho +  \text{div}\, (\bv \rho) = 0 
    \text{ in } (0,T) \times \Omega;
    ~~  \rho|_{t=T} = \eta
    \text{ in } \Omega,
\end{align}
with the Lagrange multiplier $\eta \in H^1(\Omega)$ given by \eqref{eq:eta}. This gradient is well-defined thanks to the uniform boundedness of $\theta(t, x)$.

An optimal control $\mathbf{v}^*$ to problem (OP) satisfies the optimality condition $J'(\mathbf{v}^*)  = 0$. Together with the state equation \eqref{eq:state} and the adjoint equation \eqref{eq:adjoint}, this condition yields the first-order necessary optimality conditions (FONCs) (in agreement with Pontryagin's maximum principle) for problem (OP).

\subsection{Computational challenges and motivations.}

A primary challenge in numerical simulations of scalar transport and mixing is the need to preserve the \emph{global conservation} properties of the scalar field under incompressible flows. Beyond this, several coupled difficulties arise:
\begin{itemize}
  \item \textbf{Mixing-driven multiscale formation.} The mixing dynamics generate increasingly fine-scale structures and sharp gradients in the scalar distribution.
  \item \textbf{Frequency transfer and norm imbalance.} As the mix-norm $\|\theta\|_{\dot H^{-1}(\Omega)}$ decreases, by duality $\|\theta\|_{H^{1}(\Omega)}$ generally increases, reflecting the transfer of energy from low to high frequencies.
  \item \textbf{Resolution requirements.} The appearance of thin filaments during the evolution forces the computational mesh to be sufficiently refined to resolve these small spatial scales.
  \item \textbf{Algorithmic cost of the optimal control loop.} Introducing an optimal control problem adds substantial cost: one must solve the state equation forward in time together with the adjoint equation backward in time, coupled through a nonlinear optimality condition. A direct enforcement of these conditions typically yields prohibitively expensive computations.
  \item \textbf{Structure- and conservation-preserving discretization.} The numerical me\-thod must also maintain the conservation laws and the structural consistency inherent in both the state and adjoint systems \eqref{eq:state} and \eqref{eq:adjoint}.
\end{itemize}

Specifically, the following conservation laws need to be taken into consideration:
\begin{itemize}
\item \textbf{$L^p$-norm conservation.} 
$L^p$-norms are conserved along the dynamics generated by \eqref{eq:state}. More precisely, when  $\theta^0 \in L^\infty(\Omega)$,  
    \begin{align*}
    \|\theta(t)\|_{L^{p}(\Omega)}=\|\theta^{0}\|_{L^{p}(\Omega)}
    \text{ for each } t \geq 0, ~p \in [1,+\infty],
    \end{align*} 
    as a consequence of incompressibility and no-penetration conditions. 
In particular, the total mass 
\[
    M(t) := \int_{\Omega} \theta(t)\,dx, ~ t \geq 0
\]
and the energy
\[
E(t) := \int_{\Omega} \theta^2(t)\,dx, ~ t \geq 0.
\]
remain constant in time.

\item \textbf{State--adjoint consistency.} The numerical scheme must preserve the duality relation between the state and adjoint equations (see \eqref{20251209-ForwardBackwardPair}) so that
\begin{align}\label{eq:consist}
    \int_{\Omega} \theta(t)\,\rho(t)\,dx \equiv \int_{\Omega} \theta(T)\,\rho(T)\,dx,
    ~ t \in [0,T].
\end{align}
This identity is fundamental for accurate gradient computation in PDE-constrained optimization. If the discrete analog \eqref{eq:consist} is violated, the discrete adjoint becomes inconsistent with the discrete forward solver. This inconsistency produces an inexact reduced gradient, which can stall convergence in gradient-based optimization.
	
\end{itemize} 

However, structure-preserving numerical schemes specifically tailored to mixing-control problems remain scarce. 
The majority of PDE-constrained mixing studies employ conventional spatial discretizations, for instance, discontinuous Galerkin (DG) methods (e.g.~\cite{hu2023feedback, zheng2023numerical}) without explicitly targeting geometric or conservation properties inherent to the continuous system. To the best of our knowledge, no existing work systematically enforces state--adjoint consistency within the optimization loop, a property crucial for ensuring numerical fidelity and stability in optimal control solutions. This gap motivates the development of structure-preserving numerical methods specifically adapted to fluid mixing control.

\subsection{Main results and contributions}

Our main results are as follows:

\begin{itemize}
  \item \textbf{Numerical scheme.}
  We develop a solver  (stated in Algorithm~\ref{alg:openloop}) for the problem (OP) based on a conservative finite-volume spatial discretization and a second-order, time-symmetric Crank--Nicolson time integrator applied to both the state equation \eqref{eq:state} and the adjoint equation \eqref{eq:adjoint}. The gradient  \eqref{20251006-GraidentOfJ} is enforced by a conjugate gradient method with line search for step selection. We show the convergence of discrete solutions of state equation in Appendix \ref{Appendix-state_convergence} and the convergence of the discrete cost functional in Appendix \ref{Appendix-ConvDiscreteObjective}.

\vskip 5pt
  \item \textbf{Structure-preserving properties.}
  The fully discrete scheme preserves the key invariants and dualities of the problem (OP), (see Theorem~\ref{20251008-theorem-PerservedStructures}).
  \begin{itemize}
        \vskip 3pt
    \item \emph{Discrete mass conservation} (see \eqref{20251008-MassConservation}): the flux-difference finite-volume method with no-penetration boundaries preserves discrete mass at every time step.
        \vskip 3pt
    \item \emph{Discrete $L^2$-energy conservation} (see \eqref{20251008-EnergyConservation}): the semi-discrete advection operator is skew-symmetric, and the Crank--Nicolson time integrator is an isometry in the discrete inner product, hence the discrete energy is invariant.
        \vskip 3pt
    \item \emph{State--adjoint consistency} (see \eqref{20251008-ForwardBackwardConsistency}): the Crank--Nicolson time integrator is applied to both the state and adjoint equations,  to preserve the discrete analog of the pairing \eqref{eq:consist}.
  \end{itemize}
  To the best of our knowledge, this is the first mixing-control scheme that simultaneously enforces all three properties in a single fully discrete framework.

\vskip 5pt
  \item \textbf{Equivalence between the  Optimize-Then-Discretize and Discre\-tize\--Then-Optimize approaches.} The numerical scheme developed in this paper is designed by the Optimize-Then-Discretize approach. We construct a discrete gradient of $J$ from the continuous gradient \eqref{20251006-GraidentOfJ} by discretizing a coupled system consisting of the state and adjoint equations, together with \eqref{eq:eta}. Theorem~\ref{202501008-theorem-BasicEquivalence} states that this discrete gradient can also be established by the Discre\-tize-Then-Optimize approach, where only the functional $J$ and the state equation \eqref{eq:state} are discretized. Thus, our numerical scheme owes the advantages of both approaches.

\vskip 5pt
  \item \textbf{Numerical experiments.}
Our numerical experiments provide consistent evidence that the proposed discretization delivers the intended structure-preserving behavior and robust optimization performance. Across the tested configurations, the fully discrete scheme preserves the targeted invariants to machine precision and maintains a tight state--adjoint consistency throughout the time-stepping. In addition, the computed optimal controls produce a marked faster decay of the chosen mix-norm than standard steady benchmarks, with the time-dependent optimized strategies exhibiting nearly exponential decay over significant time horizons.

\vskip 5pt
  \item \textbf{Practical implications for optimal mixing}
From an actuator-design perspective, these results point to a clear practical lesson: within a realistic low-dimensional actuation space, the decisive factor is not the sophistication of individual steady stirring modes, but the ability to combine them time-dependently. Each steady mode in isolation tends to lead, at best, to polynomial decay of the mix-norm. By contrast, an optimized time-dependent combination of the same limited set of modes can systematically break these barriers and yield near-exponential mixing rates. This highlights time modulation as a key design lever for effective mixing under actuation constraints.
\end{itemize}

\subsection{Related work}
\begin{itemize}
\item \textbf{Control in fluid mixing.} 
Control of transport and mixing of passive scalars via pure advection or stirring strategies has been extensively studied in the existing literature. In real-life applications, the velocity field is usually subject to certain constraints of physical interest, such as fixed flow energy, enstrophy, or fixed action, and the research very often leads to the design of open-loop mixing control inputs (e.g.,~\cite{franjione1992symmetry, sharma1997control, d1999control, vikhansky2002enhancement, vikhansky2002control, mathew2007optimal, cortelezzi2008feasibility, gubanov2010towards, couchman2010control, lin2011optimal,lunasin2012optimal, balasuriya2015dynamical, eggl2020mixing, eggl2022mixing}).

As discussed in Section~\ref{sec:problem}, it is common to restrict attention to finite-dimensio\-nal velocity models generated by a few actuators or reference mixers (e.g.\ \cite{aref1984stirring,mathew2007optimal,couchman2010control, gubanov2010towards,rodrigo2003optimization,paul2004handbook,aref2002development, wang2003closed,navarro2025exponential}). A classical example is Aref's ``batch stirring device'' (or ``blinking vortex'') model \cite{aref1984stirring,aref2002development}, widely used in chemical engineering, in which a small number of vortices are switched on and off to generate chaotic advection.

In our framework for mixing problems, the velocity field itself serves as the direct control input. Another approach of controlling its evolution through dynamical equations like the controlled Stokes and Navier--Stokes equations can be found in \cite{hu2018boundarycontrol, hu2018boundary}.

However, due to the nonlinear nature of this problem, deriving a closed-loop feedback law for effective flow advection is completely nontrivial.

It is also worth noting that the research integrating the controlled dynamics into the velocity field to optimize the transport and mixing processes has been investigated in (e.g.~\cite{balogh2004optimal, liu2008mixing, foures2014optimal, hu2018boundary, hu2018boundarycontrol,  eggl2022mixing, hu2023feedback, hu2019approximating, hu2020approximating}).

A recent work \cite{hu2025cellular} links the mixing problem to the Least Action Principle (LAP) for incompressible flows (e.g.~\cite{brenier1989least, brenier1993dual}), which is formally analogous to the Benamou--Brenier formulation of optimal transport \cite{Benamou2000monge}, but imposes an incompressibility constraint.

\item \textbf{Structure-preserving schemes.} In computational mathematics, there is broad recognition that numerical schemes should respect the intrinsic structures and invariants of physical systems. Schemes that exactly conserve quantities like mass, energy, or symplectic form can prevent unphysical drift and improve long-term stability.

For pure transport equations such as advection, numerous structure-preserving methods have been developed to maintain fundamental conservation laws. The finite-volume method (FVM) discretizes the integral form of a conservation law over control volumes, which guarantees strict local conservation of mass by construction \cite{LeVeque2002FVM}. High-order DG methods provide conservation together with sharp resolution of advective features; classical analyses include the unified DG framework \cite{ArnoldBrezziCockburnMarini2002SINUM, Shu1998DG} and hp-DG theory for advection--diffusion--reaction \cite{HoustonSchwabSuli2002SINUM}.

In PDE-constrained optimal control problems, state--adjoint consistency reflects the standard duality relation between the state and adjoint variables; see, for example, \cite{lions1971, diperna1989ordinary}. The numerical scheme must preserve this consistency at the discrete level.

Adjoint consistency means that the discrete adjoint is truly the ``transpose'' (with respect to the chosen inner product) of the discrete forward operator, so that duality pairings are preserved, and the computed gradient is exact for the discrete problem \cite{Lewis1998adjoint}.

Time-integration schemes that are time-symmetric (self-adjoint), such as Crank--Nicolson or the implicit midpoint rule, are particularly attractive in this regard. Related consistency-preserving discretizations can be found, for instance, in \cite{condi2010,stuck2017}.
\end{itemize}

\subsection{Organization of the paper}
The remainder of the paper is structured as follows.

In Section~\ref{sec:scheme}, we present our numerical method: first, a finite-volume spatial discretization combined with a second-order Crank--Nicolson time-stepping scheme for both the state equation \eqref{eq:state} and the adjoint equation \eqref{eq:adjoint}; then, a conjugate gradient method enhanced with a line-search strategy is applied to problem (OP), based on the implementation of the gradient \eqref{20251006-GraidentOfJ}.

Section~\ref{sec:property} is devoted to the theoretical analysis of the scheme's structure-preserving properties, where we rigorously prove discrete analogues of mass conservation, energy conservation, and state--adjoint consistency.

Section~\ref{sec:OPtimization-Discretization} establishes the equivalence between the Optimize-Then-Discretize and Dis\-cretize-Then-Optimize approaches for problem (OP).

In Section~\ref{sec:experiment}, we present numerical experiments demonstrating the effectiveness and accuracy of our scheme.

In Section~\ref{conclusions}, we summarize the main findings and propose some future research directions.

Finally, the Appendix~\ref{appendix} gathers some technical proofs.

\section{Numerical scheme}\label{sec:scheme}

This section is devoted to the numerical scheme (stated in Algorithm~\ref{alg:openloop}) for solving problem (OP), which preserves several structural properties (presented in Theorem~\ref{20251008-theorem-PerservedStructures}). This scheme is designed according to the following four-component workflow: (i) discretization of the state equation \eqref{eq:state}, (ii) computation of the Lagrange multiplier  \eqref{eq:eta}, (iii) discretization of the adjoint equation \eqref{eq:adjoint}, and (iv) computation of the derivative of the functional with respect to the velocity field, which, combined with the conjugate gradient method, is used to update this field and optimize its design. These four components are introduced in detail in the following subsections. 

\subsection{Full discretization of the state equation}\label{sec:space_discrete} 
We construct a numerical method for the state equation \eqref{eq:state} in two stages. First, we develop a finite-volume method (FVM) for spatial discretization. Second, we apply the Crank--Nicolson time discretization to obtain a fully discrete numerical method for equation \eqref{eq:state}. The fully discrete quantities depend on both the spatial mesh size $h$ and the time step $\Delta t$.
To keep the notation light, we write $\theta_h$ for the fully discrete density, with the dependence on $\Delta t$ understood implicitly. The same convention applies to all other discrete variables.

\textbf{(1) Finite-volume space discretization.} 
 The FVM is based on enforcing the conservation law on each \emph{control volume}. Let $\mathcal T_h$ be a partition of the physical domain $\Omega$ into cells $K$. Integrating the transport equation over an arbitrary cell $K$ and applying the divergence theorem yields the local balance relation
\begin{align}\label{20250930-yb-IntegralEquationOverControlVolume}
\frac{d}{dt}   \int_{ K } \theta(t)\,dx  \;+\;
    \int_{ \partial K }  (\mathbf{v}(t)\cdot \mathbf{n}_K)\,\theta(t)\, d \sigma
    \;=\;  0,
\end{align}
where $\mathbf{n}_K$ denotes the unit outward normal on $\partial K$. 
This identity states that the rate of change of the \emph{cell mass} $\int_K \theta$ is exactly balanced by the net advective flux through the cell boundary. 
The spatial discretization then proceeds by representing $\theta$ with cell averages $$\theta_K(t):=  \fint_{K} \theta (x) dx, ~K \in \mathcal T_h$$ and replacing the boundary integral in \eqref{20250930-yb-IntegralEquationOverControlVolume} by a sum of numerical fluxes across the faces of $K$. This leads to a conservative semi-discrete ODE system for the cell averages.

\vskip 5pt
\textit{Spatial mesh and its faces.}
Let $h>0$. 
Consider an admissible mesh $\cT_h $ of the mesh size $h$ for the physical domain $\Omega$, that is, a finite family of open, convex polygonal control volumes $\{K\}_{K\in\mathcal T_h}$ forming a conforming partition of $\Omega$. So that there exist constants $c_1,c_2 > 0$ such that 
\begin{align}\label{20251111-PropertiesOfMesh}
    c_1 h^d \leq |K|
    ~\text{and}~
    |\partial K | \leq  c_2  h^{d-1}
    ~\text{for each}~
    K  \in  \mathcal T_h
\end{align}
(see \cite[Definition\@ 24.1]{EymardFVM} for further details on admissible meshes). 
Let $|\mathcal T_h|$ be the total number of elements in $\mathcal T_h$. We define the sets of faces of a control volume $K \in \mathcal T_h$ and the mesh $\mathcal T_h$, respectively:
\begin{align*}
    \mathcal F_K := \big\{ 
        \Gamma \subset \partial K   ~:~  \Gamma \text{ is a face of } K  
    \big\}
    ~~\text{and}~~
     \mathcal{F}(\mathcal T_h) :=  \cup_{ K \in \mathcal T_h }  \mathcal F_K.
\end{align*}
In what follows, we will repeatedly use these definitions for the discrete velocity and the corresponding numerical flux.

\vskip 5pt
\textit{Discrete unknowns.}  According to  \eqref{20250930-yb-IntegralEquationOverControlVolume}, a simple way to approximate the solution of \eqref{eq:state} is to discretize it as a piecewise constant function over the elements of the mesh $\mathcal T_h$.  In line with this approach, we introduce the following state spaces for solutions of \eqref{eq:state} and for their discrete approximations:
\begin{align}\label{20251008-StateSpaces}
   X :=  L^2(\Omega)  ~\text{and}~    X_h := \mathbb R^{ |\mathcal T_h| },  
\end{align}
where $X_h$ is equipped with the inner product
\begin{align}\label{20251008-InnerProductOfXh}
\big\langle 
     (a_K)_{K\in \mathcal T_h}, (b_K)_{K\in \mathcal T_h}
\big\rangle_{X_h}
 := \sum_{K \in \mathcal T_h}   a_K b_K  |K|.
\end{align}
The space $X_h$ is regarded as a discretization of $X$ in the sense that each function in $X$ can be approximated by a vector in $X_h$. 
 
To approximate the solution $\theta(t)$ of \eqref{eq:state}, we represent it by an element of $X_h$, namely
\begin{align*}
    \theta_h(t) := ( \theta_K(t) )_{ K \in \mathcal T_h },
\end{align*}
which will be computed via a discretized transport equation. In particular, the discretization $( \theta_K^0 )_{ K \in \mathcal T_h } \in X_h$ of the initial data $\theta^0$ is given by
\begin{align}\label{20250930-DiscreteIntialData}
    \theta_K^0 :=  \fint_{K} \theta^0 (x) dx,  
    ~K \in \mathcal T_h.      
\end{align}

\vskip 5pt
\textit{Numerical flux.}  
We introduce the discretization of the flux (i.e., the boundary integral in \eqref{20250930-yb-IntegralEquationOverControlVolume}). By the no-penetration condition  $\mathbf{v} \cdot \mathbf {n}_{\Omega} =0$ on $\partial\Omega$, the boundary integral in \eqref{20250930-yb-IntegralEquationOverControlVolume} for a control volume $K$ is the sum of the fluxes across its interior faces, i.e.,
\begin{align*} 
    \int_{ \partial K }  (\mathbf{v}(t) \cdot \mathbf{n}_K) \theta(t) d \sigma
    =  \sum_{ \Gamma \in \mathcal F_K, \,\Gamma \not\subset \partial\Omega }  \int_{ \Gamma }  (\mathbf{v}(t) \cdot \mathbf{n}_K) \theta(t) d \sigma.
\end{align*}
At the same time, $\theta(t)$ is approximated by a vector $\theta_h(t)$ whose coordinates are correspondingly assigned to control volumes. Thus, we discretize the above boundary integral in the following way: 
\begin{align*}
\int_{ \partial K }  (\mathbf{v}(t) \cdot \mathbf{n}_K) \theta(t) d \sigma
=  \sum_{ \Gamma \in \mathcal F_K, \,\Gamma \not\subset \partial\Omega }   F_{K,\Gamma}( \mathbf{v}(t),  \theta_h(t) ), 
\end{align*}
where for each $\mathbf{u} \in U$ and $\mathbf{y}_h:=(y_K)_{K \in \mathcal T_h} \in X_h$, 
\begin{align}\label{20250930-FluxOnFace}
    F_{K,\Gamma} ( \mathbf{u},  \mathbf{y}_h  ) := 
    \frac{y_K + y_{L(K,\Gamma)} }{2} 
    \int_{ \Gamma }  \mathbf{u} \cdot \mathbf{n}_K d \sigma,
\end{align}
and $L(K,\Gamma)$ is the unique control volume in $\mathcal T_h$ sharing the same face $\Gamma$ with $K$. Here, we used the central flux for the approximation of the boundary integral. 
Furthermore, we note that when two control volumes $K$ and $L$ share the same face $\Gamma$, the numerical fluxes from one to the other cancel each other, i.e.,
\begin{align*}
    F_{K,\Gamma}  +  F_{L,\Gamma}  = 0. 
\end{align*}
This property is crucial for ensuring mass conservation of the numerical scheme.

\vskip 5pt
\textit{Semi-discrete state equation.}
For each $\mathbf{u} \in U$, we introduce the following numerical 
 form of $\text{div}\,(\mathbf{u}y)$ as an operator from $\mathbb R^{ | \mathcal T_h |}  $  to $ \mathbb R^{ | \mathcal T_h |}$: for each $\mathbf{y}_h \in \mathbb R^{ | \mathcal T_h |}$, 
\begin{align}\label{20251007-DiscreteDivergence}
    D_{ \mathbf{u} } (  \mathbf{y}_h ) := 
    \bigg(
        \frac{1}{ |K| }
         \sum_{ \Gamma \in \mathcal F_K,  \,\Gamma \not\subset \partial\Omega }   F_{K,\Gamma}( \mathbf{u},  \mathbf{y}_h  )
     \bigg)_{ K \in \mathcal T_h },
\end{align}
where $F_{K, \Gamma}$ is given by \eqref{20250930-FluxOnFace}. Note that the velocity fields are taken from $L^2((0,T);U)$ with $U$ given by \eqref{20251004-ControlConstraint}. The state equation \eqref{eq:state} is discretized as the following semi-discrete ODE with the discrete unknown $\theta_h$: 
\begin{align}\label{eq:semi_discrete}
  \frac{d}{dt}  \theta_h(t)  +  \sum_{i=1}^m v_i(t)  D_{ \mathbf{b}_i }  \theta_h(t)
  = 0,~ t \in (0,T);
  ~~ \theta_h(0) = (\theta^0_K)_{K \in \mathcal T_h}   \in   X_h,
\end{align}
where the velocity coefficients $(v_1,\ldots,v_m) \in  L^2((0,T); \mathbb R^m)$ and the discrete initial data $(\theta^0_K)_{K \in \mathcal T_h}$ are given by \eqref{20250930-DiscreteIntialData}. 

\begin{remark}[Exact flux conditions for structure preservation]\label{rem:exact_flux_conditions}
In the statements below, ``exact conservation'' means exact at the algebraic (machine-precision) level and holds under the following discrete conditions:
(i) \emph{single-valued interior fluxes:} for every interior face $\Gamma=\partial K\cap\partial L$,
$F_{K,\Gamma}(\mathbf u,\mathbf y_h)=-F_{L,\Gamma}(\mathbf u,\mathbf y_h)$;
(ii) \emph{discrete incompressibility:} $D_{\mathbf u}1=0$ (equivalently, zero net flux in each cell);
(iii) \emph{exact face flux evaluation:} the face integrals used to build $F_{K,\Gamma}$ are evaluated exactly (or enforced so that (i)-(ii) hold exactly).
If face integrals are approximated by quadrature or $\mathbf u$ is only approximately divergence-free, then (i)-(ii) hold up to a residual and the invariants below are preserved up to the corresponding quadrature/discrete-divergence error.
\end{remark}

\vskip 5pt
\textbf{(2) Crank--Nicolson time discretization.} 
Let $N_t$ be a positive integer. Consider the following time partition
\begin{align*}
    t_n := n \Delta t  ~(n = 0, 1, \ldots, N_t)
    \text{ with }
    \Delta t : = T / N_t. 
\end{align*} 
Applying the Crank--Nicolson (trapezoidal) rule to the semi-discrete equation \eqref{eq:semi_discrete}, we obtain the following fully discrete scheme for the state equation \eqref{eq:state} with unknowns $\{ \theta^{(n)}_h \}_{ n=0}^{N_t}$:
\begin{align}\label{eq:CN_cell}
\left\{
    \begin{array}{r@{~}l}
      \dfrac{\theta^{(n+1)}_h-\theta^{(n)}_h}{\Delta t}  
      + \sum_{i=1}^m v_i^{(n)}  D_{ \mathbf{b}_i } \dfrac{ \theta^{(n+1)}_h  +  \theta^{(n)}_h }{2}  =&  0,
      ~n = 0,  \ldots, N_t-1, 
      \vspace{0.5em}\\
      \theta^{(0)}_h =& (\theta^0_K)_{ K \in \mathcal T_h }  \in  X_h,
    \end{array}
\right.
\end{align}
where the discrete initial data $(\theta^0_K)_{K \in \mathcal T_h}$ is given by \eqref{20250930-DiscreteIntialData}, and the discrete velocity coefficients $\{ ( v_i^{(n)})_{i=1}^m \}_{n=0}^{N_t-1}  \subset  \mathbb R^m$.

Under standard mesh-regularity and consistency assumptions, this cell-centered finite-volume discretization combined with a Crank-Nicolson time integrator yields a convergent approximation of equation \eqref{eq:state} (see Appendix~\ref{Appendix-state_convergence}). Moreover, for sufficiently regular solutions, the underlying cell-centered finite-volume discretization is first-order accurate in the mesh size $h$ (see \cite[Remark\@ 6.2]{EymardFVM}), while the Crank--Nicolson integrator is second-order accurate in the time step $\Delta t$ (see \cite[Eq.\@ (2.85)]{MortonMayersPDE}).

It is worth noting that the Crank--Nicolson finite-volume scheme \eqref{eq:CN_cell} applied to the state equation \eqref{eq:state} is \emph{symmetric} (time-reversible): \eqref{eq:CN_cell} remains unchanged if we interchange $\theta_h^{(n+1)}$ and $\theta_h^{(n)}$ and replace $\Delta t$ by $-\Delta t$.
This observation motivates our choice to discretize the adjoint equation \eqref{eq:adjoint} in the same manner, as introduced in Subsection~\ref{subsection:DiscreteAdjointEquation} below.

\begin{remark}[Why not an upwind form]
Since our goal is a \emph{structure-preserving} discretization for a PDE-constrained mixing-control problem, we deliberately avoid upwind fluxes and instead employ a centered finite-volume flux coupled with Crank--Nicolson time stepping. The main reasons are as follows.

(i) \emph{$L^2$ conservation.} With a centered FVM discretization combined with the Crank--Nicolson scheme, our method preserves both the discrete energy and the state--adjoint pairing at each time step. In contrast, upwind fluxes introduce numerical diffusion, destroy skew-adjointness, and lead to $L^2$-energy decay; see, e.g., \cite[Sec.\@ 4.4.2.2]{Durranbook}.

(ii) \emph{Scheme complexity.} For an upwind spatial discretization, the algebraic discrete adjoint requires a different non-symmetric flux and special boundary treatment, so the forward and adjoint solvers cannot share a single simple scheme. In our approach, we use the same centered Crank--Nicolson discretization for both state and adjoint equations, retaining the conservative structure needed for accurate gradient computation while keeping the overall algorithmic design considerably simpler.
\end{remark}

\begin{remark}[Spurious oscillations]
Centered advection schemes are non-dissipative and can, in principle, generate Gibbs-like oscillations near sharp gradients. In our simulations, however, we did not observe problematic oscillations. The initial scalar profiles we consider in the numerical experiments are resolved on a fine grid, which minimizes any overshoot. Moreover, the  ``mix-norm'' is relatively insensitive to small-scale wiggles in $\theta$, since it emphasizes large-scale concentration differences. Thus, any minor oscillations at the grid scale do not noticeably pollute the mix-norm or its decay rate in our results.
\end{remark}

\subsection{Computation of the Lagrange multiplier}

In this subsection, we aim to solve \eqref{eq:eta} numerically by the FVM discretizations introduced in Section~\ref{sec:space_discrete} and adapted from \cite[Section\@ 10]{EymardFVM}. For this purpose, we integrate \eqref{eq:eta} over each control volume to get the following integral form
\begin{align*}
    - \int_{\partial K} \nabla \eta  \cdot \mathbf{n}_K  d\sigma
= \int_K  \theta(T;\mathbf{v}) dx,
~ \forall\, K \in \mathcal T_h.
\end{align*}
Write $X_h^0$ for the following subspace of $X_h$:
\begin{align}\label{20251208-SubspaceOfXh}
    X_h^0 := \Big\{  (f_K)_{ K \in \mathcal T_h }   \in X_h  
        ~:~  \sum_{K \in \mathcal T_h} f_{K} |K| = 0
    \Big\}.
\end{align}
The aforementioned integral form is then discretized as the following linear equations with  unknowns $\eta_h = ( \eta_{h,K} )_{ K \in \mathcal T_h }$ in $X_h^0$:
\begin{align}\label{20251011-DiscreteLaplacian}
L_h \eta_h := \left(
     - \frac{1}{ |K| }
    \sum_{ 
     \Gamma\in\mathcal F_K, \, \Gamma\not\subset\partial\Omega
    }
    \frac{ |\Gamma| }{ d_{K,L(K,\Gamma)} } \Big( \eta_{h,L(K,\Gamma)} - \eta_{h,K} \Big)
\right)_{ K \in \mathcal T_h}
= \theta_h^{(N_t)}
\text{ in }  X_h^0,
\end{align}
where $d_{K,L}$ denotes the distance between the centers of $K$ and $L$, and  $\theta_h^{(N_t)}$ is the solution to equation \eqref{eq:CN_cell} at time step $N_t$. The operator $L_h$ is invertible and self-adjoint over $X_h^0$ (see Lemma \ref{20251209-lemma-PropertiesOfLh} in Appendix~\ref{Appendix-DiscreteEllipticEquation}). 
By solving the linear equations \eqref{20251011-DiscreteLaplacian} by a numerical solver (e.g. conjugate-gradient (CG) method), $\eta_h$ is obtained.

\subsection{Backward discretization of the adjoint equation.}
\label{subsection:DiscreteAdjointEquation}
The adjoint equation \eqref{eq:adjoint} has the same form as the state equation 
\eqref{eq:state}, except that the initial data is imposed at time $t=T$. Meanwhile, the Crank--Nicolson rule used for the state equation \eqref{eq:state}  is symmetric, as mentioned above. 
Thus, we can apply the same numerical scheme to the adjoint equation \eqref{eq:adjoint}. More precisely, by exchanging $\theta^{(n+1)}_h$ and $\theta^{(n)}_h$ and replacing $\Delta t$ by $-\Delta t$ in \eqref{eq:CN_cell}, we obtain the following fully discrete equation for the adjoint equation \eqref{eq:adjoint} with unknowns $\{ \rho^{(n)}_h \}_{ n=0}^{N_t}$: 
\begin{align}\label{20251004-DiscreteAdjointEquation}
\left\{
    \begin{array}{r@{~}l}
      \dfrac{\rho^{(n)}_h - \rho^{(n+1)}_h}{ - \Delta t}  
        + \sum_{i=1}^m v_i^{(n)}  D_{ \mathbf{b}_i } \dfrac{ \rho^{(n)}_h  +  \rho^{(n+1)}_h }{2}  
        =&  0,       ~n = N_t-1, \ldots,1,0,  
       \\
       \vspace{0.5em}
      \rho^{(N_t)}_h =&  \eta_h  \in  X_h, 
    \end{array}
\right.
\end{align}
where $\eta_h$ is obtained from \eqref{20251011-DiscreteLaplacian}. 
This equation is essentially the same as \eqref{eq:CN_cell}, except for the different notations for unknowns and the initial data.
Starting from the terminal condition $\rho^{(N_t)}$, we solve \eqref{20251004-DiscreteAdjointEquation} step-by-step for $n=N_t-1,\dots,1,0$. 

\begin{remark}[Algebraic adjoint interpretation]\label{rem:alg_adj}
Let the forward Crank--Nicolson update be written as $\theta_h^{(n+1)}=A_n\theta_h^{(n)}$, where
\[
A_n=(I_h+D^{(n)})^{-1}(I_h-D^{(n)}),
\qquad
D^{(n)}:=D_{0.5\,\Delta t\sum_{i=1}^m v_i^{(n)}\mathbf b_i}.
\]
Under the antisymmetry of $D^{(n)}$ with respect to $\langle\cdot,\cdot\rangle_{X_h}$ (Lemma~\ref{20251008-lemma-AntisymmetricDiscreteDivergence}),
$A_n$ is an isometry on $X_h$, hence $A_n^\ast=A_n^{-1}$.
Therefore, applying the same CN scheme backward in time coincides with using the algebraic adjoint (transpose) of the forward timestep map.
\end{remark}

\subsection{Numerical scheme for problem (OP)}\label{subsec:NumericalScheme}
Recall that each admissible velocity field $\mathbf{v}$ is of the finite-dimensional form \eqref{eq:v_finite}. It is therefore equivalent, and convenient, to formulate problem (OP) directly in terms of the coefficient functions relative to the basis $\{\mathbf{b}_i\}_{i=1}^m$. To this end, we introduce the coefficient-space functional
\begin{align*}
    \widetilde J\big((v_i)_{i=1}^m\big)
    \;:=\;
    J\!\left(\sum_{i=1}^m v_i(t)\,\mathbf{b}_i\right),
    \qquad
    (v_i)_{i=1}^m \in L^2\big((0,T);\mathbb{R}^m\big),
\end{align*}
and compute the gradient of $\widetilde J$ with respect to the velocity coefficients. It follows  
from \eqref{20251006-GraidentOfJ}  that for each $ \mathbf{v} = \sum_{i=1}^m v_i \mathbf{b}_i \in L^2( (0,T); U)$, 
\begin{align*}
    \nabla  \widetilde J( (v_i)_{i=1}^m )   =
    \begin{pmatrix}
         \langle \gamma \mathbf{v} + \theta \nabla \rho , \mathbf{b}_1\rangle_{L^2(\Omega; \mathbb R^d)}\\
        \vdots   \\
        \langle \gamma \mathbf{v} +  \theta \nabla \rho , \mathbf{b}_m  \rangle_{L^2(\Omega; \mathbb R^d)}
    \end{pmatrix}
    =
    \begin{pmatrix}
        \gamma v_1 
        + \langle \theta, \text{div}(\mathbf{b}_1 \rho) \rangle_{L^2(\Omega)}\\
        \vdots   \\
        \gamma v_m 
        + \langle \theta, \text{div}(\mathbf{b}_m \rho) \rangle_{L^2(\Omega)}
    \end{pmatrix}.
\end{align*}

This allows us to introduce the discrete version of the gradient as follows: for discrete velocity coefficients $\{ ( v_i^{(n)})_{i=1}^m \}_{n=0}^{N_t-1}  \subset \mathbb R^m$, 
\begin{align}\label{cost_cell}
    (\nabla \widetilde J)_h \Big( 
        \{ ( v_i^{(n)})_{i=1}^m \}_{n=0}^{N_t-1}
    \Big)=&  
    \left\{  \Delta t 
    \begin{pmatrix}
          \gamma  v_1^{(n)}  
        + \big\langle 
            \frac{ \theta^{(n+1)}_h  +  \theta^{(n)}_h }{2}, D_{\mathbf{b}_1}   \rho_h^{(n)} 
        \big\rangle_{ X_h }   \\
            \vdots   \\
         \gamma v_m^{(n)}  
        + \big\langle 
            \frac{ \theta^{(n+1)}_h  +  \theta^{(n)}_h }{2}, D_{\mathbf{b}_m}   \rho_h^{(n)} 
        \big\rangle_{ X_h }
    \end{pmatrix}
    \right\}_{n=0}^{N_t-1},
\end{align}
where $D$ is given by \eqref{20251007-DiscreteDivergence}. 

It is worth noting that $\theta$ and $\rho$ are discretized in \eqref{cost_cell} using slightly different time conventions: the state $\theta$ enters through the midpoint average $(\theta_h^{(n+1)} + \theta_h^{(n)})/2$, which is naturally associated with the time interval $(t_n,t_{n+1})$, whereas the adjoint $\rho$ appears with its nodal value $\rho_h^{(n)}$ at time $t_n$. This choice ensures that the numerical gradient defined above coincides with the gradient of the discrete functional $\widetilde{J}_h$ introduced in \eqref{20251008-DiscreteFunctional}, as stated in Theorem~\ref{202501008-theorem-BasicEquivalence}. 

In parallel with the discrete state and adjoint equations, we now introduce the discrete cost functional that defines the fully discrete optimization problem
\begin{align}\label{20251008-DiscreteFunctional}
\widetilde J_h ( \mathbf{v}_h )
:= \frac{1}{2} \langle \theta_h^{(N_t)}, \eta_h \rangle_{X_h}
+ \frac{\gamma}{2}  \Delta t  \sum_{n=0}^{N_t-1} \sum_{i=1}^m  | v_i^{(n)}|^2,
~
\mathbf{v}_h := \{ (v_i^{(n)})_{i=1}^m \}_{n=0}^{N_t-1} \subset \mathbb R^m,
\end{align}
where $\theta_h^{(N_t)}$ and $\eta_h$ are computed from equations \eqref{eq:CN_cell} and \eqref{20251011-DiscreteLaplacian}, respectively. 

The convergence of the discrete cost functional $\widetilde J_h$ toward $J$ is proved in Appendix~\ref{Appendix-ConvDiscreteObjective}. 
In the fully discrete setting, the admissible set is finite-dimensional and closed, so the existence of a global minimizer follows from standard compactness/continuity arguments. 
This provides a rigorous justification for minimizing $\widetilde J_h$ as a reliable route to approximating optimal continuous mixers.

Next, to compute the optimal discrete velocity coefficients
$\{(v_i^{(n),*})_{i=1}^m\}_{n=0}^{N_t-1}$ for problem (OP), we rely on the following components:
(i) a conjugate gradient (CG) method,
(ii) a line-search strategy detailed in our recent work \cite[Algorithm\@ 3.9]{zheng2023numerical}, and
(iii) the discrete gradient $(\nabla \widetilde J)_h$ given by \eqref{cost_cell}, which is evaluated from the discrete state and adjoint systems \eqref{eq:CN_cell} and \eqref{20251004-DiscreteAdjointEquation}, together with the discrete approximation of $\eta$ in \eqref{20251011-DiscreteLaplacian}. 

Once the optimal coefficients are obtained, the corresponding optimal velocity field $\mathbf{v}^*$ for (OP) is reconstructed via
    \begin{align*}
         v^*(t)  \approx \sum_{n=0}^{N_t-1}  \sum_{i=1}^m \chi_{[nT/N_t, \,(n+1)T/N_t)}(t) 
         ~v_i^{(n),*}  \mathbf{b}_i,
         ~t \in (0,T).
    \end{align*}

With the above-mentioned elements, the complete framework of the numerical scheme for problem (OP)  is summarized in Algorithm~\ref{alg:openloop}, which preserves good structures,  introduced in Section~\ref{sec:property}.


\begin{algorithm}
\caption{Computation of an optimal velocity field}\label{alg:openloop}
\begin{algorithmic}[1]
    \State \textbf{Input:} Initial data $\theta^0$, parameters $T,\gamma>0$, mesh size $h>0$, integer $N_t>0$, line-search parameter $c>0$, and stopping tolerance $\varepsilon$.
    \State \textbf{Output:} Optimal discrete velocity coefficients $\{\mathbf{v}^{(n),k}\}_{n=0}^{N_t-1}\subset\mathbb{R}^m$.
    \State \textbf{Initialize:} Generate a mesh $\mathcal{T}_h$ of size $h$ and initial discrete velocity coefficients $\{\mathbf{v}^{(n),0}\}_{n=0}^{N_t-1}\subset\mathbb{R}^m$. Set the initial discrete data $(\theta_K^0)_{K\in\mathcal{T}_h}$ by \eqref{20250930-DiscreteIntialData}, set $k:=0$, and define the initial search direction
    \[
        d^0 := -(\nabla \widetilde J)_h\Big(\{\mathbf{v}^{(n),0}\}_{n=0}^{N_t-1}\Big)
        \qquad\text{(according to \eqref{cost_cell}).}
    \]

    \Repeat
        \State \textbf{(1) State solve.} Solve the discrete state equation \eqref{eq:CN_cell} with $(v_i^{(n)})_{i=1}^m$ replaced by $\mathbf{v}^{(n),k}$ to obtain $\{\theta_h^{(n)}\}_{n=0}^{N_t}$.
        \State \textbf{(2) Multiplier update.} Compute the Lagrange multiplier $\eta_h^k$ for \eqref{eq:eta} (with $\theta(T;\mathbf{v})$ replaced by $\theta_h^{(N_t)}$) according to \eqref{20251011-DiscreteLaplacian}.
        \State \textbf{(3) Adjoint solve.} Solve the discrete adjoint equation \eqref{20251004-DiscreteAdjointEquation} with $((v_i^{(n)})_{i=1}^m,\eta_h)$ replaced by $(\mathbf{v}^{(n),k},\eta_h^k)$.
        \State \textbf{(4) CG update with line search.}
        \begin{enumerate}[(i)]
            \item Compute the discrete gradient
            \[
                (\nabla \widetilde J)_h\Big(\{\mathbf{v}^{(n),k}\}_{n=0}^{N_t-1}\Big)
            \]
            according to \eqref{cost_cell} using the results of Steps \textbf{(1)} and \textbf{(3)}.
            \item Apply the line-search algorithm with $\{\mathbf{v}^{(n),k}\}_{n=0}^{N_t-1}$ and $d^k$ to obtain $\alpha^k>0$ and update
            \[
                \{\mathbf{v}^{(n),k+1}\}_{n=0}^{N_t-1}
                :=
                \{\mathbf{v}^{(n),k}\}_{n=0}^{N_t-1} + \alpha^k d^k.
            \]
            Compute $\widetilde{J}_h\big(\{\mathbf{v}^{(n),k+1}\}_{n=0}^{N_t-1}\big)$ as in \eqref{20251008-DiscreteFunctional}, where $\alpha^k$ is chosen such that
            \[
                \widetilde{J}_h\Big(\{\mathbf{v}^{(n),k+1}\}_{n=0}^{N_t-1}\Big)
                \le
                \widetilde{J}_h\Big(\{\mathbf{v}^{(n),k}\}_{n=0}^{N_t-1}\Big)
                + c\,\alpha^k \Big\langle \nabla \widetilde{J}_h^k, d^k \Big\rangle.
            \]
            \item Update the search direction
            \[
                d^{k+1}
                :=
                -(\nabla \widetilde J)_h\Big(\{\mathbf{v}^{(n),k+1}\}_{n=0}^{N_t-1}\Big)
                + \beta^{k} d^{k},
            \]
            with
            \[
                \beta^{k}
                :=
                \frac{
                    \left\|(\nabla \widetilde J)_h\Big(\{\mathbf{v}^{(n),k+1}\}_{n=0}^{N_t-1}\Big)\right\|
                }{
                    \left\|(\nabla \widetilde J)_h\Big(\{\mathbf{v}^{(n),k}\}_{n=0}^{N_t-1}\Big)\right\|
                }.
            \]
        \end{enumerate}
        \State \textbf{(5)} Set $k \leftarrow k+1$.
    \Until{
        $\displaystyle
        \frac{
            \left|
            \widetilde{J}_h\big(\{\mathbf{v}^{(n),k}\}_{n=0}^{N_t-1}\big)
            -
            \widetilde{J}_h\big(\{\mathbf{v}^{(n),k-1}\}_{n=0}^{N_t-1}\big)
            \right|
        }{
            \widetilde{J}_h\big(\{\mathbf{v}^{(n),k-1}\}_{n=0}^{N_t-1}\big)
        }
        < \varepsilon.$
    }
\end{algorithmic}
\end{algorithm}

\section{Structure-preserving properties of the numerical scheme}\label{sec:property}

This section is devoted to the structural properties preserved by the numerical scheme introduced in Section~\ref{sec:scheme}.

We first define the discrete mass $M_h$ and discrete $L^2$-energy $E_h$ as follows: 
Given
    $\xi_h = (\xi_{h,K})_{K\in\mathcal T_h} \in X_h$, we set
    \begin{align}\label{eq:discrete}
        M_h(\xi_h) := \sum_{K\in\mathcal T_h} \xi_{h,K}\,|K|,
        \qquad
        E_h(\xi_h) := \|\xi_h\|_{X_h}^2.
    \end{align}

The following holds.

\begin{thm}\label{20251008-theorem-PerservedStructures}
Assume that the fluxes satisfy the exact conditions in Remark~\ref{rem:exact_flux_conditions}.
Let $\theta^{(n)}_h $ and $\rho^{(n)}_h $ be the solutions  at time level $t_n$ ($n \in \{0,1,\ldots, N_t\}$) to equations \eqref{eq:CN_cell} and \eqref{20251004-DiscreteAdjointEquation} with arbitrary initial data $\theta_h^{(0)},\rho_h^{(N_t)} \in X_h$. 

Then, for each time step $n$, the following properties hold:
\begin{align}
    &\text{(Mass conservation)} \quad 
        M_h(\theta_h^{(n)})=M_h(\theta_h^{(0)})
        \text{ and }
        M_h(\rho_h^{(n)})=M_h(\rho_h^{(N_t)})
    \label{20251008-MassConservation}
    \\
    &\text{(Energy conservation)} \quad
        E_h(\theta_h^{(n)})=E_h(\theta_h^{(0)})
        \text{ and }
        E_h(\rho_h^{(n)})=E_h(\rho_h^{(N_t)})
    \label{20251008-EnergyConservation}
    \\
    &\text{(State--adjoint consistency)}   \quad
        \big\langle \theta^{(n)}_h, \rho^{(n)}_h \big\rangle_{X_h} = 
        \big\langle \theta^{(N_t)}_h, \rho^{(N_t)}_h \big\rangle_{X_h}.
    \label{20251008-ForwardBackwardConsistency}
\end{align}

\end{thm}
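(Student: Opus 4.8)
The plan is to reduce all three identities to two algebraic facts about the discrete advection operator and then propagate them by induction on $n$. First I would rewrite the Crank--Nicolson update \eqref{eq:CN_cell} in one-step form: setting $D^{(n)} := \tfrac{\Delta t}{2}\sum_{i=1}^m v_i^{(n)} D_{\mathbf b_i}$, the scheme reads $(I_h + D^{(n)})\theta_h^{(n+1)} = (I_h - D^{(n)})\theta_h^{(n)}$, so that $\theta_h^{(n+1)} = A_n \theta_h^{(n)}$ with $A_n := (I_h+D^{(n)})^{-1}(I_h-D^{(n)})$. The two facts I will use throughout are: (a) $D^{(n)}$ is skew-adjoint on $(X_h,\langle\cdot,\cdot\rangle_{X_h})$, i.e.\ $\langle D^{(n)}\xi,\zeta\rangle_{X_h} = -\langle \xi, D^{(n)}\zeta\rangle_{X_h}$ (Lemma~\ref{20251008-lemma-AntisymmetricDiscreteDivergence}), which in particular makes $I_h+D^{(n)}$ invertible since its spectrum lies in $\{1+i\lambda : \lambda\in\R\}$; and (b) the discrete incompressibility $D^{(n)}\mathbf{1}=0$ for the constant vector $\mathbf{1}=(1)_{K\in\cT_h}$, from Remark~\ref{rem:exact_flux_conditions}(ii). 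A preliminary observation driving the adjoint case is that, rewriting the first term of the backward scheme \eqref{20251004-DiscreteAdjointEquation} as $(\rho_h^{(n+1)}-\rho_h^{(n)})/\Delta t$ and clearing the denominator, $\rho_h$ obeys the same recurrence $(I_h+D^{(n)})\rho_h^{(n+1)} = (I_h-D^{(n)})\rho_h^{(n)}$ as $\theta_h$; hence $\rho_h^{(n+1)} = A_n\rho_h^{(n)}$, and both variables are advanced by the identical one-step map.

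For mass conservation I would pair the increment $\theta_h^{(n+1)}-\theta_h^{(n)} = -D^{(n)}(\theta_h^{(n+1)}+\theta_h^{(n)})$ with $\mathbf{1}$. Since $M_h(\xi_h)=\langle \xi_h,\mathbf{1}\rangle_{X_h}$, this gives $M_h(\theta_h^{(n+1)})-M_h(\theta_h^{(n)}) = -\langle D^{(n)}(\theta_h^{(n+1)}+\theta_h^{(n)}),\mathbf{1}\rangle_{X_h}$, which vanishes either directly through the interior-face cancellation $F_{K,\Gamma}+F_{L,\Gamma}=0$ or, equivalently, by skew-adjointness combined with $D^{(n)}\mathbf{1}=0$. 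Telescoping over the time levels yields \eqref{20251008-MassConservation} for $\theta_h$, and the identical recurrence gives it for $\rho_h$.

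For energy conservation I would show that $A_n$ is an isometry of $(X_h,\langle\cdot,\cdot\rangle_{X_h})$. Writing $w:=\theta_h^{(n+1)}$ and $u:=\theta_h^{(n)}$, the relation $w-u=-D^{(n)}(w+u)$ yields $\|w\|_{X_h}^2-\|u\|_{X_h}^2 = \langle w-u,w+u\rangle_{X_h} = -\langle D^{(n)}(w+u),w+u\rangle_{X_h}=0$, since skew-adjointness forces $\langle D^{(n)}x,x\rangle_{X_h}=0$ for every $x\in X_h$. Hence $E_h(\theta_h^{(n+1)})=E_h(\theta_h^{(n)})$, and telescoping gives \eqref{20251008-EnergyConservation}; the same argument applies verbatim to $\rho_h$.

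For state--adjoint consistency I would establish the per-step identity $\langle \theta_h^{(n+1)},\rho_h^{(n+1)}\rangle_{X_h} = \langle \theta_h^{(n)},\rho_h^{(n)}\rangle_{X_h}$ and telescope. The quickest route uses the isometry: since $A_n^\ast A_n = I_h$ (Remark~\ref{rem:alg_adj}) and both variables advance by $A_n$, one gets $\langle \theta_h^{(n+1)},\rho_h^{(n+1)}\rangle_{X_h} = \langle A_n\theta_h^{(n)},A_n\rho_h^{(n)}\rangle_{X_h} = \langle \theta_h^{(n)},\rho_h^{(n)}\rangle_{X_h}$. A self-contained alternative, which I would include for transparency, uses the discrete Leibniz identity $\langle\theta_h^{(n+1)},\rho_h^{(n+1)}\rangle_{X_h}-\langle\theta_h^{(n)},\rho_h^{(n)}\rangle_{X_h} = \langle \theta_h^{(n+1)}-\theta_h^{(n)},\tfrac{\rho_h^{(n+1)}+\rho_h^{(n)}}{2}\rangle_{X_h} + \langle \tfrac{\theta_h^{(n+1)}+\theta_h^{(n)}}{2},\rho_h^{(n+1)}-\rho_h^{(n)}\rangle_{X_h}$; substituting the two Crank--Nicolson increments and invoking skew-adjointness cancels the two terms exactly. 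Either way \eqref{20251008-ForwardBackwardConsistency} follows. The main obstacle is not any single estimate but the sign-and-index bookkeeping confirming the preliminary claim that the backward adjoint scheme---with its $-\Delta t$ and swapped levels---reproduces precisely the forward one-step map $A_n$; once that is pinned down, all three properties follow mechanically from skew-adjointness and $D^{(n)}\mathbf{1}=0$.
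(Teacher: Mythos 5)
Your proof is correct and rests on the same two pillars as the paper's own argument, namely the one-step Cayley form $\theta_h^{(n+1)} = (I_h+D^{(n)})^{-1}(I_h-D^{(n)})\,\theta_h^{(n)}$ and the skew-adjointness of $D^{(n)}$ from Lemma~\ref{20251008-lemma-AntisymmetricDiscreteDivergence}, and your bookkeeping check that the backward adjoint scheme \eqref{20251004-DiscreteAdjointEquation} reproduces exactly the same one-step map is precisely the point the paper also relies on. The only difference is organizational: the paper proves the state--adjoint pairing identity \eqref{20251008-ForwardBackwardConsistency} first and then obtains energy and mass conservation as corollaries by substituting $\rho_h^{(k)}=\theta_h^{(k)}$ and $\rho_h^{(k)}\equiv 1$ into it, whereas you prove each of the three properties directly and independently; both orderings are equally valid.
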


The proof of Theorem~\ref{20251008-theorem-PerservedStructures} requires the following lemma, which presents a basic property of the discrete divergence $D_{ \mathbf{u} }$ in  \eqref{20251007-DiscreteDivergence} (see Appendix~\ref{sec:proof-lemma-AntisymmetricDiscreteDivergence} for the proof).

\begin{lem}\label{20251008-lemma-AntisymmetricDiscreteDivergence}
Let $X_h$ be given by \eqref{20251008-StateSpaces},  $\mathbf{u} \in U$ and $D_{ \mathbf{u} }$ be as in  \eqref{20251007-DiscreteDivergence}. 
Assume that the fluxes satisfy the exact conditions in Remark~\ref{rem:exact_flux_conditions}.
Then, $D_{ \mathbf{u} }$ is antisymmetric over $X_h$  (equipped with the inner product in \eqref{20251008-InnerProductOfXh}). 
Furthermore,  $I_h +  D_{ \mathbf{u} }$ is invertible over $X_h$, where $I_h$ is the identity operator over $X_h$. 
\end{lem}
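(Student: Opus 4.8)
The plan is to establish antisymmetry by a face-by-face reorganization of the bilinear form $\langle D_{\mathbf{u}}\mathbf{y}_h, \mathbf{z}_h\rangle_{X_h}$, and then to deduce invertibility of $I_h + D_{\mathbf{u}}$ from skew-symmetry by a coercivity argument. First I would unfold the definitions: using \eqref{20251008-InnerProductOfXh} and \eqref{20251007-DiscreteDivergence}, the cell weight $|K|$ cancels against the $1/|K|$ inside $D_{\mathbf{u}}$, leaving
\[
\langle D_{\mathbf{u}}\mathbf{y}_h, \mathbf{z}_h\rangle_{X_h}
= \sum_{K\in\mathcal{T}_h} z_K \sum_{\Gamma \in \mathcal{F}_K,\, \Gamma\not\subset\partial\Omega} F_{K,\Gamma}(\mathbf{u}, \mathbf{y}_h).
\]
The next step is to collapse this double sum into a sum over interior faces. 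Each interior face $\Gamma$ is shared by exactly two cells $K$ and $L=L(K,\Gamma)$, and on $\Gamma$ one has $\mathbf{n}_L=-\mathbf{n}_K$; writing $u_\Gamma := \int_\Gamma \mathbf{u}\cdot\mathbf{n}_K\, d\sigma$ and invoking the single-valued-flux condition~(i) of Remark~\ref{rem:exact_flux_conditions}, the two contributions attached to $\Gamma$ combine into $\tfrac12 (y_K + y_L)(z_K - z_L)\,u_\Gamma$, which one checks is independent of the labeling of the two cells. Hence
\[
\langle D_{\mathbf{u}}\mathbf{y}_h, \mathbf{z}_h\rangle_{X_h}
= \tfrac12 \sum_{\Gamma\ \text{interior}} (y_K + y_L)(z_K - z_L)\, u_\Gamma.
\]

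The heart of the argument is to add this to the same expression with $\mathbf{y}_h$ and $\mathbf{z}_h$ interchanged. A short expansion of $(y_K+y_L)(z_K-z_L)+(z_K+z_L)(y_K-y_L)$ shows the off-diagonal terms cancel, leaving
\[
\langle D_{\mathbf{u}}\mathbf{y}_h, \mathbf{z}_h\rangle_{X_h} + \langle D_{\mathbf{u}}\mathbf{z}_h, \mathbf{y}_h\rangle_{X_h}
= \sum_{\Gamma\ \text{interior}} (y_K z_K - y_L z_L)\, u_\Gamma.
\]
Setting $w_K := y_K z_K$ and reversing the face-to-cell reorganization, the right-hand side equals $\sum_{K} w_K \sum_{\Gamma\in\mathcal{F}_K,\,\Gamma\not\subset\partial\Omega}\int_\Gamma \mathbf{u}\cdot\mathbf{n}_K\, d\sigma$, in which the inner sum is precisely $|K|\,(D_{\mathbf{u}}\mathbf{1})_K$ for the constant vector $\mathbf{1}=(1)_{K\in\mathcal{T}_h}$. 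By the discrete incompressibility condition~(ii) of Remark~\ref{rem:exact_flux_conditions}, $D_{\mathbf{u}}\mathbf{1}=0$, so the whole sum vanishes and $\langle D_{\mathbf{u}}\mathbf{y}_h,\mathbf{z}_h\rangle_{X_h} = -\langle \mathbf{y}_h, D_{\mathbf{u}}\mathbf{z}_h\rangle_{X_h}$, which is the claimed antisymmetry.

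Finally, invertibility of $I_h + D_{\mathbf{u}}$ follows at once from skew-symmetry: for any $\mathbf{y}_h$, antisymmetry forces $\langle D_{\mathbf{u}}\mathbf{y}_h, \mathbf{y}_h\rangle_{X_h}=0$, so
\[
\langle (I_h + D_{\mathbf{u}})\mathbf{y}_h, \mathbf{y}_h\rangle_{X_h} = \|\mathbf{y}_h\|_{X_h}^2,
\]
which is strictly positive whenever $\mathbf{y}_h\neq 0$. Thus $(I_h + D_{\mathbf{u}})\mathbf{y}_h=0$ implies $\mathbf{y}_h=0$, and since $X_h$ is finite-dimensional the operator is invertible. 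I expect the main obstacle to be purely bookkeeping: keeping the normal orientations consistent through both the face-to-cell and cell-to-face reorganizations, and verifying the off-diagonal cancellation cleanly so that exactly the diagonal products $y_K z_K$ survive to meet the discrete-incompressibility identity. No analytic difficulty arises once conditions~(i)--(ii) of Remark~\ref{rem:exact_flux_conditions} are applied at the right places.
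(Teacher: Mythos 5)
Your proof is correct, and both halves rest on exactly the same two ingredients the paper uses: cancellation of the face flux between the two cells sharing an interior face, and the vanishing of the net flux over each cell (discrete incompressibility). The difference is in the bookkeeping of the antisymmetry argument. You first collapse the double sum over cells and faces into a single sum over interior faces, obtaining $\tfrac12\sum_{\Gamma}(y_K+y_L)(z_K-z_L)u_\Gamma$, and then \emph{symmetrize}: the sum $\langle D_{\mathbf u}\mathbf y_h,\mathbf z_h\rangle_{X_h}+\langle D_{\mathbf u}\mathbf z_h,\mathbf y_h\rangle_{X_h}$ telescopes into the discrete divergence of $\mathbf u$ tested against the cellwise products $w_K=y_Kz_K$, which vanishes by $D_{\mathbf u}\mathbf 1=0$. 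The paper instead works directly on $\langle D_{\mathbf u}\mathbf y_h,\mathbf z_h\rangle_{X_h}$: it splits off the diagonal part $\tfrac12\sum_K z_Ky_K\sum_\Gamma\int_\Gamma\mathbf u\cdot\mathbf n_K\,d\sigma$ (which vanishes cell by cell by incompressibility), flips the normal on the remaining off-diagonal term, reindexes the $(K,\Gamma)$ pairs through the involution $L=L(K,\Gamma)$, and then adds the diagonal term back in for $\mathbf z_h$ to reassemble $-\langle\mathbf y_h,D_{\mathbf u}\mathbf z_h\rangle_{X_h}$ — so it uses the per-cell zero-flux identity twice rather than once. Your face-based symmetrization is arguably cleaner and makes explicit that the symmetric part of $D_{\mathbf u}$ is precisely a discrete divergence of $\mathbf u$ weighted by $y_Kz_K$; the paper's reindexing has the minor advantage of exhibiting the adjoint identity term by term without passing through the symmetrized form. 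The invertibility arguments are identical: skew-symmetry gives $\langle(I_h+D_{\mathbf u})\mathbf y_h,\mathbf y_h\rangle_{X_h}=\|\mathbf y_h\|_{X_h}^2$, hence injectivity, hence invertibility in finite dimension.
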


We are now in the position to prove Theorem~\ref{20251008-theorem-PerservedStructures}. 
\begin{proof}[Proof of Theorem~\ref{20251008-theorem-PerservedStructures}]
Let $n \in \{0,1,\ldots, N_t\}$ be fixed. 
The proof is organized in the following three steps.

\vskip 5pt
\noindent\textit{
Step 1. Proof of \eqref{20251008-ForwardBackwardConsistency}
}

Let $I_h$ be the identity operator over $X_h$. Write
\begin{align*}
D^{(k)} : =  D_{0.5 \Delta t \sum_{i=1}^m  v_i^{(k)} \mathbf{b}_i}
    = 0.5 \Delta t \sum_{i=1}^m  v_i^{(k)} D_{\mathbf{b}_i},
    ~ k=0,\ldots, N_t-1.
\end{align*}
From \eqref{eq:CN_cell}, we know that for each $k \in \{0,\ldots, N_t-1\}$, 
\begin{align*}
\big( I_h +  D^{(k)} \big)
\theta_h^{(k+1)} =  
\big( I_h -  D^{(k)} \big)
\theta_h^{(k)}.
\end{align*}
Since each operator $I + D^{(k)}$ is invertible and antisymmetric over $X_h$ (by Lemma \ref{20251008-lemma-AntisymmetricDiscreteDivergence} with $\mathbf{u}$ replaced by $0.5 \Delta t \sum_{i=1}^m  v_i^{(k)} \mathbf{b}_i$), we rewrite the above equality as
\begin{align}\label{20251008-ProofOfConsistency-1}
\theta_h^{(k+1)} =
\big( I_h +  D^{(k)} \big)^{-1}
\big( I_h -  D^{(k)} \big)
\theta_h^{(k)},
~ k=0,\ldots, N_t-1.
\end{align}
Because each $D^{(k)}$ is antisymmetric (see Lemma \ref{20251008-lemma-AntisymmetricDiscreteDivergence}), the above implies that for each $k \in \{0,\ldots, N_t-1\}$, 
\begin{align*}
\langle \theta_h^{(k+1)},  \rho_h^{(k+1)}  \rangle_{X_h}  =
\Big\langle  \theta_h^{(k)}, 
\big( I_h -  D^{(k)} \big)^{-1}
\big( I_h +  D^{(k)} \big)
\rho_h^{(k+1)}
\Big\rangle_{X_h}.
\end{align*}
Meanwhile, $\{ \rho^{(k)}_h \}_{k=0}^{N_t}$ satisfies the same discrete transport equation as $\{ \theta^{(k)}_h \}_{k=0}^{N_t}$ (see \eqref{eq:CN_cell} and \eqref{20251004-DiscreteAdjointEquation}). Then, by replacing $\theta$ with $\rho$ in \eqref{20251008-ProofOfConsistency-1}, we determine from the last equality  that
\begin{align*}
\langle \theta_h^{(k+1)},  \rho_h^{(k+1)}  \rangle_{X_h}  =
\Big\langle  \theta_h^{(k)}, \rho_h^{(k)}
\Big\rangle_{X_h},
~ k=0,\ldots, N_t-1.
\end{align*}
This gives \eqref{20251008-ForwardBackwardConsistency}. 

\vskip 5pt
\noindent\textit{
Step 2. Proof of \eqref{20251008-EnergyConservation}
}

By Step 1, we know that \eqref{20251008-ForwardBackwardConsistency} holds for any solutions to equations \eqref{eq:CN_cell} and \eqref{20251004-DiscreteAdjointEquation} (with any initial data). Meanwhile, equations \eqref{eq:CN_cell} and \eqref{20251004-DiscreteAdjointEquation} are the same except for their initial data. Thus, we can take $\rho_h^{(n)} = \theta_h^{(k)}$ ($k=0,1,\cdots, N_t$) in \eqref{20251008-ForwardBackwardConsistency}. This, together with \eqref{eq:discrete}, leads to \eqref{20251008-EnergyConservation} directly.

\vskip 5pt
\noindent\textit{
Step 3. Proof of  \eqref{20251008-MassConservation}
}
Note that the constant function $y_K^{(k)}\equiv 1$ is an exact discrete solution since $D_{\mathbf u}1=0$ by Remark~\ref{rem:exact_flux_conditions}.
Meanwhile, equations \eqref{eq:CN_cell} and \eqref{20251004-DiscreteAdjointEquation} are the same except their initial data. Thus, we  take $\rho_h^{(k)} \equiv 1$  (resp. $\theta_h^{(k)} \equiv 1$)   in \eqref{20251008-ForwardBackwardConsistency} to obtain that when $k=0,1,\cdots, N_t$,
\begin{align*}
    \langle \theta_h^{(k)}, 1 \rangle_{X_h}  = \langle \theta_h^{(0)}, 1 \rangle_{X_h}
    ~(\text{resp. } \langle \rho_h^{(k)}, 1 \rangle_{X_h}  = \langle \rho_h^{(N_t)}, 1 \rangle_{X_h}).
\end{align*}
 The last equalities, along with \eqref{20251008-InnerProductOfXh}, yield  \eqref{20251008-MassConservation}. 

\vskip 5pt
In summary, the proof of Theorem~\ref{20251008-theorem-PerservedStructures} is completed. 
\end{proof}

\section{Commutativity of Optimization and Discretization}
\label{sec:OPtimization-Discretization}

The numerical scheme in Section~\ref{subsec:NumericalScheme} is designed by the Optimize-Then-Discretize approach. This means that the discrete gradient \eqref{cost_cell} of the function $J$  for problem (OP) is obtained from the original gradient  \eqref{20251006-GraidentOfJ} by discretizing a coupled system of the state and adjoint equations. The following Theorem~\ref{202501008-theorem-BasicEquivalence} states that this discrete gradient can also be established only from the discretizations of the functional $J$ and the state equation. In other words, this theorem presents the equivalence between the  Optimize-Then-Discretize and Discretize-Then-Optimize approaches for the problem (OP).

\begin{thm}\label{202501008-theorem-BasicEquivalence}
Let $(\nabla \widetilde J)_h$ and $\widetilde J_h$ be given by \eqref{cost_cell}  and \eqref{20251008-DiscreteFunctional}, respectively. Then, it holds that $(\nabla \widetilde J)_h  \equiv \nabla \widetilde J_h$  over $\mathbb R^{ m\times N_t}$. 
\end{thm}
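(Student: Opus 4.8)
The plan is to prove the identity by computing the genuine gradient $\nabla\widetilde J_h$ of the discrete cost functional \eqref{20251008-DiscreteFunctional} directly (the Discretize-Then-Optimize side) and checking, component by component in the coefficients $v_j^{(\ell)}$, that it reproduces the Optimize-Then-Discretize formula \eqref{cost_cell}. The regularization term $\frac{\gamma}{2}\Delta t\sum_{n,i}|v_i^{(n)}|^2$ differentiates trivially into the $\Delta t\,\gamma v_j^{(\ell)}$ contribution, so the entire content lies in differentiating the terminal term $\frac12\langle\theta_h^{(N_t)},\eta_h\rangle_{X_h}$. Since $\eta_h=L_h^{-1}\theta_h^{(N_t)}$ with $L_h$ invertible and self-adjoint over $X_h^0$ (Lemma~\ref{20251209-lemma-PropertiesOfLh}), and since discrete mass conservation (Theorem~\ref{20251008-theorem-PerservedStructures}) keeps $\theta_h^{(N_t)}\in X_h^0$, this term is the quadratic form $\frac12\langle\theta_h^{(N_t)},L_h^{-1}\theta_h^{(N_t)}\rangle_{X_h}$, whose directional derivative in $v_j^{(\ell)}$ is $\langle\eta_h,\sigma_h^{(N_t)}\rangle_{X_h}$, where $\sigma_h^{(n)}:=\partial_{v_j^{(\ell)}}\theta_h^{(n)}$ is the tangent (sensitivity) of the state.

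Next I would set up and solve the tangent recursion. Writing the forward step as $(I_h+D^{(n)})\theta_h^{(n+1)}=(I_h-D^{(n)})\theta_h^{(n)}$ with $D^{(n)}=0.5\Delta t\sum_i v_i^{(n)}D_{\mathbf b_i}$, differentiation in $v_j^{(\ell)}$ shows $\sigma_h^{(n)}=0$ for $n\le\ell$ and leaves the single forced step $(I_h+D^{(\ell)})\sigma_h^{(\ell+1)}=-0.5\Delta t\,D_{\mathbf b_j}(\theta_h^{(\ell+1)}+\theta_h^{(\ell)})$, after which $\sigma_h$ obeys the homogeneous recursion $\sigma_h^{(n+1)}=A_n\sigma_h^{(n)}$ with $A_n=(I_h+D^{(n)})^{-1}(I_h-D^{(n)})$. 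Because each $A_n$ is an isometry with $A_n^{*}=A_n^{-1}$ (Remark~\ref{rem:alg_adj}, via the antisymmetry of $D^{(n)}$ in Lemma~\ref{20251008-lemma-AntisymmetricDiscreteDivergence}), and because the adjoint iterates $\rho_h^{(n)}$ satisfy the same recursion with $\rho_h^{(N_t)}=\eta_h$, the pairing telescopes: $\langle\eta_h,\sigma_h^{(N_t)}\rangle_{X_h}=\langle\rho_h^{(\ell+1)},\sigma_h^{(\ell+1)}\rangle_{X_h}$. Substituting the forced step and moving $(I_h+D^{(\ell)})^{-1}$ onto the adjoint factor (whose adjoint is $(I_h-D^{(\ell)})^{-1}$, again by antisymmetry) reduces everything to the single interval $(t_\ell,t_{\ell+1})$.

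The remaining task is purely algebraic. The adjoint factor that emerges is $(I_h-D^{(\ell)})^{-1}\rho_h^{(\ell+1)}$, and the two Crank--Nicolson half-step identities $(I_h+D^{(\ell)})\tfrac{\rho_h^{(\ell)}+\rho_h^{(\ell+1)}}{2}=\rho_h^{(\ell)}$ and $(I_h-D^{(\ell)})\tfrac{\rho_h^{(\ell)}+\rho_h^{(\ell+1)}}{2}=\rho_h^{(\ell+1)}$ — both immediate from the adjoint recursion $(I_h+D^{(\ell)})\rho_h^{(\ell+1)}=(I_h-D^{(\ell)})\rho_h^{(\ell)}$ — identify it with the interval-centered combination $\tfrac{\rho_h^{(\ell)}+\rho_h^{(\ell+1)}}{2}$. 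One further use of the antisymmetry of $D_{\mathbf b_j}$ then casts the contribution as $\Delta t\,\langle\tfrac{\theta_h^{(\ell+1)}+\theta_h^{(\ell)}}{2},\,D_{\mathbf b_j}\tfrac{\rho_h^{(\ell)}+\rho_h^{(\ell+1)}}{2}\rangle_{X_h}$, which has exactly the midpoint-against-midpoint structure. Summing the $\gamma$-term and this terminal contribution over all $j,\ell$ assembles $\nabla\widetilde J_h$.

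I expect the delicate point — and the step I would write out most carefully — to be precisely the reconciliation of the two time conventions flagged after \eqref{cost_cell}: the computation naturally produces the \emph{interval-centered} adjoint $\tfrac{\rho_h^{(\ell)}+\rho_h^{(\ell+1)}}{2}$, whereas \eqref{cost_cell} is written with the \emph{nodal} value $\rho_h^{(\ell)}$. Since $\tfrac{\rho_h^{(\ell)}+\rho_h^{(\ell+1)}}{2}=\rho_h^{(\ell)}+\tfrac{\rho_h^{(\ell+1)}-\rho_h^{(\ell)}}{2}$ and $\rho_h^{(\ell+1)}-\rho_h^{(\ell)}=-D^{(\ell)}(\rho_h^{(\ell)}+\rho_h^{(\ell+1)})$, the discrepancy between the two conventions is carried by a factor of $D^{(\ell)}=O(\Delta t)$. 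The crux is therefore to show that, under the pairing with the state midpoint and after all uses of antisymmetry, this $O(\Delta t)$ defect closes \emph{exactly} rather than merely to leading order — i.e., to pin down the precise half-step bookkeeping that makes the nodal form in \eqref{cost_cell} coincide with the midpoint form the adjoint calculus produces. Everything else is the standard adjoint-equals-tangent duality, made rigorous by the isometry and antisymmetry lemmas already established.
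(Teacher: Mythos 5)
Your strategy is essentially the paper's: linearize the Crank--Nicolson recursion in the control coefficients, use the self-adjointness of $L_h^{-1}$ (Lemma~\ref{20251209-lemma-PropertiesOfLh}) to reduce the terminal term to a pairing of the sensitivity with $\eta_h$, and transport that pairing back through the isometries $A_n$ onto the single forced step. Your algebra is correct, and in fact more explicit than the paper's, up to the point where the interval contribution comes out as $\Delta t\,\bigl\langle \tfrac{\theta_h^{(\ell+1)}+\theta_h^{(\ell)}}{2},\,D_{\mathbf b_j}\tfrac{\rho_h^{(\ell)}+\rho_h^{(\ell+1)}}{2}\bigr\rangle_{X_h}$. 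But you then stop at exactly the step that decides the theorem: you declare that reconciling this interval-centered adjoint with the nodal value $\rho_h^{(\ell)}$ appearing in \eqref{cost_cell} is ``the crux'' and must be shown to ``close exactly,'' and you do not carry it out. As a proof of the stated identity, the proposal is therefore incomplete.

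More importantly, that defect does not close. Using $\rho_h^{(\ell+1)}-\rho_h^{(\ell)}=-D^{(\ell)}\bigl(\rho_h^{(\ell)}+\rho_h^{(\ell+1)}\bigr)$ and the antisymmetry of $D_{\mathbf b_j}$, the difference between the midpoint and nodal contributions equals $\tfrac{\Delta t^{2}}{2}\sum_{i=1}^{m} v_i^{(\ell)}\bigl\langle D_{\mathbf b_j}\tfrac{\theta_h^{(\ell+1)}+\theta_h^{(\ell)}}{2},\,D_{\mathbf b_i}\tfrac{\rho_h^{(\ell)}+\rho_h^{(\ell+1)}}{2}\bigr\rangle_{X_h}$, which is generically nonzero: already for $N_t=1$, $m=1$ it reduces to a positive multiple of $\|D_{\mathbf b_1}\bar\theta\|_{X_h}^{2}$ when the adjoint midpoint is proportional to the state midpoint. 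So the exact gradient of $\widetilde J_h$ is the midpoint form your duality computation produces, and it agrees with the nodal form in \eqref{cost_cell} only up to $O(\Delta t^{2})$ per step. The paper's own proof performs the same silent substitution of $\rho_h^{(n)}$ for $\tfrac{\rho_h^{(n)}+\rho_h^{(n+1)}}{2}$ at the line following the duality identity, so the discrepancy you flagged is a genuine issue in the source rather than bookkeeping you overlooked; the correct conclusion of your argument is that $(\nabla\widetilde J)_h=\nabla\widetilde J_h$ holds with \eqref{cost_cell} read with the interval-centered adjoint, not with the nodal one.
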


\begin{proof}
We will compute $\nabla \widetilde{J}_h$ at any fixed velocity coefficients $v_h = \{ (v_i^{(n)})_{i=1}^m \}_{n=0}^{N_t-1}$ in $\mathbb R^{m\times N_t}$. For this purpose, we take a variational direction $u_h = \{ (u_i^{(n)})_{i=1}^m \}_{n=0}^{N_t-1}$ in $\mathbb R^{m\times N_t}$,  
and define new velocity coefficients in $\mathbb R^{m\times N_t}$ as follows:
\begin{align}\label{20251009-NewControl}
    v_h^{\varepsilon} :=  v_h +  \varepsilon u_h,
    ~\varepsilon \in \mathbb R. 
\end{align}
Write $\{ \delta\theta_h^{(n)} \}_{n=0}^{N_t} $ for the solution to the following variational equation:
\begin{align}\label{20251009-VaritalDiscreteEquation}
\left\{
    \begin{array}{r@{~}l}
      \dfrac{\delta\theta^{(n+1)}_h - \delta\theta^{(n)}_h}{\Delta t}  
      +& \sum_{i=1}^m v_i^{(n)}  D_{ \mathbf{b}_i } \dfrac{ \delta\theta^{(n+1)}_h  +  \delta\theta^{(n)}_h }{2}  
      \vspace{0.5em}\\
      \quad\quad &=  - \sum_{i=1}^m u_i^{(n)}  D_{ \mathbf{b}_i } \dfrac{ \theta^{(n+1)}_h  +  \theta^{(n)}_h }{2},
      ~n = 0,  \ldots, N_t-1, 
      \vspace{0.5em}\\
      \delta\theta^{(0)}_h &= 0  \text{ in }  X_h,
    \end{array}
\right.
\end{align}
Write $\eta_h$ (resp. $\delta\eta_h$) for the solutions to equation \eqref{20251011-DiscreteLaplacian} with $\theta_h^{(N_t)}$ (resp. $\delta\theta_h^{(N_t)}$). 
Then, by the definition of $\widetilde J_h$ in \eqref{20251008-DiscreteFunctional},  we obtain 
\begin{align}\label{20251009-VaritationOfDiscreteFunctional}
\lim_{ \varepsilon \rightarrow 0}
\frac{1}{\varepsilon} \Big( 
    \widetilde J_h(v_h^{\varepsilon})  -  \widetilde J_h(v_h)
\Big)
= \frac{1}{2} \big\langle 
    \theta_h^{(N_t)}, \delta\eta_h 
\big\rangle_{X_h}
+  \frac{1}{2} \big\langle 
    \delta\theta_h^{(N_t)}, \eta_h 
\big\rangle_{X_h}
 + \gamma \Delta t \langle u_h, v_h \rangle_{ \mathbb R^{m\times N_t} }.
\end{align}
Meanwhile, we determine from \eqref{20251011-DiscreteLaplacian} and Lemma \ref{20251209-lemma-PropertiesOfLh} in Appendix~\ref{Appendix-DiscreteEllipticEquation} that 
\begin{align*}
 \big\langle 
    \theta_h^{(N_t)}, \delta\eta_h
\big\rangle_{X_h}
=& \big\langle 
    \theta_h^{(N_t)}, L_h^{-1} \delta\theta_h^{(N_t)}
\big\rangle_{X_h}
= \big\langle 
   (L_h^{-1})^* \theta_h^{(N_t)}, \delta\theta_h^{(N_t)}
\big\rangle_{X_h}
\nonumber\\
=& \big\langle 
   L_h^{-1} \theta_h^{(N_t)}, \delta\theta_h^{(N_t)}
\big\rangle_{X_h}
=   \big\langle 
   \eta_h, \delta\theta_h^{(N_t)}
\big\rangle_{X_h}.
\end{align*}
This, along with \eqref{20251009-VaritationOfDiscreteFunctional} and \eqref{20251009-NewControl}, yields
\begin{align*}
\nabla\widetilde J_h(v_h) u_h
= \big\langle 
    \delta\theta_h^{(N_t)}, \eta_h
\big\rangle_{X_h}
+ \gamma \Delta t  \langle u_h, v_h \rangle_{ \mathbb R^{m\times N_t} }.
\end{align*}
Combined with equations \eqref{20251009-VaritalDiscreteEquation} and \eqref{20251004-DiscreteAdjointEquation} (as well as Lemma \ref{20251008-lemma-AntisymmetricDiscreteDivergence} with $\mathbf{u}$ replaced by $\mathbf{b}_i$), the last equality yields
\begin{align*}
\nabla\widetilde J_h(v_h) u_h
=& \sum_{n=0}^{N_t-1} 
\Big\langle
- \Delta t \sum_{i=1}^m u_i^{(n)}  D_{ \mathbf{b}_i } \frac{ \theta^{(n+1)}_h  +  \theta^{(n)}_h }{2},
\rho_h^{(n)}
\Big\rangle_{X_h}
+ \gamma \Delta t  \langle u_h, v_h \rangle_{ \mathbb R^{m\times N_t} }
\nonumber\\
=& \Delta t  \sum_{n=0}^{N_t-1} \sum_{i=1}^m 
u_i^{(n)}
\Big\langle
    \frac{ \theta^{(n+1)}_h  +  \theta^{(n)}_h }{2},
D_{ \mathbf{b}_i } \rho_h^{(n)}
\Big\rangle_{X_h}
+ \gamma  \Delta t \langle u_h, v_h \rangle_{ \mathbb R^{m\times N_t} }.
\end{align*}
Since $u_h = \{ (u_i^{(n)})_{i=1}^m \}_{n=0}^{N_t-1} $ was arbitrarily taken from $\mathbb R^{m\times N_t}$, the above implies
\begin{align*}
\nabla\widetilde J_h(v_h) 
= \bigg\{  \Big( \Delta t 
\Big\langle
    \frac{ \theta^{(n+1)}_h  +  \theta^{(n)}_h }{2},
D_{ \mathbf{b}_i } \rho_h^{(n)}
\Big\rangle_{X_h}
\Big)_{i=1}^m
\bigg\}_{n=0}^{N_t-1}
+ \gamma \Delta t  v_h.
\end{align*}
From this and \eqref{cost_cell}, we conclude that 
$ \nabla \widetilde J_h = (\nabla \widetilde J)_h$ over $\mathbb R^{ m\times N_t}$. 
This completes the proof. 
\end{proof}

\section{Numerical experiments}\label{sec:experiment}

We now present numerical tests to verify the structure-preserving properties of our scheme and to demonstrate its performance in enhancing mixing. The experiments are divided into two parts: (i) velocity controls using the cellular flow basis in a polygon domain, and (ii) velocity controls using the Doswell frontogenesis basis in a circular domain. In each case, we first examine the evolution of the mix-norm and scalar field under a single basis flow (without any optimization) to establish a baseline. We then present the optimal control results obtained by using combinations of the basis flows \eqref{eq:v_finite} to drive the flow. In what follows, $M_h(\theta_h(t))$ and $E_h(\theta_h(t))$ (defined in \eqref{eq:discrete}) denote the discrete mass and discrete $L^2$-energy of the scalar field $\theta_h(t)$. In all experiments, the control penalty weight in the objective is set to $\gamma = 10^{-6}$ in accordance with \cite{zheng2023numerical}.

\subsection{Cellular flow basis}
In this section, all experiments are performed on the domain $\Omega = (0,1)^2$ with a uniform Cartesian grid of spacing $h=0.002$. The time step is $\Delta t = 10^{-3}$, and the final time is $T=1$. We consider two types of zero-mean initial data. The first is a sharp jump distribution given by 
\begin{equation}
\theta^0(x_1,x_2) := \tanh\!\left(\frac{x_2-0.5}{0.01}\right),
~x_1,x_2 \in \Omega, 
\label{eq:theta_init_1}
\end{equation}
and the second initial data is a trigonometric distribution:
\begin{equation}
\theta^0(x_1,x_2):=\sin\!\left(2\pi x_2\right),
~x_1,x_2 \in \Omega. 
\label{eq:theta_init_2}
\end{equation}

Figure~\ref{fig:cellular} illustrates the two incompressible basis flows $\mathbf{b}_1$ and $\mathbf{b}_2$ (defined in \eqref{eq:cellular}) that span the control space $U$ for this case. Flow $\mathbf{b}_1$ consists of a single convection cell in the domain $\Omega$, while $\mathbf{b}_2$ consists of a four-cell pattern. We use these as representative stirring modes.

\begin{figure}
  \centering
  \begin{subfigure}[b]{0.4\textwidth}
    \includegraphics[width=\textwidth]{./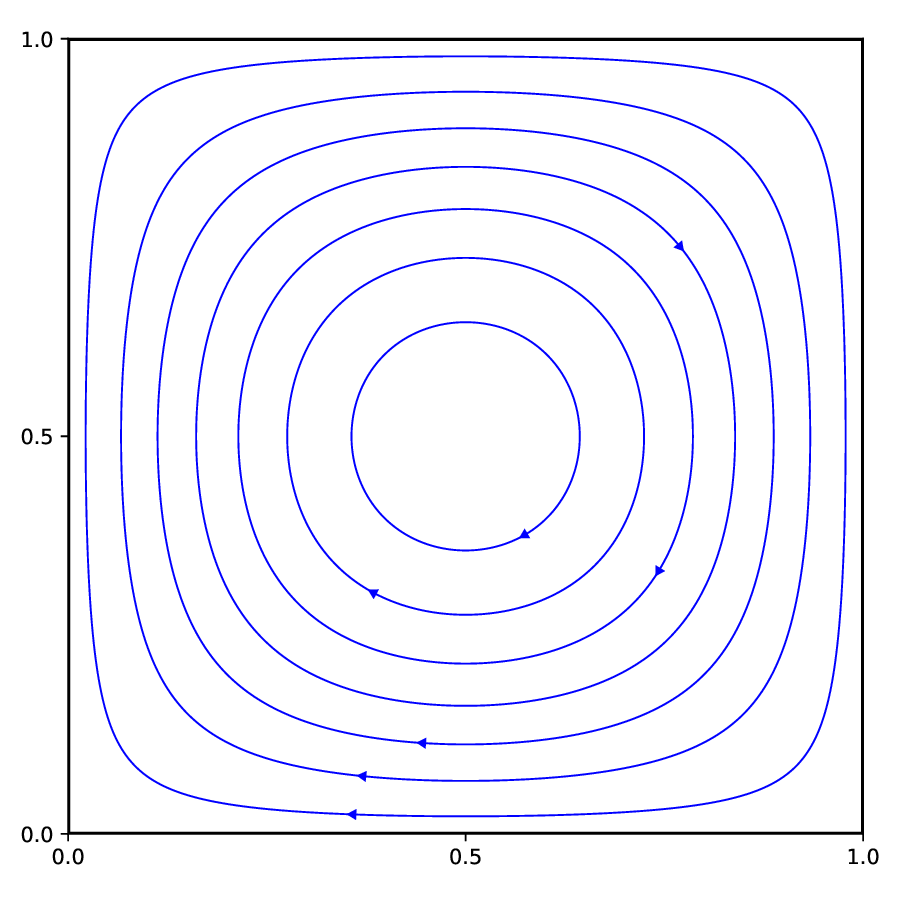}
    \caption{$\mathbf{b}_1$ cellular flow}
    \label{fig:b1}
  \end{subfigure}
  \begin{subfigure}[b]{0.4\textwidth}
    \includegraphics[width=\textwidth]{./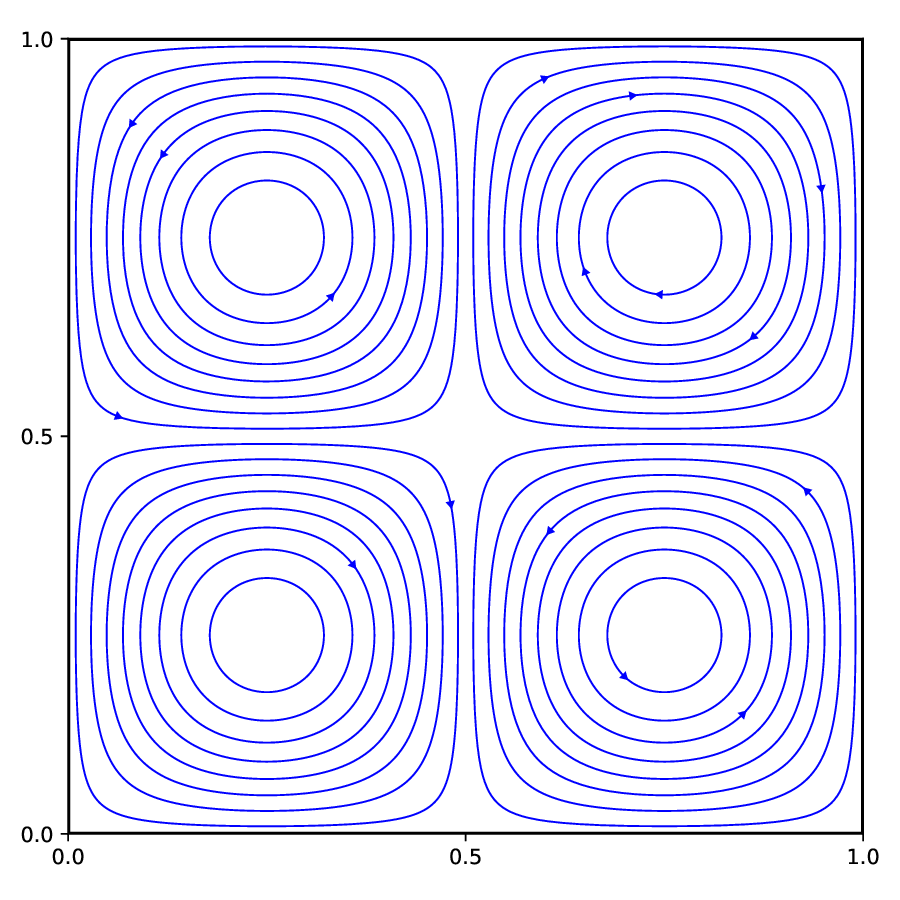}
    \caption{$\mathbf{b}_2$ cellular flow}
    \label{fig:b2}
  \end{subfigure}
  \caption{Cellular basis flows $\mathbf{b}_1$ and $\mathbf{b}_2$ defined in \eqref{eq:cellular}.}
  \label{fig:cellular}
\end{figure}

\subsubsection{Single cellular flow baseline mixing}
We first investigate the mixing achieved by a single cellular flow in the absence of any control optimization. For the basis flow $\mathbf{b}_1$, we simulate the advection of the scalar field under the velocity $\mathbf{v}(t,x) = \mathbf{b}_1(x)$, i.e., using one basis flow alone. The simulations are performed for the two initial conditions \eqref{eq:theta_init_1} and \eqref{eq:theta_init_2}. We do not consider $\mathbf{b}_2$, since its streamline structure does not promote mixing throughout the entire domain and therefore leads to comparatively ineffective stirring. As shown in Figure~\ref{fig:mixnorm_cellular}, the mix-norm $\|\theta_h(t)\|_{\dot H^{-1}(\Omega)}$ exhibits a polynomial decay for both initial data. Figures \ref{fig:theta_cellular_1} and \ref{fig:theta_cellular_2} show snapshots of the scalar field $\theta_h(t,x)$ under both initial data, respectively, at times $t=0,0.2,0.4,0.6,0.8,1.0$. We observe that each steady cellular flow produces some filamentation of the scalar field, but the overall mixing is limited. This reflects the intuitive fact that a single time-independent stirring motion cannot efficiently mix the entire domain on its own. These baseline results will serve as a point of comparison for the optimally controlled scenarios below.

\begin{figure}
  \centering
  \begin{subfigure}[b]{0.45\textwidth}
    \includegraphics[width=\textwidth]{./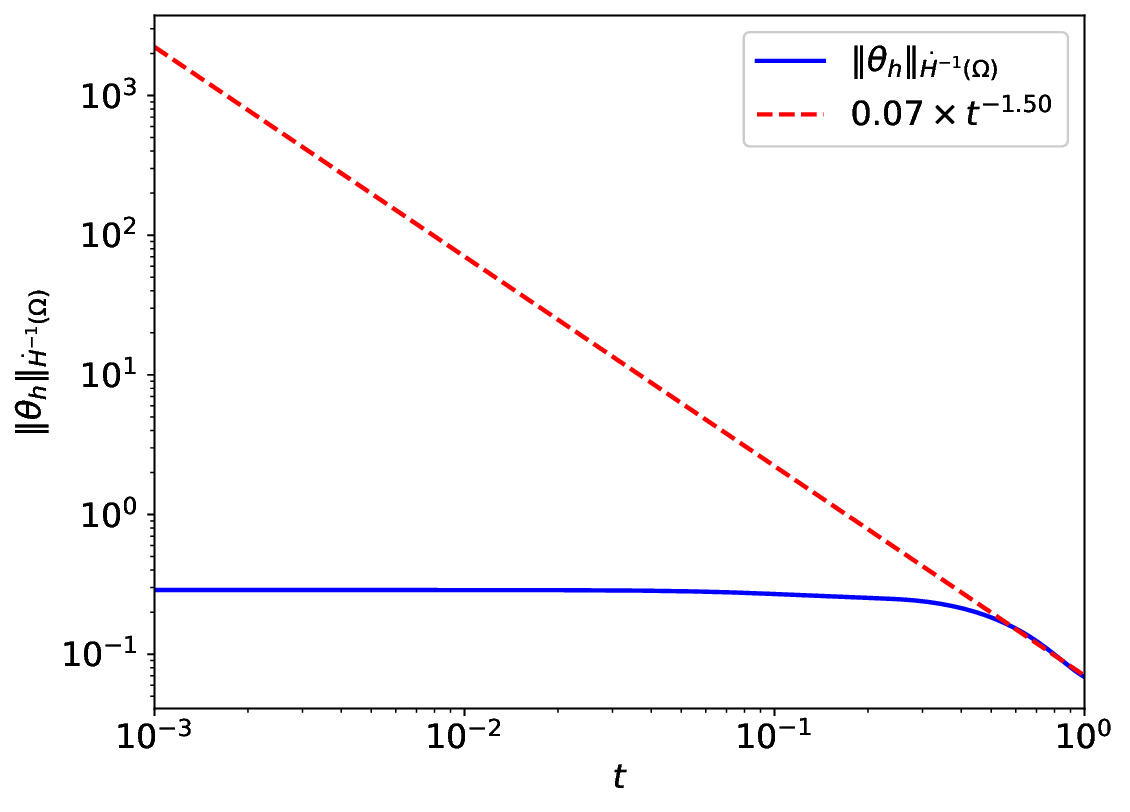}
    \caption{Evolution of $\Vert \theta_h \Vert_{\dot{H}^{-1}(\Omega)}$ under initial data \eqref{eq:theta_init_1}}
    \label{fig:mixnorm_b1}
  \end{subfigure}
  \begin{subfigure}[b]{0.45\textwidth}
    \includegraphics[width=\textwidth]{./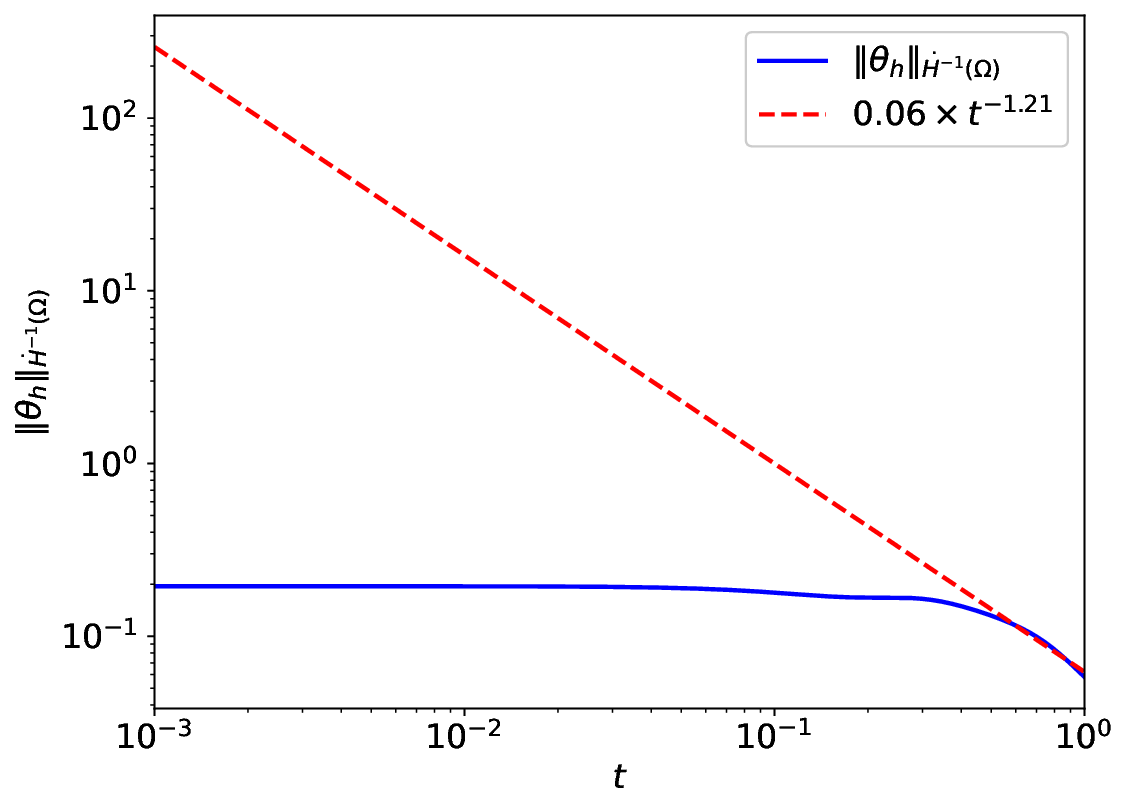}
    \caption{Evolution of $\Vert \theta_h \Vert_{\dot{H}^{-1}(\Omega)}$ under initial data \eqref{eq:theta_init_2}}
    \label{fig:mixnorm_b2}
  \end{subfigure}
  \caption{Evolution of $\Vert \theta_h \Vert_{\dot{H}^{-1}(\Omega)}$ with cellular flow $\mathbf{b}_1$ under initial data \eqref{eq:theta_init_1} and \eqref{eq:theta_init_2}.}
  \label{fig:mixnorm_cellular}
\end{figure}

\begin{figure}
  \centering
  \begin{subfigure}[b]{0.16\textwidth}
    \includegraphics[width=\textwidth]{./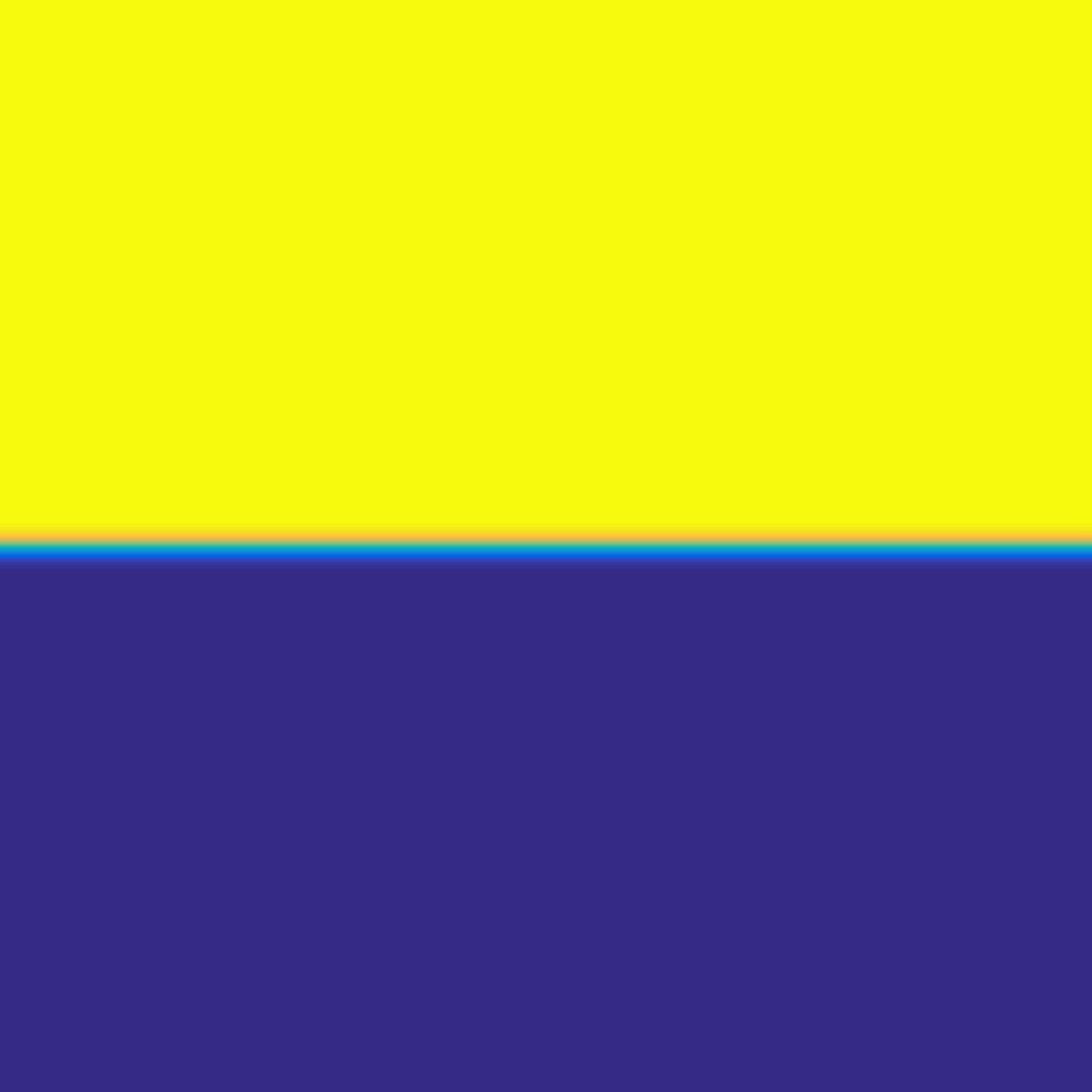}
    \caption{$t=0$}
  \end{subfigure}
  \begin{subfigure}[b]{0.16\textwidth}
    \includegraphics[width=\textwidth]{./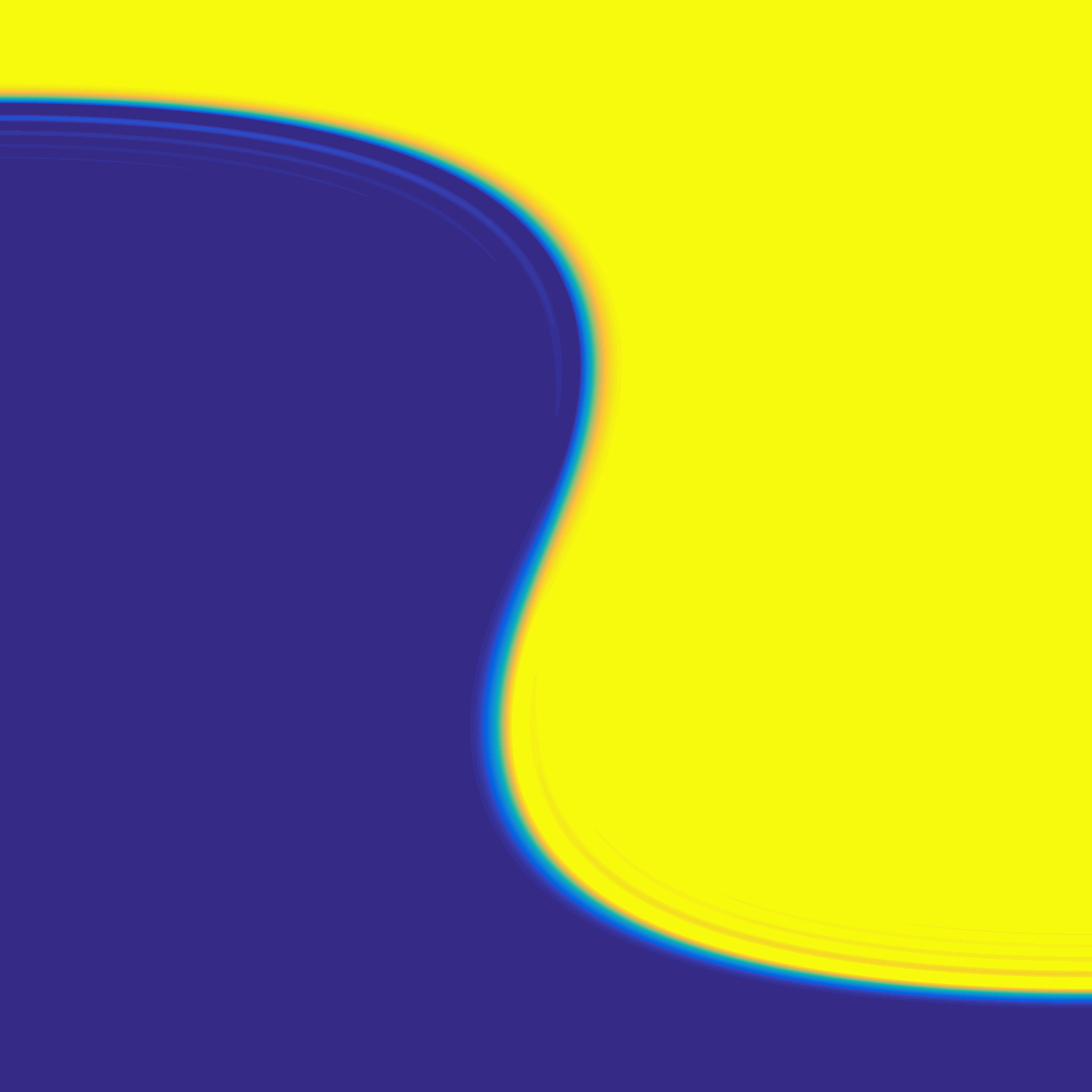}
    \caption{$t=0.2$}
  \end{subfigure}
  \begin{subfigure}[b]{0.16\textwidth}
    \includegraphics[width=\textwidth]{./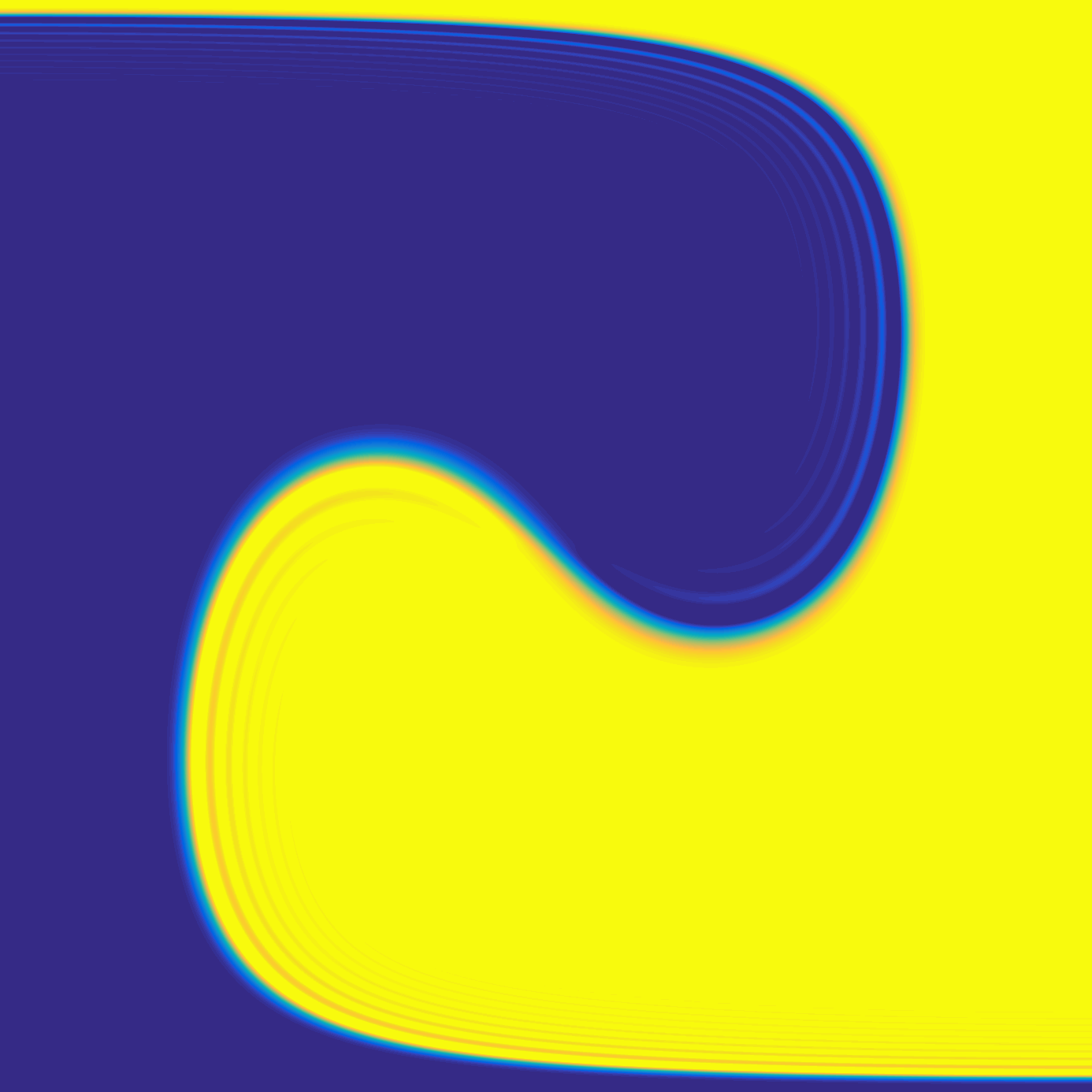}
    \caption{$t=0.4$}
  \end{subfigure}
  \begin{subfigure}[b]{0.16\textwidth}
    \includegraphics[width=\textwidth]{./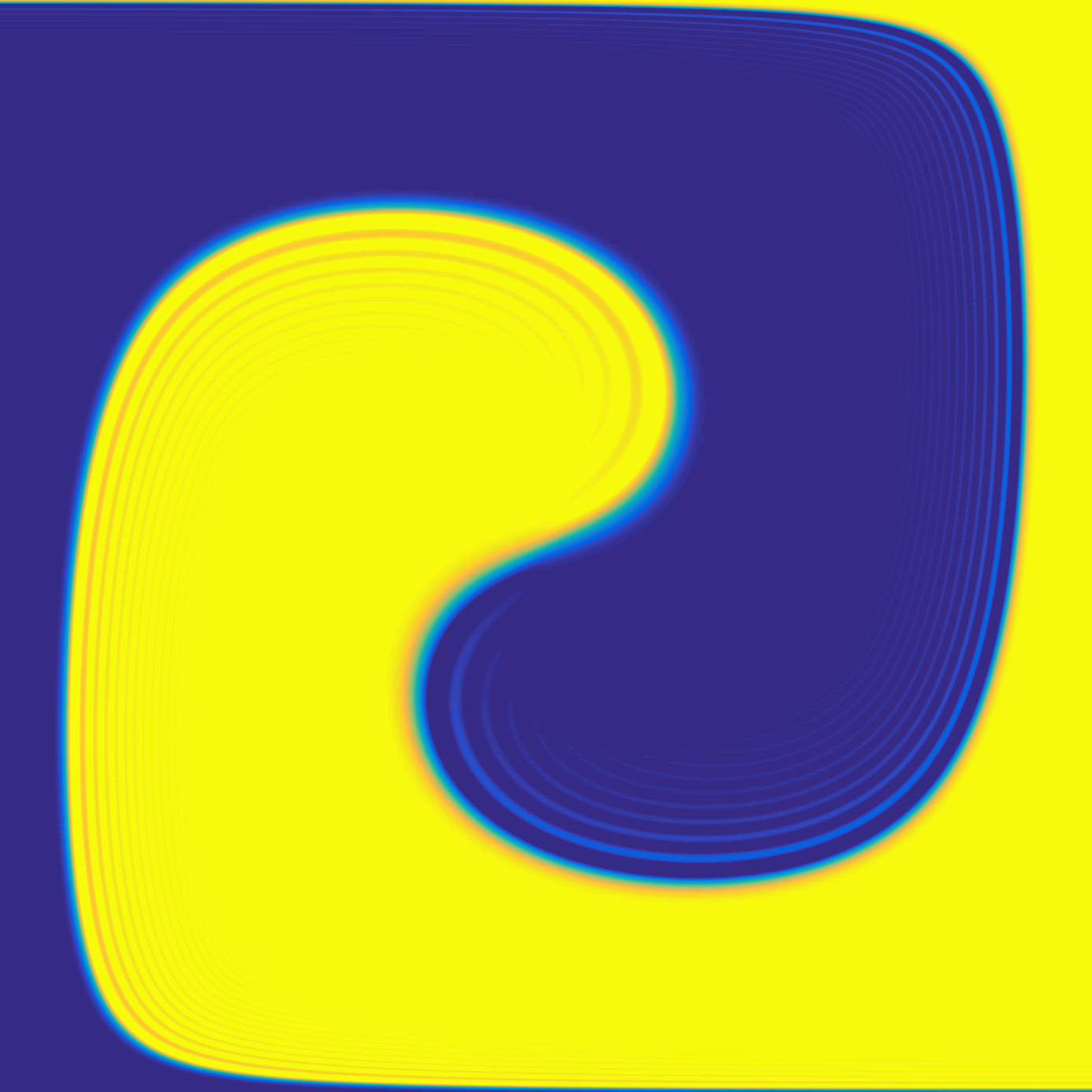}
    \caption{$t=0.6$}
  \end{subfigure}
  \begin{subfigure}[b]{0.16\textwidth}
    \includegraphics[width=\textwidth]{./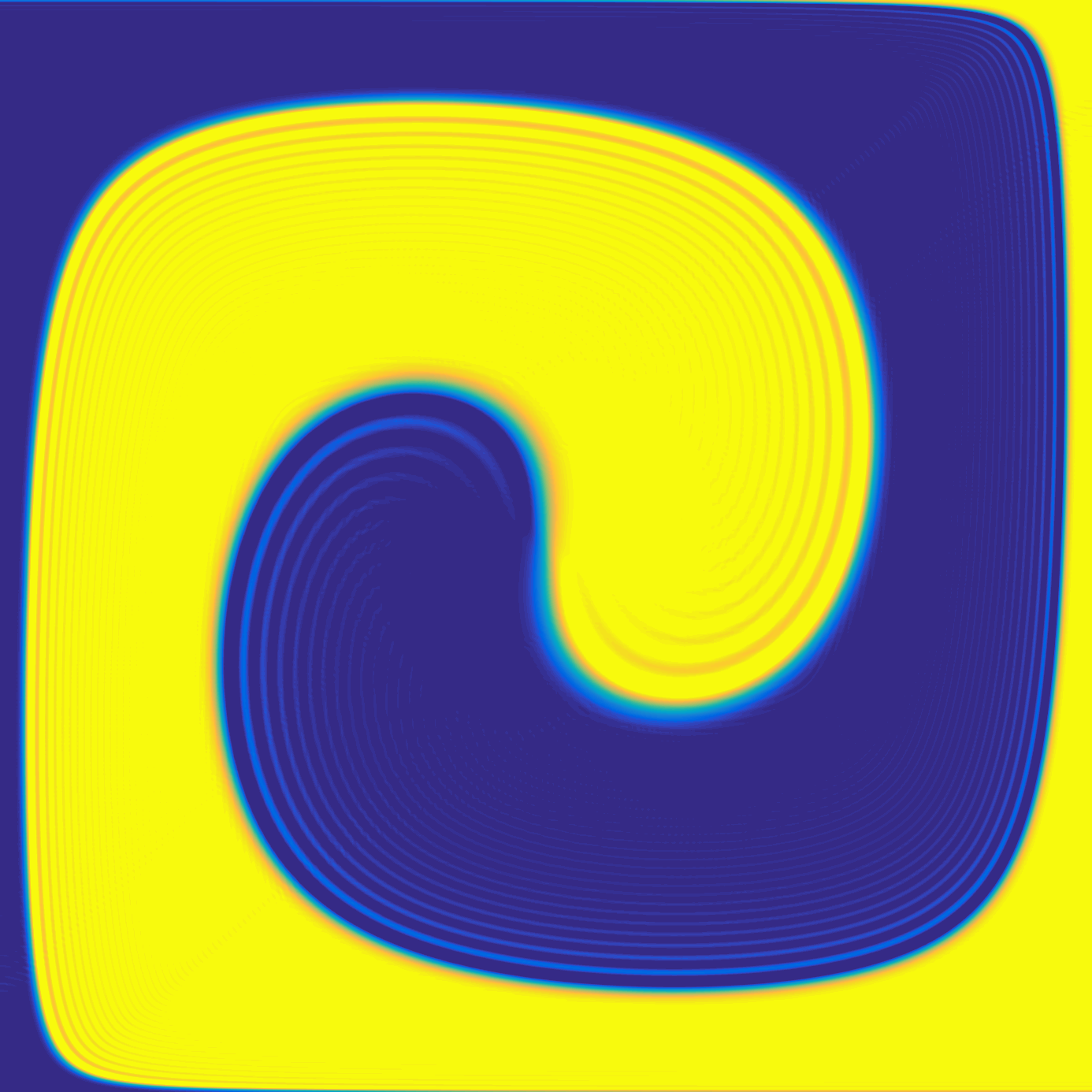}
    \caption{$t=0.8$}
  \end{subfigure}
  \begin{subfigure}[b]{0.16\textwidth}
    \includegraphics[width=\textwidth]{./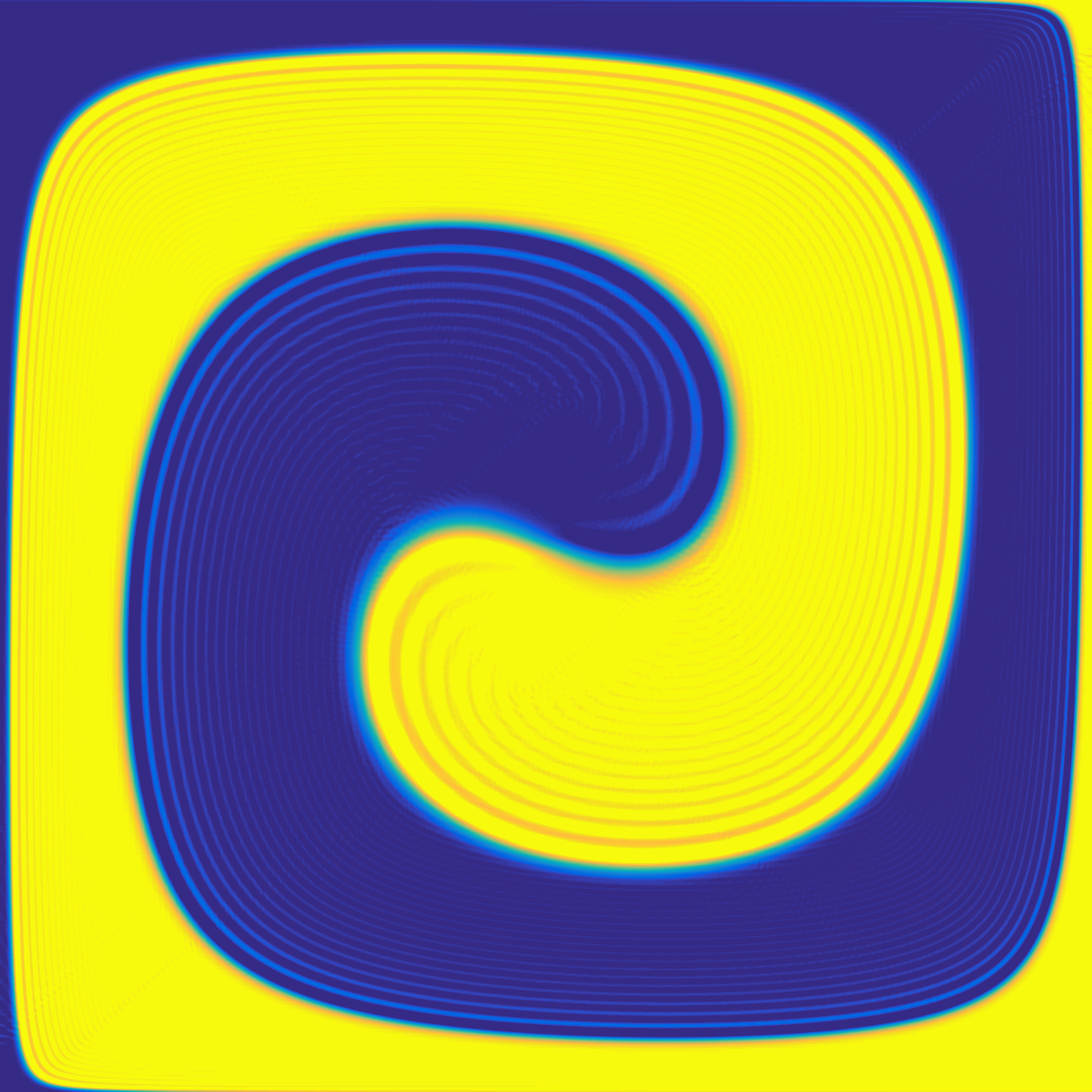}
    \caption{$t=1.0$}
  \end{subfigure}
  \caption{Evolution of $\theta_h$ with cellular flow $\mathbf{b}_1$ and initial data \eqref{eq:theta_init_1}.}
  \label{fig:theta_cellular_1}
\end{figure}

\begin{figure}
  \centering
  \begin{subfigure}[b]{0.16\textwidth}
    \includegraphics[width=\textwidth]{./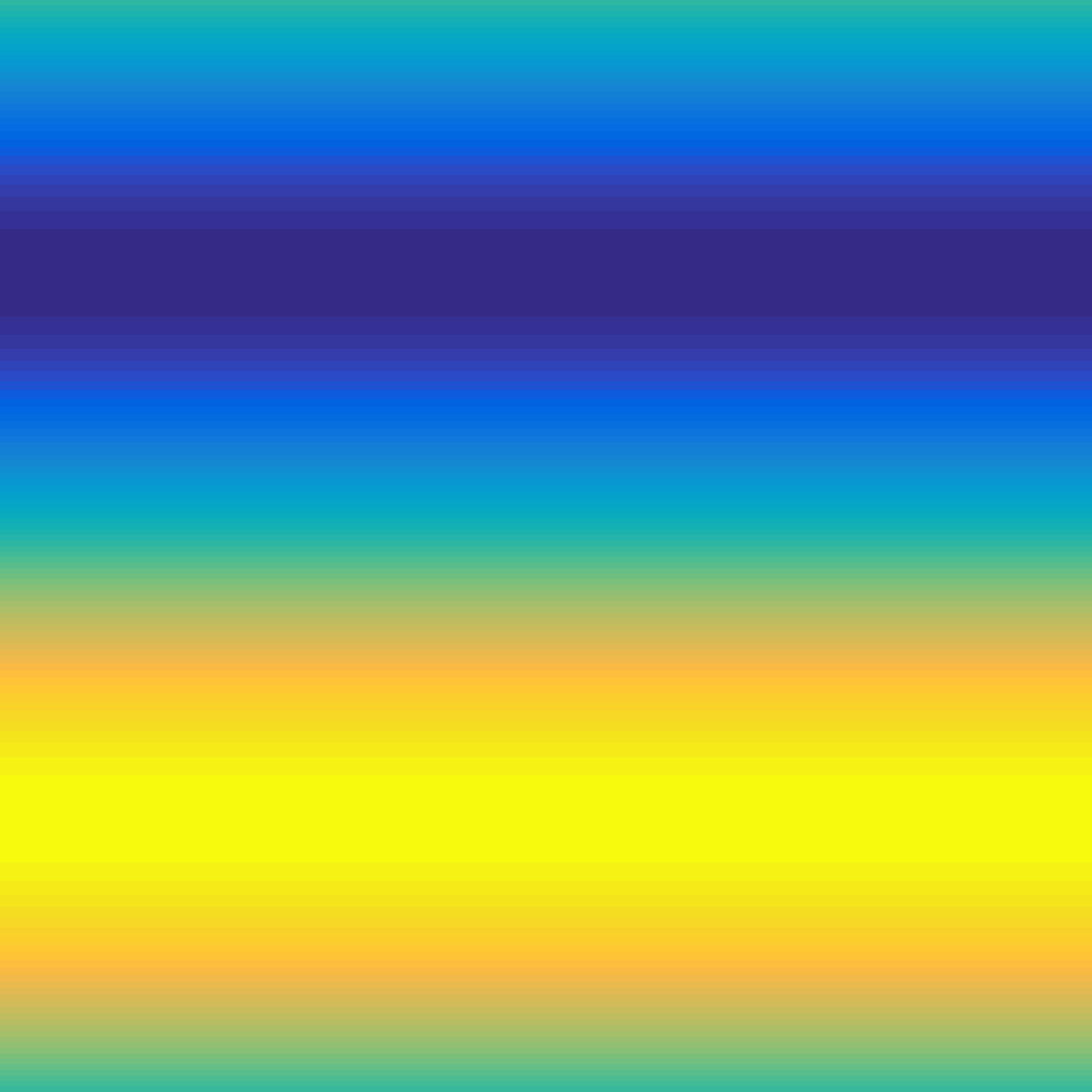}
    \caption{$t=0$}
  \end{subfigure}
  \begin{subfigure}[b]{0.16\textwidth}
    \includegraphics[width=\textwidth]{./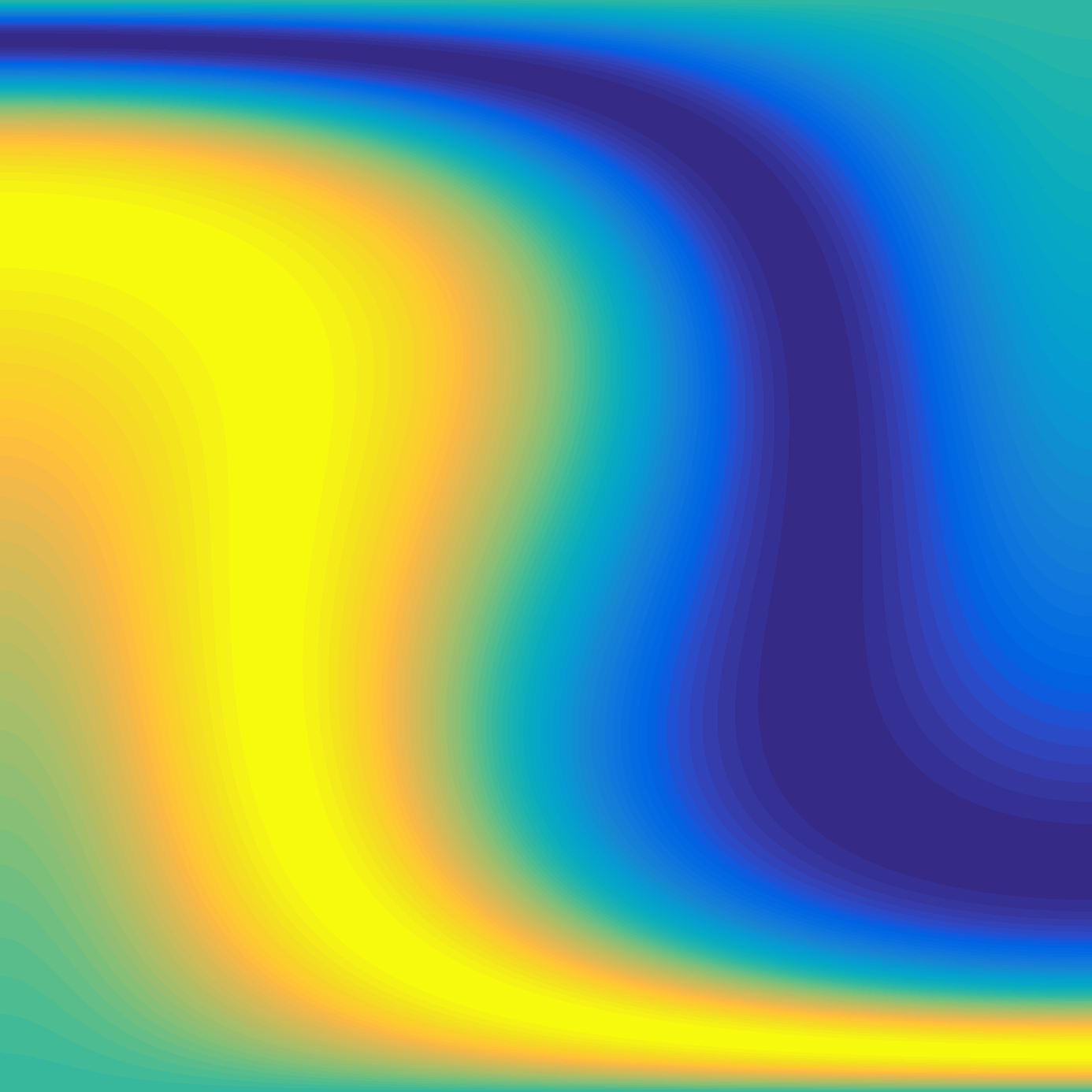}
    \caption{$t=0.2$}
  \end{subfigure}
  \begin{subfigure}[b]{0.16\textwidth}
    \includegraphics[width=\textwidth]{./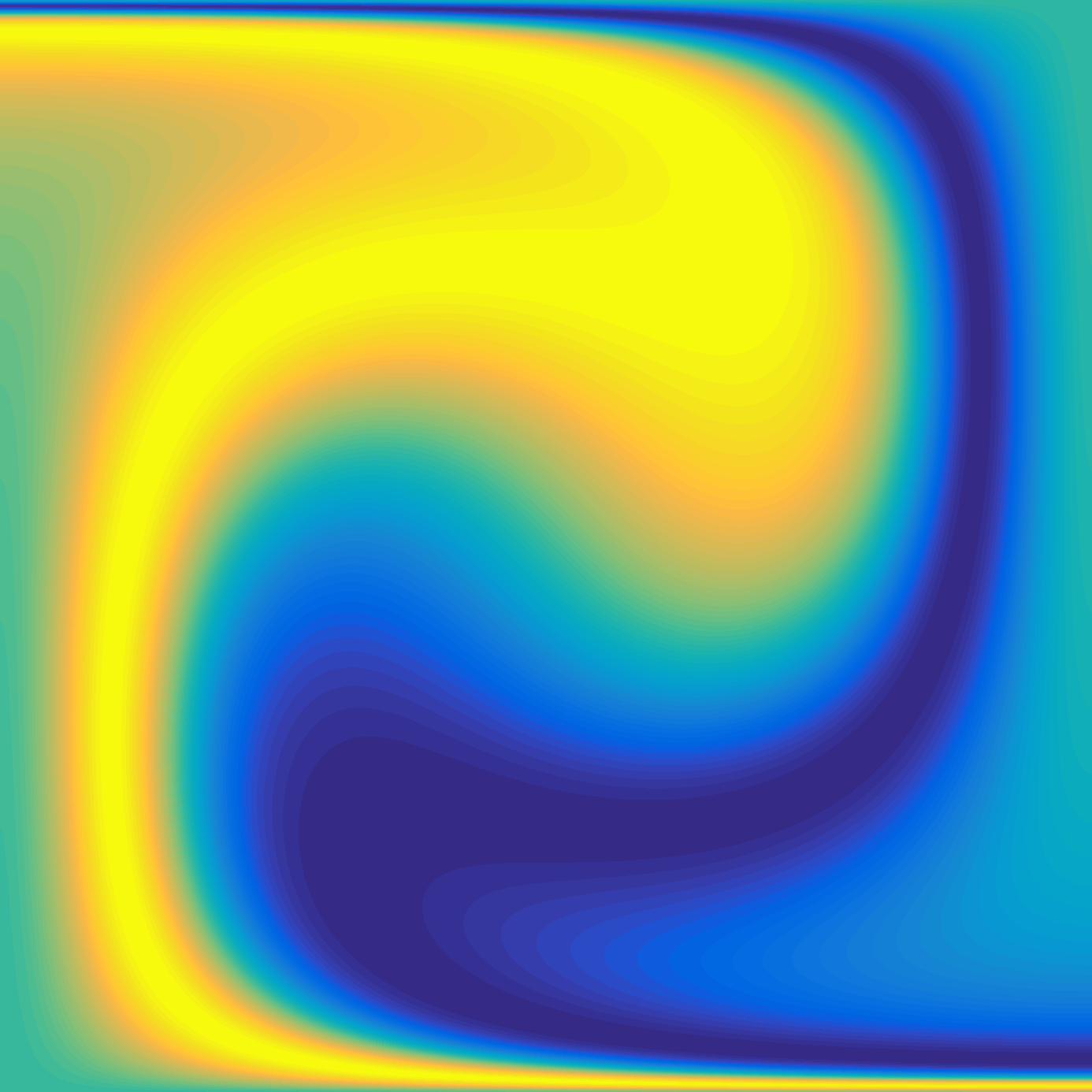}
    \caption{$t=0.4$}
  \end{subfigure}
  \begin{subfigure}[b]{0.16\textwidth}
    \includegraphics[width=\textwidth]{./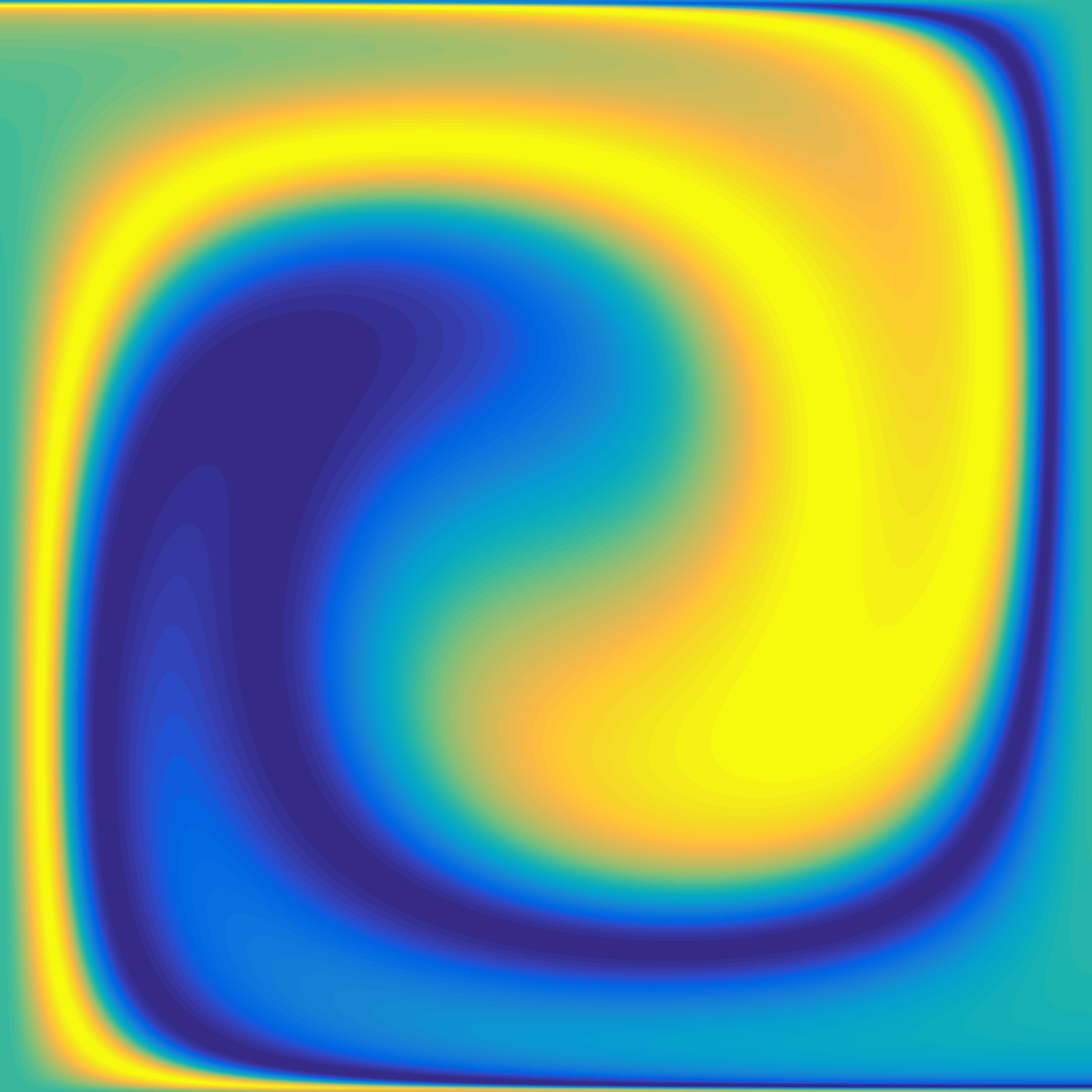}
    \caption{$t=0.6$}
  \end{subfigure}
  \begin{subfigure}[b]{0.16\textwidth}
    \includegraphics[width=\textwidth]{./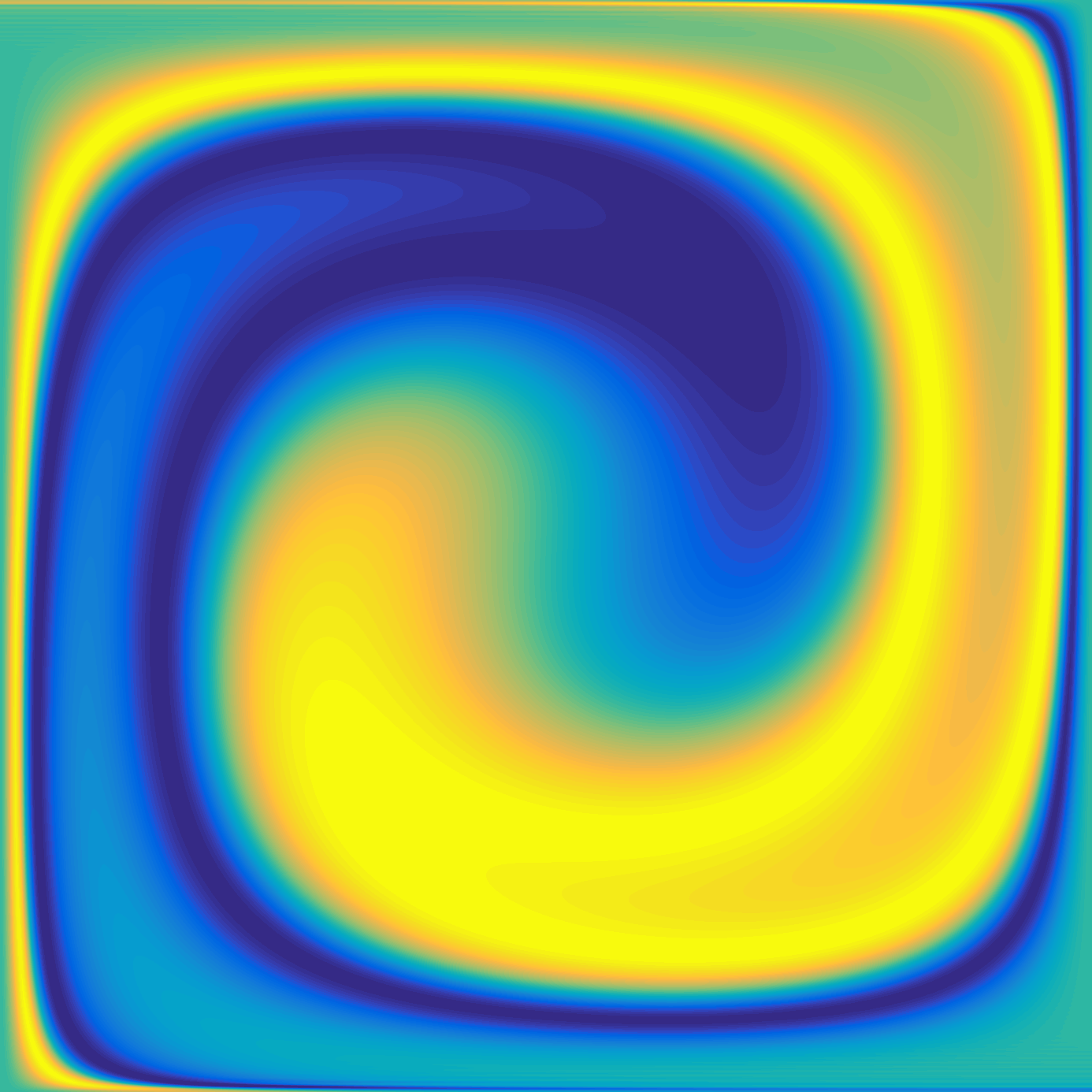}
    \caption{$t=0.8$}
  \end{subfigure}
  \begin{subfigure}[b]{0.16\textwidth}
    \includegraphics[width=\textwidth]{./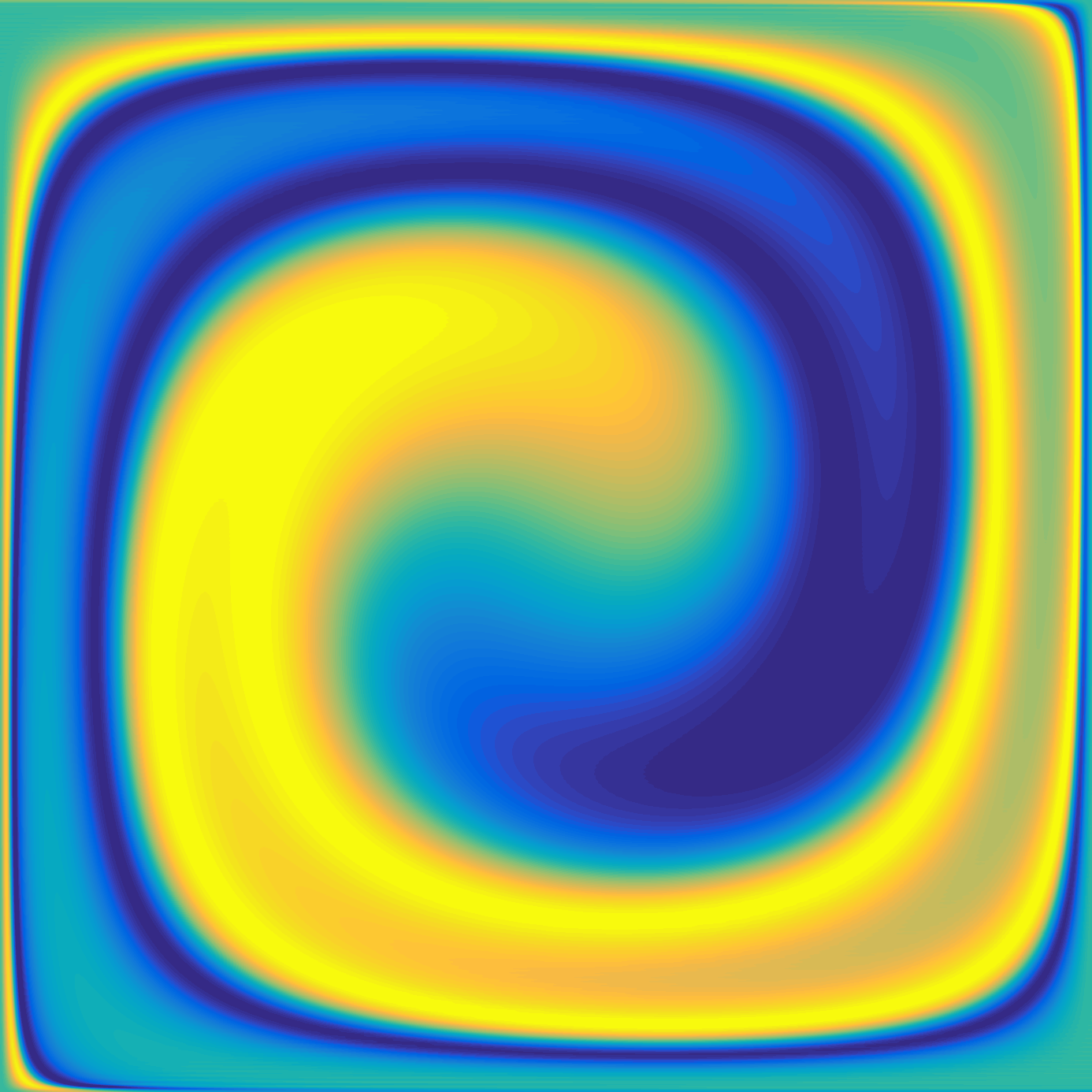}
    \caption{$t=1.0$}
  \end{subfigure}
  \caption{Evolution of $\theta_h$ with cellular flow $\mathbf{b}_1$ and initial data \eqref{eq:theta_init_2}.}
  \label{fig:theta_cellular_2}
\end{figure}

\subsubsection{Optimal control for initial data \eqref{eq:theta_init_1}}
We now apply our optimal control algorithm to the sharp jump initial data  \eqref{eq:theta_init_1}. The control is allowed to use both basis flows $\mathbf{b}_1$ and $\mathbf{b}_2$ in combination with time-dependent coefficients. We perform the optimization in two scenarios, starting from two different initial guesses for the control time-series. The first initial guess is denoted by
\begin{align}\label{eq:u1u2_1}
v_1(t)=1,~v_2(t)=1,~ t\in(0,1),
\end{align}
and the second initial guess is 
\begin{align}\label{eq:u1u2_2}
v_1(t)=\cos(\pi t / 2),
~v_2(t)=\sin(\pi t / 2),~ t\in(0,1).
\end{align}

Figure~\ref{fig:u1u2_mixnorm_init1_1} summarizes the results starting from the zero initial guess \eqref{eq:u1u2_1}. Figure~\ref{fig:u1u2_init1_1} shows the evolution of the control coefficients $v_1(t)$ and $v_2(t)$ for the initial guess \eqref{eq:u1u2_1}. Both controls vary sharply over time, and, over the simulated time interval, the optimized mix-norm $\Vert \theta_h(t) \Vert_{\dot{H}^{-1}(\Omega)}$ exhibits a decay that is well approximated by an exponential with an effective rate of about 2.52 (Figure~\ref{fig:mixnorm_init1_1}). Figure~\ref{fig:property_init1_1} displays the temporal evolution of the following discrete invariants: mass error $|M_h(\theta_h(t))|$, energy error $|E_h(\theta_h(t))-E_h(\theta_h(0))|$, and state-adjoint pairing error 
$\bigl|
    \langle \theta_h(t), \rho_h(t)\rangle_{X_h}   -  
    \langle \theta_h(T), \rho_h(T)\rangle_{X_h}
\bigr|$. Over the entire time interval $[0,1]$, these errors remain around machine precision (mass error $\sim10^{-17}$, energy error $\sim10^{-10}$, state-adjoint pairing error $\sim10^{-14}$), confirming exact conservation of mass, energy, and state--adjoint consistency in practice. The snapshots of $\theta_h$ in Figure~\ref{fig:theta_init1_1} illustrate the scalar field evolution under control. Compared to the baseline in Figure~\ref{fig:mixnorm_cellular}, we see a much more thorough mixing by $t=1$.

\begin{figure}
  \centering
  \begin{subfigure}[b]{0.32\textwidth}
    \includegraphics[width=\textwidth]{./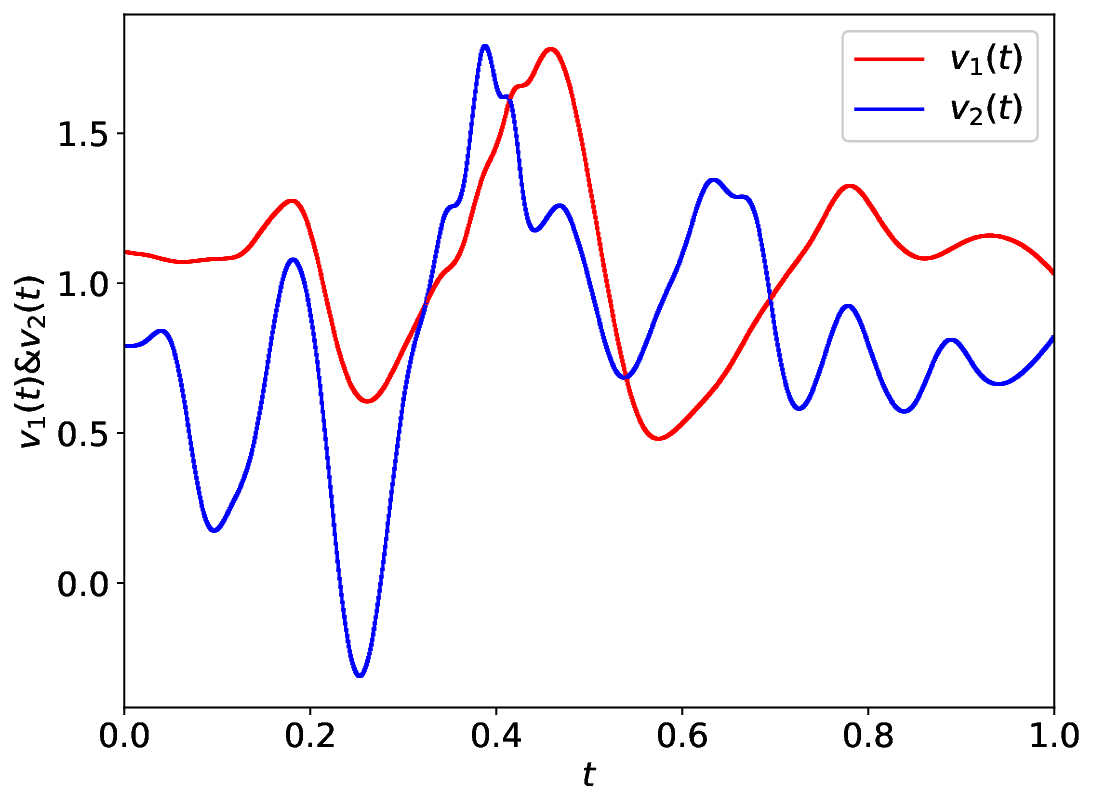}
    \caption{Evolutions of $v_1(t)$, $v_2(t)$.}
    \label{fig:u1u2_init1_1}
  \end{subfigure}
  \begin{subfigure}[b]{0.33\textwidth}
    \includegraphics[width=\textwidth]{./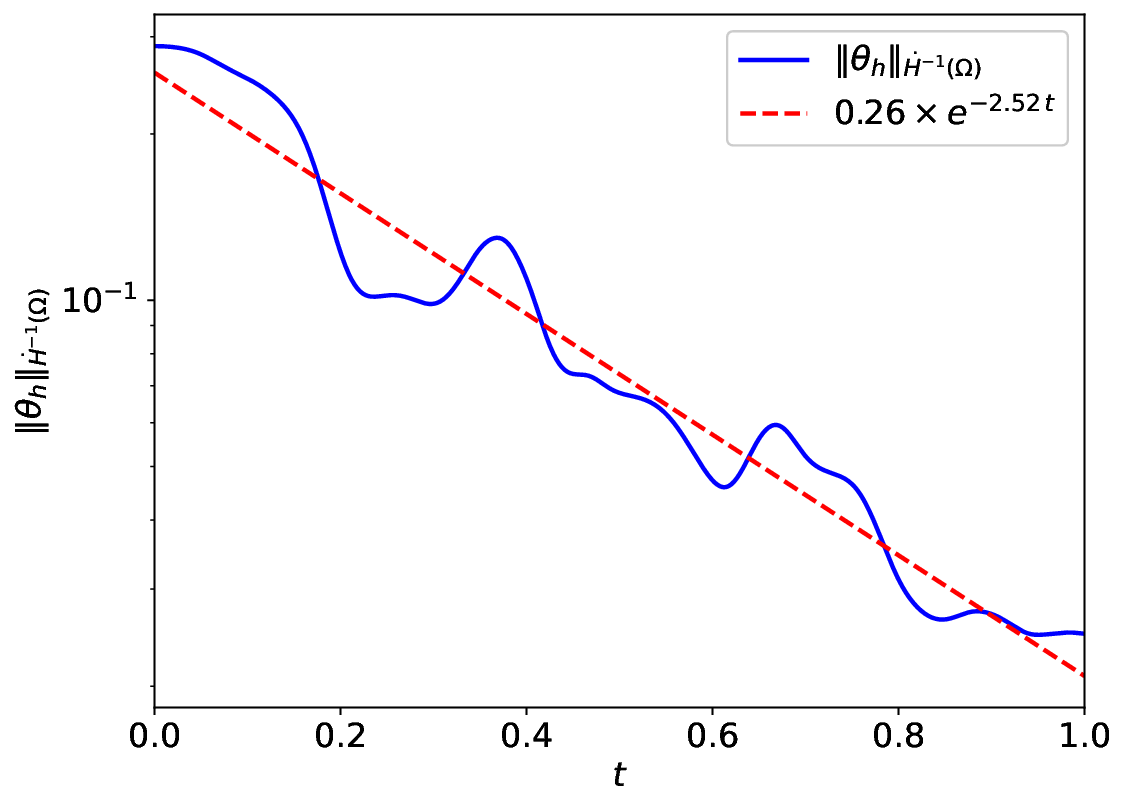}
    \caption{Evolution of $\Vert \theta_h \Vert_{\dot{H}^{-1}(\Omega)}$.}
    \label{fig:mixnorm_init1_1}
  \end{subfigure}
  \begin{subfigure}[b]{0.33\textwidth}
    \includegraphics[width=\textwidth]{./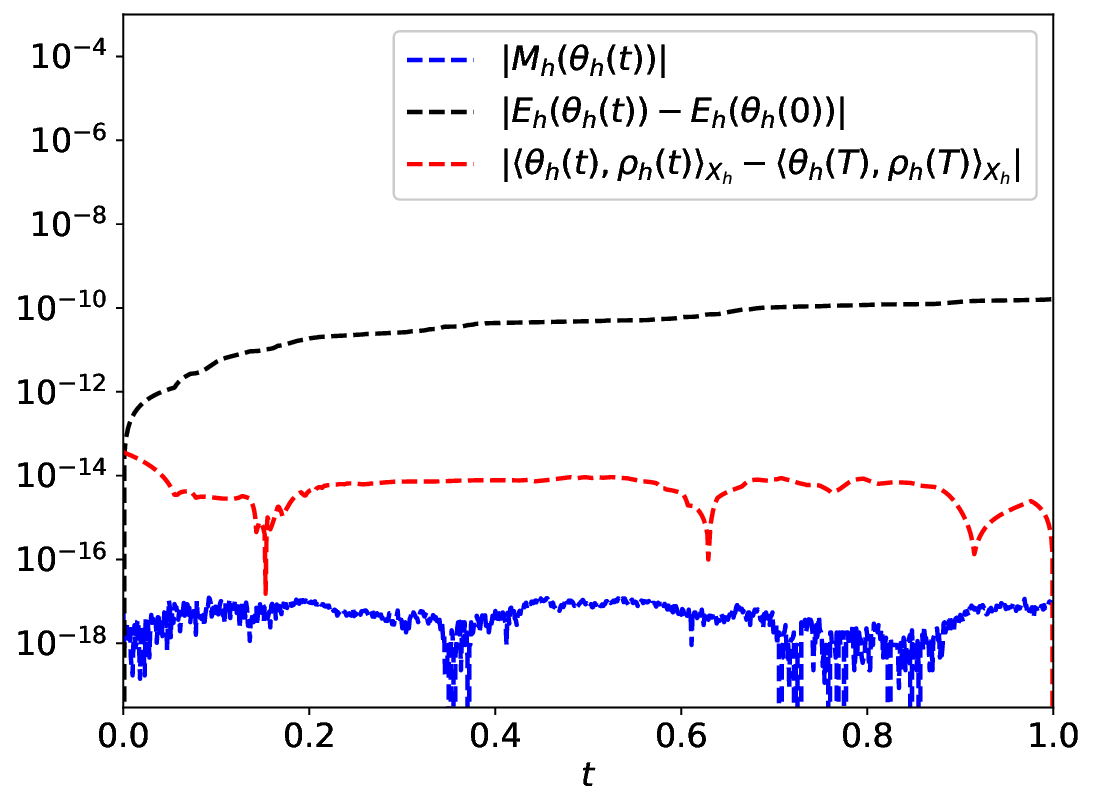}
    \caption{$M_h(\theta_h)$, $E_h(\theta_h)$, $\langle \theta_h, \rho_h \rangle_{X_h}$.}
    \label{fig:property_init1_1}
  \end{subfigure}
    \caption{Evolutions of $v_1(t)$, $v_2(t)$, mix-norm $\Vert \theta_h \Vert_{\dot{H}^{-1}(\Omega)}$, mass $M_h(\theta_h)$, energy $E_h(\theta_h)$ and state-adjoint pairing $\langle \theta_h, \rho_h \rangle_{X_h}$ for $t\in[0, 1]$ with initial control \eqref{eq:u1u2_1} and initial data \eqref{eq:theta_init_1}.}
  \label{fig:u1u2_mixnorm_init1_1}
\end{figure}

\begin{figure}
  \centering
  \begin{subfigure}[b]{0.16\textwidth}
    \includegraphics[width=\textwidth]{./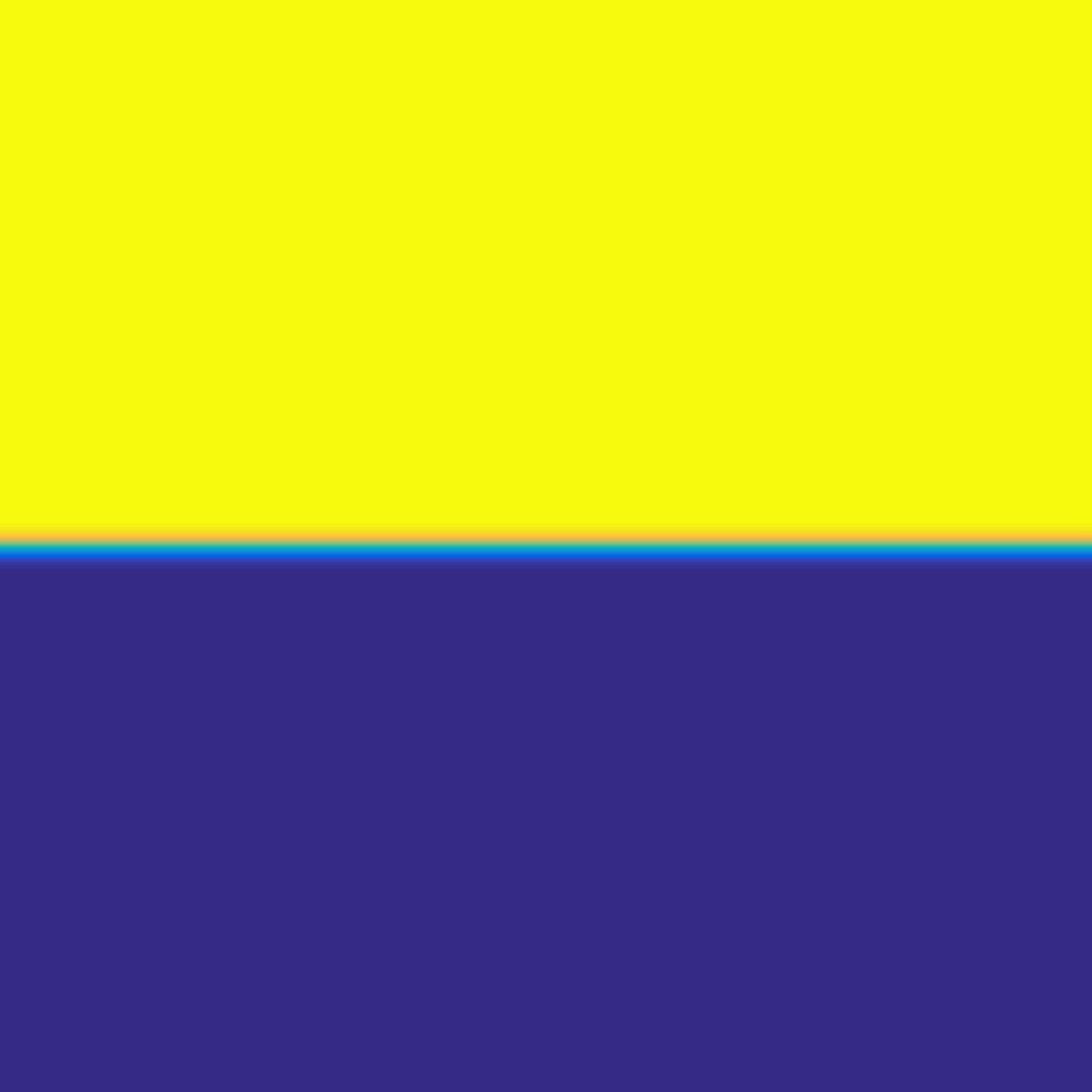}
    \caption{$t=0$}
  \end{subfigure}
  \begin{subfigure}[b]{0.16\textwidth}
    \includegraphics[width=\textwidth]{./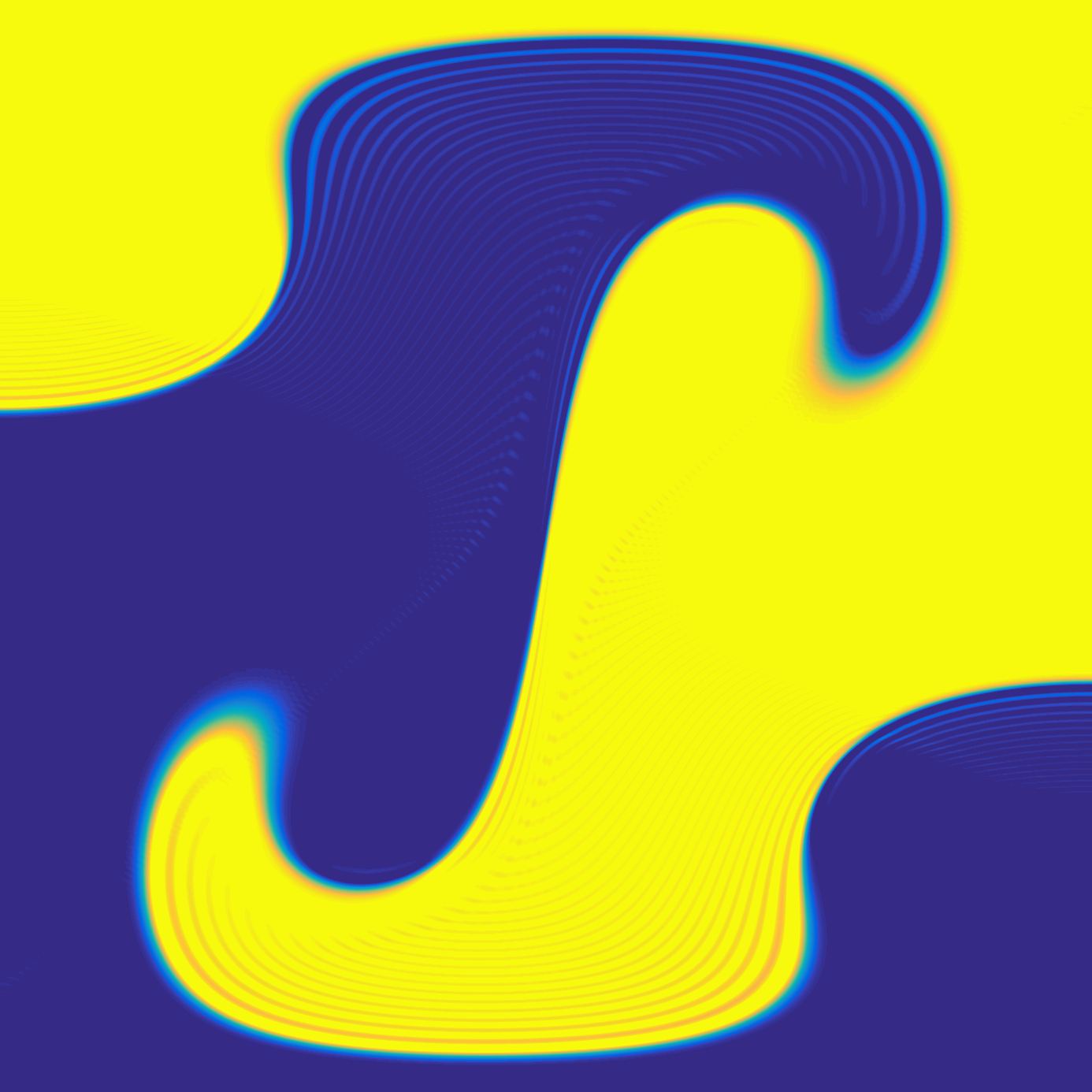}
    \caption{$t=0.2$}
  \end{subfigure}
  \begin{subfigure}[b]{0.16\textwidth}
    \includegraphics[width=\textwidth]{./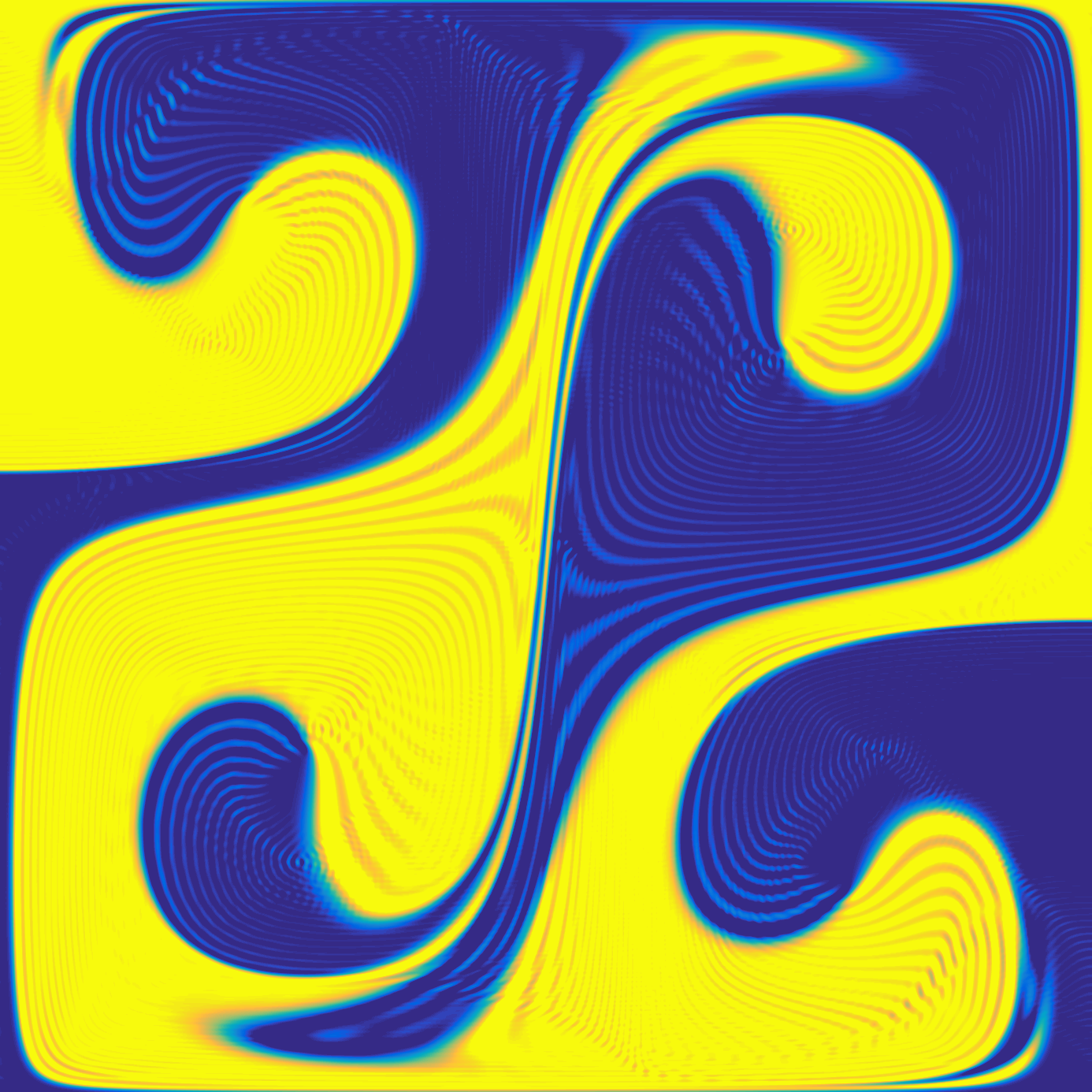}
    \caption{$t=0.4$}
  \end{subfigure}
  \begin{subfigure}[b]{0.16\textwidth}
    \includegraphics[width=\textwidth]{./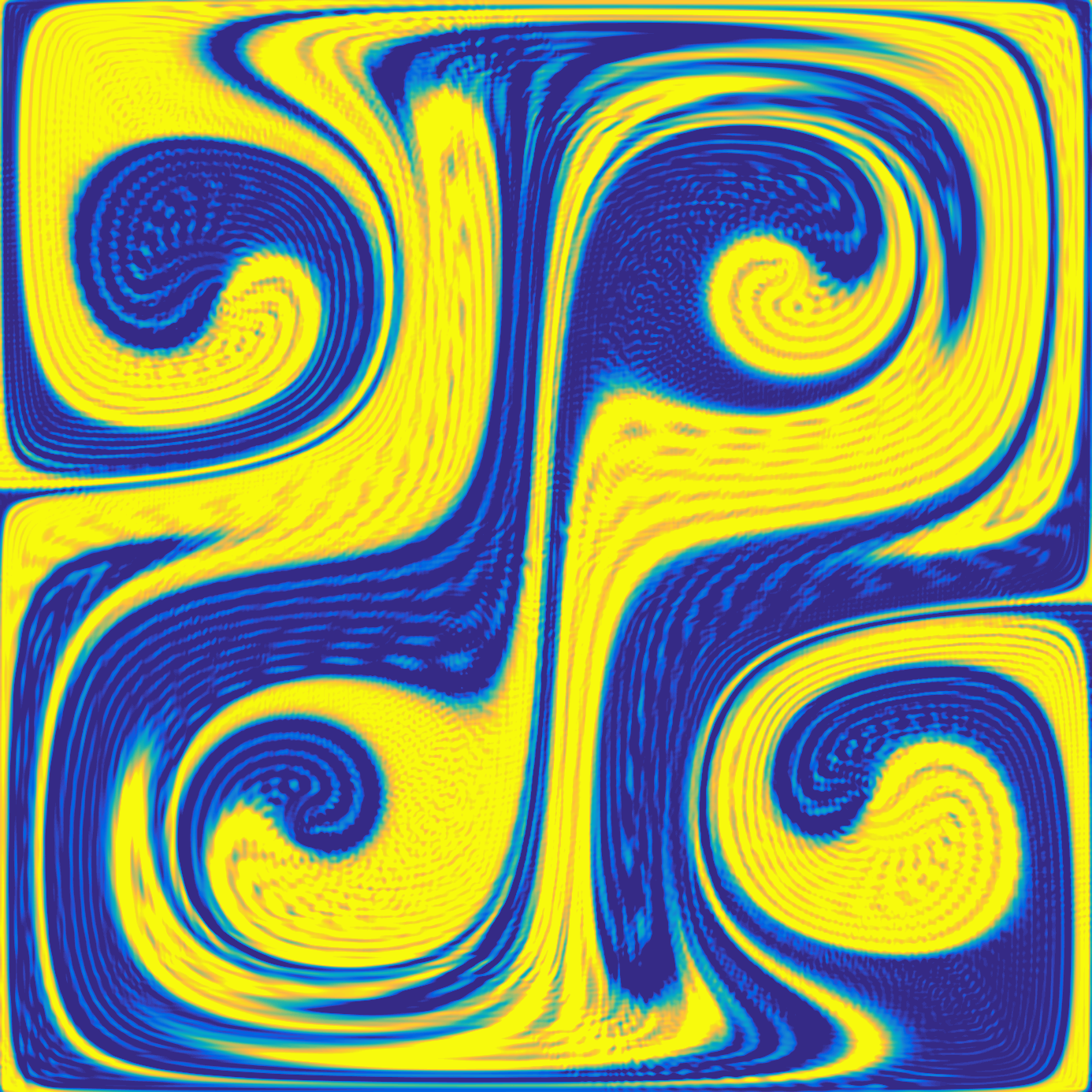}
    \caption{$t=0.6$}
  \end{subfigure}
  \begin{subfigure}[b]{0.16\textwidth}
    \includegraphics[width=\textwidth]{./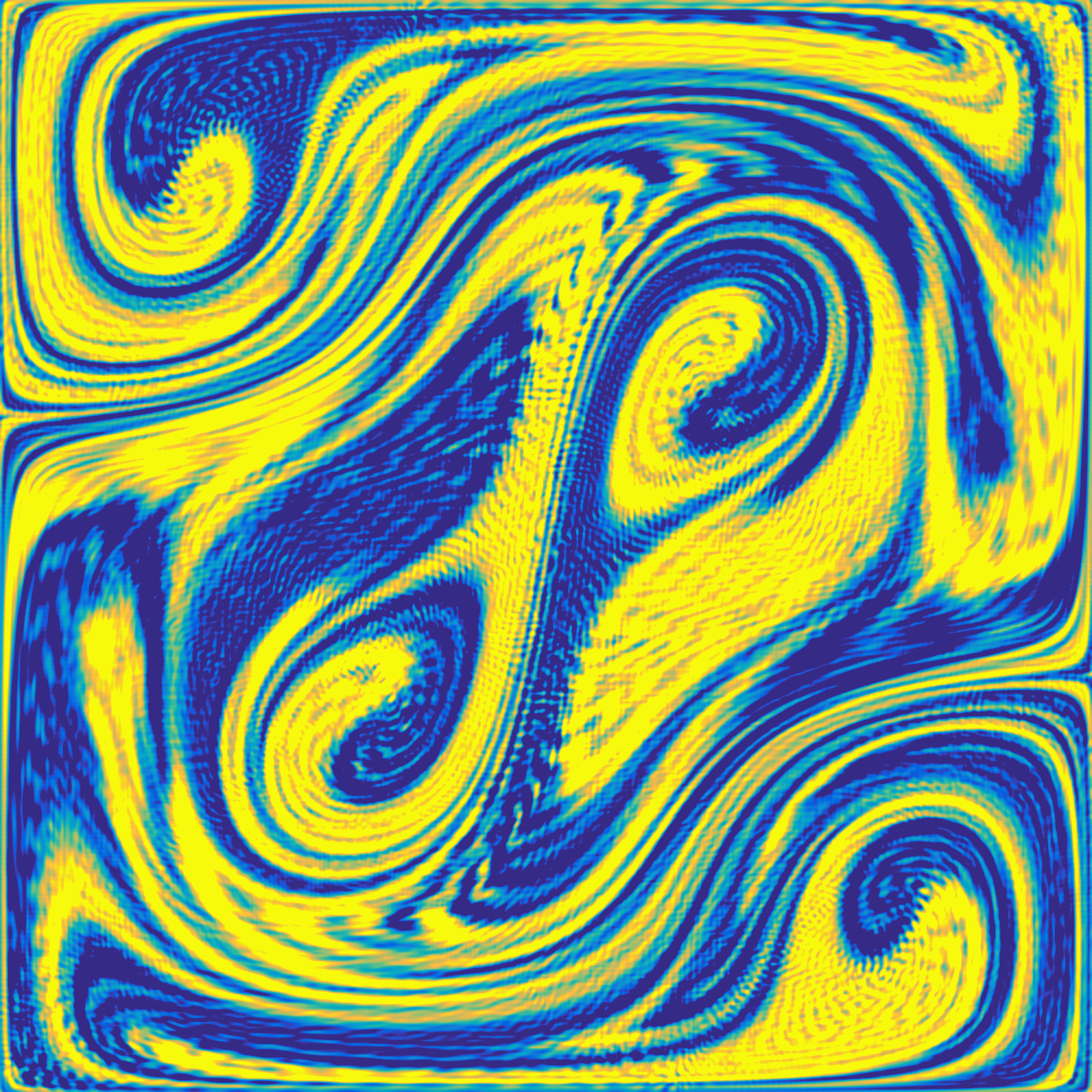}
    \caption{$t=0.8$}
  \end{subfigure}
  \begin{subfigure}[b]{0.16\textwidth}
    \includegraphics[width=\textwidth]{./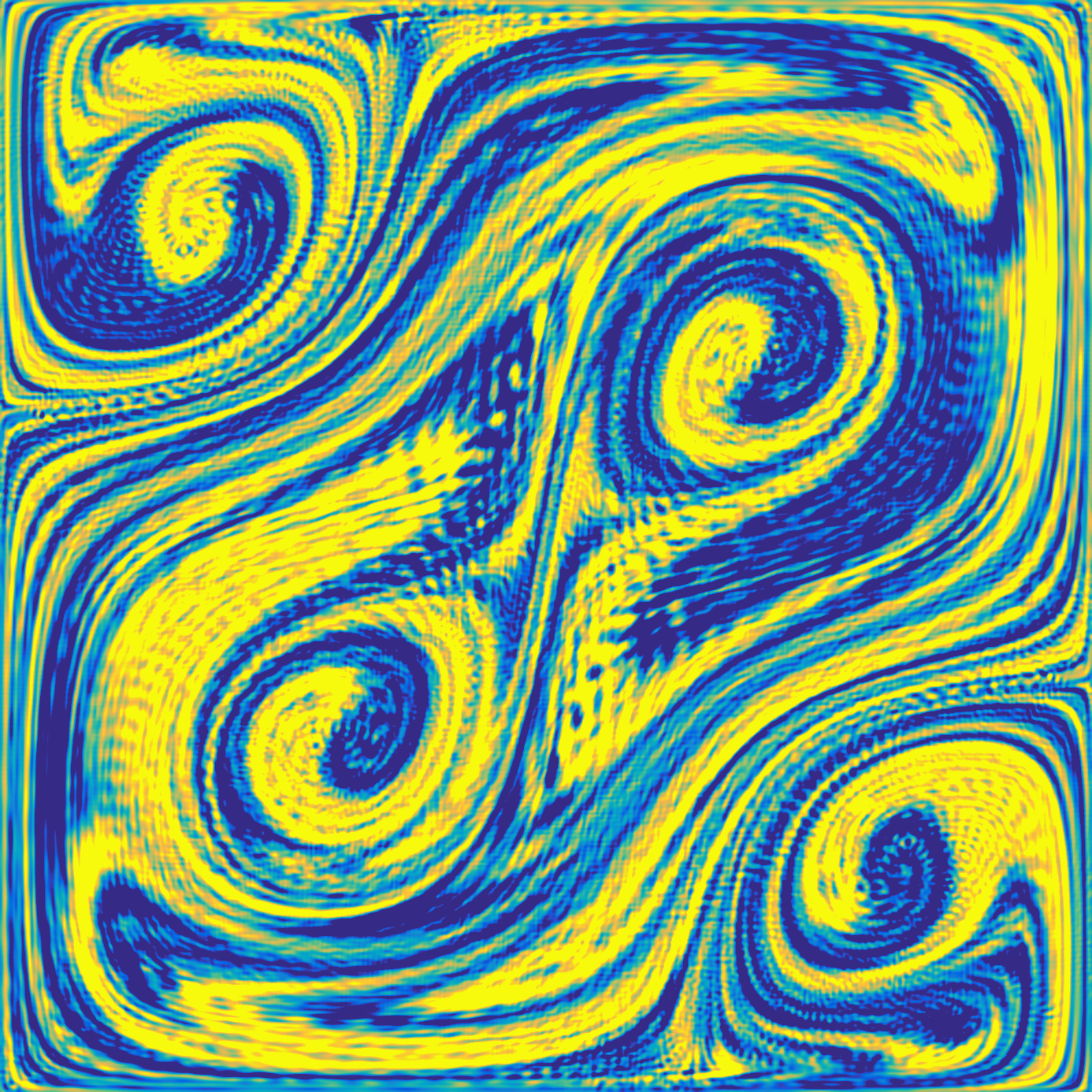}
    \caption{$t=1.0$}
  \end{subfigure}
  \caption{Evolution of $\theta_h$ with initial control \eqref{eq:u1u2_1} and initial data \eqref{eq:theta_init_1}.}
  \label{fig:theta_init1_1}
\end{figure}

Next, we consider the optimization starting from the initial guess  \eqref{eq:u1u2_2}. The optimized control coefficients $v_1(t), v_2(t)$ for this case are shown in Figure~~\ref{fig:u1u2_init1_2}, and the mix-norm decay is shown in Figure~\ref{fig:mixnorm_init1_2}. We again observe a nearly exponential decay of the mix-norm with a rate $1.81$. The invariants (mass, energy, and adjoint consistency) are once more preserved to machine precision, as illustrated in Figure~\ref{fig:property_init1_2}. Snapshots of the scalar field (Figure~\ref{fig:theta_init1_2}) demonstrate that the final state is effectively well-mixed. These results validate that our structure-preserving scheme can achieve efficient mixing in practice: the optimal control steers the flow to accelerate the decay of the mix-norm (relative to any single steady flow), while the discrete conservation laws remain exactly satisfied throughout the process.

\begin{figure}
  \centering
  \begin{subfigure}[b]{0.315\textwidth}
    \includegraphics[width=\textwidth]{./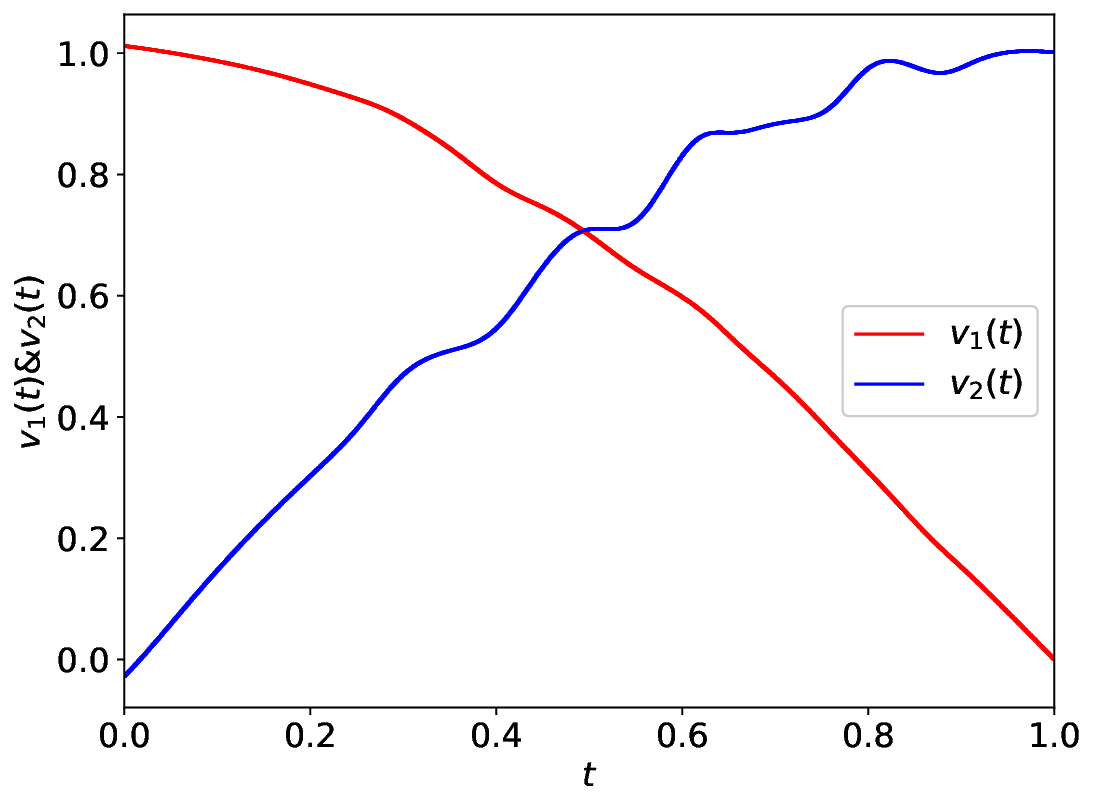}
    \caption{Evolutions of $v_1(t)$, $v_2(t)$.}
    \label{fig:u1u2_init1_2}
  \end{subfigure}
  \begin{subfigure}[b]{0.335\textwidth}
    \includegraphics[width=\textwidth]{./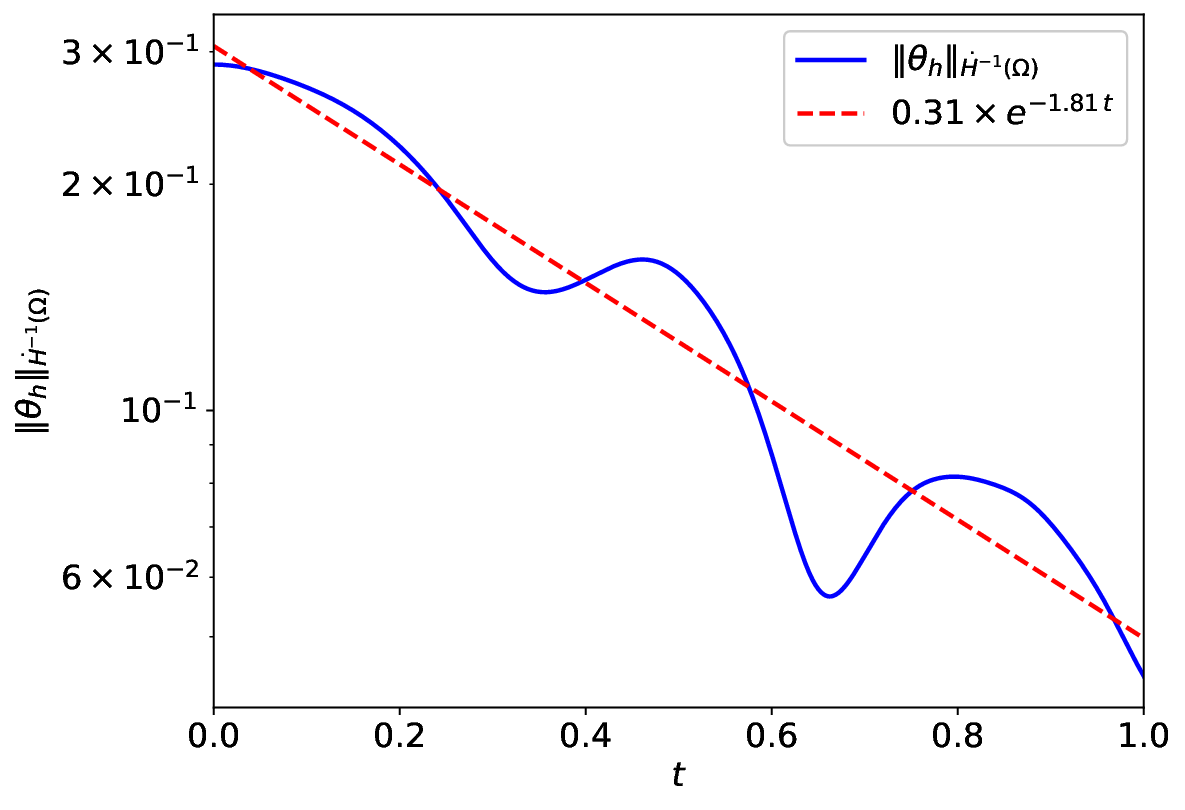}
    \caption{Evolution of $\Vert \theta_h \Vert_{\dot{H}^{-1}(\Omega)}$.}
    \label{fig:mixnorm_init1_2}
  \end{subfigure}
  \begin{subfigure}[b]{0.33\textwidth}
    \includegraphics[width=\textwidth]{./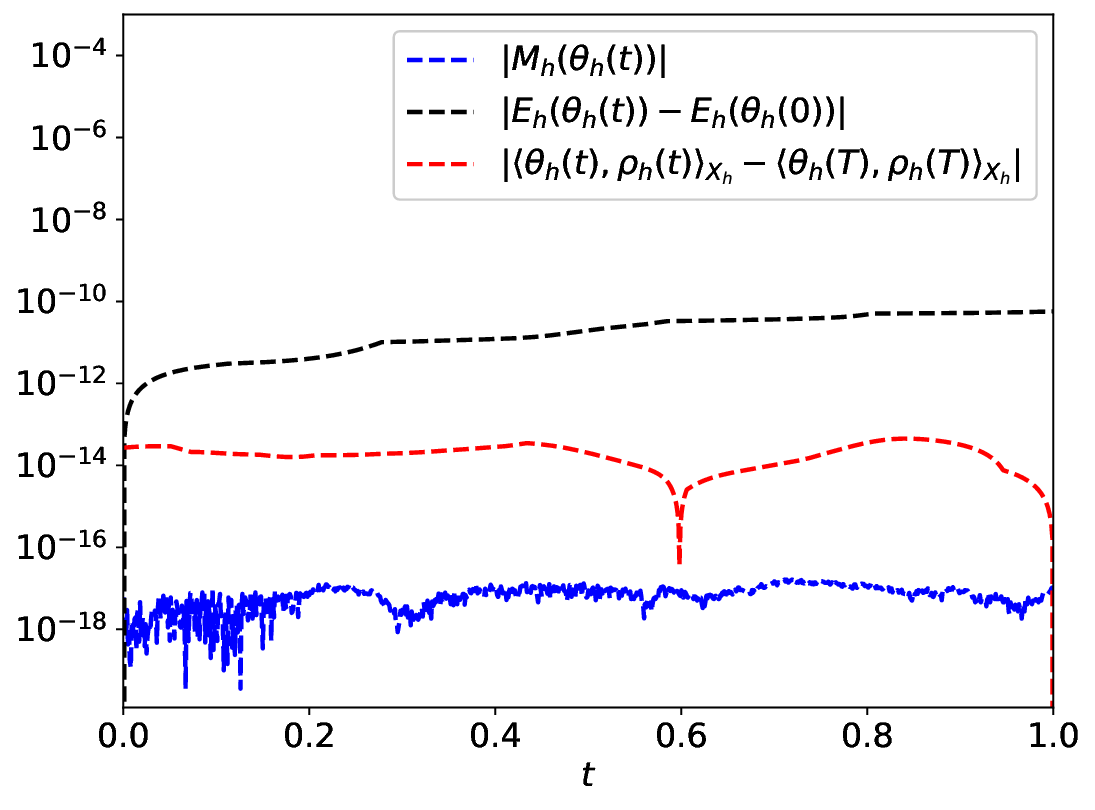}
    \caption{$M_h(\theta_h)$, $E_h(\theta_h)$, $\langle \theta_h, \rho_h \rangle_{X_h}$.}
    \label{fig:property_init1_2}
  \end{subfigure}
    \caption{Evolutions of $v_1(t)$, $v_2(t)$, mix-norm $\Vert \theta_h \Vert_{\dot{H}^{-1}(\Omega)}$, mass $M_h(\theta_h)$, energy $E_h(\theta_h)$ and state-adjoint pairing $\langle \theta_h, \rho_h \rangle_{X_h}$ for $t\in[0, 1]$ with initial control \eqref{eq:u1u2_2} and initial data \eqref{eq:theta_init_1}.}
  \label{fig:u1u2_mixnorm_init1_2}
\end{figure}

\begin{figure}
  \centering
  \begin{subfigure}[b]{0.16\textwidth}
    \includegraphics[width=\textwidth]{./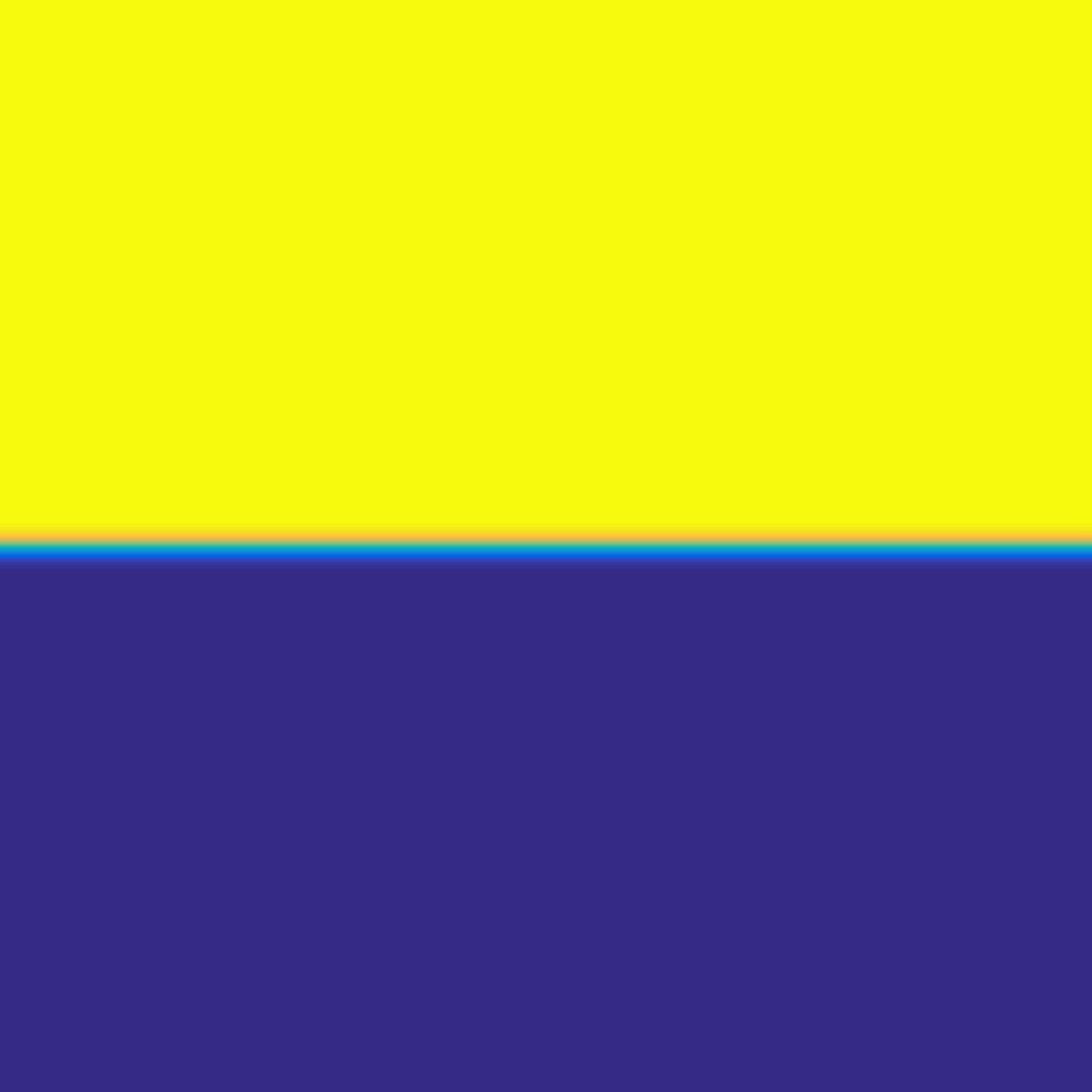}
    \caption{$t=0$}
  \end{subfigure}
  \begin{subfigure}[b]{0.16\textwidth}
    \includegraphics[width=\textwidth]{./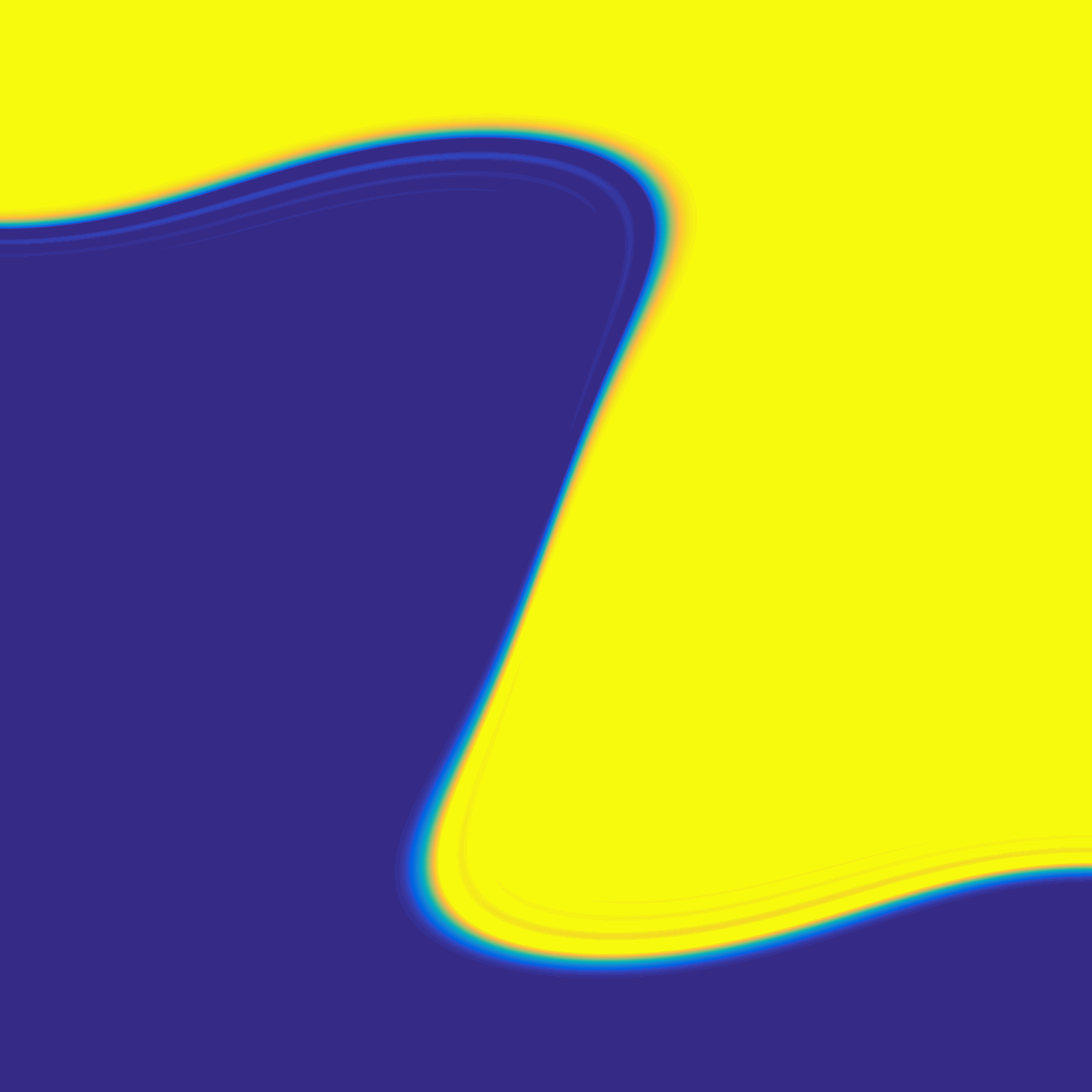}
    \caption{$t=0.2$}
  \end{subfigure}
  \begin{subfigure}[b]{0.16\textwidth}
    \includegraphics[width=\textwidth]{./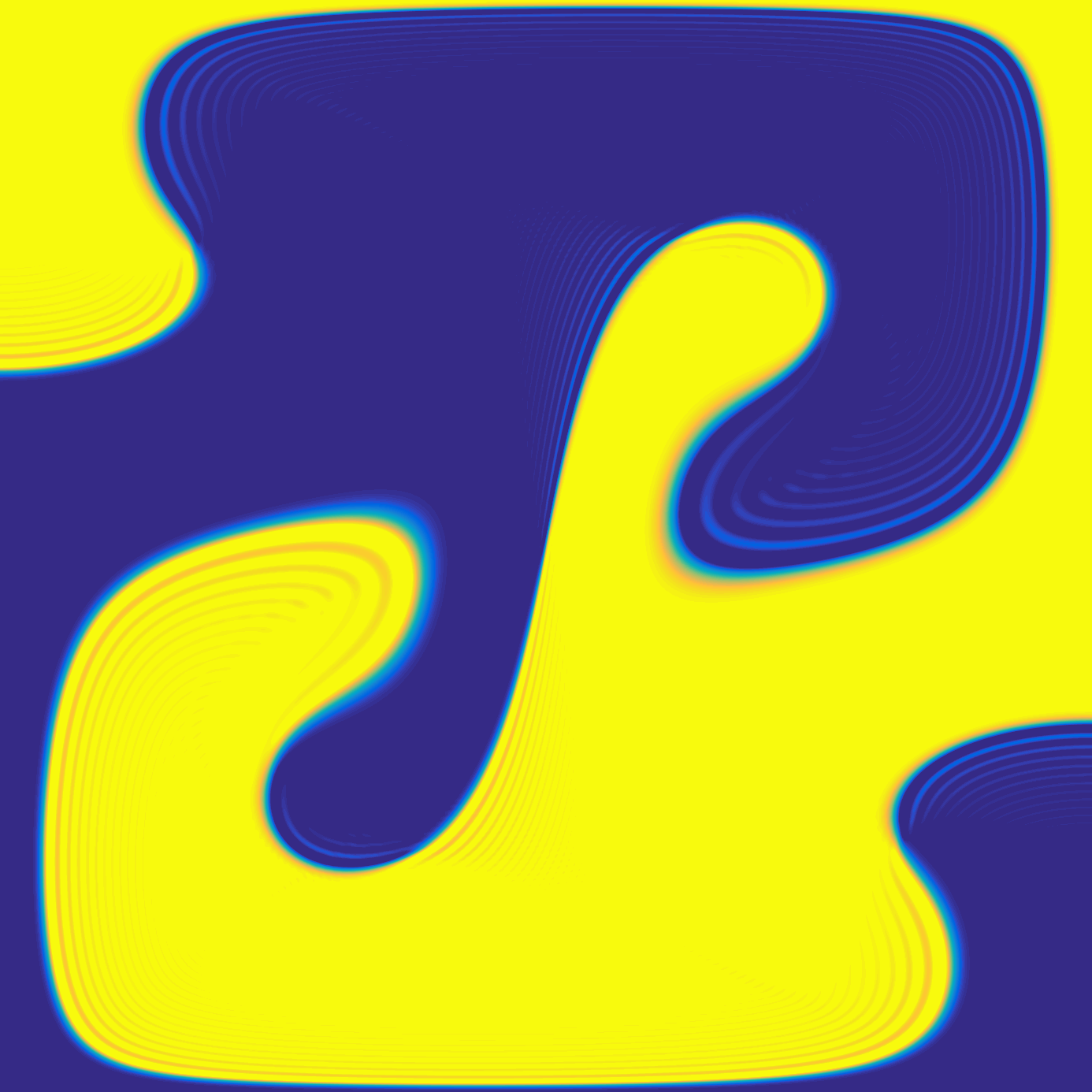}
    \caption{$t=0.4$}
  \end{subfigure}
  \begin{subfigure}[b]{0.16\textwidth}
    \includegraphics[width=\textwidth]{./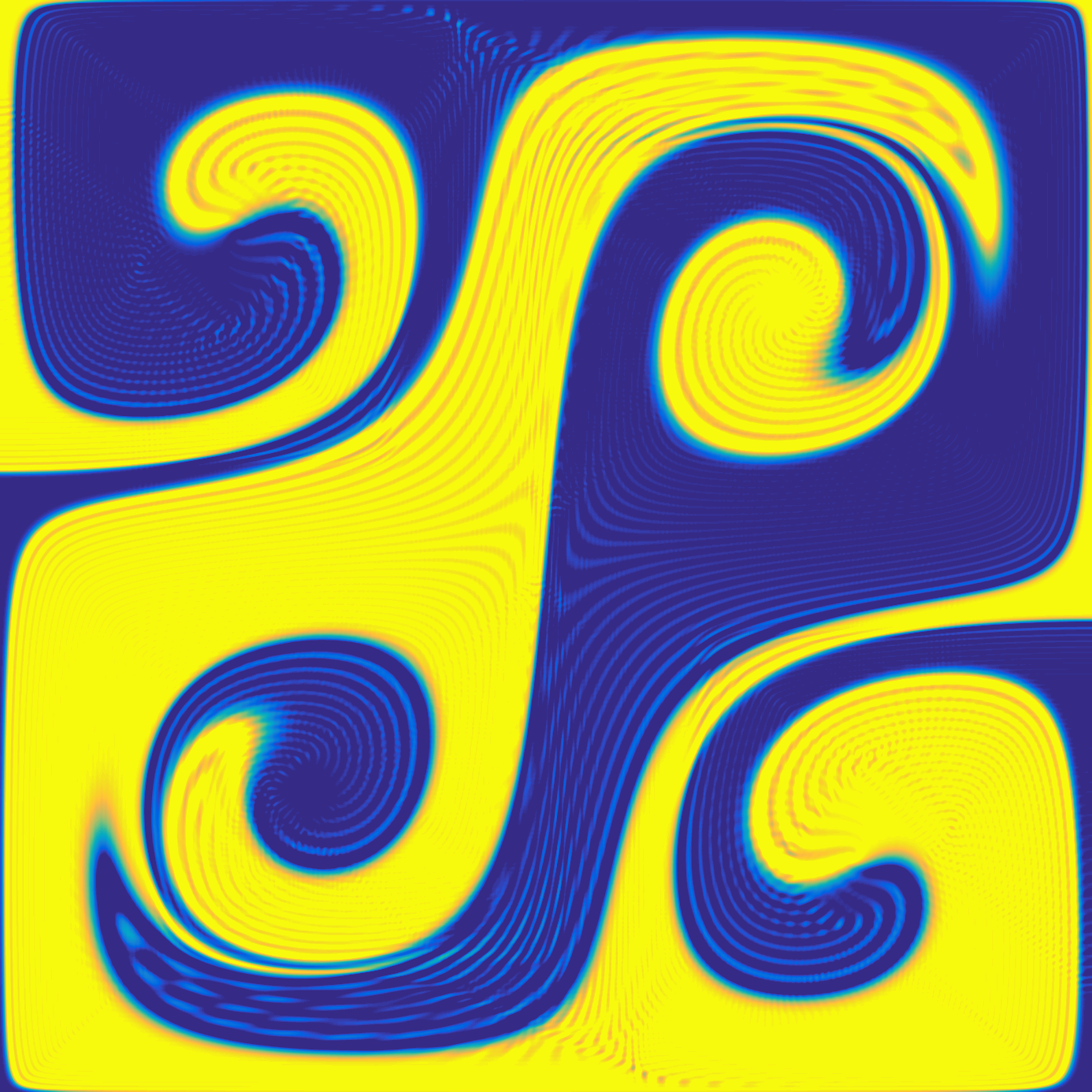}
    \caption{$t=0.6$}
  \end{subfigure}
  \begin{subfigure}[b]{0.16\textwidth}
    \includegraphics[width=\textwidth]{./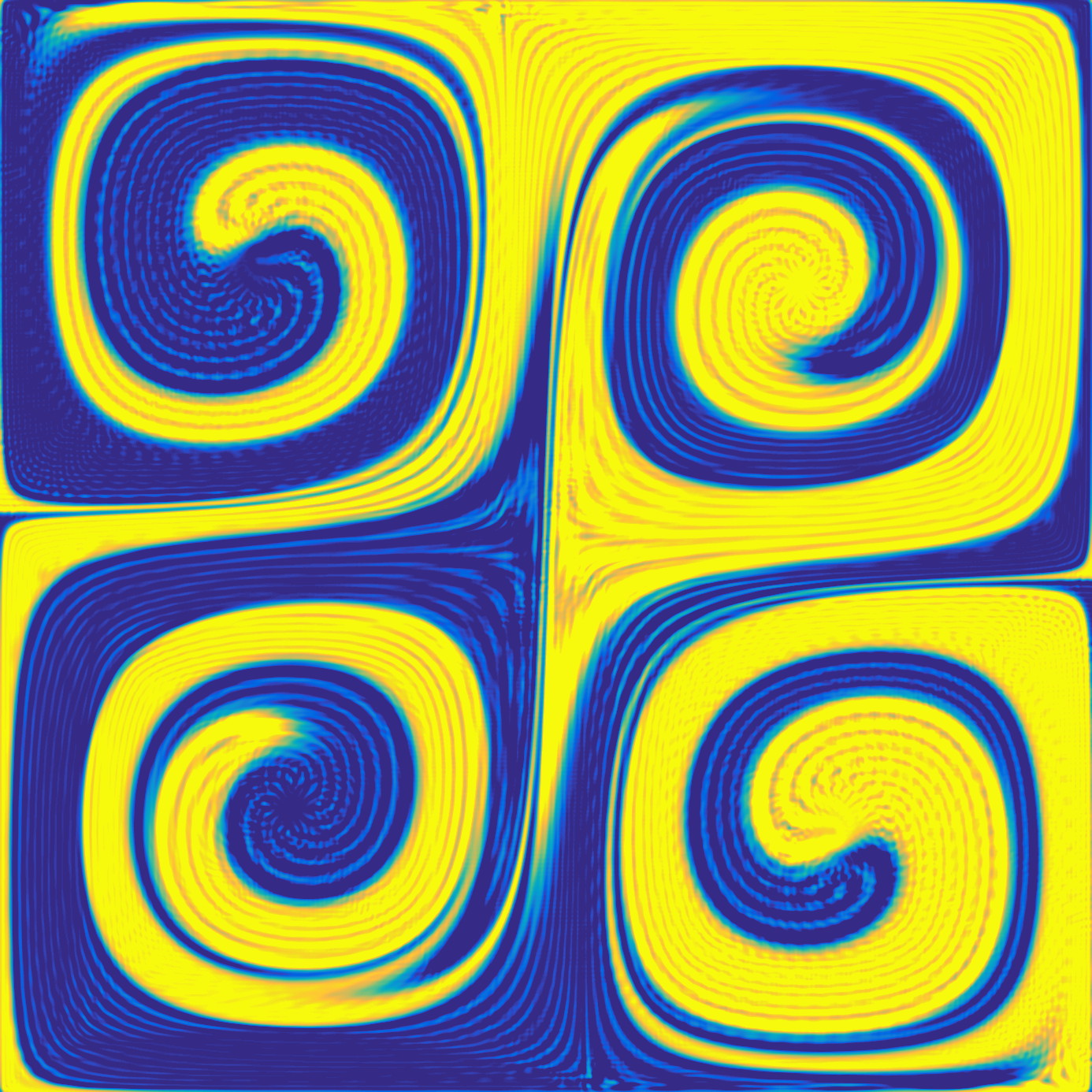}
    \caption{$t=0.8$}
  \end{subfigure}
  \begin{subfigure}[b]{0.16\textwidth}
    \includegraphics[width=\textwidth]{./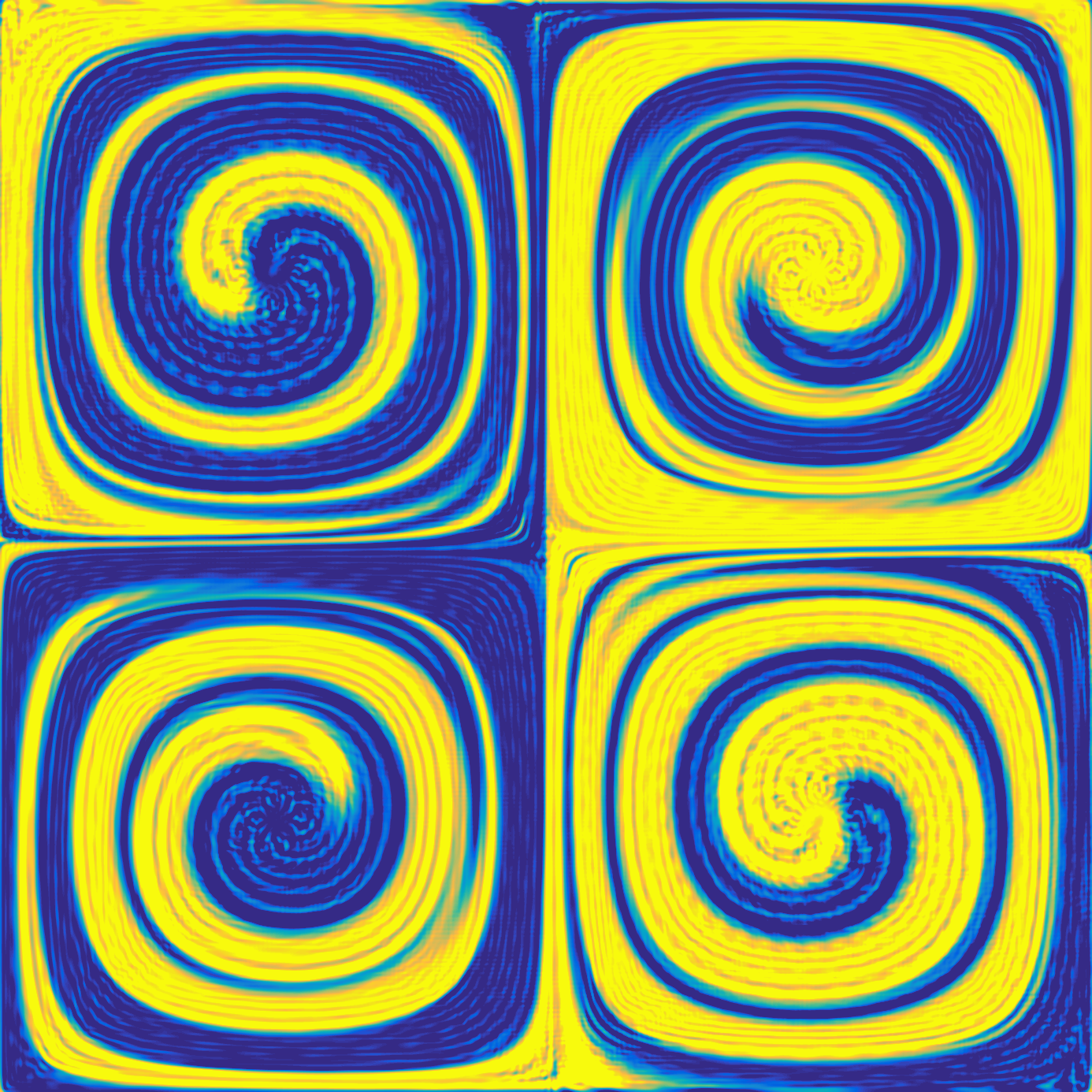}
    \caption{$t=1.0$}
  \end{subfigure}
  \caption{Evolution of $\theta_h$ for $t\in[0, 1]$ with initial control \eqref{eq:u1u2_2} and initial data \eqref{eq:theta_init_1}.}
  \label{fig:theta_init1_2}
\end{figure}

\subsubsection{Optimal control for initial data \eqref{eq:theta_init_2}}
We now turn to the smoother trigonometric initial data \eqref{eq:theta_init_2}. We repeat the optimal control experiments for this case, again using both $\mathbf{b}_1$ and $\mathbf{b}_2$ as control directions and test two different initial guesses for the control. 

Using the initial guess \eqref{eq:u1u2_1}, Figure~\ref{fig:u1u2_init2_1} shows the evolution of the optimized coefficients $v_1(t)$ and $v_2(t)$. The corresponding mix-norm decay is plotted in Figure~\ref{fig:mixnorm_init2_1}: the curve suggests an almost exponential decay with an approximate rate of $1.19$. The discrete mass, energy, and state--adjoint pairing are again conserved (Figure~\ref{fig:mixnorm_init2_1}) with errors on the order of machine precision. Snapshots of the scalar field are given in Figure~\ref{fig:theta_init2_1}. We note that in this optimized control, one of the coefficients ($v_2(t)$) changes sign during the time interval, indicating that the flow reverses its circulation at some point to enhance mixing effectiveness.

\begin{figure}
  \centering
  \begin{subfigure}[b]{0.315\textwidth}
    \includegraphics[width=\textwidth]{./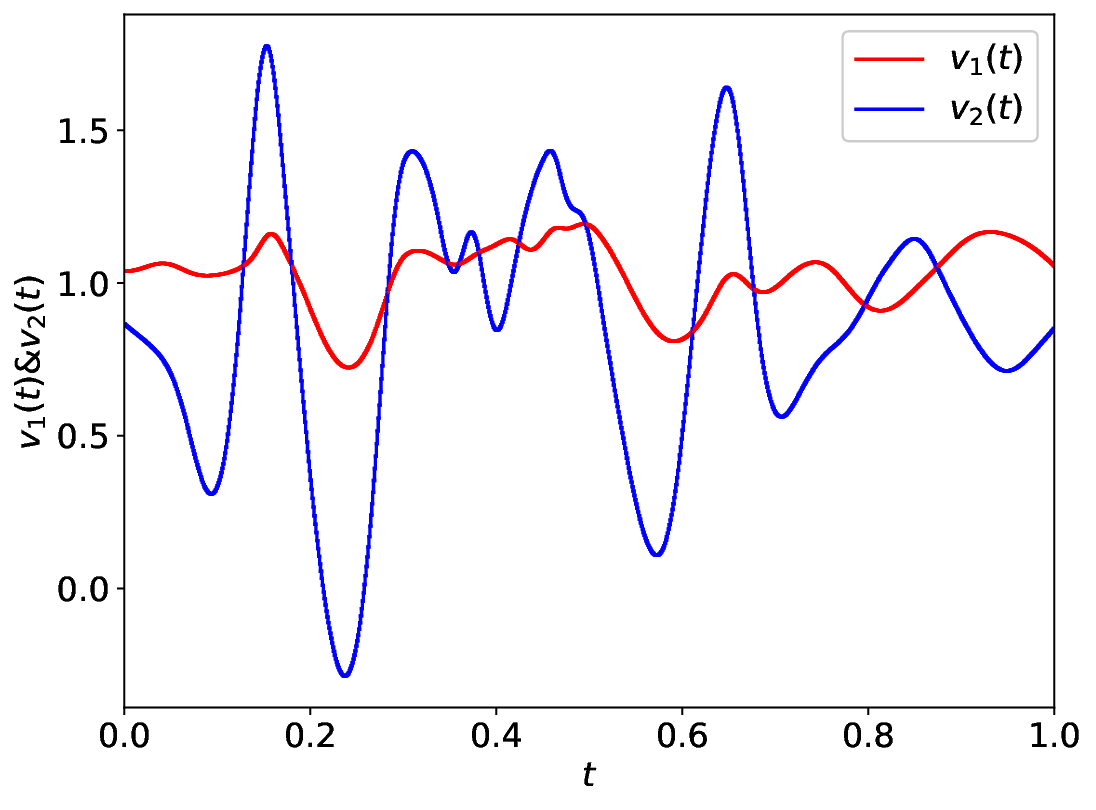}
    \caption{Evolutions of $v_1(t)$, $v_2(t)$.}
    \label{fig:u1u2_init2_1}
  \end{subfigure}
  \begin{subfigure}[b]{0.335\textwidth}
    \includegraphics[width=\textwidth]{./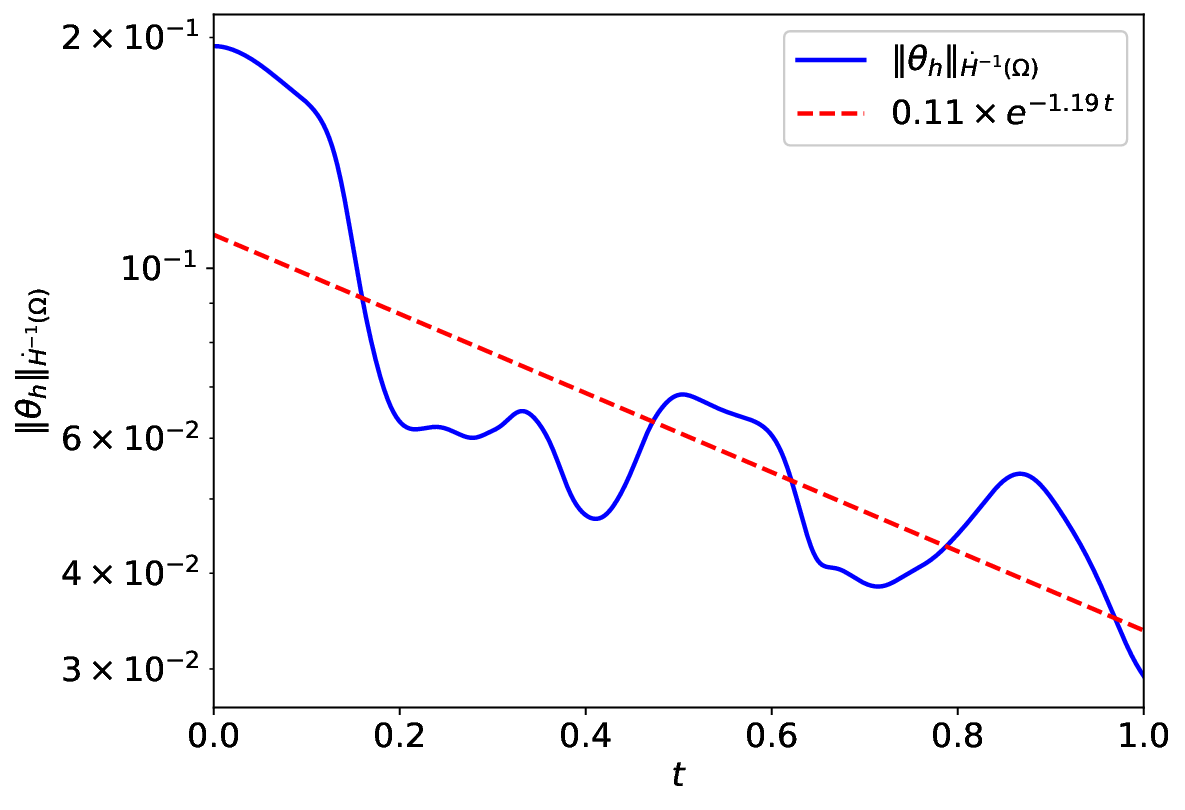}
    \caption{Evolution of $\Vert \theta_h \Vert_{\dot{H}^{-1}(\Omega)}$.}
    \label{fig:mixnorm_init2_1}
  \end{subfigure}
  \begin{subfigure}[b]{0.33\textwidth}
    \includegraphics[width=\textwidth]{./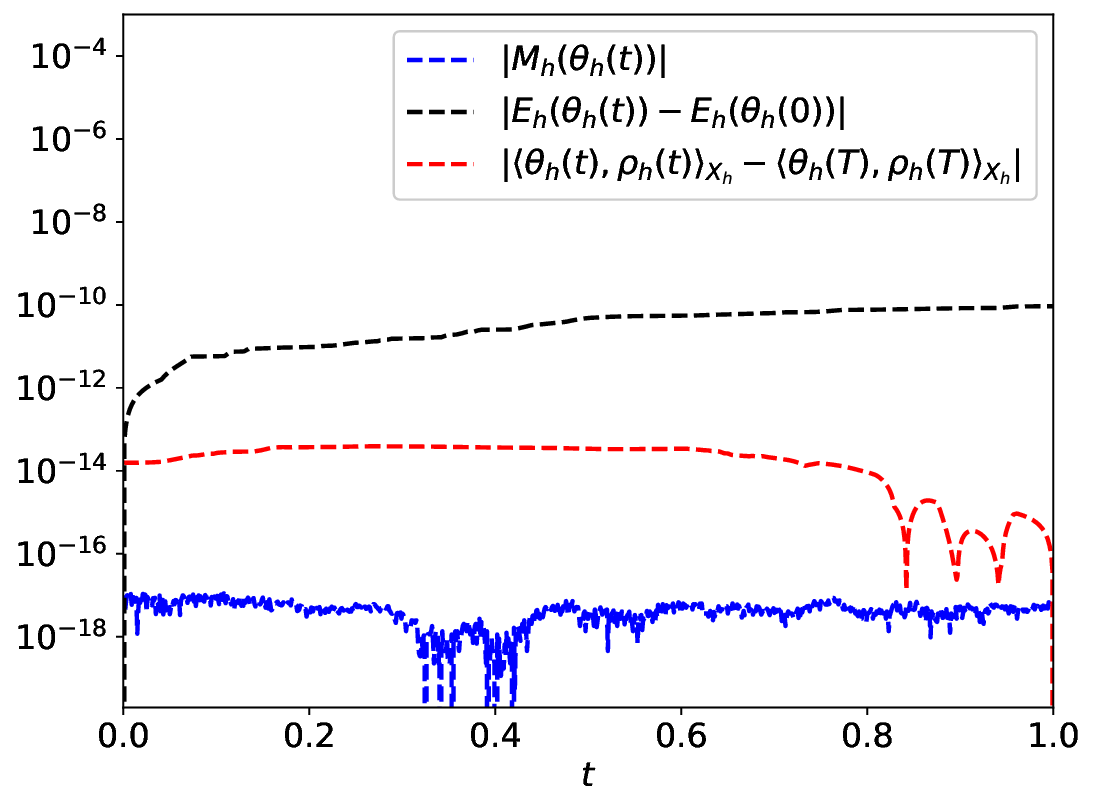}
    \caption{$M_h(\theta_h)$, $E_h(\theta_h)$, $\langle \theta_h, \rho_h \rangle_{X_h}$.}
    \label{fig:property_init2_1}
  \end{subfigure}
    \caption{Evolutions of $v_1(t)$, $v_2(t)$, mix-norm $\Vert \theta_h \Vert_{\dot{H}^{-1}(\Omega)}$, mass $M_h(\theta_h)$, energy $E_h(\theta_h)$ and state-adjoint pairing $\langle \theta_h, \rho_h \rangle_{X_h}$ for $t\in[0, 1]$ with initial control \eqref{eq:u1u2_1} and initial data \eqref{eq:theta_init_2}.}
  \label{fig:u1u2_mixnorm_init2_1}
\end{figure}

\begin{figure}
  \centering
  \begin{subfigure}[b]{0.16\textwidth}
    \includegraphics[width=\textwidth]{./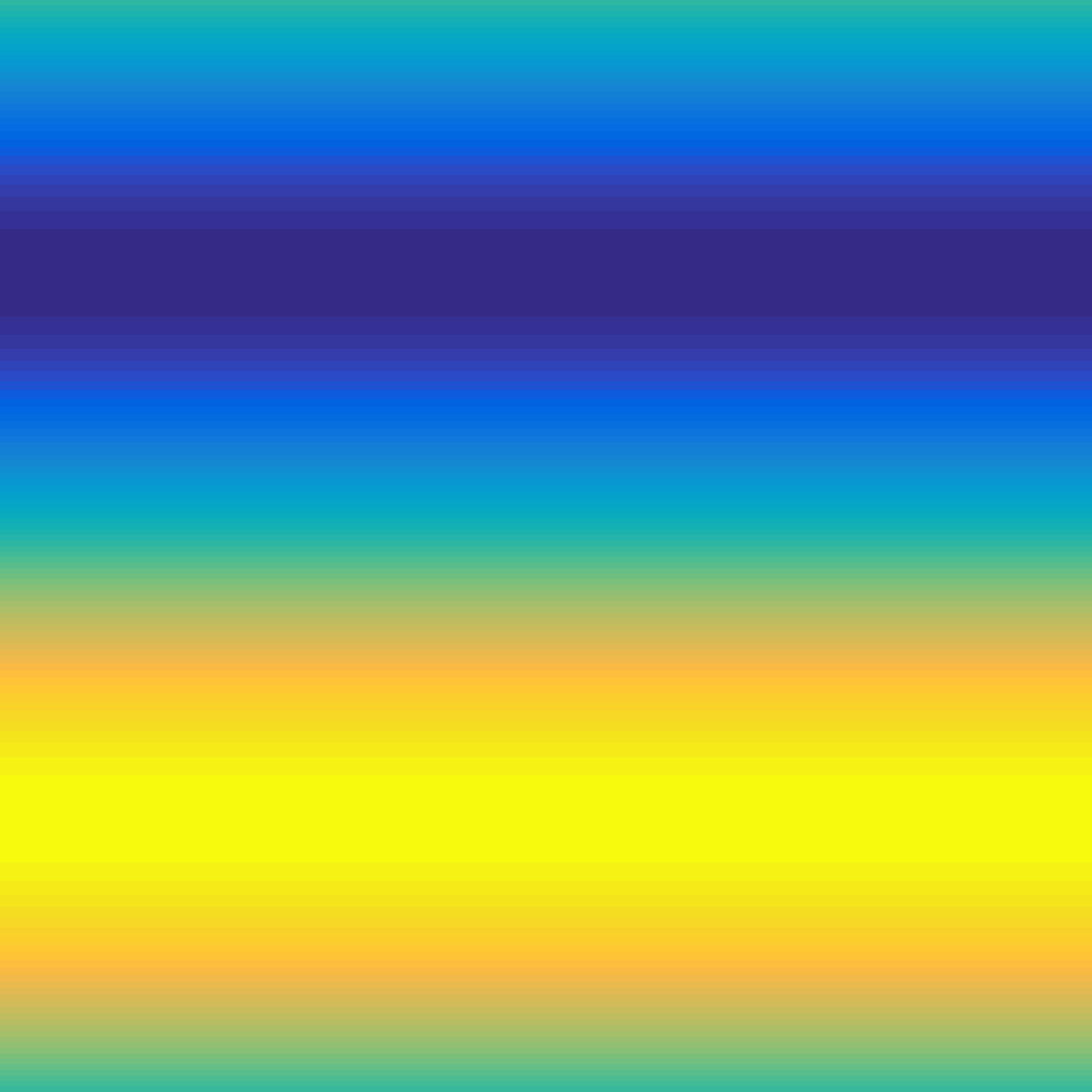}
    \caption{$t=0$}
  \end{subfigure}
  \begin{subfigure}[b]{0.16\textwidth}
    \includegraphics[width=\textwidth]{./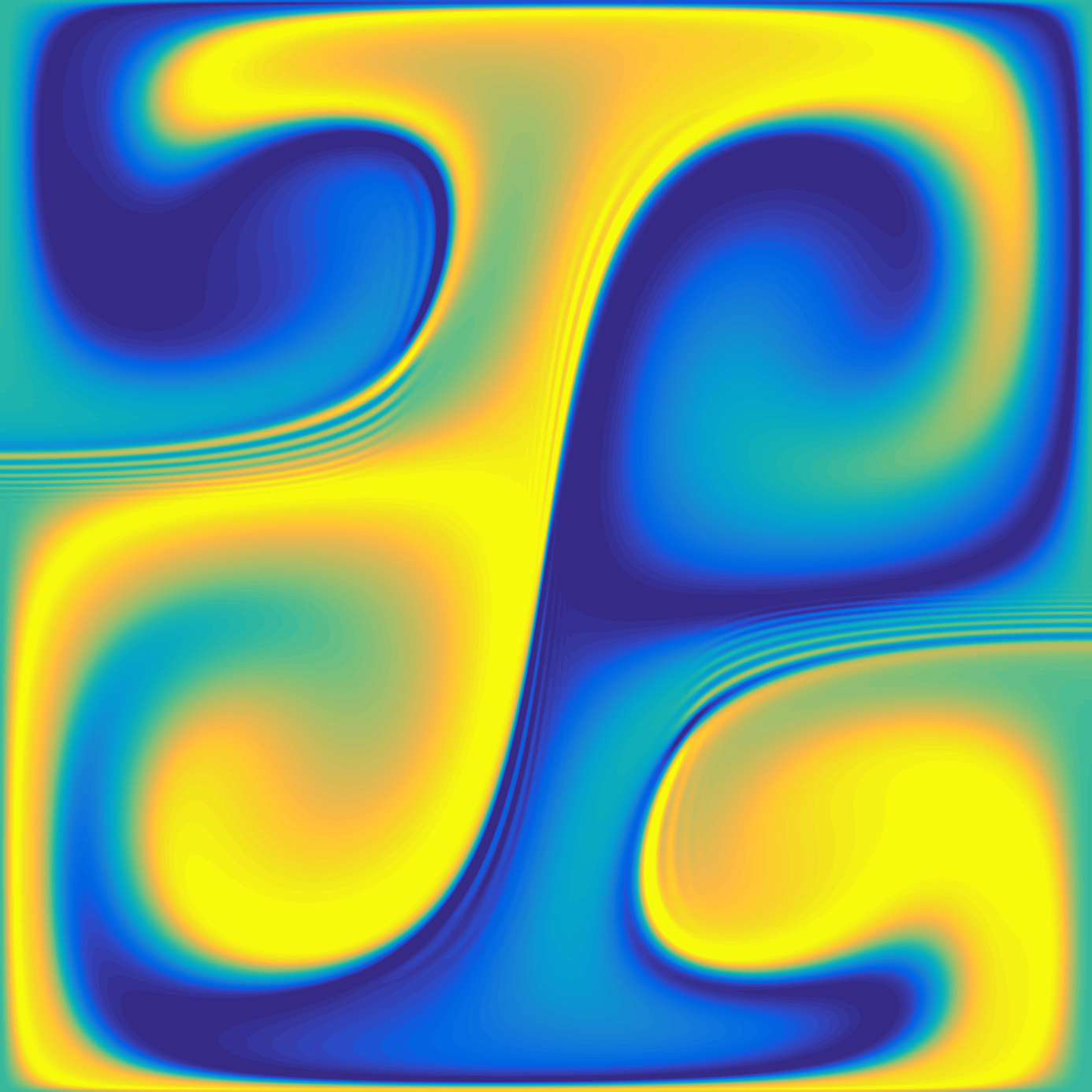}
    \caption{$t=0.2$}
  \end{subfigure}
  \begin{subfigure}[b]{0.16\textwidth}
    \includegraphics[width=\textwidth]{./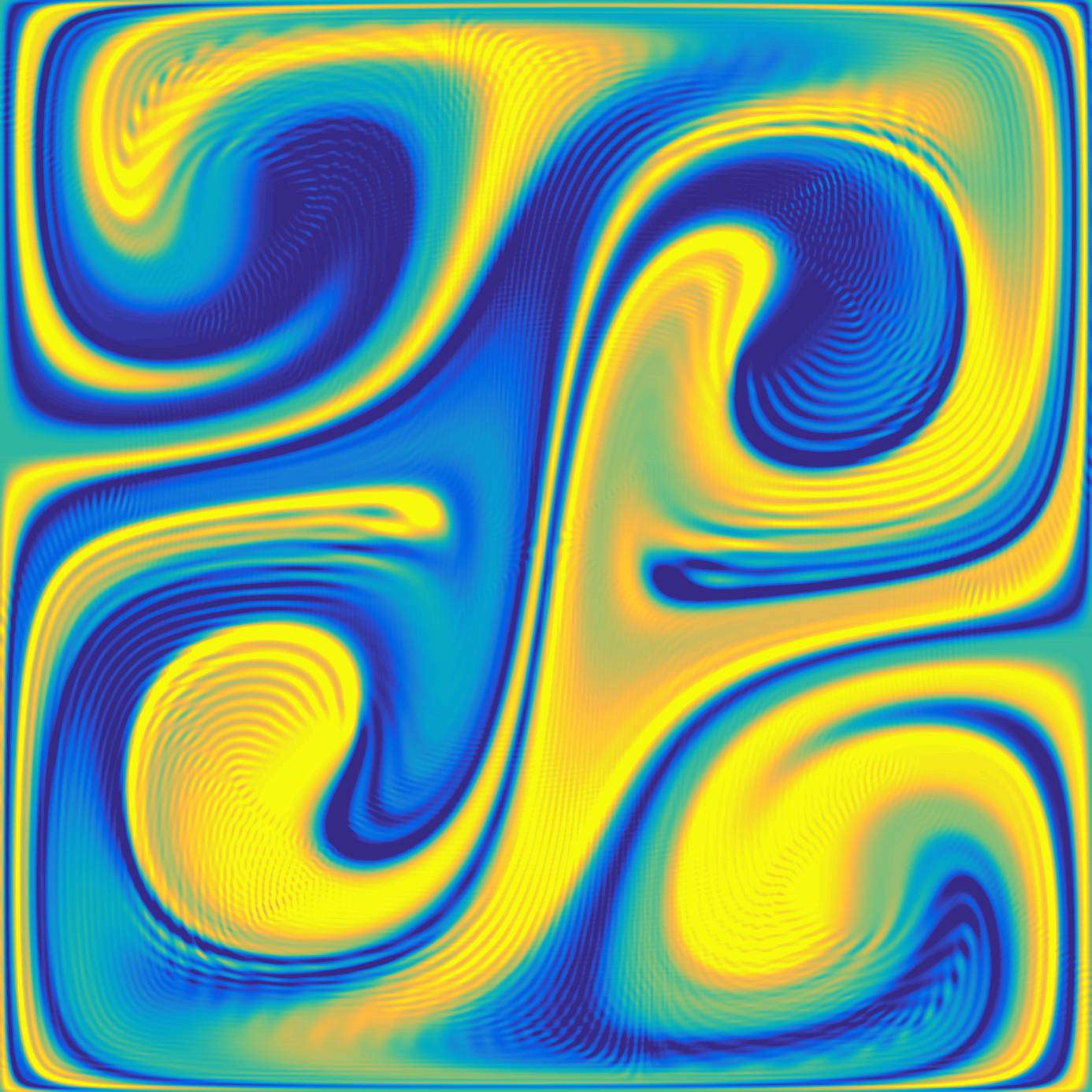}
    \caption{$t=0.4$}
  \end{subfigure}
  \begin{subfigure}[b]{0.16\textwidth}
    \includegraphics[width=\textwidth]{./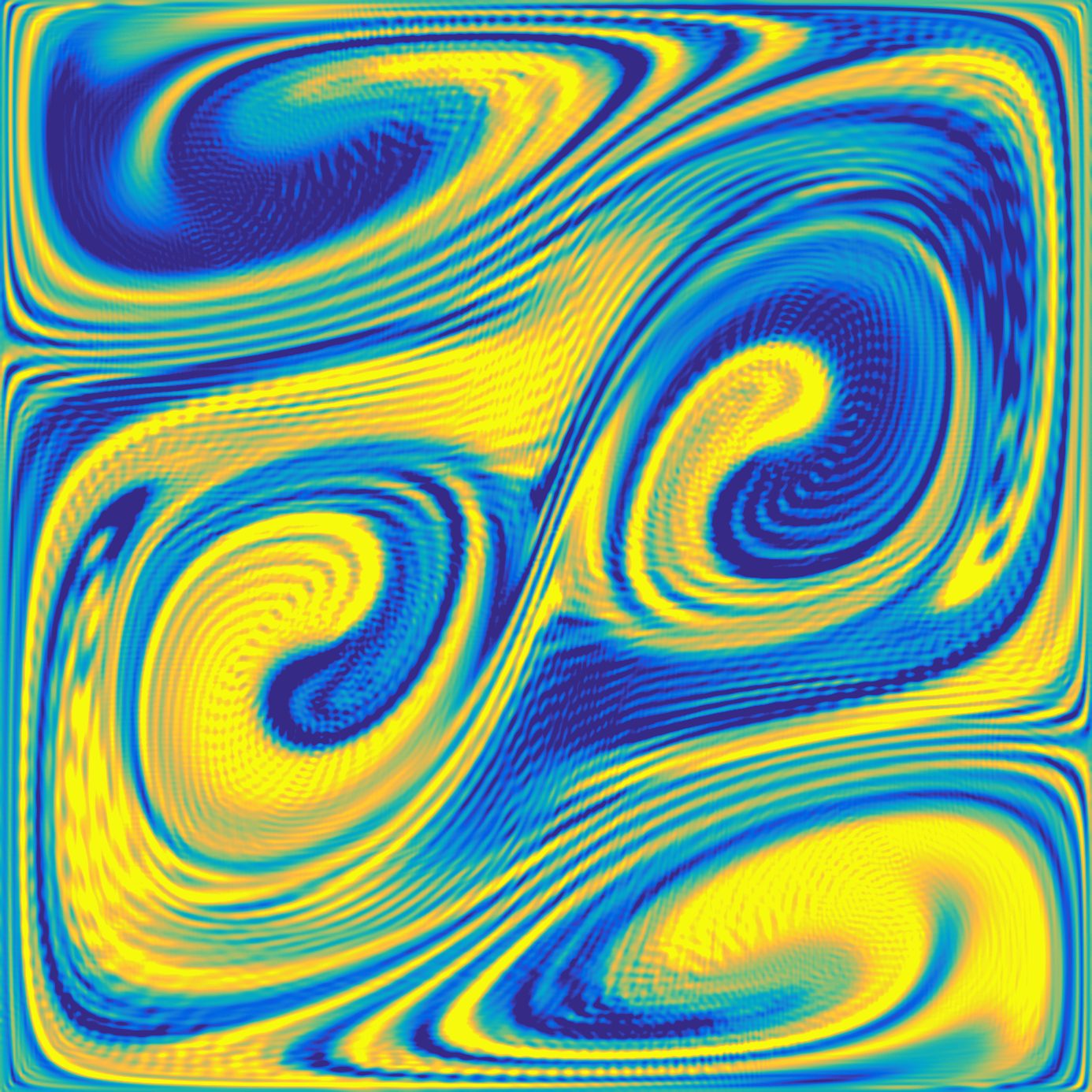}
    \caption{$t=0.6$}
  \end{subfigure}
  \begin{subfigure}[b]{0.16\textwidth}
    \includegraphics[width=\textwidth]{./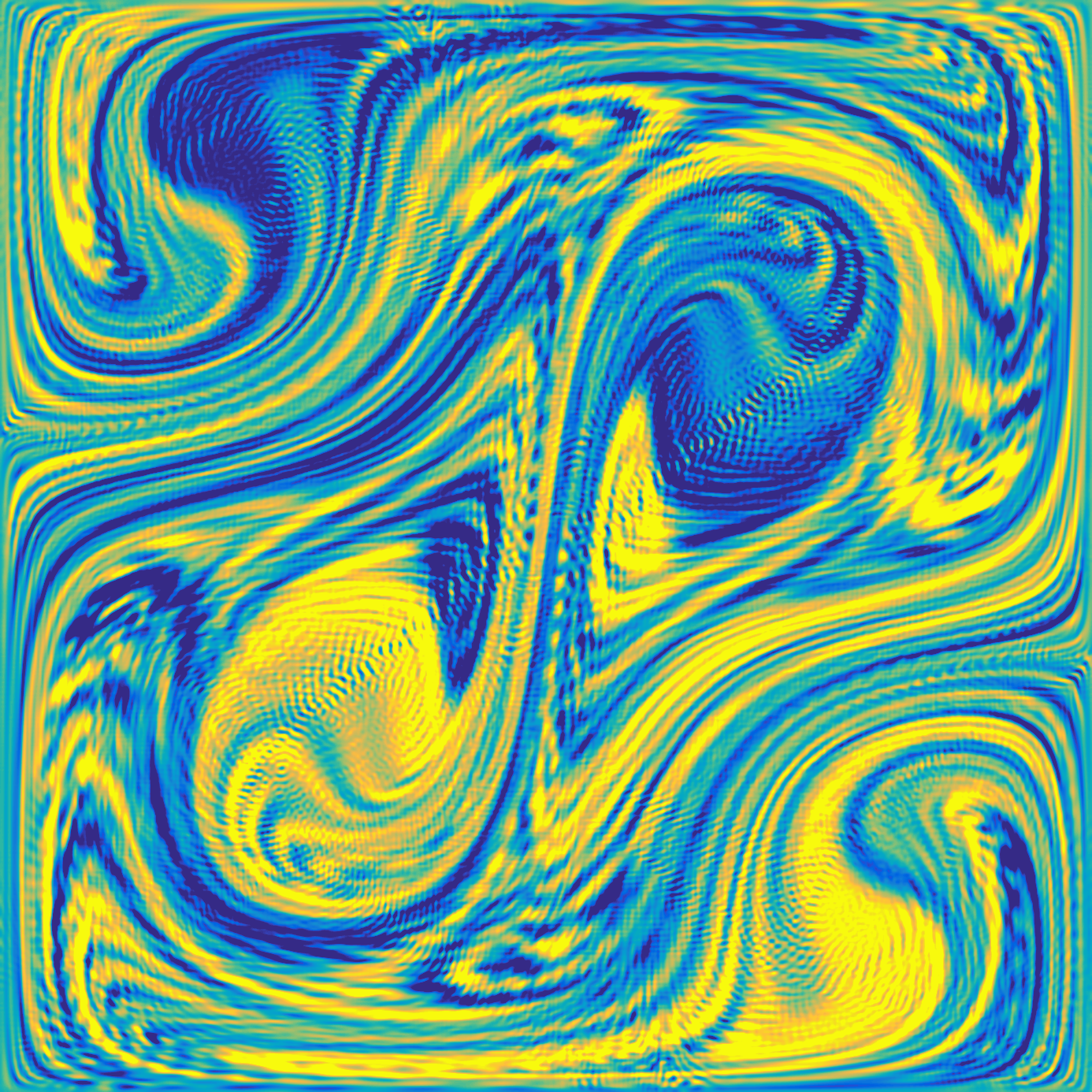}
    \caption{$t=0.8$}
  \end{subfigure}
  \begin{subfigure}[b]{0.16\textwidth}
    \includegraphics[width=\textwidth]{./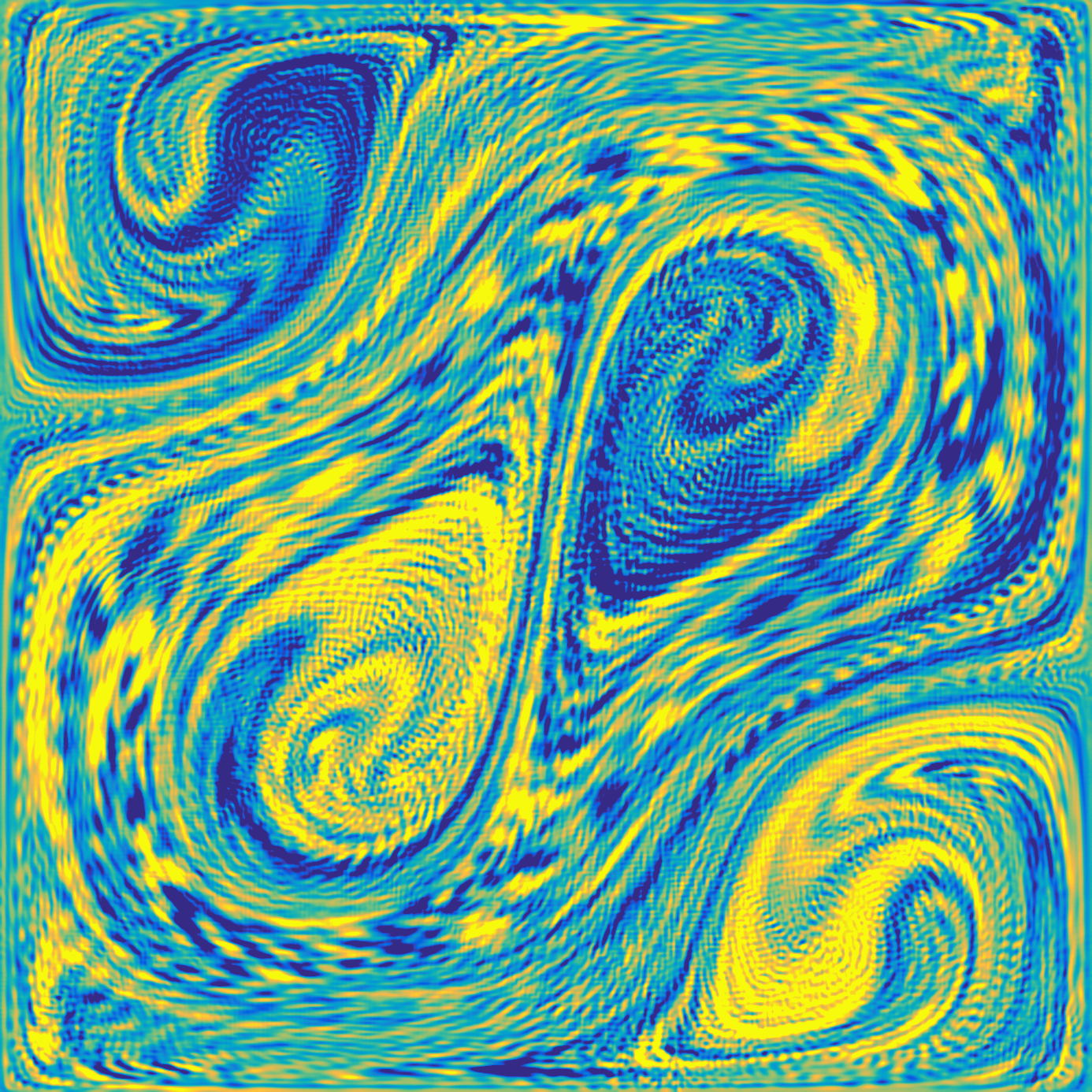}
    \caption{$t=1.0$}
  \end{subfigure}
  \caption{Evolution of $\theta_h$ for $t\in[0, 1]$ with initial control \eqref{eq:u1u2_1} and initial data \eqref{eq:theta_init_2}.}
  \label{fig:theta_init2_1}
\end{figure}

We also perform the optimization starting from the control guess \eqref{eq:u1u2_2}. The optimized control in this scenario is shown in Figure~\ref{fig:u1u2_init2_2}. In this scenario, the mix-norm again decays rapidly, with behavior that is well approximated by an exponential with a rate of about 2.17 (Figure~\ref{fig:mixnorm_init2_2}). Figure~\ref{fig:property_init2_2} again confirms invariant preservation to machine precision, and Figure~\ref{fig:theta_init2_2} shows snapshots of the scalar field.

Overall, the optimally controlled flows for both types of initial data \eqref{eq:theta_init_1} and \eqref{eq:theta_init_2} achieve substantially faster mixing (in terms of mix-norm decay) than any single basis flow could produce. The nearly exponential decay rates obtained underline the efficacy of the control in stirring the scalar. At the same time, the structure-preserving discretization ensures that the fundamental physical invariants of the system are never violated, lending credence to the physical fidelity of the simulations.

\begin{figure}
  \centering
  \begin{subfigure}[b]{0.315\textwidth}
    \includegraphics[width=\textwidth]{./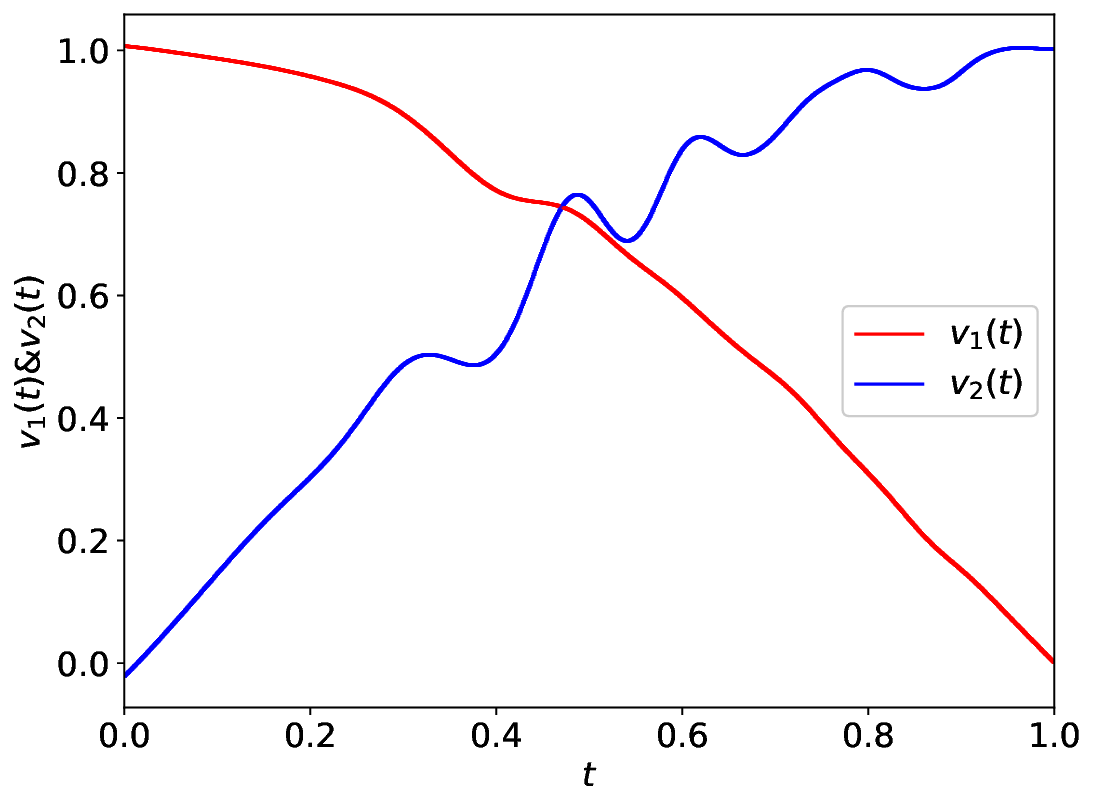}
    \caption{Evolutions of $v_1(t)$, $v_2(t)$.}
    \label{fig:u1u2_init2_2}
  \end{subfigure}
  \begin{subfigure}[b]{0.335\textwidth}
    \includegraphics[width=\textwidth]{./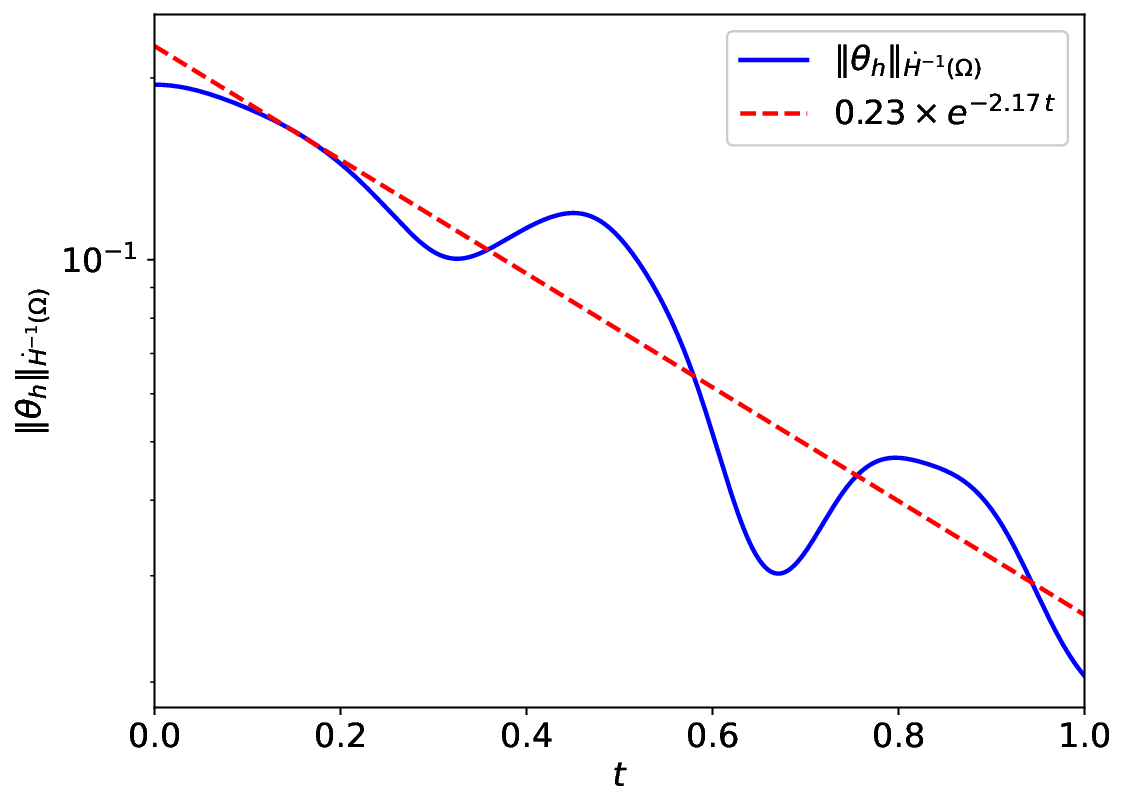}
    \caption{Evolution of $\Vert \theta_h \Vert_{\dot{H}^{-1}(\Omega)}$.}
    \label{fig:mixnorm_init2_2}
  \end{subfigure}
  \begin{subfigure}[b]{0.33\textwidth}
    \includegraphics[width=\textwidth]{./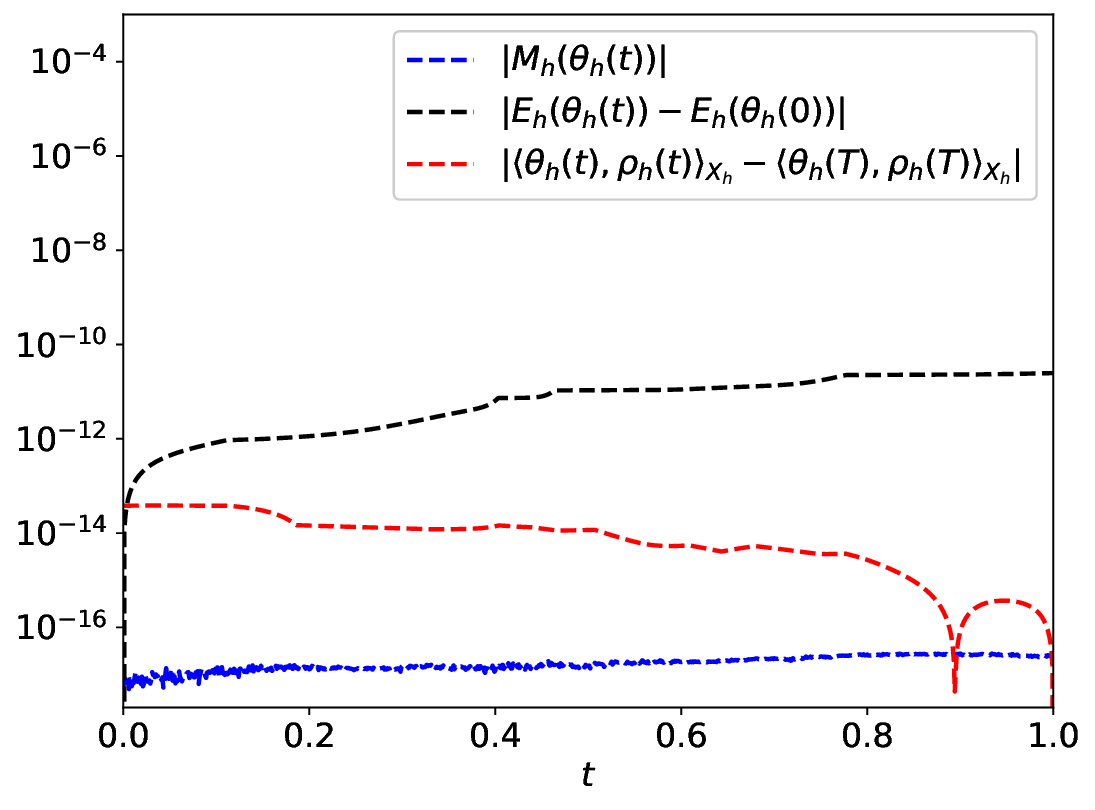}
    \caption{$M_h(\theta_h)$, $E_h(\theta_h)$, $\langle \theta_h, \rho_h \rangle_{X_h}$.}
    \label{fig:property_init2_2}
  \end{subfigure}
    \caption{Evolutions of $v_1(t)$, $v_2(t)$, mix-norm $\Vert \theta_h \Vert_{\dot{H}^{-1}(\Omega)}$, mass $M_h(\theta_h)$, energy $E_h(\theta_h)$ and state-adjoint pairing $\langle \theta_h, \rho_h \rangle_{X_h}$ for $t\in[0, 1]$ with initial control \eqref{eq:u1u2_2} and initial data \eqref{eq:theta_init_2}.}
  \label{fig:u1u2_mixnorm_init2_2}
\end{figure}

\begin{figure}
  \centering
  \begin{subfigure}[b]{0.16\textwidth}
    \includegraphics[width=\textwidth]{./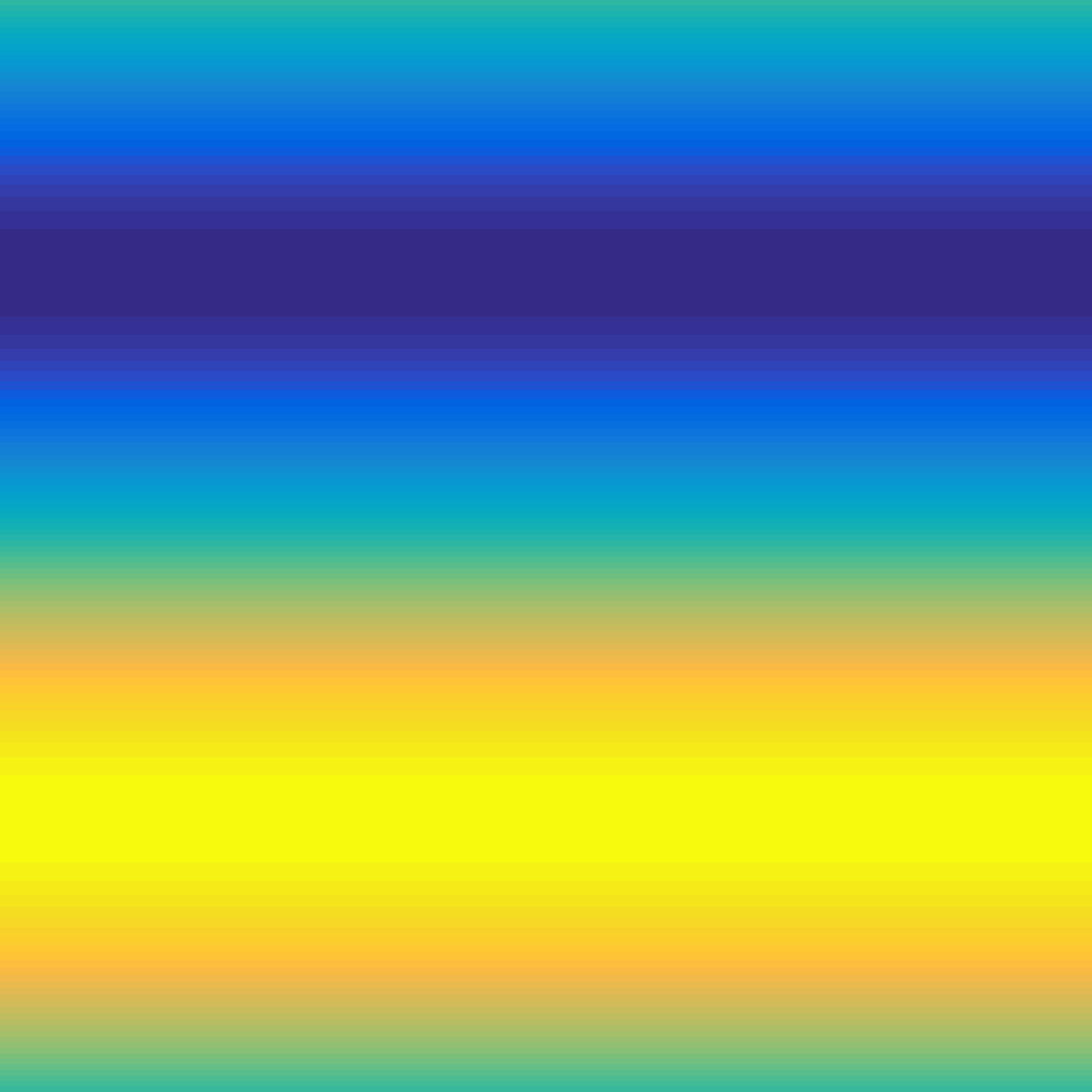}
    \caption{$t=0$}
  \end{subfigure}
  \begin{subfigure}[b]{0.16\textwidth}
    \includegraphics[width=\textwidth]{./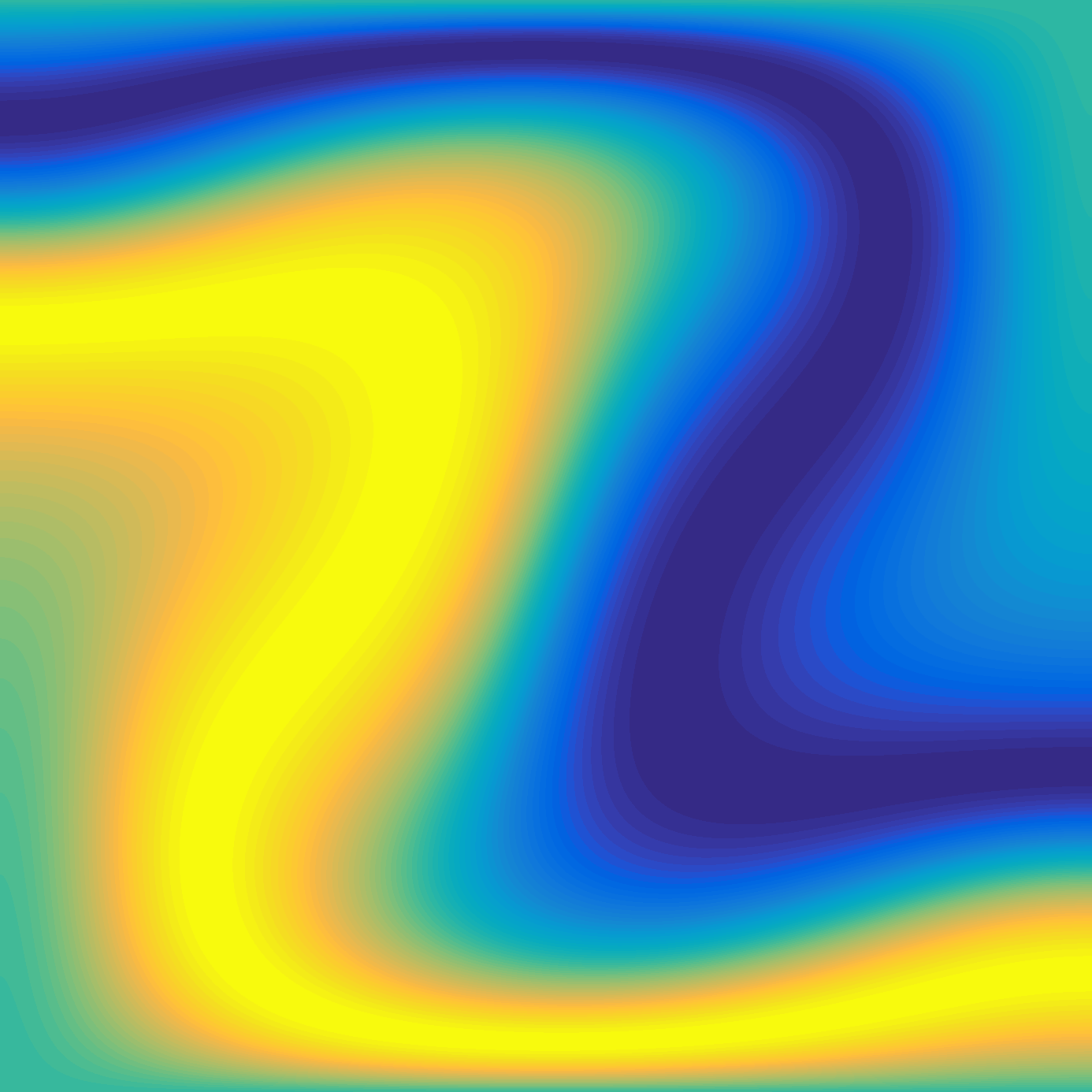}
    \caption{$t=0.2$}
  \end{subfigure}
  \begin{subfigure}[b]{0.16\textwidth}
    \includegraphics[width=\textwidth]{./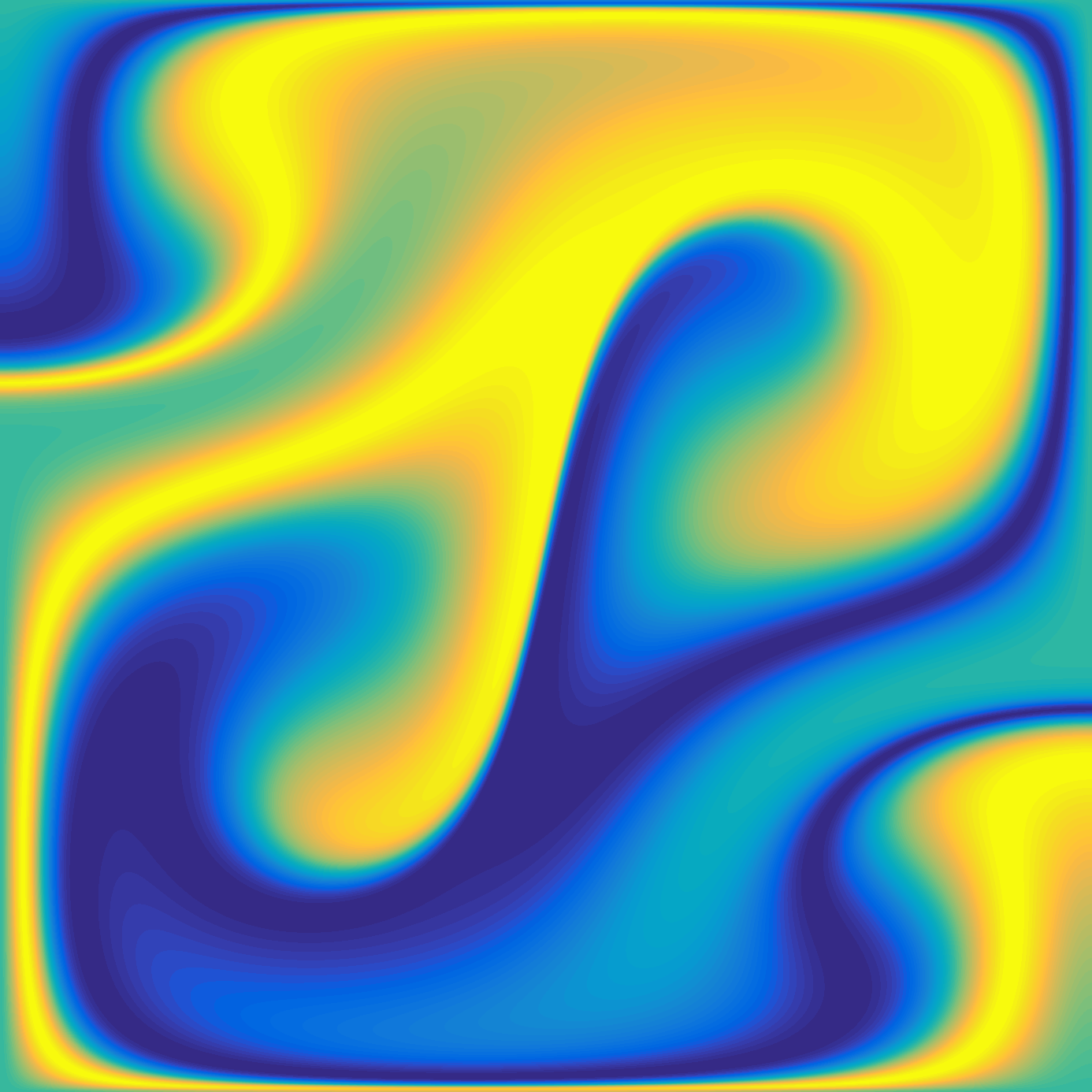}
    \caption{$t=0.4$}
  \end{subfigure}
  \begin{subfigure}[b]{0.16\textwidth}
    \includegraphics[width=\textwidth]{./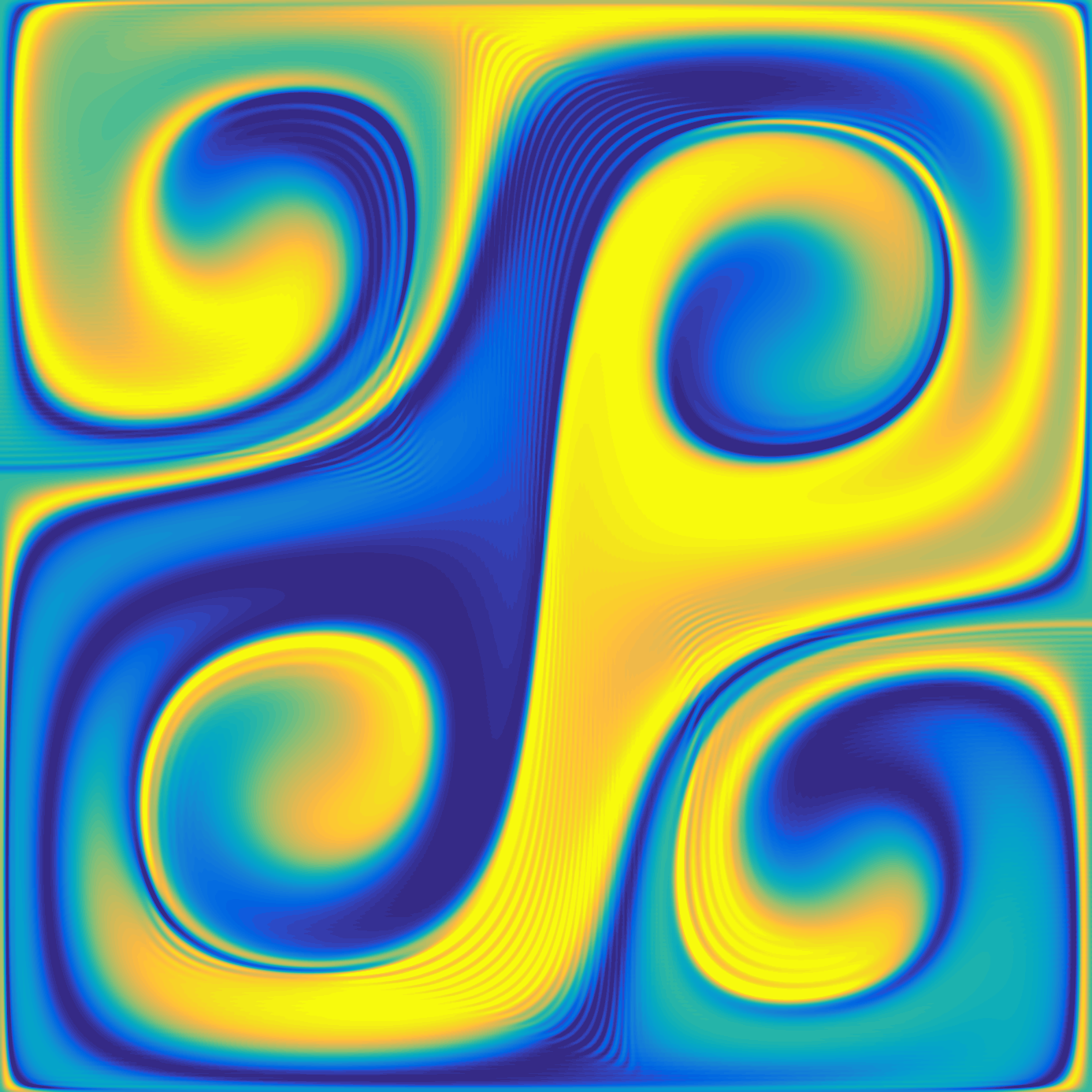}
    \caption{$t=0.6$}
  \end{subfigure}
  \begin{subfigure}[b]{0.16\textwidth}
    \includegraphics[width=\textwidth]{./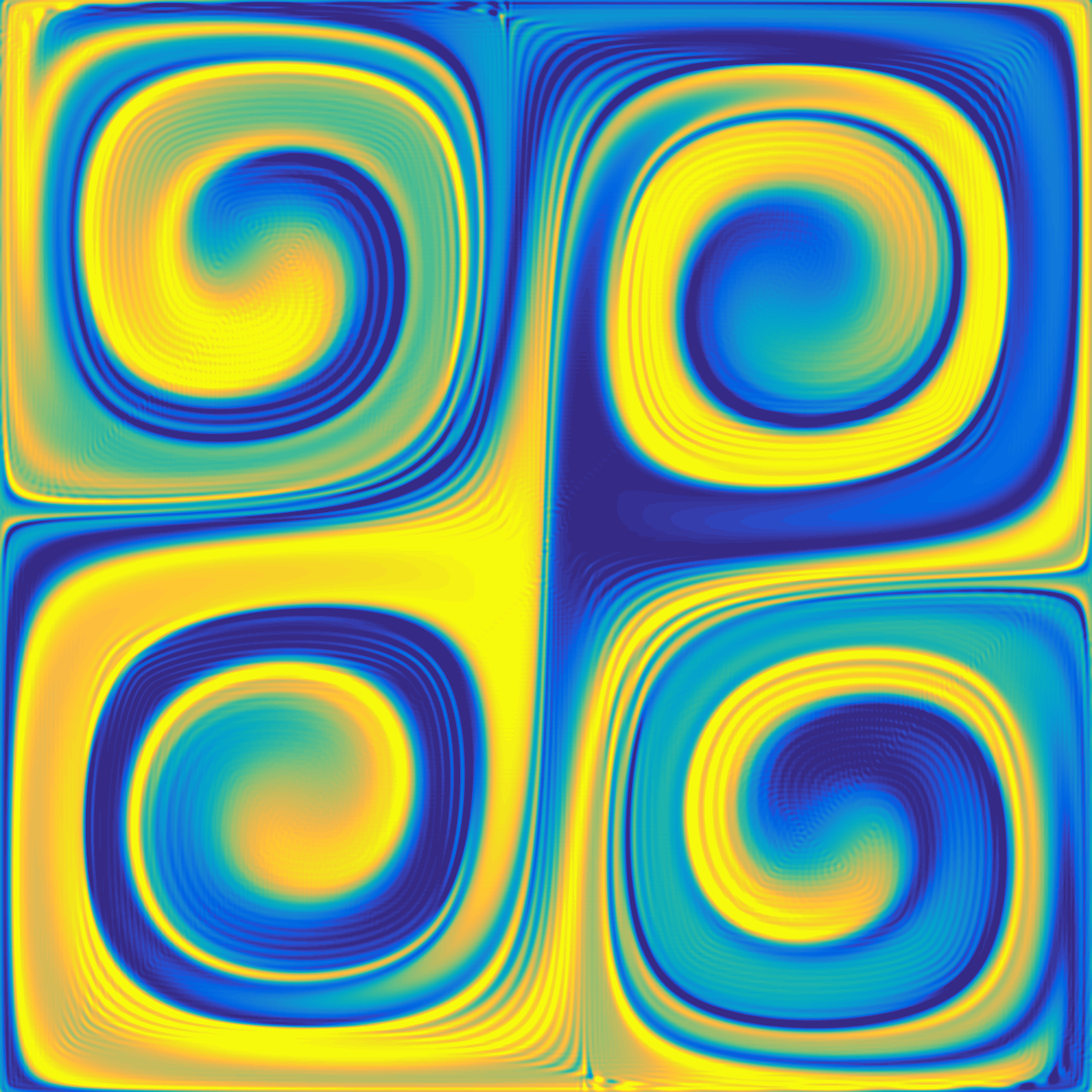}
    \caption{$t=0.8$}
  \end{subfigure}
  \begin{subfigure}[b]{0.16\textwidth}
    \includegraphics[width=\textwidth]{./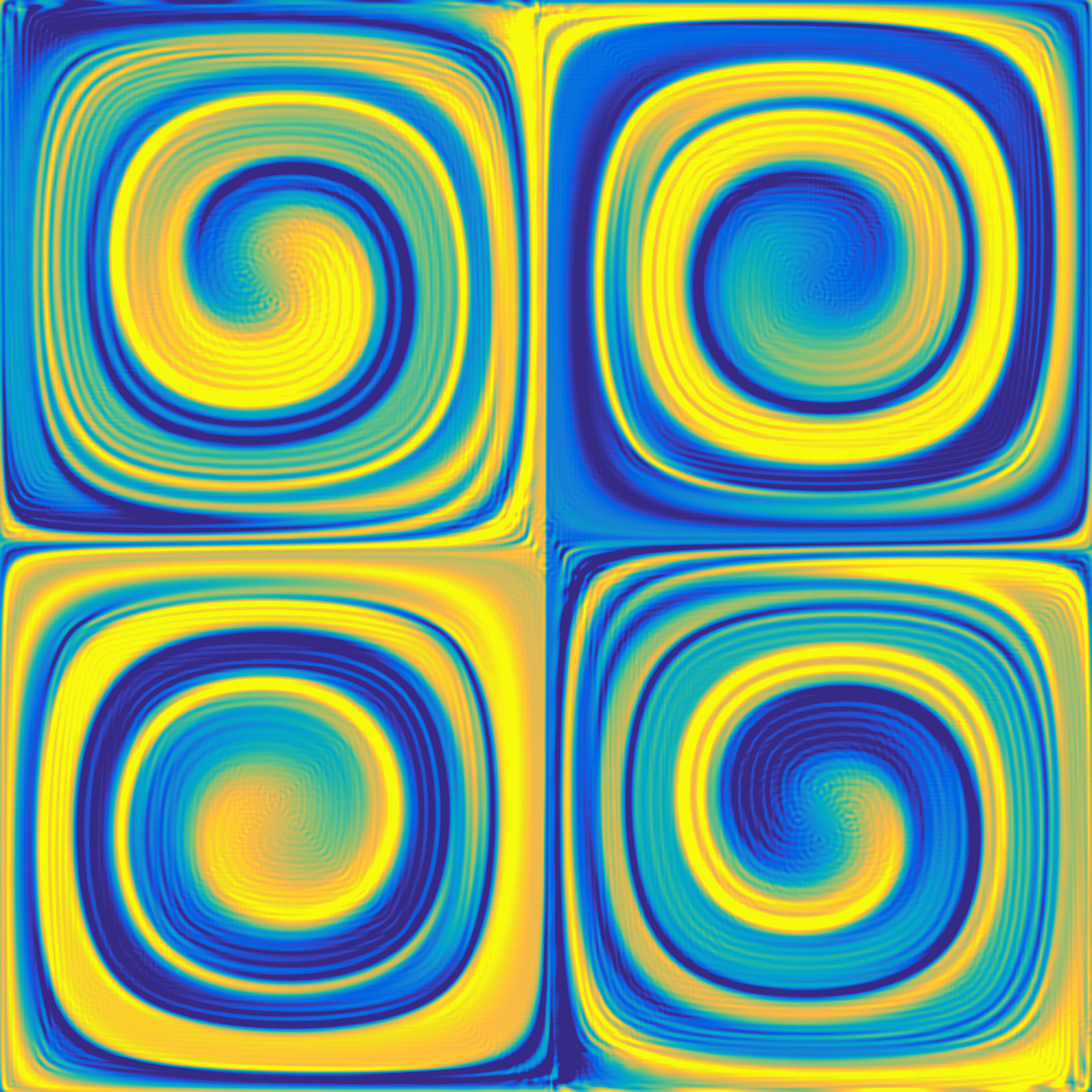}
    \caption{$t=1.0$}
  \end{subfigure}
  \caption{Evolution of $\theta_h$ for $t\in[0, 1]$ with initial control \eqref{eq:u1u2_2} and initial data \eqref{eq:theta_init_2}.}
  \label{fig:theta_init2_2}
\end{figure}

\subsection{Doswell frontogenesis basis}\label{sec:Doswell}
We next consider a more complex flow configuration to demonstrate the flexibility of our framework and to verify that the structure-preserving properties hold on a non-Cartesian mesh. In this section, the domain $\Omega = \{ \mathbf{x} \in \mathbb{R}^2 : |\mathbf{x} - \mathbf{x}_c| < R \}$ with center $\mathbf{x}_c = (0.5, 0.5)$ and radius $R=0.5$. The velocity field is driven by a single steady basis flow defined in polar coordinates $(r, \phi)$ centered at $\mathbf{x}_c$. To handle the circular geometry, we employ a polar grid. We emphasize that the structure-preserving properties of our scheme (Theorem~\ref{20251008-theorem-PerservedStructures}) rely on the skew-symmetry of the discrete divergence operator $D_u$, which in turn relies on the flux cancellation property of the finite-volume discretization.
Since the finite-volume formulation integrates fluxes across cell interfaces, it naturally extends to the curvilinear control volumes defined by the polar grid. By treating the polar mesh as a logical graph of adjacent control volumes, the matrix structure of the discrete operators remains identical to the Cartesian case.
We discretize the domain with $N_r = 500$ radial cells and $N_\phi = 500$ angular cells. The time step is set to $\Delta t = 0.001$ with a final time $T=5$ to allow more mixing in this larger time domain. We use the initial data 
\begin{align}
    \label{eq:init_Doswell}
    \theta^0(x_1,x_2) :=\tanh\!\left(\frac{x_2-0.5}{0.01}\right), ~(x_1,x_2)\in \Omega.
\end{align}

\textbf{Control basis design.}
In the following examples, we consider two prescribed velocity controls on the circular domain
$\Omega$. The first control $\mathbf{b}_1$ is the standard Doswell frontogenesis field defined in~\eqref{eq:v_Doswell}. This control is illustrated in Figure~\ref{fig:doswell1}, with a black circle as the boundary of the domain $\Omega$ and the blue curves as the velocity field. It is incompressible in $\Omega$ and satisfies the no-penetration boundary condition on $\partial\Omega$. 

The second control, $\mathbf{b}_2$, is a five-cell (multi-vortex) Doswell-type frontogenesis field formed by superposing five Doswell vortices supported on five embedded discs within $\Omega$:
four congruent discs arranged around the periphery of $\Omega$ and one disc centered at the middle of $\Omega$. Each disc carries a Doswell vortex centered at its disc center. To enforce no-penetration at each disc boundary and to obtain $C^1$ spatial regularity across the interface when the
field is extended by zero outside the disc, we multiply each vortex by the radial cutoff
\[
C(r)=\bigl(1-(r/R_c)^2\bigr)^3 \quad (0\le r<R_c), 
\qquad C(r)=0 \quad (r\ge R_c),
\]
where $R_c$ denotes the disc radius and $r$ is the distance to the corresponding vortex center. This choice satisfies $C(R_c)=0$ and $C'(R_c)=0$. The resulting control $\mathbf{b}_2$ remains incompressible in $\Omega$ and satisfies $\mathbf{b}_2\cdot\mathbf{n}=0$ on $\partial\Omega$.
Figure~\ref{fig:doswell2} displays $\mathbf{b}_2$ with red dashed lines as the five embedded discs within $\Omega$. This design introduces finer-scale structure than $\mathbf{b}_1$ and is intended to enhance stirring interactions with $\mathbf{b}_1$, thereby promoting more efficient mixing.

\begin{figure}
  \centering
  \begin{subfigure}[b]{0.4\textwidth}
    \includegraphics[width=\textwidth]{./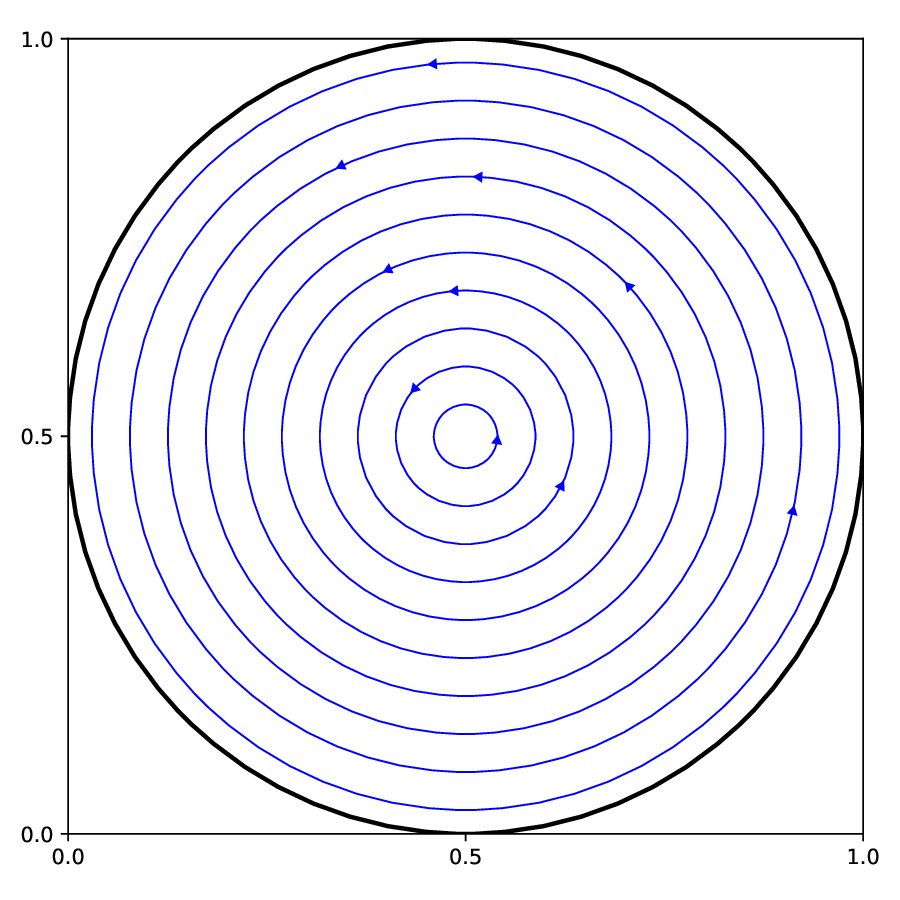}
    \caption{$\mathbf{b}_1$ Doswell basis}
    \label{fig:doswell1}
  \end{subfigure}
  \begin{subfigure}[b]{0.4\textwidth}
    \includegraphics[width=\textwidth]{./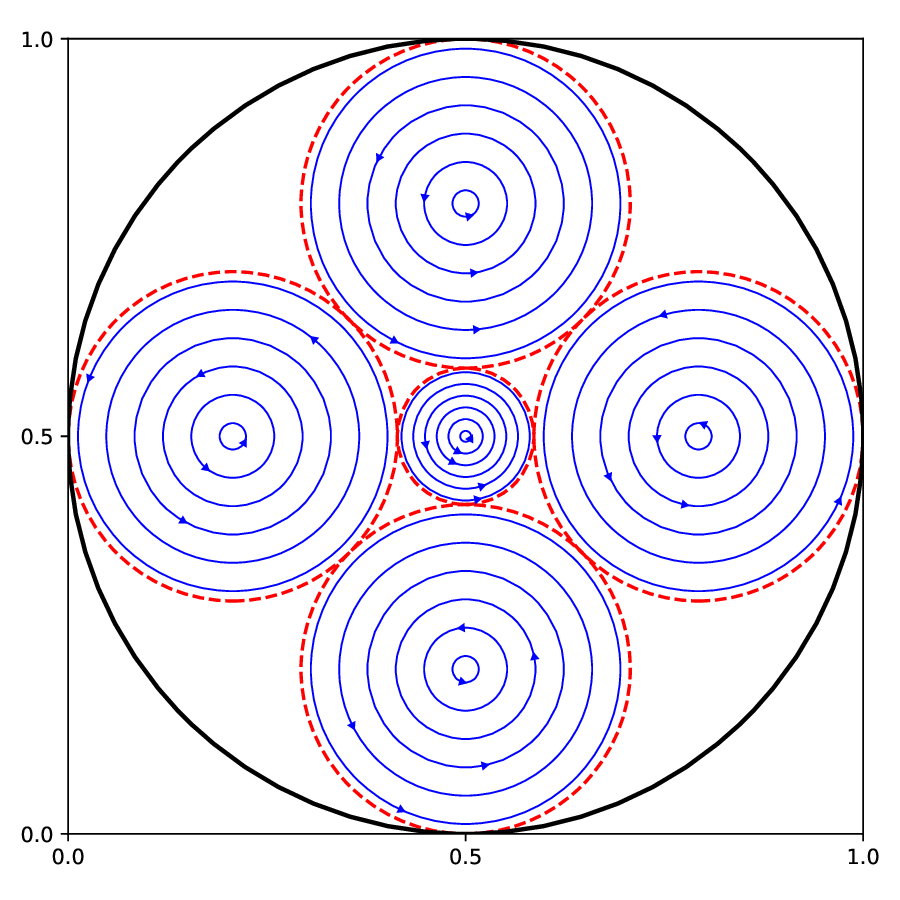}
    \caption{$\mathbf{b}_2$ Doswell basis}
    \label{fig:doswell2}
  \end{subfigure}
  \caption{Doswell frontogenesis basis flows $\mathbf{b}_1$ and $\mathbf{b}_2$.}
  \label{fig:doswell}
\end{figure}

\subsubsection{Single Doswell flow baseline mixing}
We first examine the effect of each basis flow acting alone. We simulate the evolution of $\theta_h$ with the  initial data \eqref{eq:init_Doswell} for two cases that $\mathbf{v}(t,x)=\mathbf{b}_1(x)$ and $\mathbf{v}(t,x)=\mathbf{b}_2(x)$. In both cases, the flow is held steady (no time modulation) to isolate the natural mixing effect of that mode. Figure~\ref{fig:mixnorm_b1_Doswell} plots the mix-norm $\|\theta_h(t)\|_{\dot H^{-1}(\Omega)}$ versus time for these single-flow simulations. We observe that, under $\mathbf{b}_1$ alone, the mix-norm exhibits a slow polynomial decay over time (Figure~\ref{fig:mixnorm_b1_Doswell}). A similar polynomial decay is seen for $\mathbf{b}_2$ alone (Figure~\ref{fig:mixnorm_b2_Doswell}). These baseline results indicate that neither $\mathbf{b}_1$ nor $\mathbf{b}_2$ by itself can mix the scalar efficiently in this scenario. Figures~\ref{fig:theta_Doswell_1} and \ref{fig:theta_Doswell_2} show the evolution of the scalar field under $\mathbf{b}_1$ and $\mathbf{b}_2$, respectively, confirming that while some distortion of the initial concentrations occurs, large unmixed regions remain even at $t=5$ when only a single mode is active.

\begin{figure}
  \centering
  \begin{subfigure}[b]{0.4\textwidth}
    \includegraphics[width=\textwidth]{./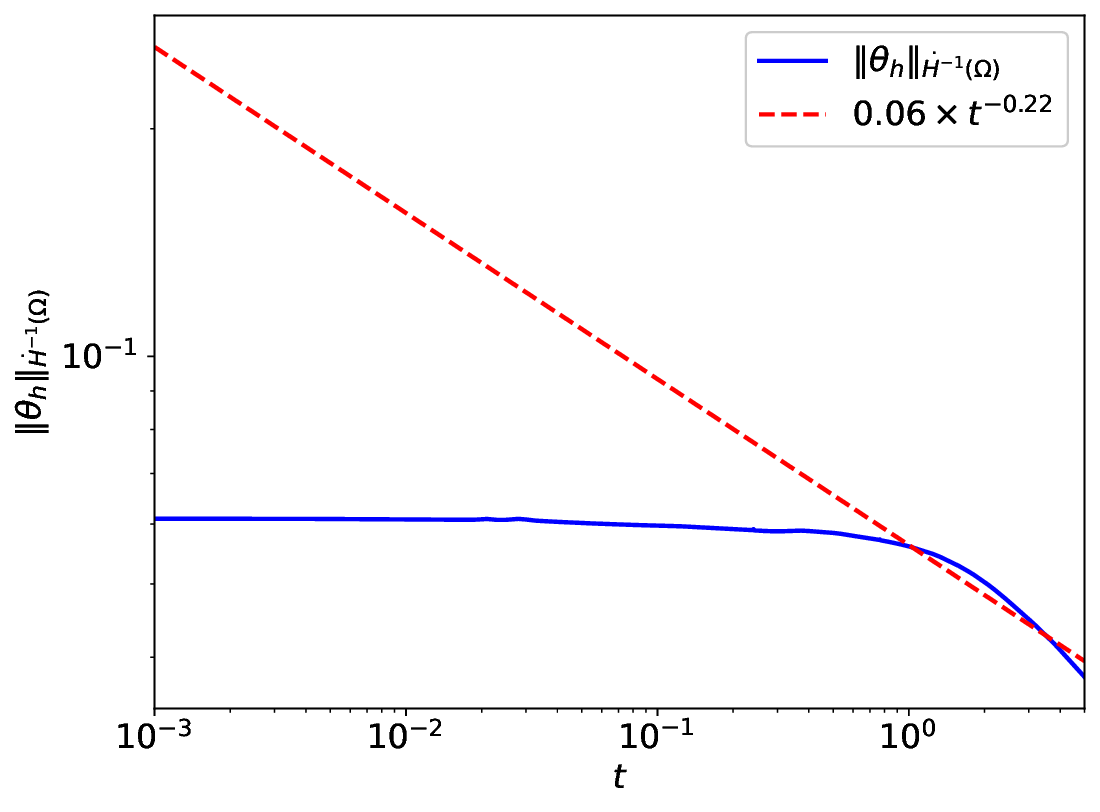}
    \caption{Evolution of $\Vert \theta_h \Vert_{\dot{H}^{-1}(\Omega)}$ with Doswell basis $\mathbf{b}_1$}
    \label{fig:mixnorm_b1_Doswell}
  \end{subfigure}
  \begin{subfigure}[b]{0.4\textwidth}
    \includegraphics[width=\textwidth]{./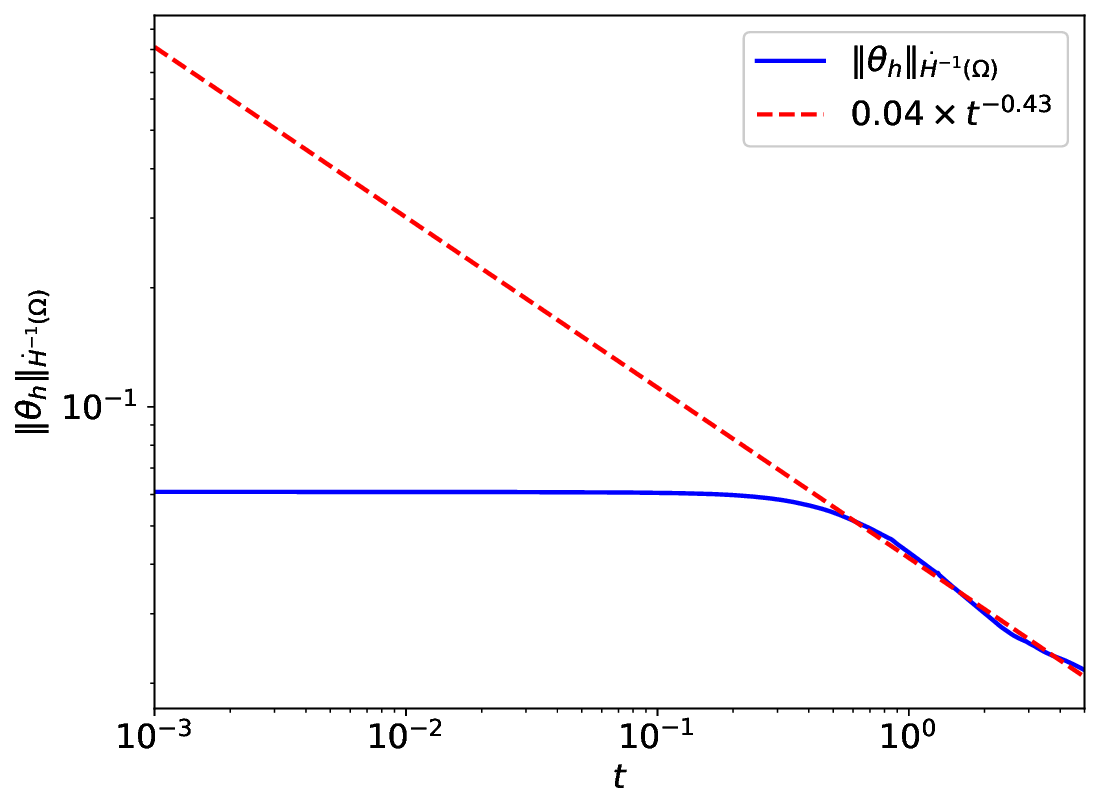}
    \caption{Evolution of $\Vert \theta_h \Vert_{\dot{H}^{-1}(\Omega)}$ with Doswell basis $\mathbf{b}_2$}
    \label{fig:mixnorm_b2_Doswell}
  \end{subfigure}
  \caption{Evolution of $\Vert \theta_h \Vert_{\dot{H}^{-1}(\Omega)}$ with Doswell basis $\mathbf{b}_1$ and $\mathbf{b}_2$.}
\end{figure}

\begin{figure}
  \centering
  \begin{subfigure}[b]{0.16\textwidth}
    \includegraphics[width=\textwidth]{./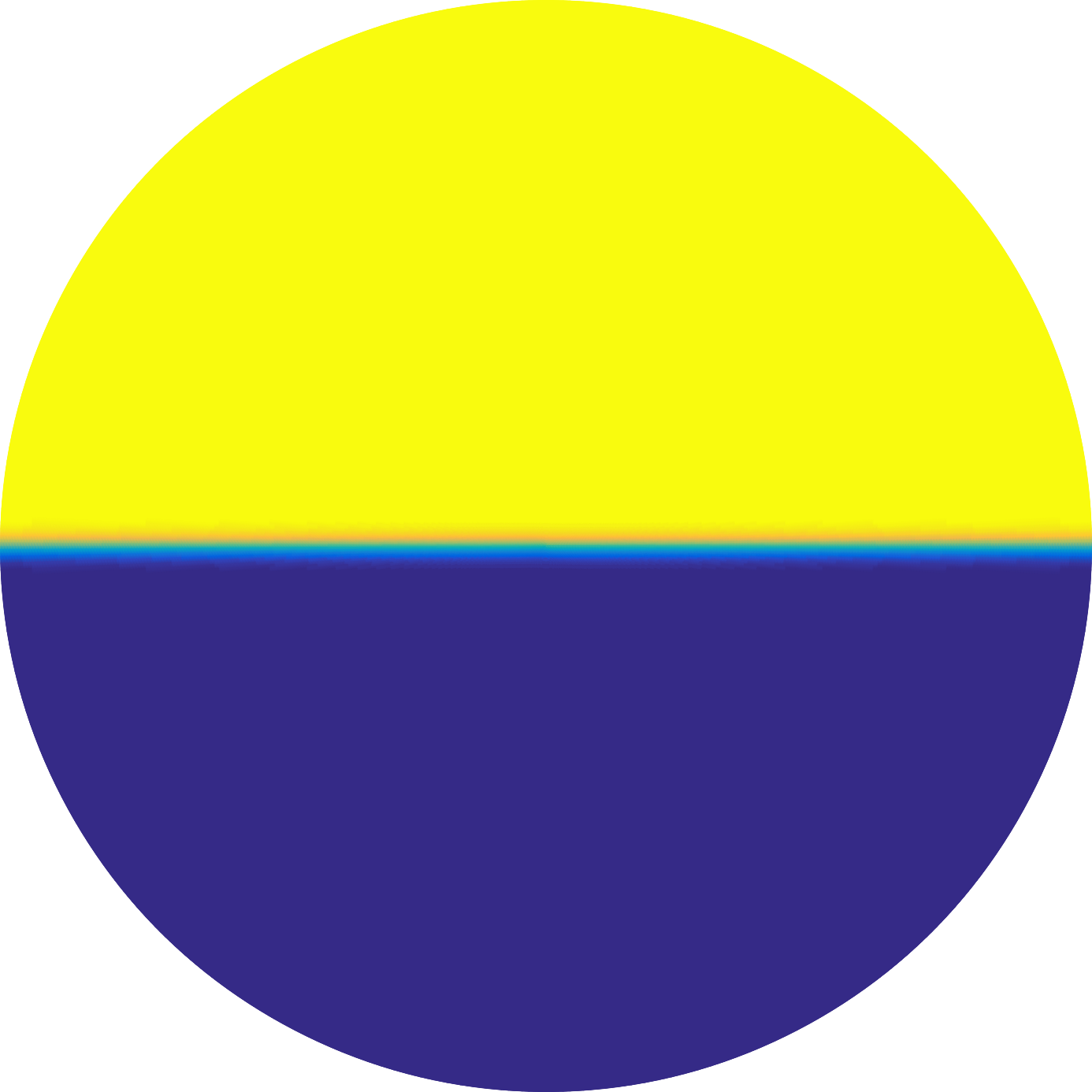}
    \caption{$t=0$}
  \end{subfigure}
  \begin{subfigure}[b]{0.16\textwidth}
    \includegraphics[width=\textwidth]{./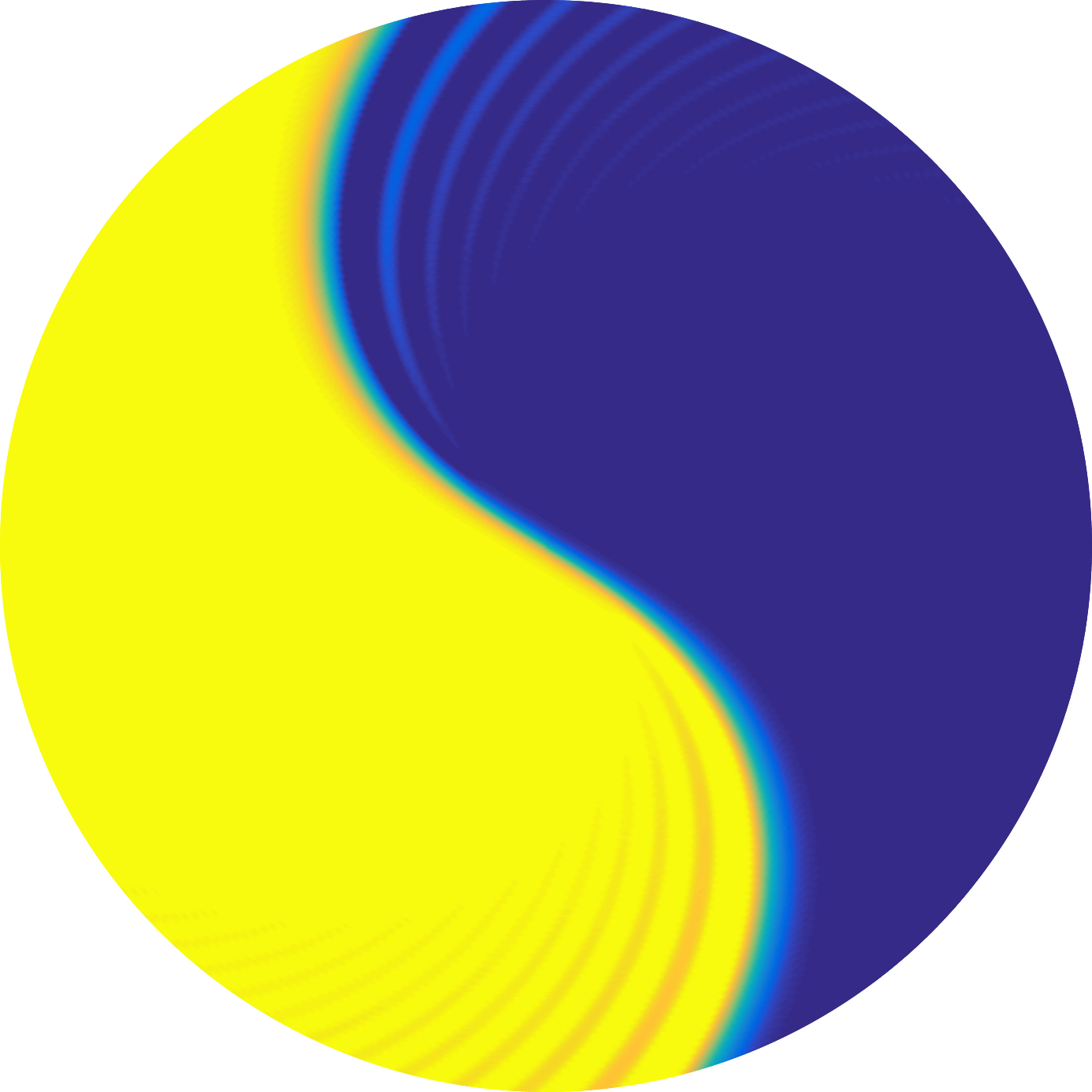}
    \caption{$t=1$}
  \end{subfigure}
  \begin{subfigure}[b]{0.16\textwidth}
    \includegraphics[width=\textwidth]{./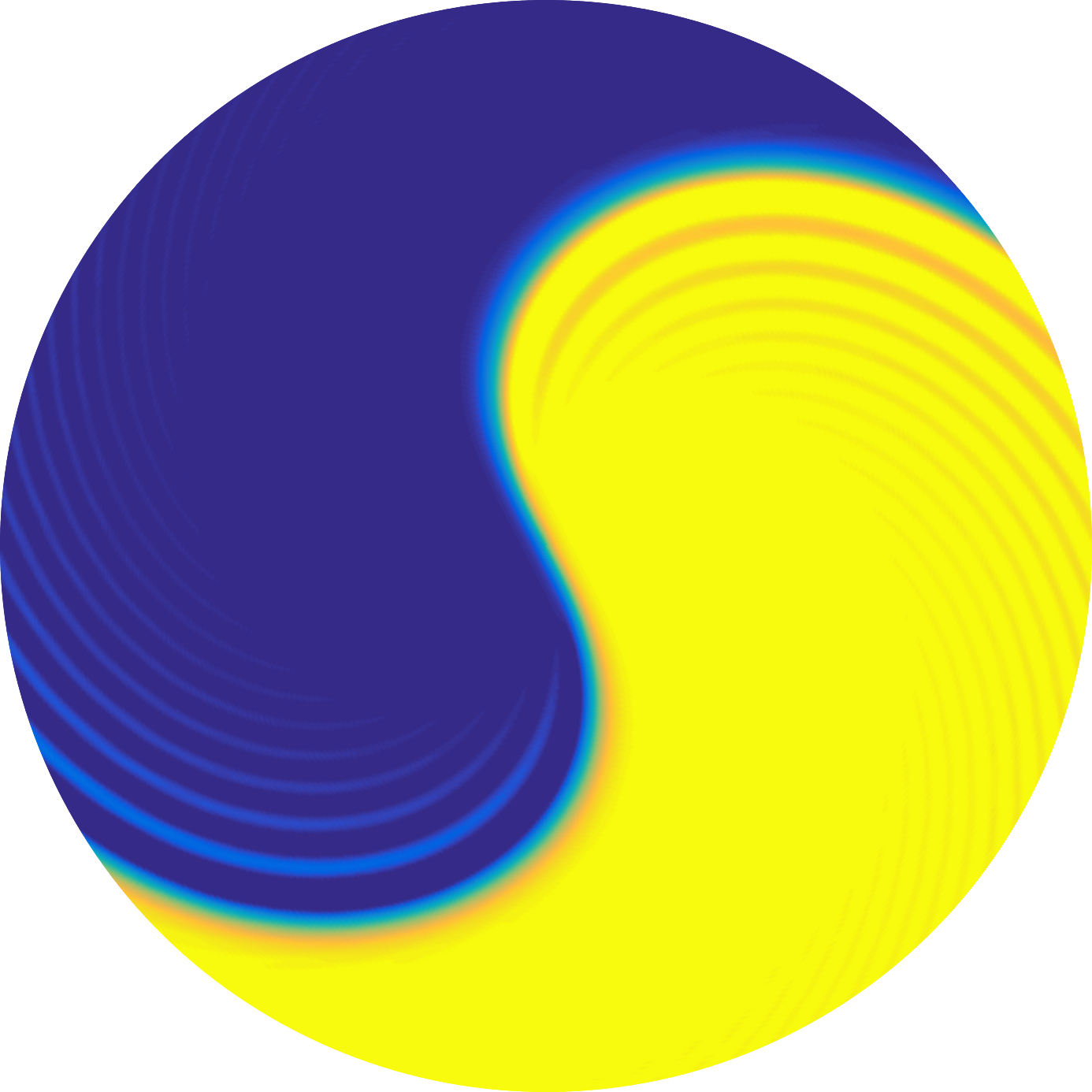}
    \caption{$t=2$}
  \end{subfigure}
  \begin{subfigure}[b]{0.16\textwidth}
    \includegraphics[width=\textwidth]{./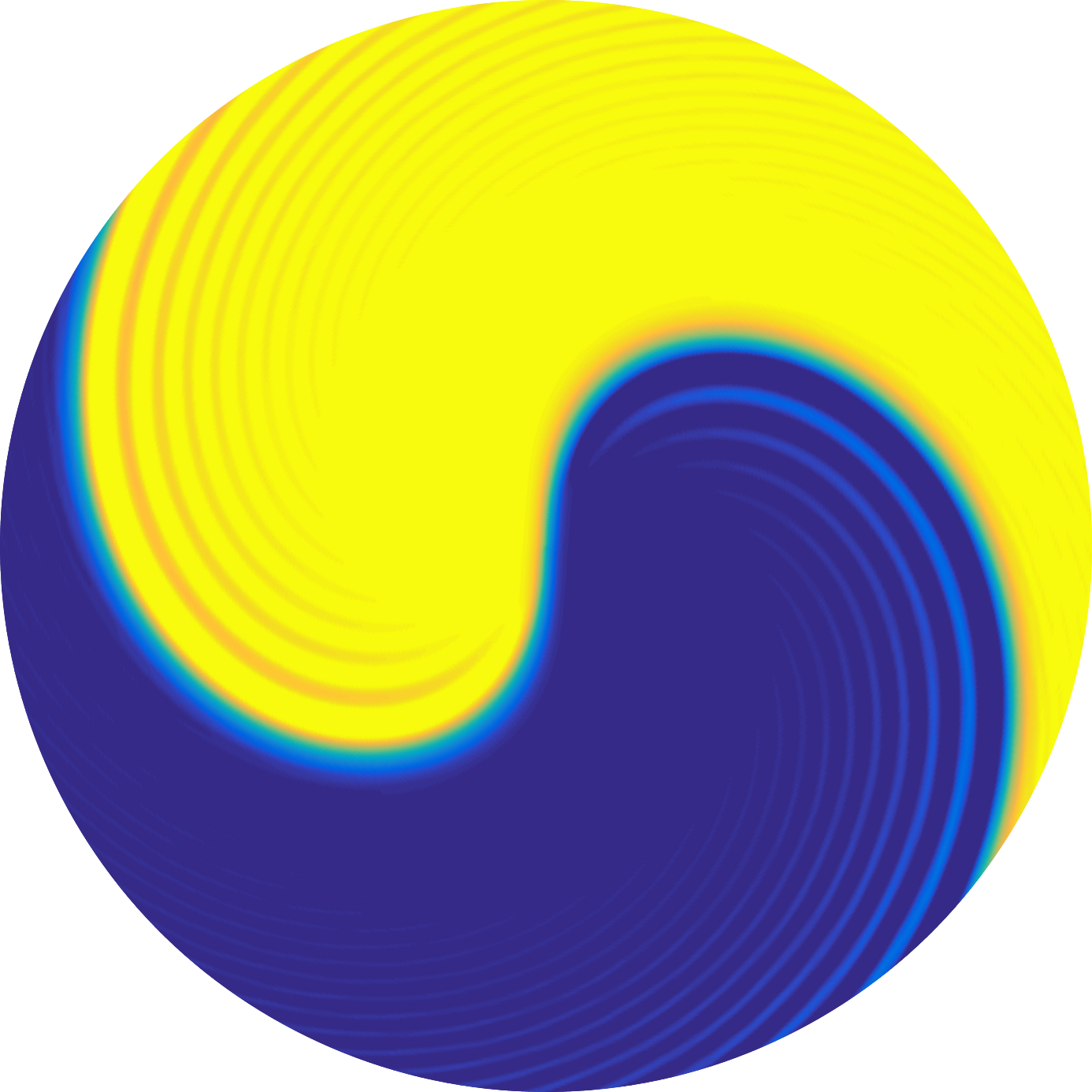}
    \caption{$t=3$}
  \end{subfigure}
  \begin{subfigure}[b]{0.16\textwidth}
    \includegraphics[width=\textwidth]{./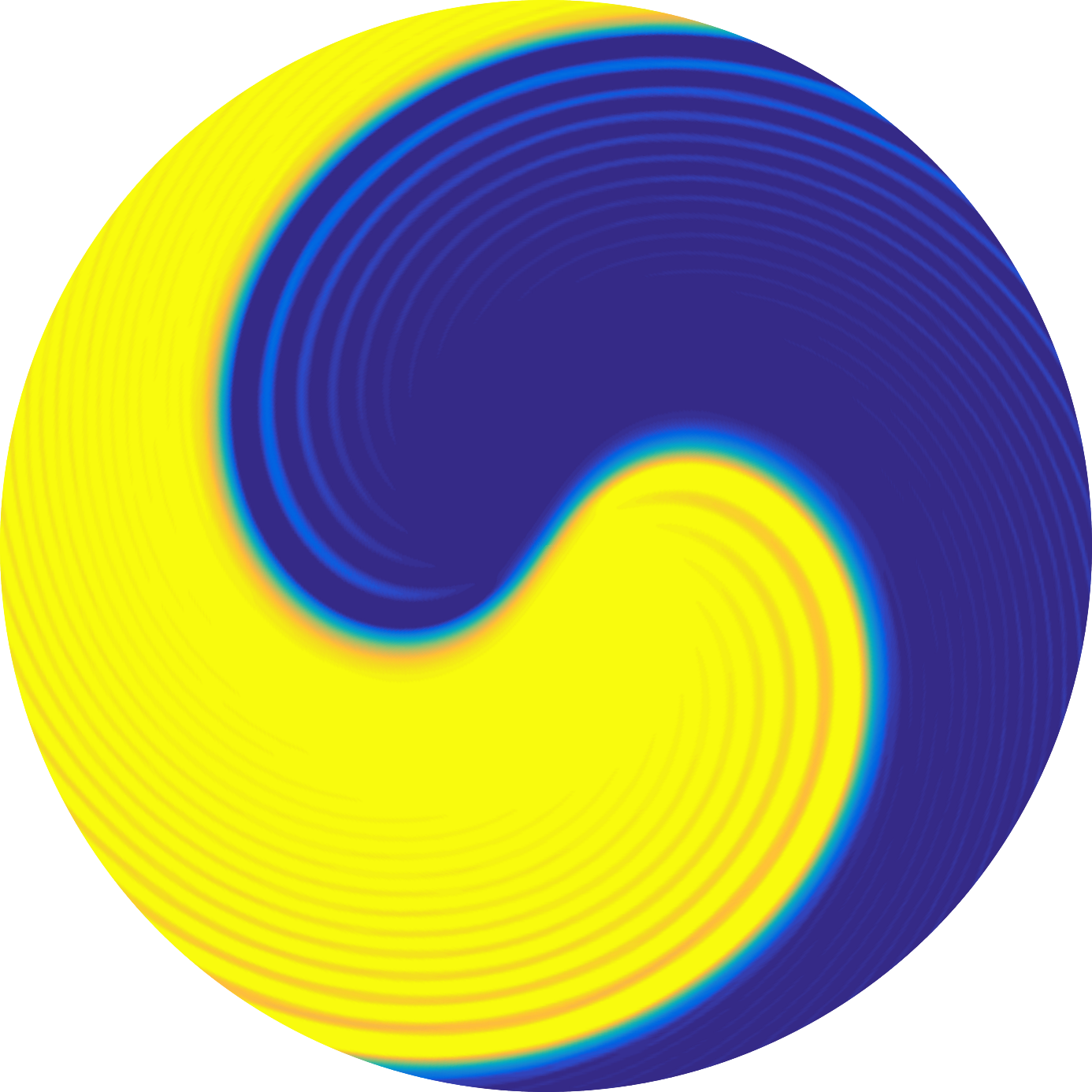}
    \caption{$t=4$}
  \end{subfigure}
  \begin{subfigure}[b]{0.16\textwidth}
    \includegraphics[width=\textwidth]{./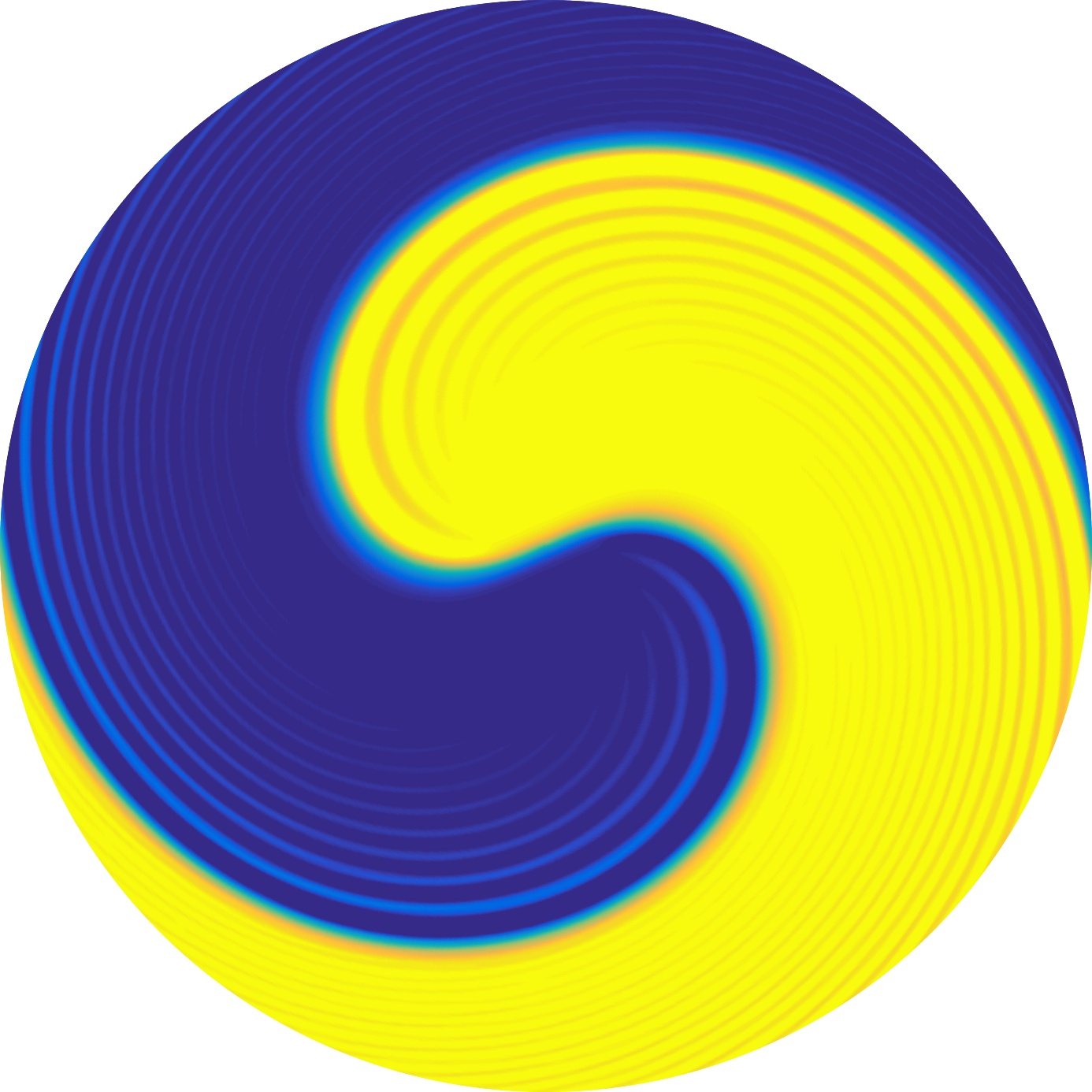}
    \caption{$t=5$}
  \end{subfigure}
  \caption{Evolution of $\theta_h$ for $t\in[0, 5]$ with Doswell basis $\mathbf{b}_1$ and initial data \eqref{eq:init_Doswell}.}
  \label{fig:theta_Doswell_1}
\end{figure}

\begin{figure}
  \centering
  \begin{subfigure}[b]{0.16\textwidth}
    \includegraphics[width=\textwidth]{./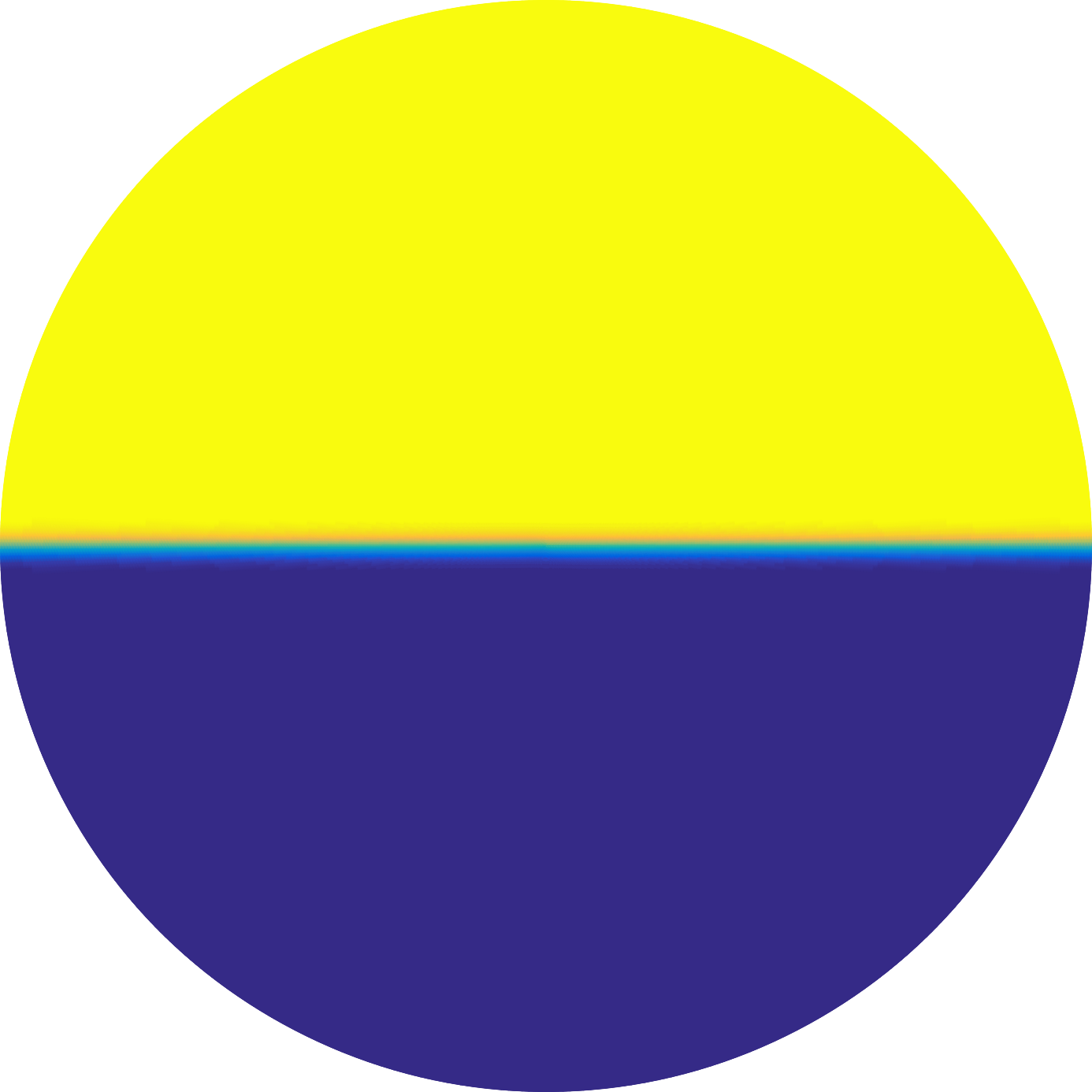}
    \caption{$t=0$}
  \end{subfigure}
  \begin{subfigure}[b]{0.16\textwidth}
    \includegraphics[width=\textwidth]{./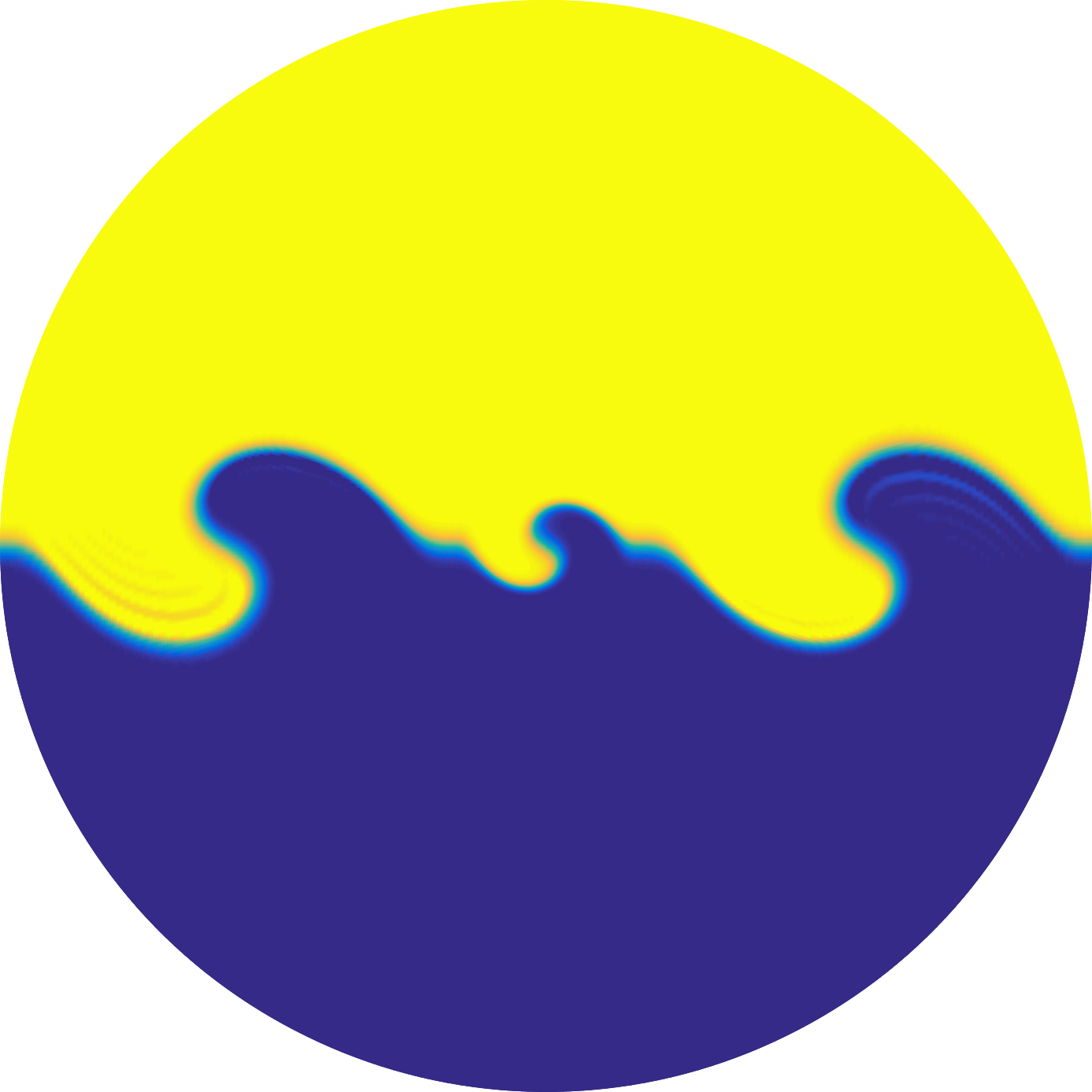}
    \caption{$t=1$}
  \end{subfigure}
  \begin{subfigure}[b]{0.16\textwidth}
    \includegraphics[width=\textwidth]{./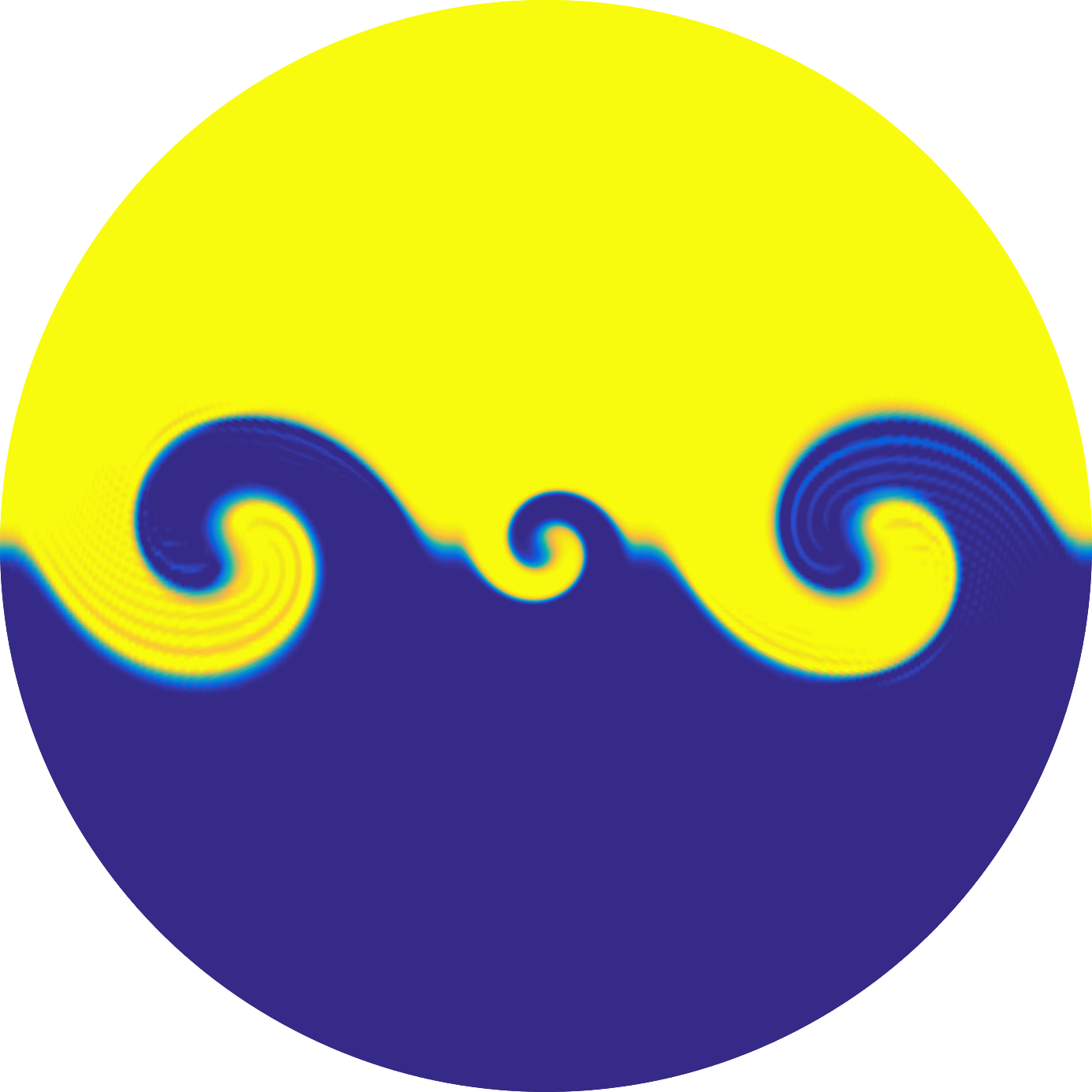}
    \caption{$t=2$}
  \end{subfigure}
  \begin{subfigure}[b]{0.16\textwidth}
    \includegraphics[width=\textwidth]{./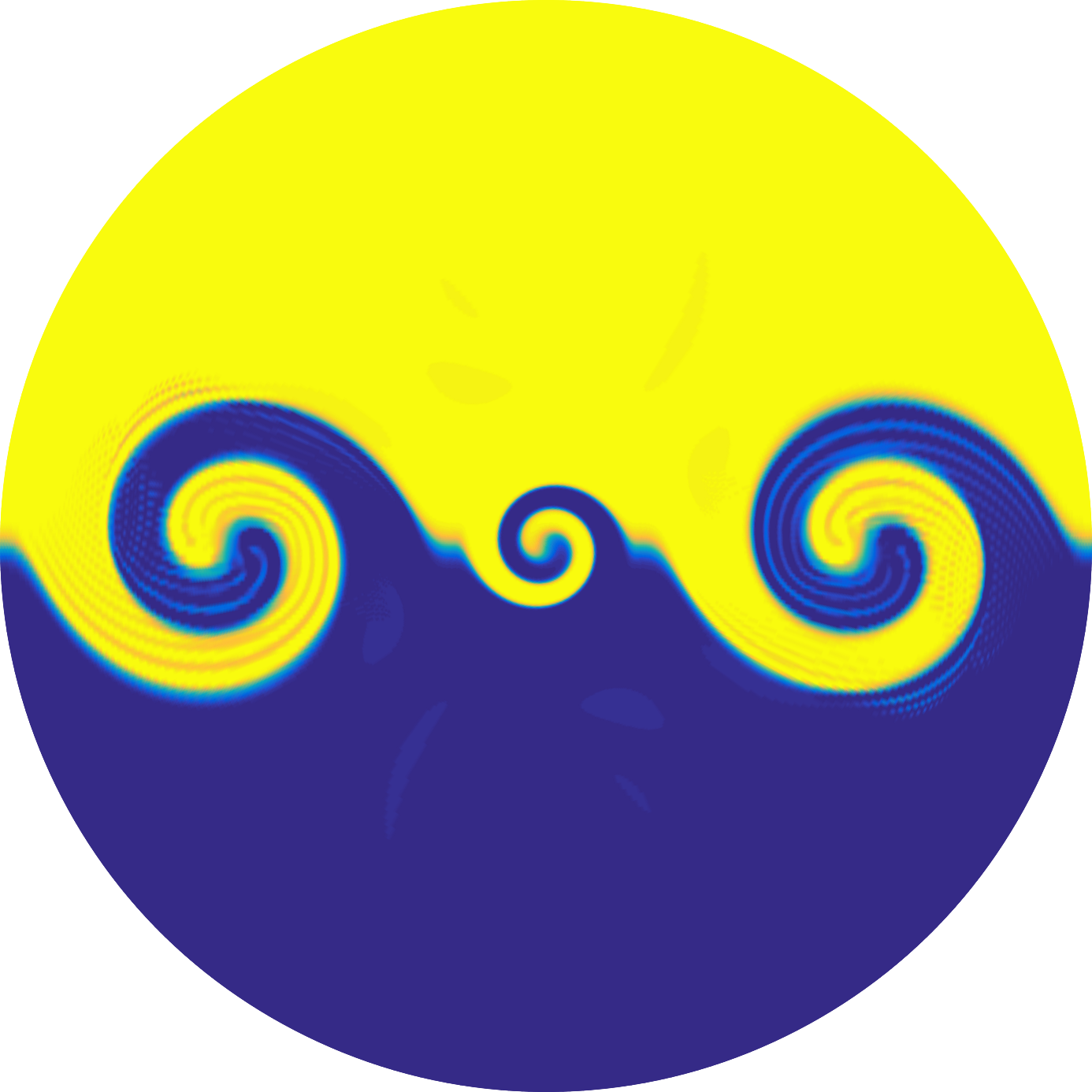}
    \caption{$t=3$}
  \end{subfigure}
  \begin{subfigure}[b]{0.16\textwidth}
    \includegraphics[width=\textwidth]{./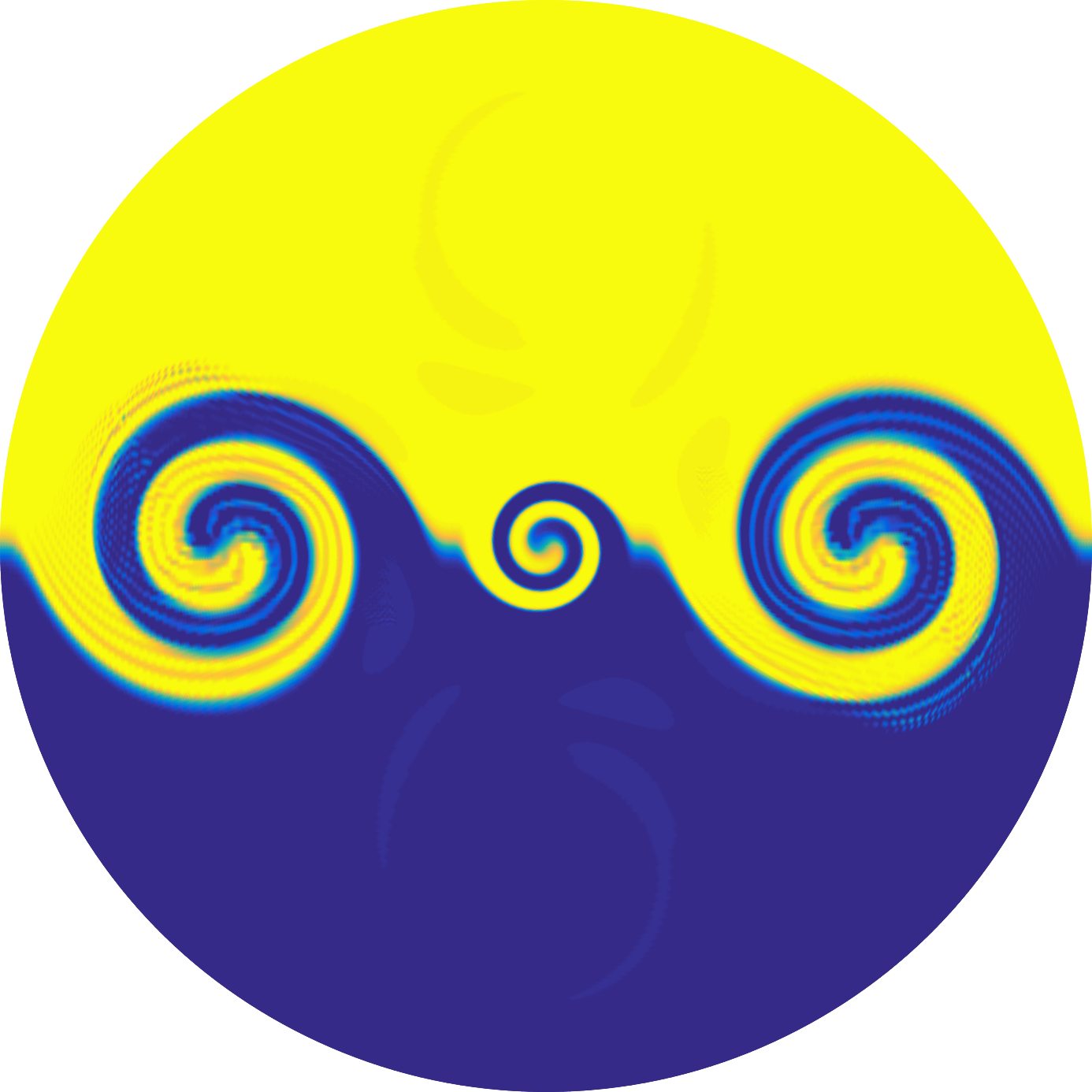}
    \caption{$t=4$}
  \end{subfigure}
  \begin{subfigure}[b]{0.16\textwidth}
    \includegraphics[width=\textwidth]{./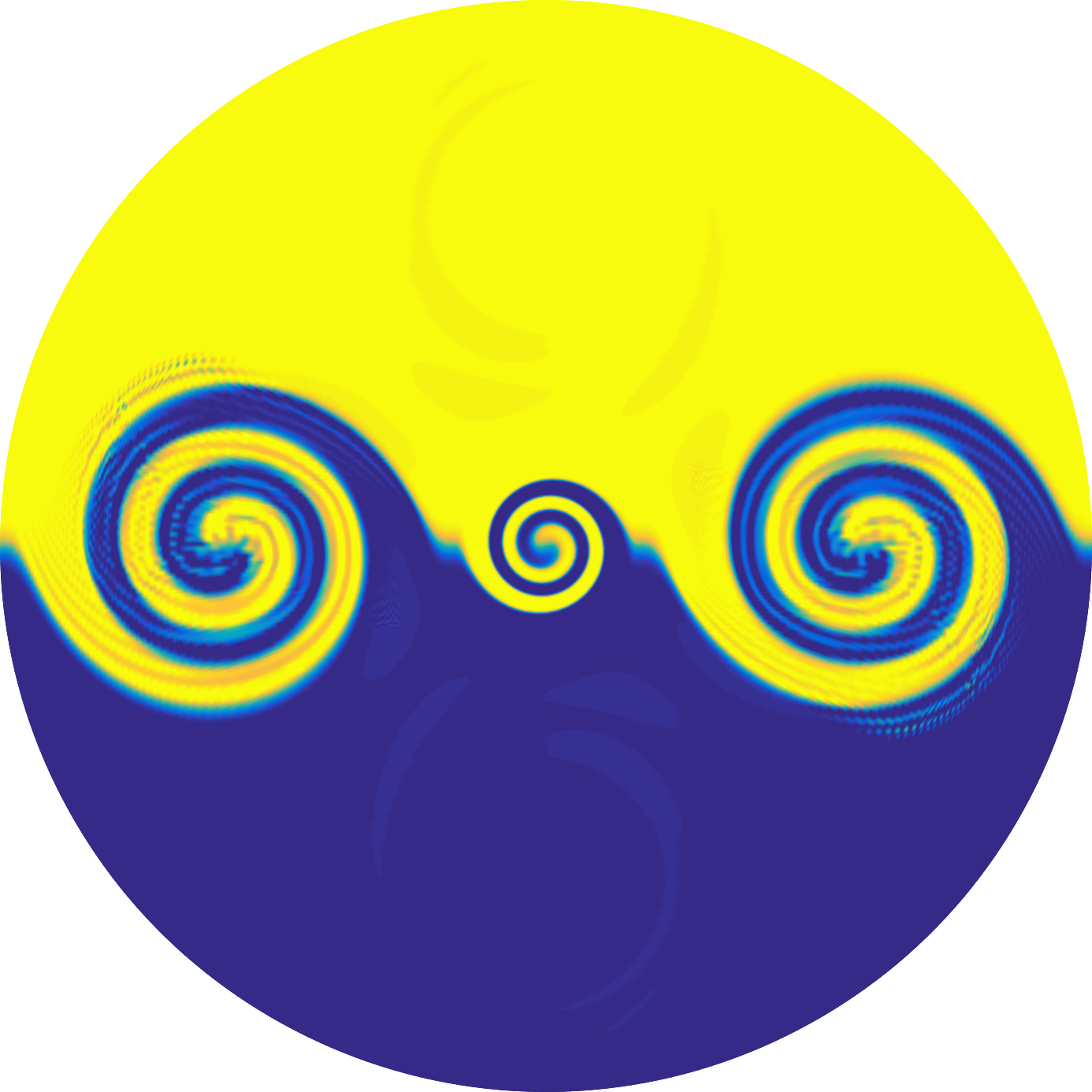}
    \caption{$t=5$}
  \end{subfigure}
  \caption{Evolution of $\theta_h$ for $t\in[0, 5]$ with Doswell basis $\mathbf{b}_2$ and initial data \eqref{eq:init_Doswell}.}
  \label{fig:theta_Doswell_2}
\end{figure}

\subsubsection{Optimal control for Doswell frontogenesis design}
We now apply the optimal control algorithm using the combination \eqref{eq:v_finite} to mix the scalar in $\Omega$. We perform the optimization with two different initial guesses for the control coefficients. The first is the constant guess
\begin{align}
\label{eq:u1_Doswell_1}
    v_1(t)=1,~v_2(t)=1,~ t\in(0,5),
\end{align}
and the second initial guess is 
\begin{align}\label{eq:u1_Doswell_2}
v_1(t)=\cos(\pi t / 2),
~v_2(t)=\sin(\pi t / 2),~ t\in(0,5).
\end{align}
Starting from the first guess \eqref{eq:u1_Doswell_1}, the algorithm converges to an optimal control solution that induces a rapid mixing process. Figure~\ref{fig:u1u2_Doswell_1} shows the optimized control coefficients $v_1(t)$ and $v_2(t)$. Notably, one coefficient diminishes while the other intensifies at certain periods, suggesting that the algorithm finds an optimal schedule for activating $\mathbf{b}_1$ and $\mathbf{b}_2$ in sequence to achieve the best mixing. As shown in Figure~\ref{fig:mixnorm_Doswell_1}, the mix-norm under the optimized control decays rapidly, in fact displaying a near-exponential decrease. We estimate an exponential decay rate of approximately $0.30$ in this case. This decay is much faster than the polynomial decay observed for either $\mathbf{b}_1$ or $\mathbf{b}_2$ alone, underscoring the benefit of using an optimized combination of flows. Furthermore, Figure~\ref{fig:property_Doswell_1} verifies that the conservation properties of our numerical scheme hold on the curvilinear mesh just as they did on the Cartesian grid. All invariants stay within machine precision over the entire simulation. This confirms that the structure-preserving attributes of the scheme are robust to the change of domain geometry and mesh structure. Finally, Figure~\ref{fig:theta_Doswell_optimal_1} depicts snapshots of the scalar field $\theta_h(t,x)$ at selected times under the optimized control.

\begin{figure}[H]
  \centering
  \begin{subfigure}[b]{0.315\textwidth}
    \includegraphics[width=\textwidth]{./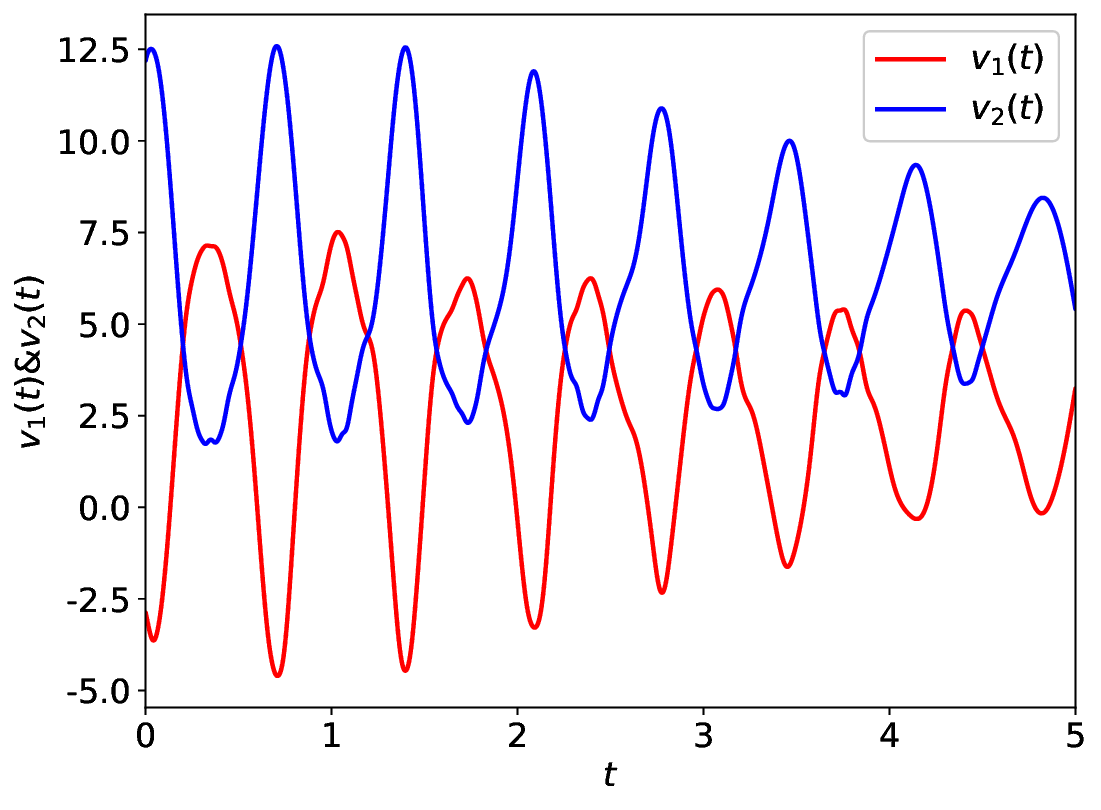}
    \caption{Evolutions of $v_1(t)$, $v_2(t)$.}
    \label{fig:u1u2_Doswell_1}
  \end{subfigure}
  \begin{subfigure}[b]{0.335\textwidth}
    \includegraphics[width=\textwidth]{./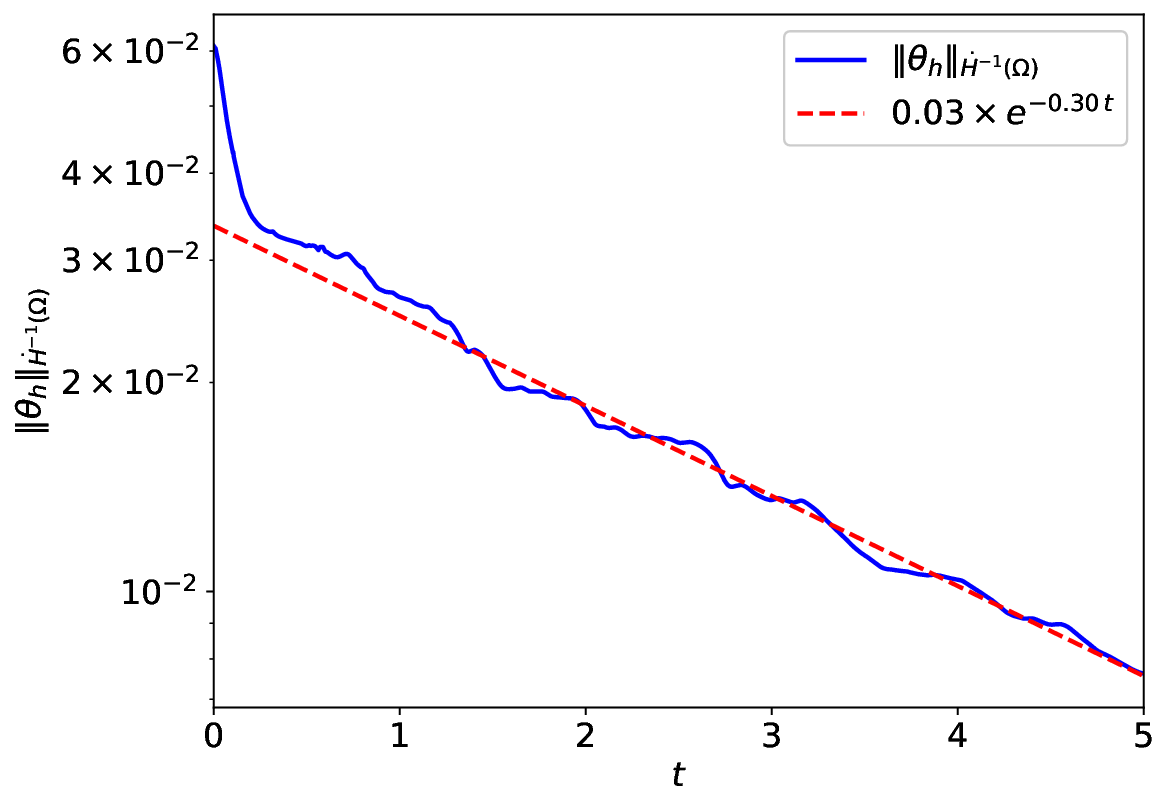}
    \caption{Evolution of $\Vert \theta_h \Vert_{\dot{H}^{-1}(\Omega)}$.}
    \label{fig:mixnorm_Doswell_1}
  \end{subfigure}
  \begin{subfigure}[b]{0.33\textwidth}
    \includegraphics[width=\textwidth]{./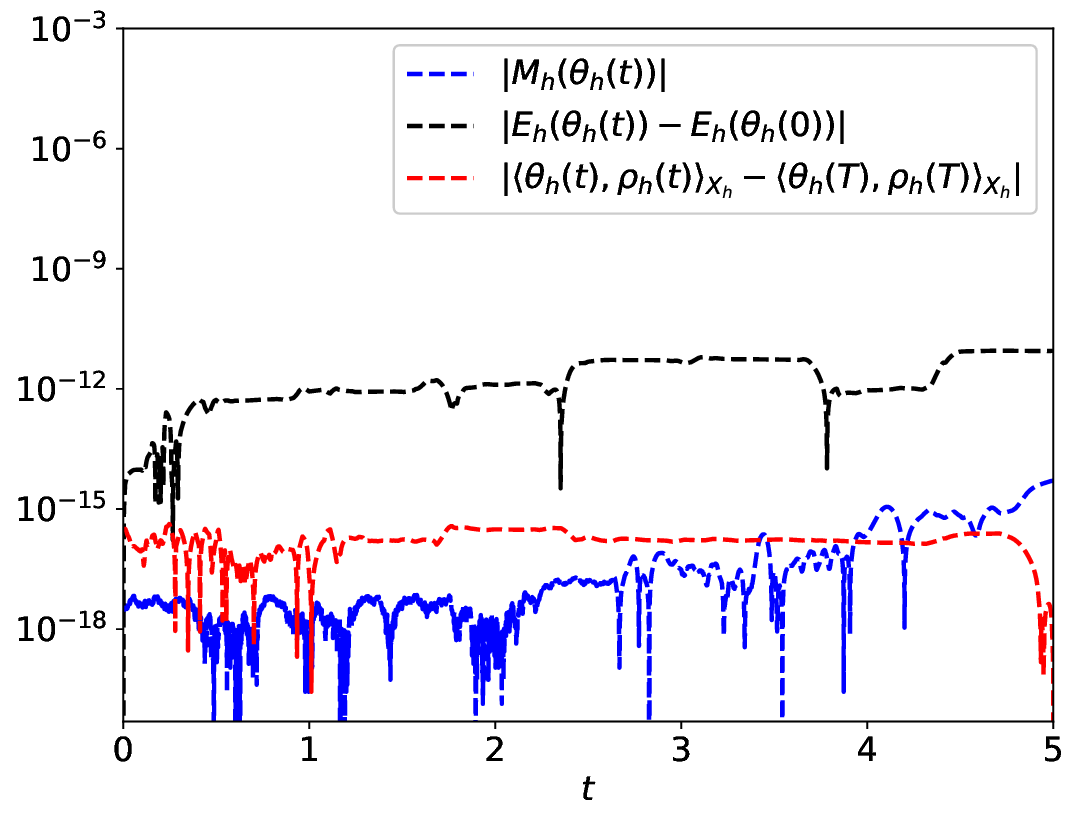}
    \caption{$M_h(\theta_h)$, $E_h(\theta_h)$, $\langle \theta_h, \rho_h \rangle_{X_h}$.}
    \label{fig:property_Doswell_1}
  \end{subfigure}
    \caption{Evolutions of $v(t)$, mix-norm $\Vert \theta_h \Vert_{\dot{H}^{-1}(\Omega)}$, mass $M_h(\theta_h)$, energy $E_h(\theta_h)$ and state-adjoint pairing $\langle \theta_h, \rho_h \rangle_{X_h}$ for $t\in[0, 5]$ with initial control \eqref{eq:u1_Doswell_1} and initial data \eqref{eq:init_Doswell}.}
  \label{fig:u1_mixnorm_Doswell_1}
\end{figure}

\begin{figure}[H]
  \centering
  \begin{subfigure}[b]{0.16\textwidth}
    \includegraphics[width=\textwidth]{./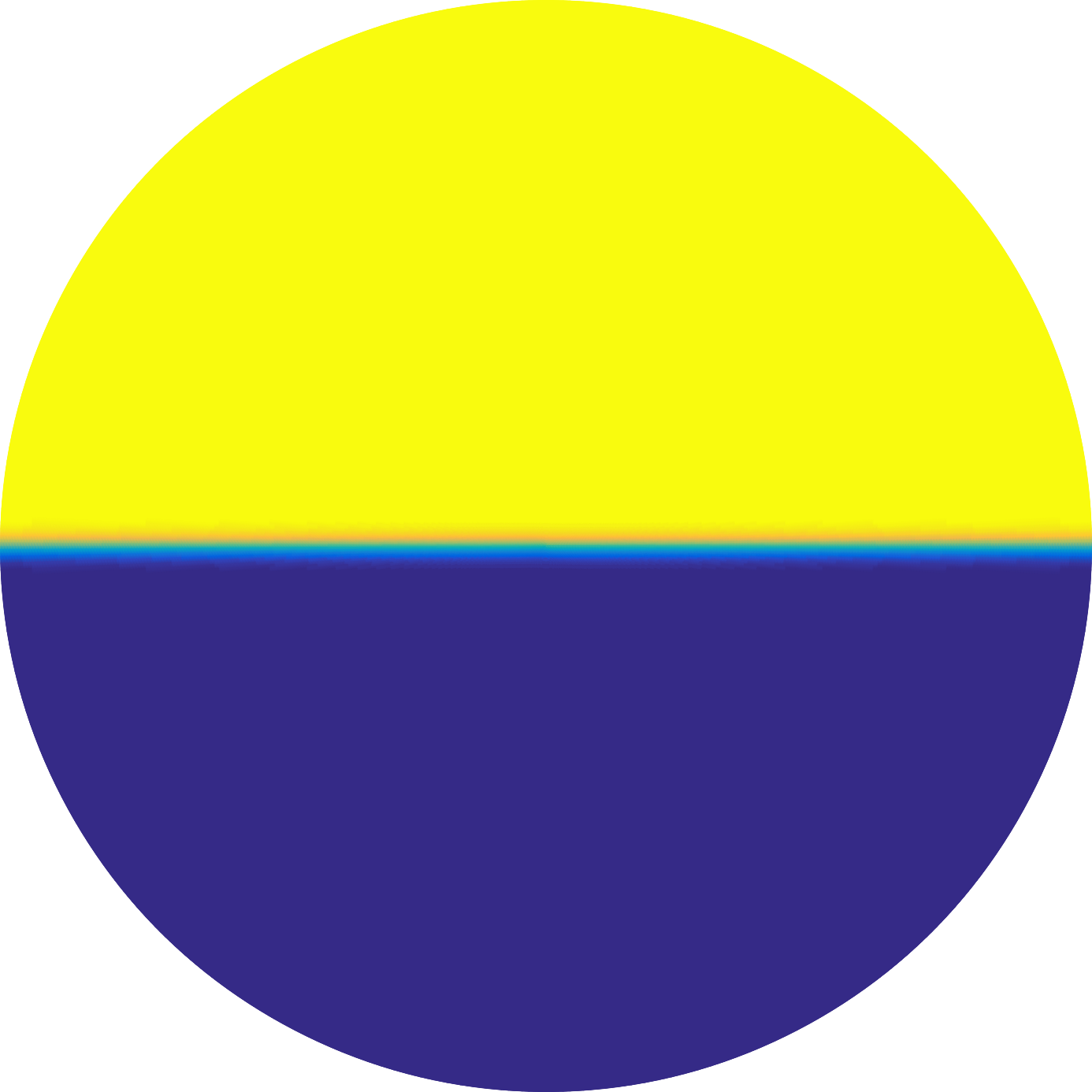}
    \caption{$t=0$}
  \end{subfigure}
  \begin{subfigure}[b]{0.16\textwidth}
    \includegraphics[width=\textwidth]{./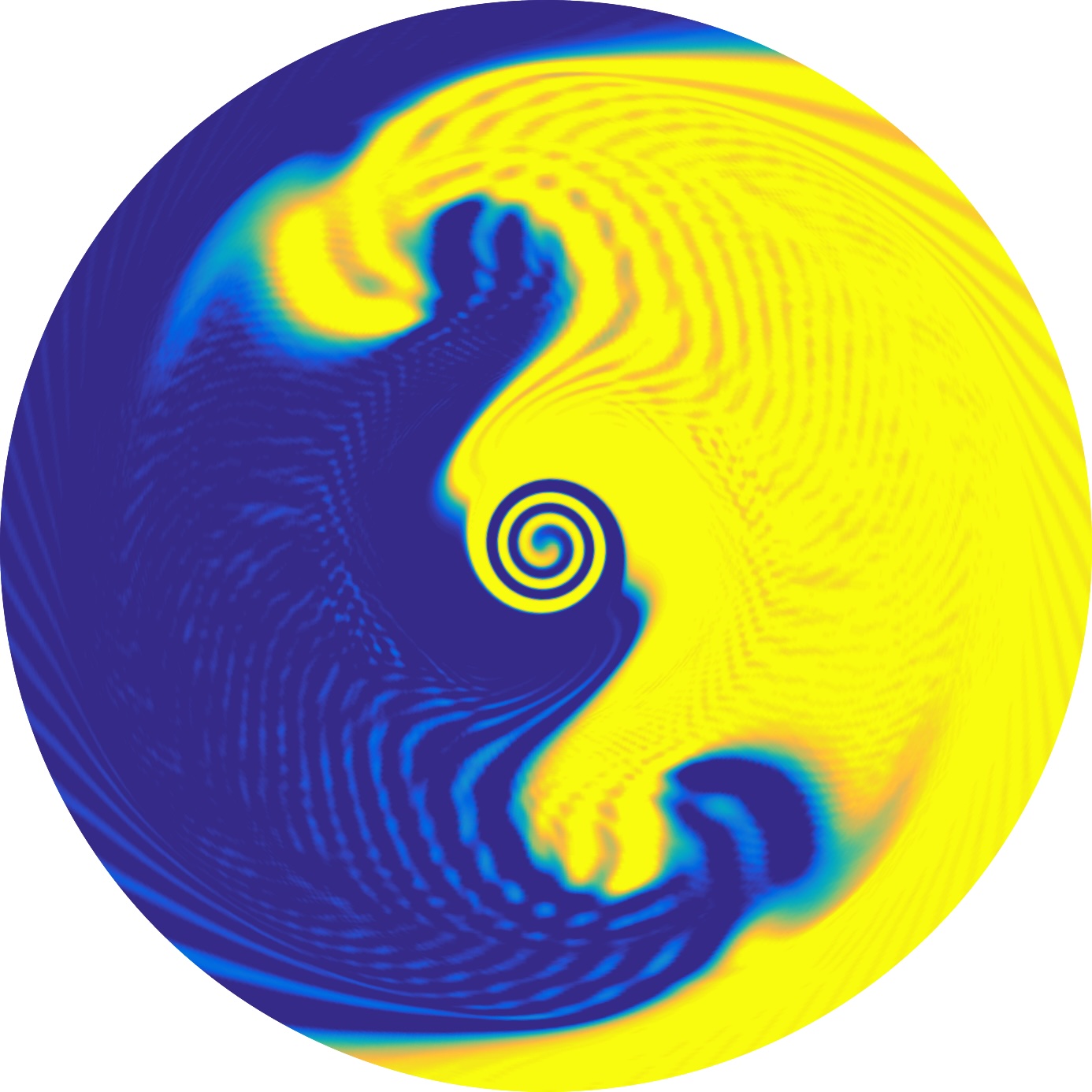}
    \caption{$t=1$}
  \end{subfigure}
  \begin{subfigure}[b]{0.16\textwidth}
    \includegraphics[width=\textwidth]{./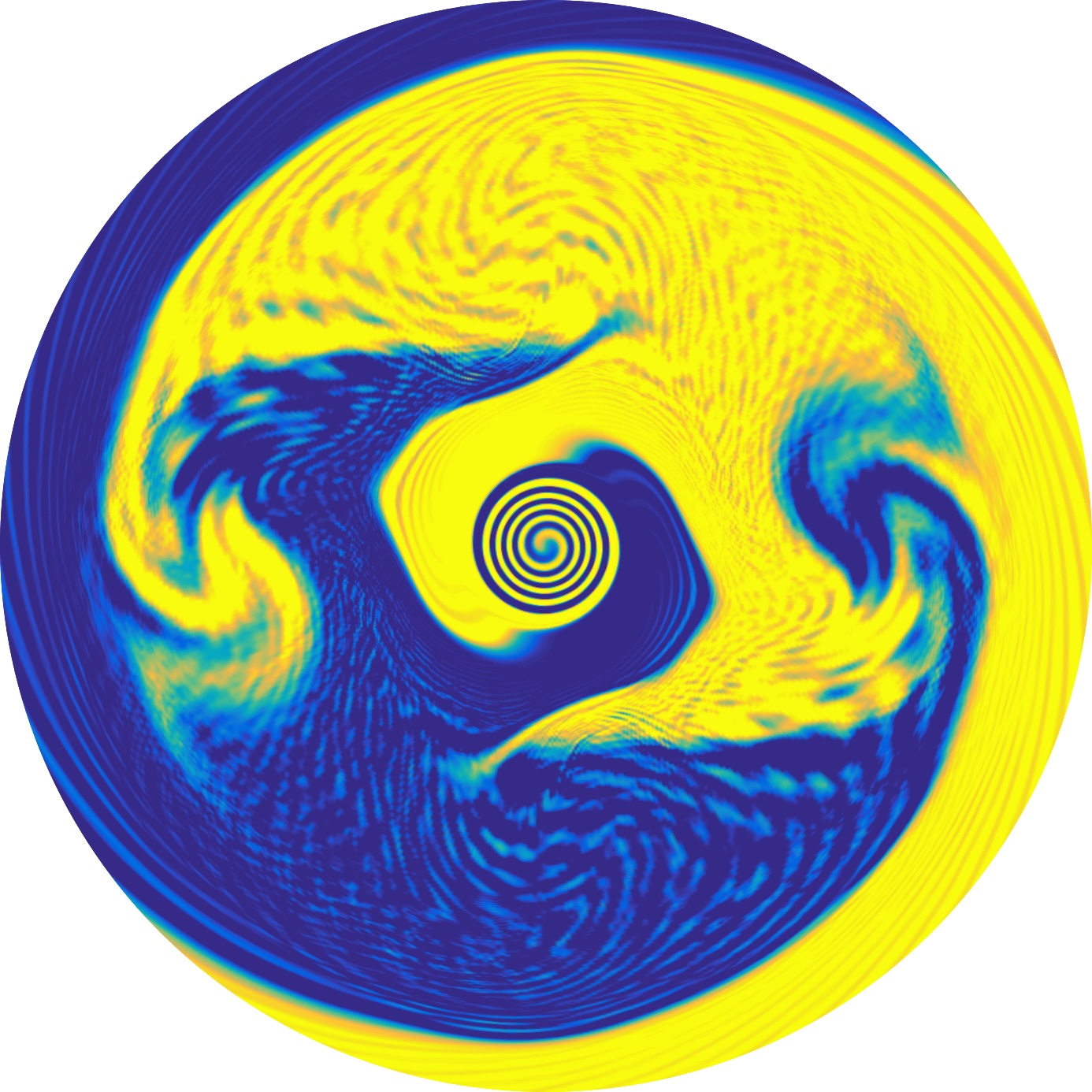}
    \caption{$t=2$}
  \end{subfigure}
  \begin{subfigure}[b]{0.16\textwidth}
    \includegraphics[width=\textwidth]{./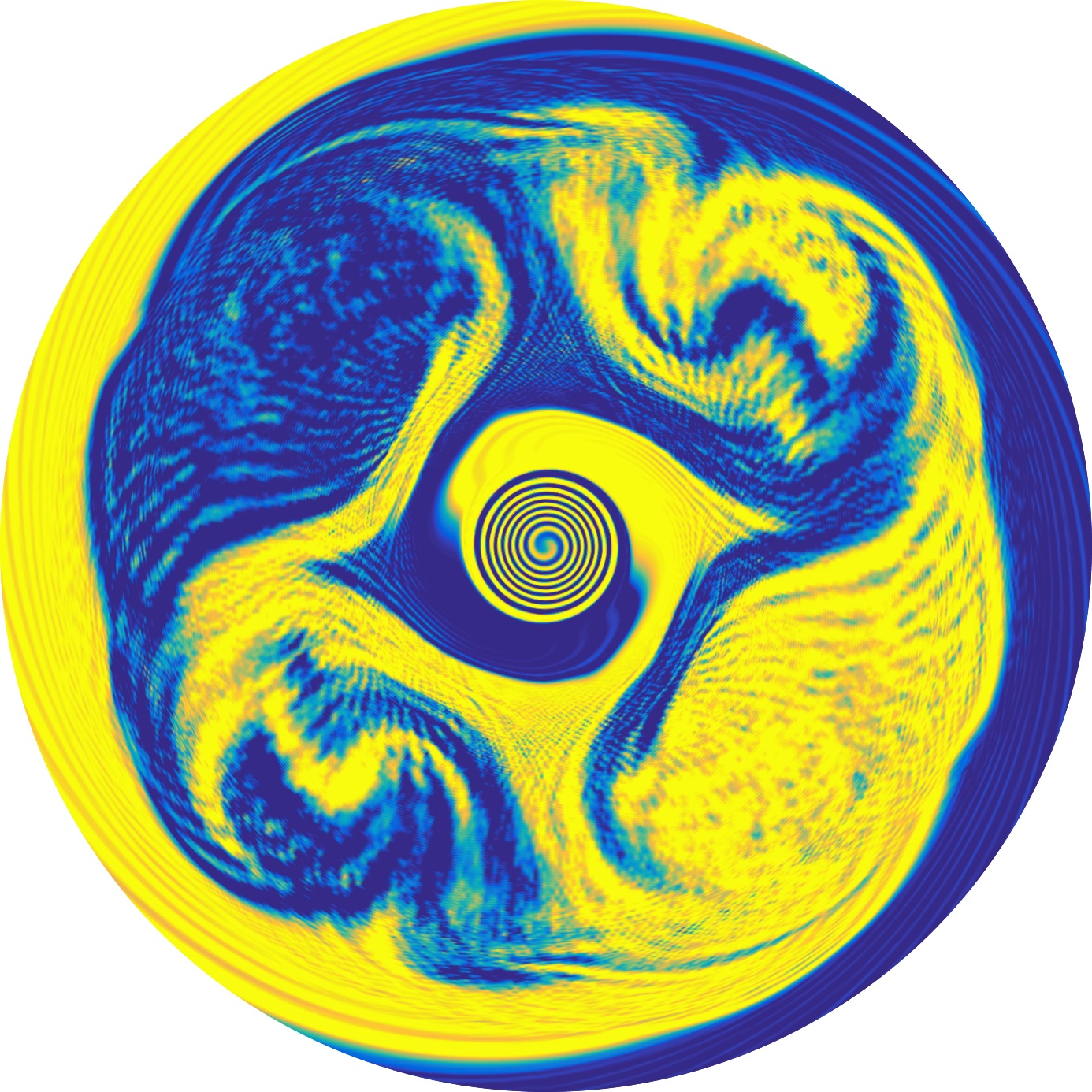}
    \caption{$t=3$}
  \end{subfigure}
  \begin{subfigure}[b]{0.16\textwidth}
    \includegraphics[width=\textwidth]{./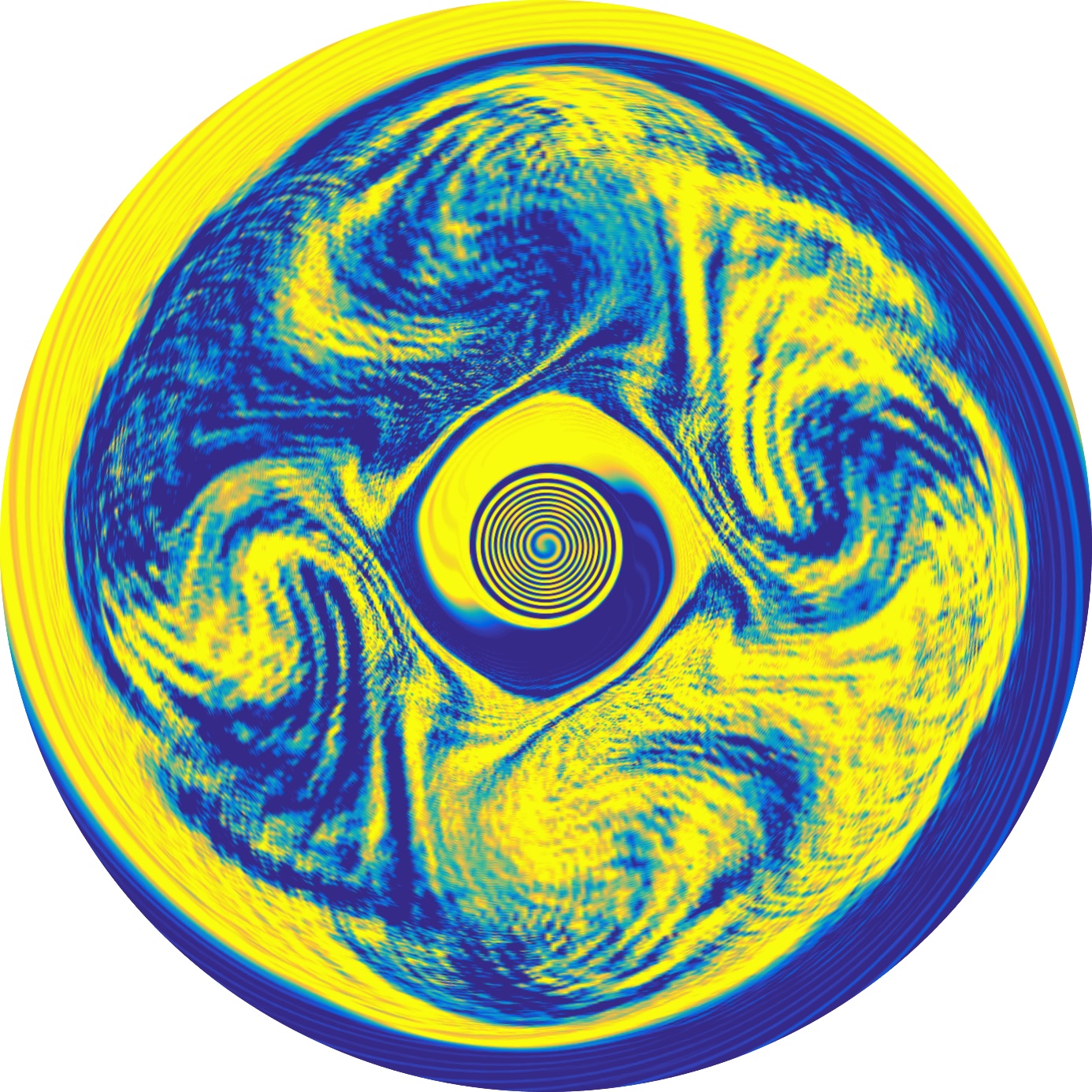}
    \caption{$t=4$}
  \end{subfigure}
  \begin{subfigure}[b]{0.16\textwidth}
    \includegraphics[width=\textwidth]{./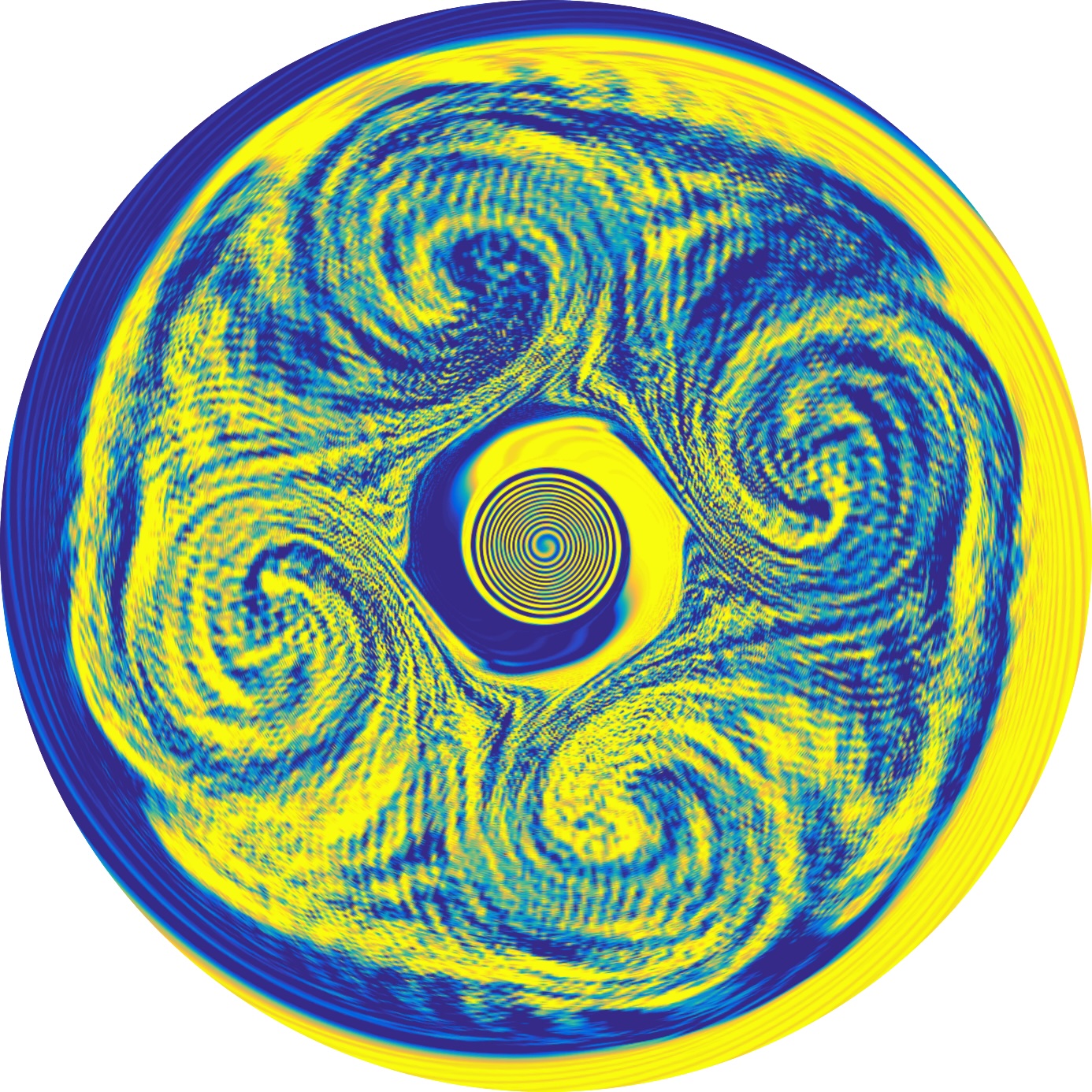}
    \caption{$t=5$}
  \end{subfigure}
  \caption{Evolution of $\theta_h$ for $t\in[0, 5]$ with initial control \eqref{eq:u1_Doswell_1} and initial data \eqref{eq:init_Doswell}.}
  \label{fig:theta_Doswell_optimal_1}
\end{figure}

We repeat the optimization with the initial guess~\eqref{eq:u1_Doswell_2}. Figure~\ref{fig:u1u2_Doswell_2} shows the resulting optimized coefficients. The mix-norm again decays in a near-exponential fashion (Figure~\ref{fig:mixnorm_Doswell_2}), with a rate of approximately $0.25$. The conservation of the properties in Figure~\ref{fig:property_Doswell_2} remains. Figure~\ref{fig:theta_Doswell_optimal_2} shows snapshots of the scalar field.

In summary, the Doswell-flow experiments demonstrate that our optimal control approach can effectively handle more complex flow patterns and domains. The optimized control strategy yields a substantially faster reduction in the mix-norm (almost exponential in time) compared to any single-flow strategy, even on a non-Cartesian domain. At the same time, the strict preservation of mass, energy, and state-adjoint consistency of the scheme carries over to this setting, giving confidence in the physical reliability of the numerical solutions.

\begin{figure}[H]
  \centering
  \begin{subfigure}[b]{0.315\textwidth}
    \includegraphics[width=\textwidth]{./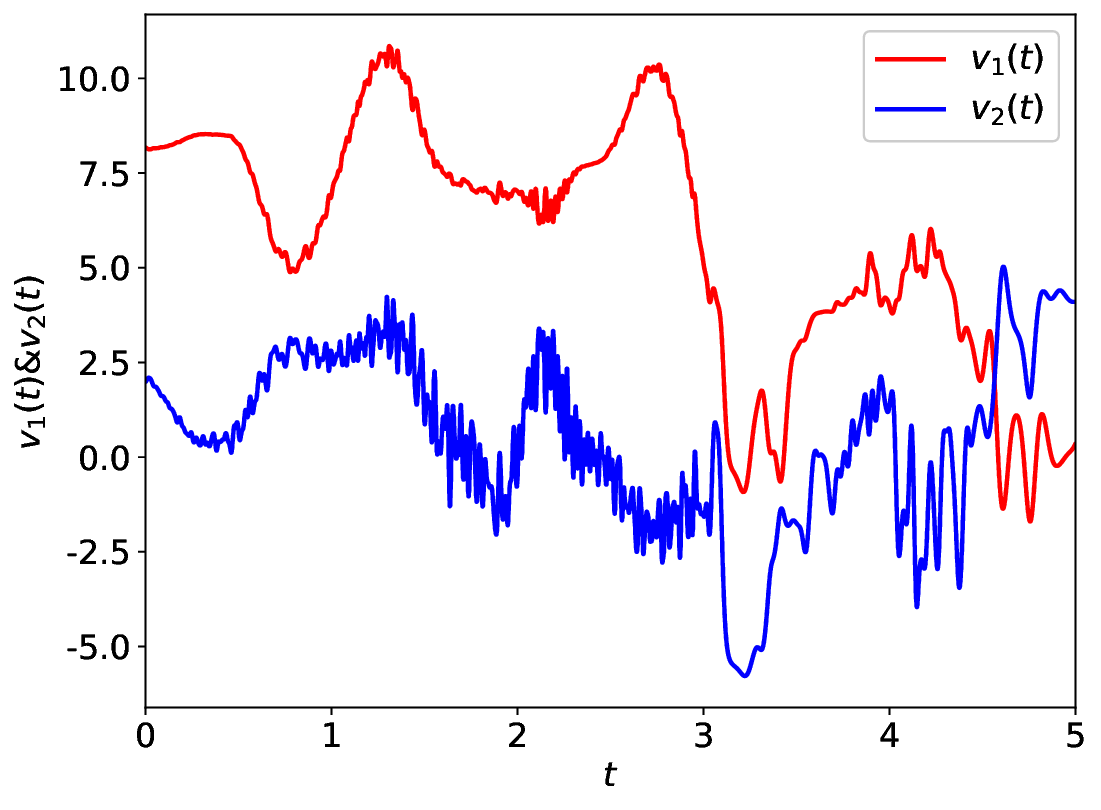}
    \caption{Evolutions of $v_1(t)$, $v_2(t)$.}
    \label{fig:u1u2_Doswell_2}
  \end{subfigure}
  \begin{subfigure}[b]{0.335\textwidth}
    \includegraphics[width=\textwidth]{./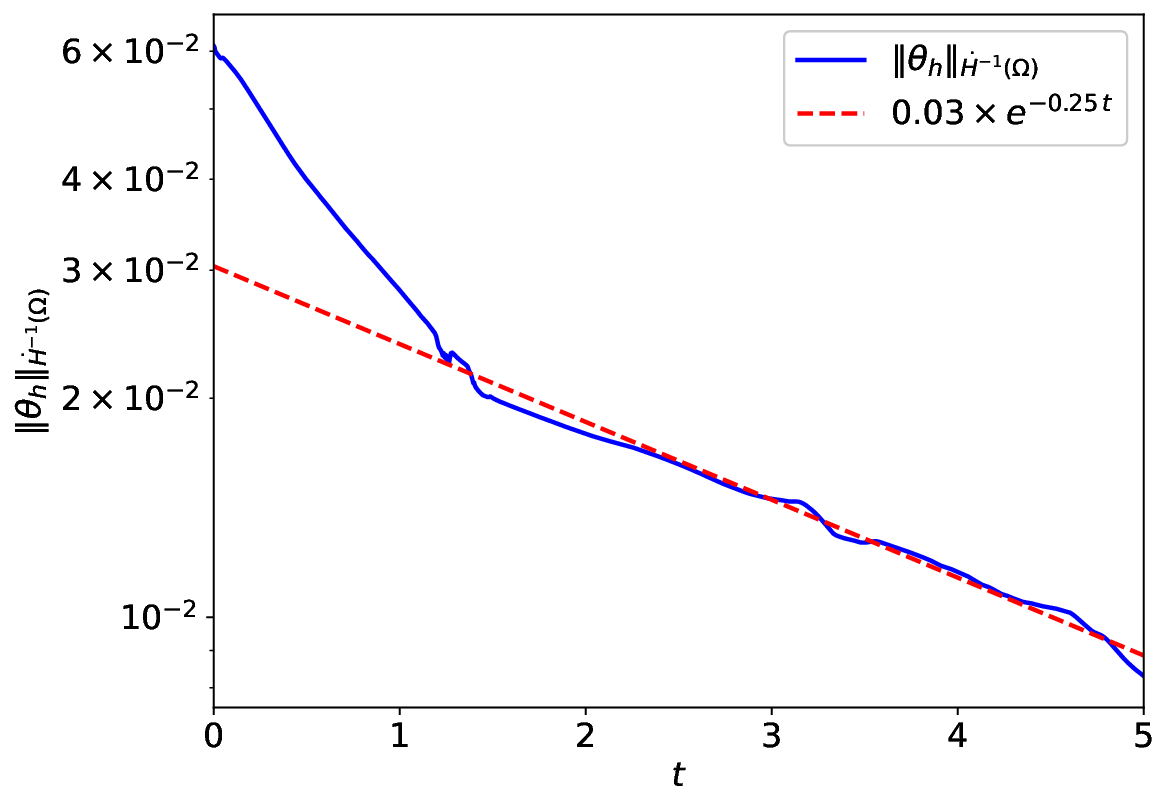}
    \caption{Evolution of $\Vert \theta_h \Vert_{\dot{H}^{-1}(\Omega)}$.}
    \label{fig:mixnorm_Doswell_2}
  \end{subfigure}
  \begin{subfigure}[b]{0.33\textwidth}
    \includegraphics[width=\textwidth]{./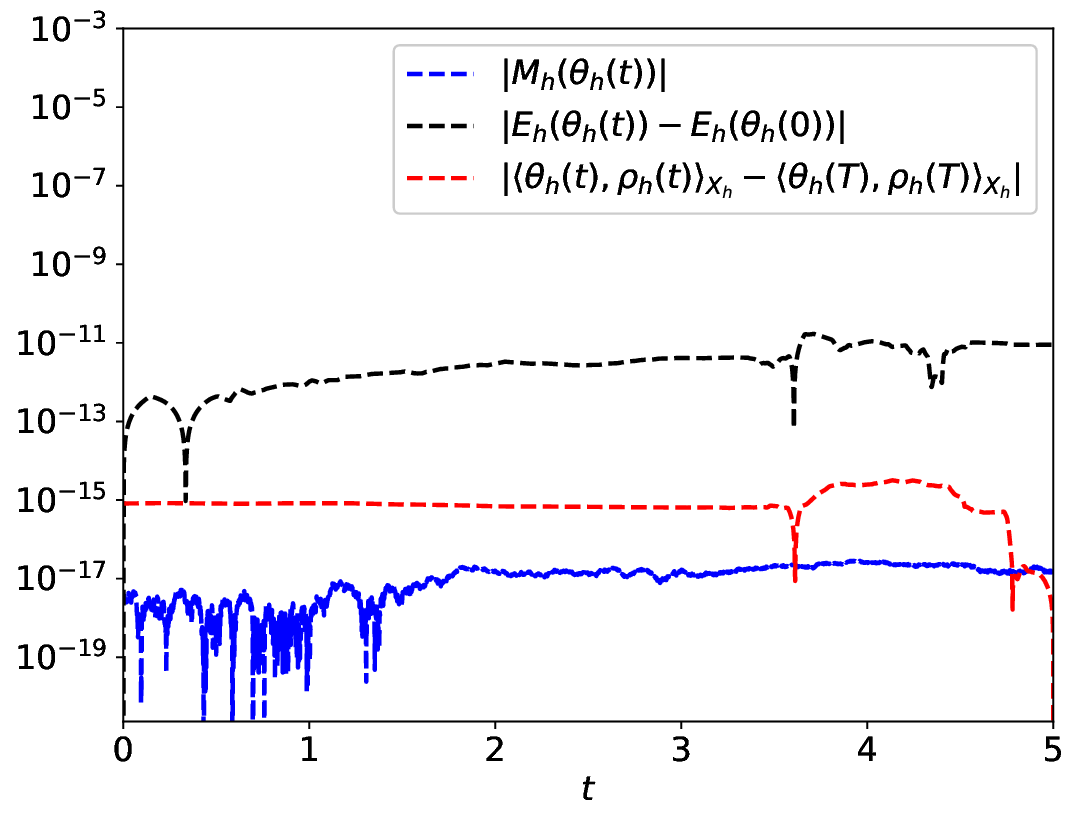}
    \caption{$M_h(\theta_h)$, $E_h(\theta_h)$, $\langle \theta_h, \rho_h \rangle_{X_h}$.}
    \label{fig:property_Doswell_2}
  \end{subfigure}
    \caption{Evolutions of $v(t)$, mix-norm $\Vert \theta_h \Vert_{\dot{H}^{-1}(\Omega)}$, mass $M_h(\theta_h)$, energy $E_h(\theta_h)$ and state-adjoint pairing $\langle \theta_h, \rho_h \rangle_{X_h}$ for $t\in[0, 5]$ with initial control \eqref{eq:u1_Doswell_2} and initial data \eqref{eq:init_Doswell}.}
  \label{fig:u1_mixnorm_Doswell_optimal_2}
\end{figure}

\begin{figure}[H]
  \centering
  \begin{subfigure}[b]{0.16\textwidth}
    \includegraphics[width=\textwidth]{./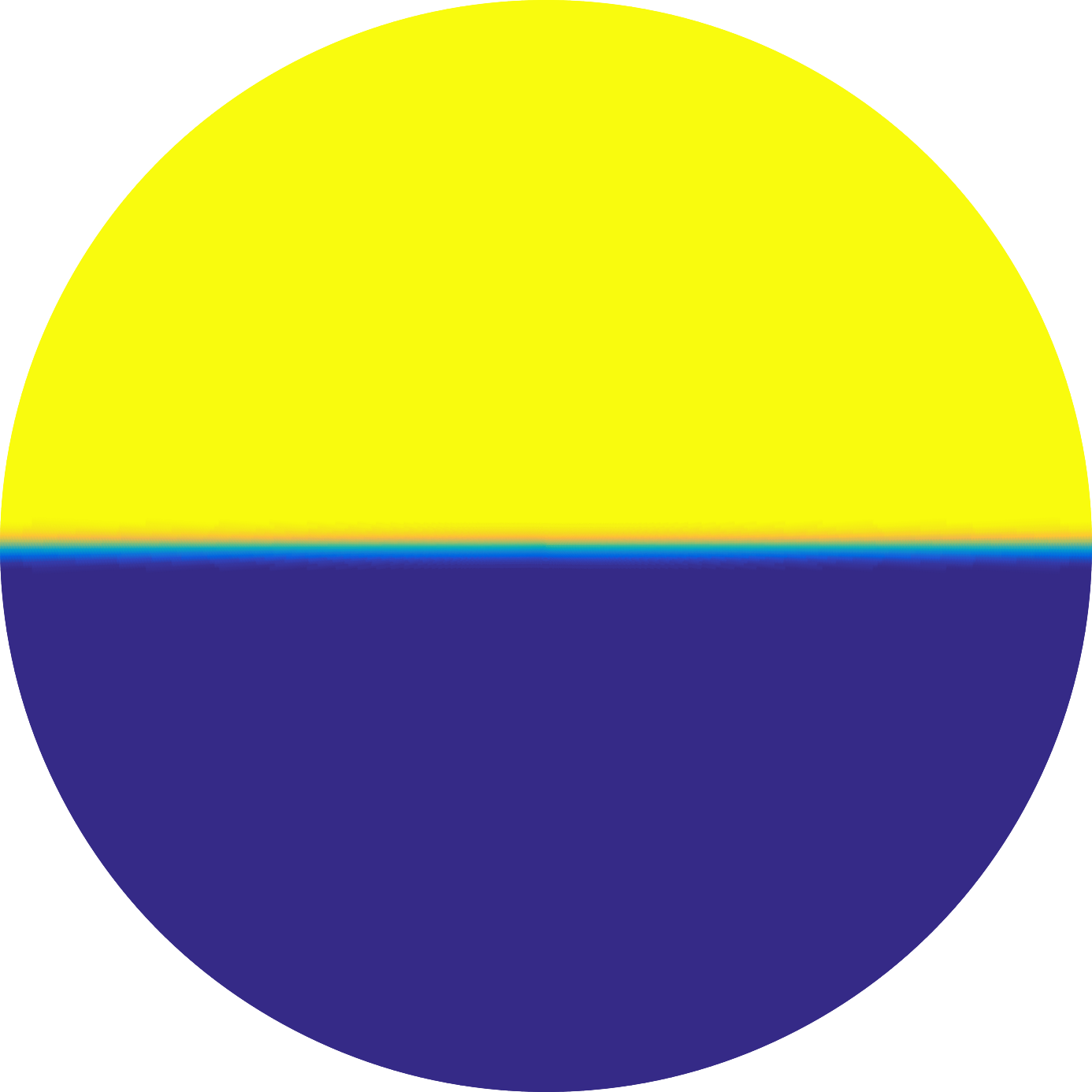}
    \caption{$t=0$}
  \end{subfigure}
  \begin{subfigure}[b]{0.16\textwidth}
    \includegraphics[width=\textwidth]{./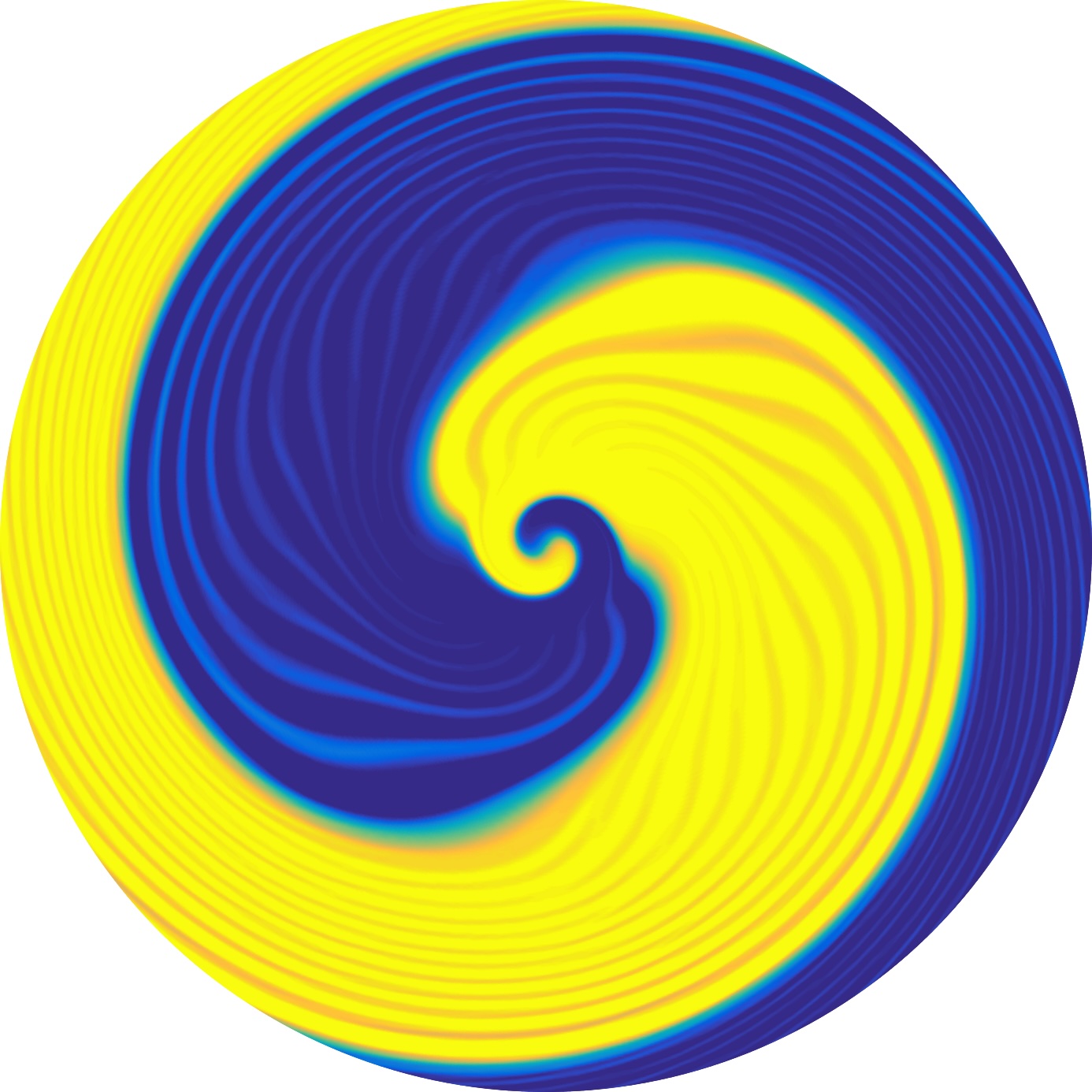}
    \caption{$t=1$}
  \end{subfigure}
  \begin{subfigure}[b]{0.16\textwidth}
    \includegraphics[width=\textwidth]{./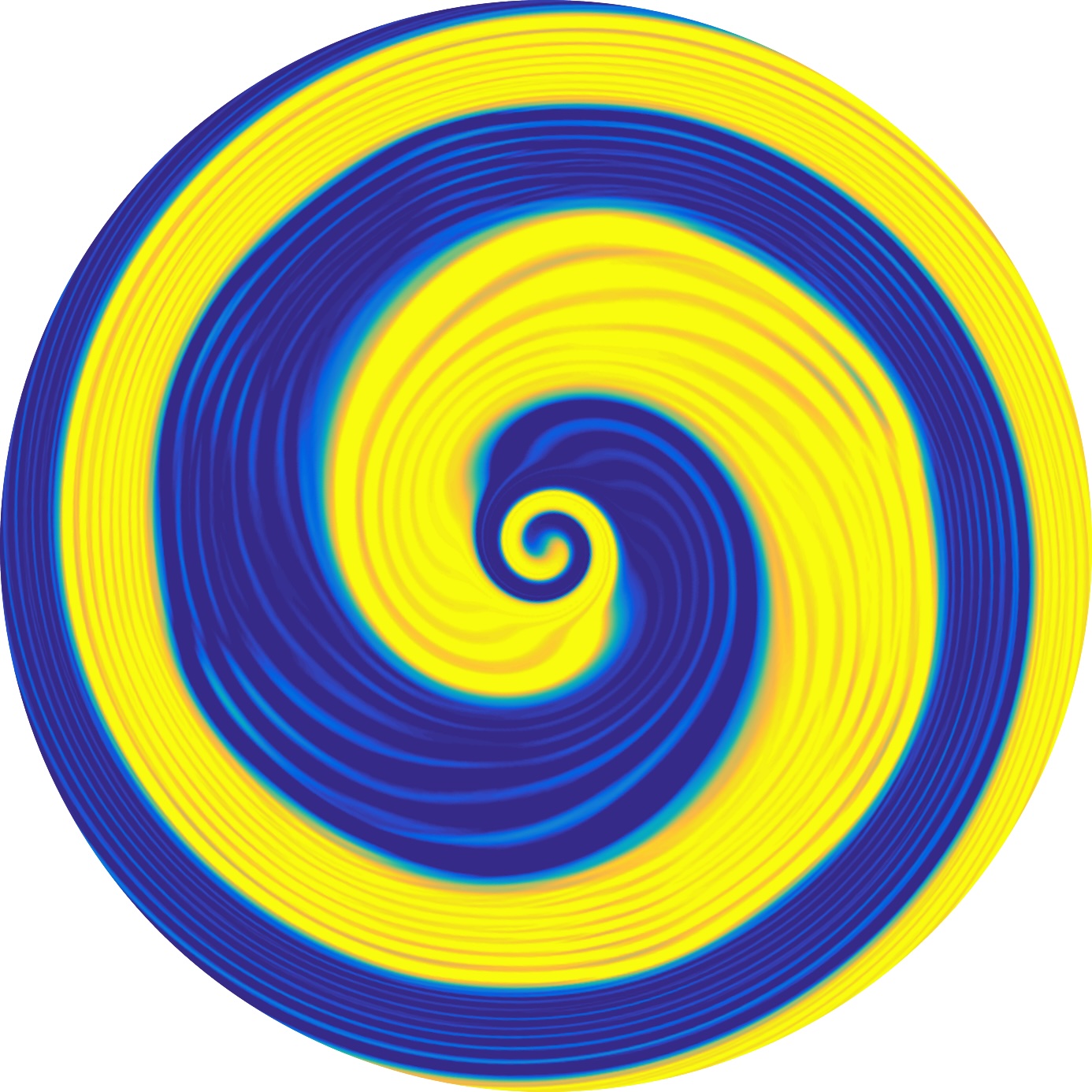}
    \caption{$t=2$}
  \end{subfigure}
  \begin{subfigure}[b]{0.16\textwidth}
    \includegraphics[width=\textwidth]{./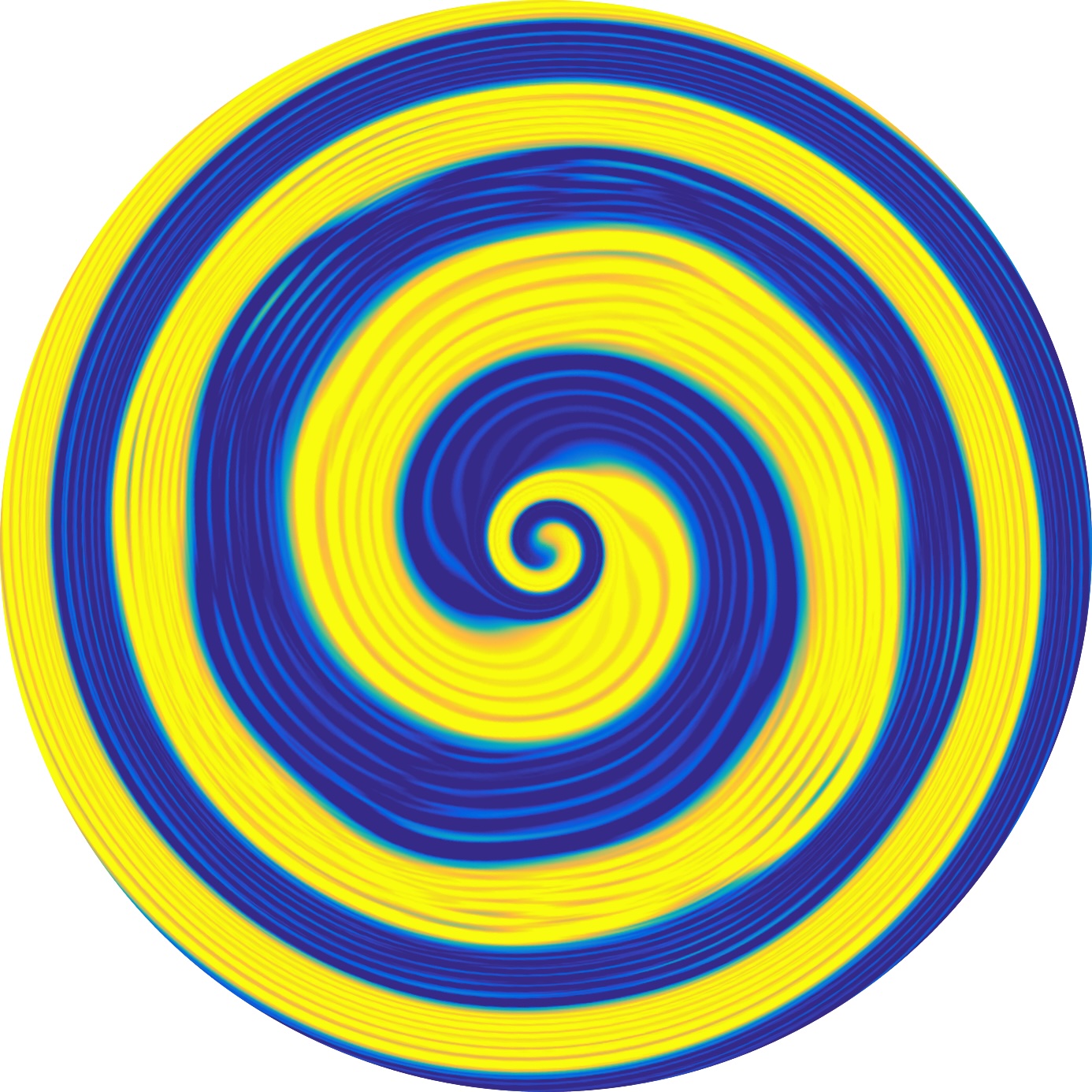}
    \caption{$t=3$}
  \end{subfigure}
  \begin{subfigure}[b]{0.16\textwidth}
    \includegraphics[width=\textwidth]{./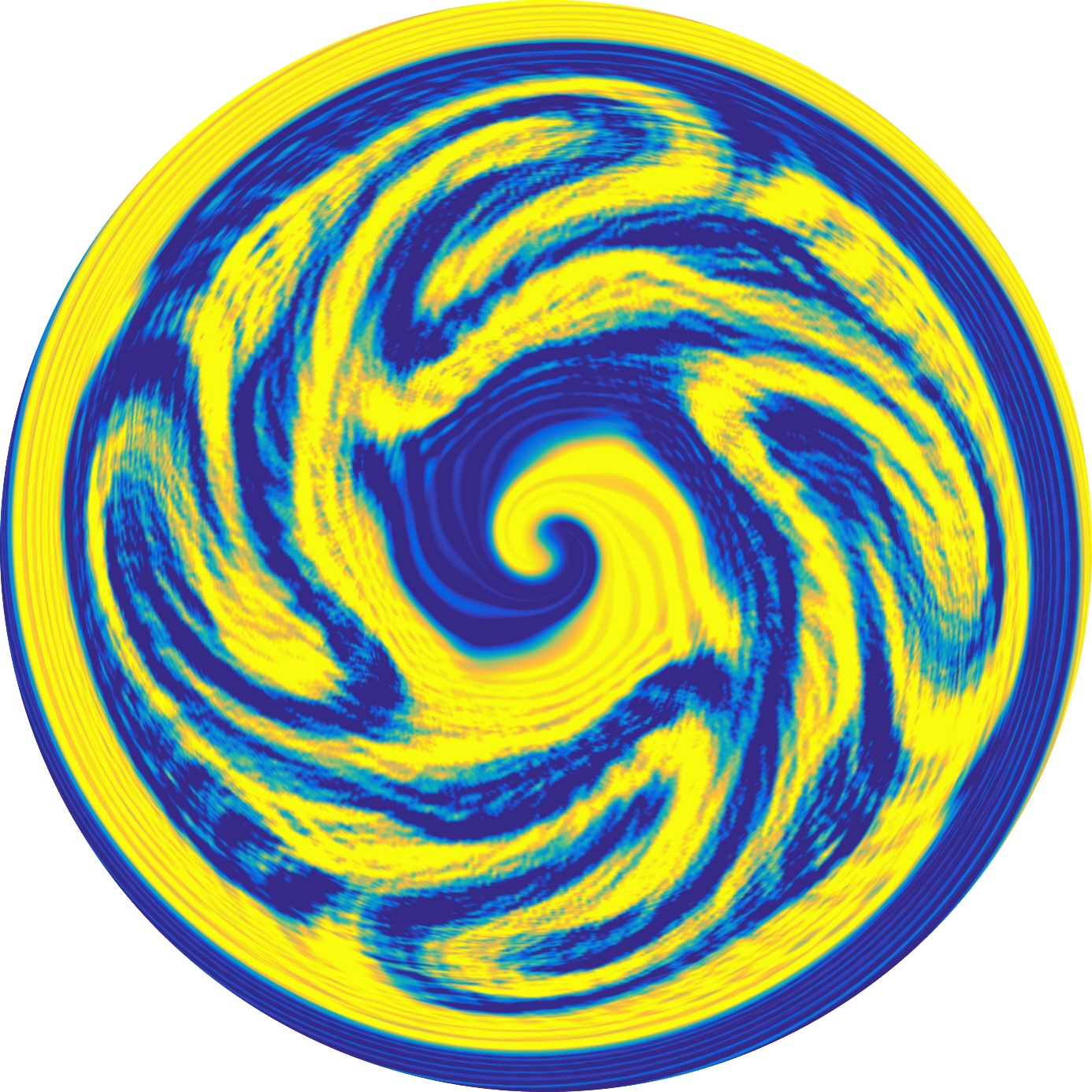}
    \caption{$t=4$}
  \end{subfigure}
  \begin{subfigure}[b]{0.16\textwidth}
    \includegraphics[width=\textwidth]{./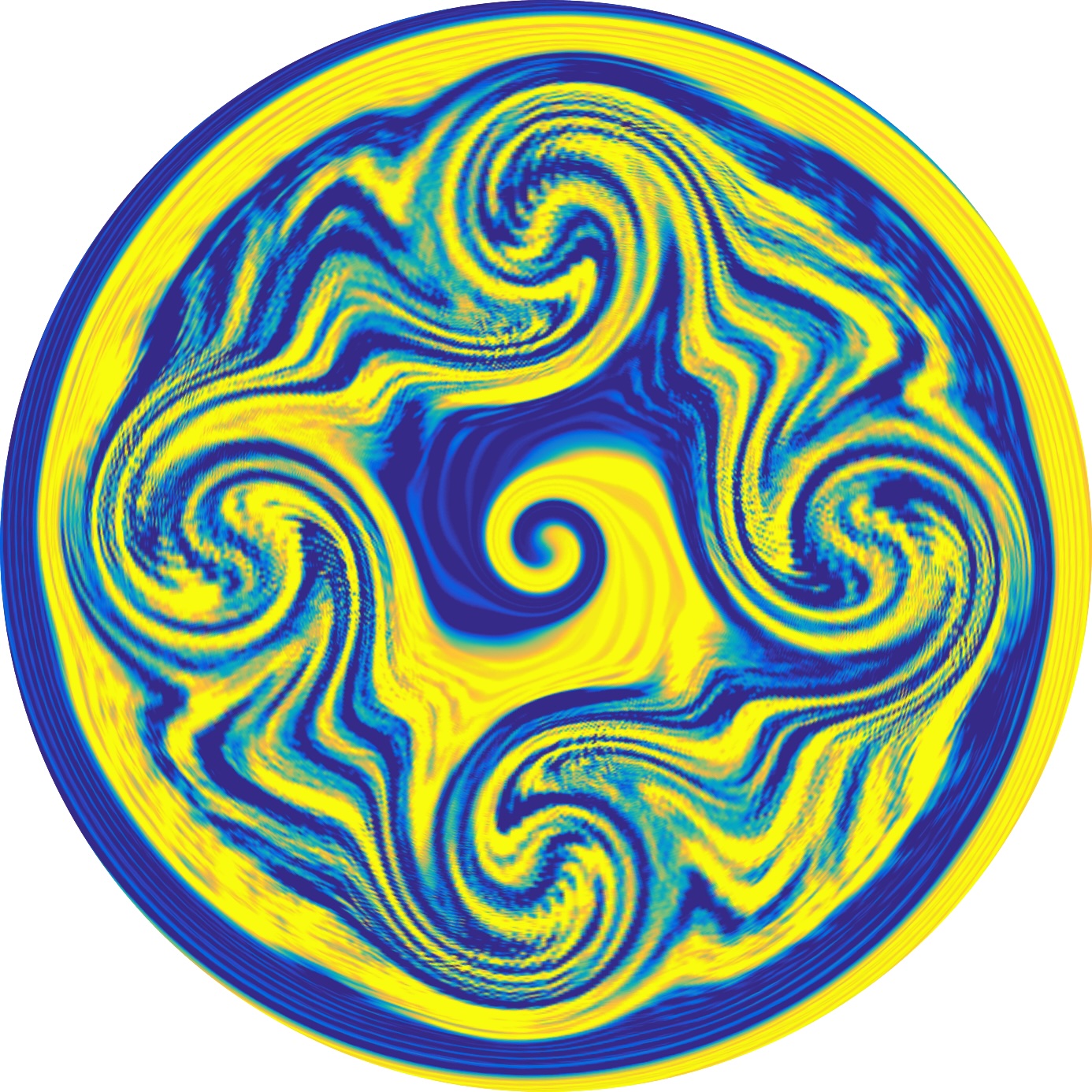}
    \caption{$t=5$}
  \end{subfigure}
  \caption{Evolution of $\theta_h$ for $t\in (0, 5)$ with initial control \eqref{eq:u1_Doswell_2} and initial data \eqref{eq:init_Doswell}.}
  \label{fig:theta_Doswell_optimal_2}
\end{figure}

\section{Conclusions and outlook}\label{conclusions}
We proposed a structure-preserving numerical scheme for optimal control of mixing in incompressible transport. The central innovation is the integration of a conservative-centered finite-volume discretization and time-symmetric Crank--Nicolson stepping within an adjoint-based optimization loop in a way that preserves, at the fully discrete level, (i) mass and $L^2$ invariants and (ii) exact state--adjoint consistency. This yields a reduced-gradient method whose discrete gradient is consistent with the discrete objective (optimize--then--discretize and discretize--then--optimize agree), which is a key requirement for reliable PDE-constrained optimization.

Numerically, the proposed scheme preserves the invariants to machine precision and yields a stable, reliable adjoint-based optimization method. Across the tested geometries and initializations, the resulting optimal controls deliver a significantly faster decay of the mix-norm than steady single-mode mixers, demonstrating both substantial mixing enhancement and robust optimization performance.

From a practical mixing viewpoint, the experiments point to a clear takeaway: with a small, actuator-limited set of stirring modes, any steady actuation remains a weak mixer (at best, polynomial decay), while an optimized time-dependent scheduling of the same modes can produce mix-norm decay consistent with near-exponential rates over the tested horizons.

Several extensions are natural and relevant for future work. First, higher-order spatial discretizations that retain skew-adjointness and time symmetry would improve performance in strongly filamented regimes without compromising gradient fidelity. Second, incorporating realistic actuation constraints (amplitude bounds, enstrophy/energy budgets, switching costs) would quantify performance--cost trade-offs and improve physical relevance. Third, extending the framework to advection--diffusion and Navier--Stokes settings, including three-dimensional configurations, would test the robustness of the observed control mechanisms and enable large-scale studies aligned with practical mixing applications.

\section{Appendix}\label{appendix}

\subsection{Properties of  (\ref{eq:state}) and (\ref{eq:adjoint})}
\label{Appendix-RepresentationOfState}

The properties presented here are classical and stated with short proofs for the sake of completeness of this paper. 

First of all,  the well-posedness of equation (\ref{eq:state}) is stated in the following lemma.

\begin{lem}\label{20251209-lemma-WellPosednessOfTransport}
Let $\theta^0 \in L^2(\Omega)$ and let $\mathbf{v} \in L^2((0,T); C^1(\overline{\Omega}; \mathbb R^d) )$ satisfy \eqref{cond:v}. Then, equation (\ref{eq:state}) has a unique weak solution $\theta \in L^2((0,T); L^2(\Omega))$ in the following sense:
\begin{align*}
  \int_0^T \int_{\Omega}  \theta (-\partial_t \varphi - \mathbf{v} \cdot \nabla \varphi) dx dt
= \int_{\Omega}  \theta^0(x) \varphi(0,x) dx,
~\forall\, \varphi \in C_0^{\infty}([0,T) \times \Omega). 
\end{align*}
Furthermore, it is in $C([0,T]; L^2(\Omega))$ with the  representation
\begin{align}\label{20251209-PDEandODE}
    \theta(t) = \theta^0( \psi_{0,t}(\cdot))
    \text{ in }  L^2(\Omega), ~ t \in [0,T],
\end{align}
where $\{ \psi_{s,t} \}_{s,t \in [0,T]}  \subset C^1(\overline{\Omega}; \overline{\Omega})$ is the measure-preserving flow of the following ODE: 
\begin{align*}
  \frac{d}{ds}  \psi_{s,t}(x) = \mathbf{v}( s, \psi_{s,t}(x) ), ~ s\in [0,T];
  ~~ \psi_{s,t}(x) |_{s=t} =  x \in \overline{\Omega}. 
\end{align*}
\end{lem}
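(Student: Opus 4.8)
The plan is to construct the solution by the method of characteristics and then verify that the resulting object is the unique weak solution. First I would set up the flow $\{\psi_{s,t}\}$. Since $\mathbf{v}(t,\cdot)\in C^1(\overline{\Omega})$ is Lipschitz in space with a Lipschitz constant that is square-integrable in $t$, the Carath\'eodory theory for ODEs with time-dependent right-hand sides yields a flow that is absolutely continuous in time and $C^1$ in space. The no-penetration condition $\mathbf{v}\cdot\mathbf{n}_{\Omega}=0$ on $\partial\Omega$ guarantees that trajectories starting in $\overline{\Omega}$ remain in $\overline{\Omega}$, so $\psi_{s,t}:\overline{\Omega}\to\overline{\Omega}$; and the incompressibility $\text{div}\,\mathbf{v}=0$, together with the Liouville--Jacobi formula for the evolution of the Jacobian determinant $\det D_x\psi_{s,t}$, forces this determinant to equal one, i.e.\ the flow is measure-preserving.

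Second, I would define the candidate $\theta(t):=\theta^0\circ\psi_{0,t}$. Measure preservation gives the change-of-variables identity $\|\theta(t)\|_{L^2(\Omega)}=\|\theta^0\|_{L^2(\Omega)}$ for every $t$ (this is exactly the $L^2$-norm conservation used elsewhere in the paper), so the candidate lies in $L^\infty((0,T);L^2(\Omega))\subset L^2((0,T);L^2(\Omega))$. To obtain $\theta\in C([0,T];L^2(\Omega))$, I would first note that for continuous data the composition $t\mapsto\theta^0\circ\psi_{0,t}$ is continuous in $L^2$ because $t\mapsto\psi_{0,t}$ is continuous; the uniform isometry bound then transfers this continuity to arbitrary $L^2$ data by density of $C(\overline{\Omega})$ in $L^2(\Omega)$.

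Third comes the weak-formulation check. For smooth data $\theta^0\in C^1(\overline{\Omega})$ the composite $\theta(t,x)=\theta^0(\psi_{0,t}(x))$ is classically differentiable and satisfies $\partial_t\theta+\mathbf{v}\cdot\nabla\theta=0$ by the chain rule along characteristics. Testing against $\varphi\in C_0^\infty([0,T)\times\Omega)$, integrating by parts in $x$ and $t$, and using $\text{div}\,\mathbf{v}=0$ to pass between the conservative and nonconservative forms (and $\mathbf{v}\cdot\mathbf{n}_{\Omega}=0$ to discard the boundary flux) recovers exactly the stated weak identity, with the term $\int_\Omega\theta^0\varphi(0,\cdot)\,dx$ arising from the $t=0$ endpoint. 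A density argument in $\theta^0$ then extends the weak identity to all $L^2$ data, since every term depends continuously on $\theta^0$ in $L^2$.

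Finally, uniqueness: the difference $w$ of two weak solutions satisfies the weak formulation with zero initial datum, and I would show $w\equiv0$ by an energy argument. Formally, multiplying the equation by $w$ and integrating gives $\tfrac12\tfrac{d}{dt}\|w(t)\|_{L^2(\Omega)}^2=-\tfrac12\int_\Omega(\text{div}\,\mathbf{v})\,w^2\,dx=0$, using incompressibility and the vanishing boundary flux, whence $\|w(t)\|_{L^2(\Omega)}=\|w(0)\|_{L^2(\Omega)}=0$. The main obstacle is making this identity rigorous at the level of weak solutions: one must justify the renormalization $|w|^2$ via a mollification/commutator estimate, and simultaneously handle the low time-regularity of $\mathbf{v}$ (only $L^2$ in $t$), which forces the flow construction, the measure-preserving property, and the energy identity to be carried out in the Carath\'eodory/absolutely-continuous sense rather than classically. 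Because $\mathbf{v}$ is Lipschitz in space, the commutator estimate is immediate (the DiPerna--Lions machinery collapses in this regularity), so the remaining work is purely the careful time-dependent bookkeeping; everything else is standard once the flow is under control.
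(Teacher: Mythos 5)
Your construction of the flow, the representation $\theta(t)=\theta^0\circ\psi_{0,t}$, the $L^2$-isometry, the continuity in time by density, and the verification of the weak identity for smooth data followed by density in $\theta^0$ all match the paper's existence argument (the paper merely asserts that the right-hand side of \eqref{20251209-PDEandODE} ``can be checked'' to be a weak solution, so your added detail is consistent). Where you genuinely diverge is uniqueness. The paper uses a duality/transposition argument: for an arbitrary $f\in C_0^\infty((0,T)\times\Omega)$ it builds the explicit test function $\varphi_f(t,x)=\int_t^T f(s,\psi_{s,t}(x))\,ds$, which solves $\partial_t\varphi_f+\mathbf{v}\cdot\nabla\varphi_f=-f$ with $\varphi_f|_{t=T}=0$, plugs it into the weak formulation for the difference of two solutions, and concludes $\int\!\!\int(\theta_1-\theta_2)f=0$ for all $f$. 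This exploits the spatial smoothness of $\mathbf{v}$ to solve the \emph{dual} problem exactly along characteristics and entirely avoids giving meaning to $w^2$ for a merely $L^2$ weak solution. Your energy/renormalization route is also viable for Lipschitz-in-space fields, and is more robust (it is the DiPerna--Lions argument that survives at Sobolev regularity), but it costs you three pieces of bookkeeping that the paper's route sidesteps and that you only partially acknowledge: (i) the commutator estimate to justify that $w^2$ satisfies the transport equation; (ii) the fact that the weak formulation only admits test functions compactly supported in $\Omega$, so passing from the interior renormalized identity to $\frac{d}{dt}\int_\Omega w^2\,dx=0$ requires a cutoff $\chi_\epsilon$ near $\partial\Omega$ and control of $\int \mathbf{v}\cdot\nabla\chi_\epsilon\,w^2$ using $\mathbf{v}\cdot\mathbf{n}_\Omega=0$; and (iii) enough weak time-continuity of $w$ to give meaning to $w(0)=0$ from a formulation that encodes the initial datum only through the test functions. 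None of these is a fatal obstacle, but together they make your uniqueness proof noticeably longer than the paper's one-line duality computation; if you want the shortest rigorous path at this regularity, the transposition argument is the better choice.
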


\begin{proof}
First of all, one can check that the function on the right-hand side of (\ref{20251209-PDEandODE}) is a weak solution, which gives the existence. 
Next, we show the uniqueness. Let $\theta_1,\theta_2$ be any two weak solutions. Then, we have
\begin{align}\label{20251209-UniquenessOfStateSolutions}
  \int_0^T \int_{\Omega}  (\theta_1 - \theta_2) (-\partial_t \varphi - \mathbf{v} \cdot \nabla \varphi) dx dt
= 0,
~\forall\, \varphi \in C_0^{\infty}([0,T) \times \Omega). 
\end{align}
Arbitrarily take a function $f \in C_0^{\infty}( (0,T) \times \Omega)$. Define a function
\begin{align*}
    \varphi_f(t,x) := \int_t^T f(s, \psi_{s,t}(x)) ds, ~ t\in [0,T],
    ~ x \in \Omega,
\end{align*}
which is in $H^1((0,T)\times \Omega)$ and satisfies
\begin{align*}
    \partial_t \varphi_f + \mathbf{v} \cdot \nabla \varphi_f = - f 
\text{ in } (0,T) \times \Omega;
~~\varphi_f|_{t=T} =0.
\end{align*}
 By density arguments,  we see from (\ref{20251209-UniquenessOfStateSolutions}) that
\begin{align*}
\int_0^T \int_{\Omega}  (\theta_1 - \theta_2) f dx dt
 = \int_0^T \int_{\Omega}  (\theta_1 - \theta_2) (-\partial_t \varphi_f - \mathbf{v} \cdot \nabla \varphi_f) dx dt
= 0.  
\end{align*}
Since $f \in C_0^{\infty}( (0,T) \times \Omega)$ was arbitrarily taken, the above implies $\theta_1=\theta_2$, which proves the desired uniqueness.

Finally,  each element in $\{ \psi_{s,t} \}_{s,t \in [0,T]}$ is a measure-preserving map due to    Liouville's formula and the incompressibility condition div\,$\mathbf{v}=0$ in \eqref{cond:v}. This completes the proof.
\end{proof}

Next, we show the conservation of $L^p$-norm of solutions to equation \eqref{eq:state}, i.e., for any solution $\theta$ to equation \eqref{eq:state} with $\theta^0 \in L^\infty(\Omega)$, 
\begin{align}\label{20251209-ConservationOfLpNorm}
    \int_{\Omega}  |\theta(t)|^p dx \equiv \text{constant},~  t \in[0,T],
    ~p \in [1,+\infty]. 
\end{align}
Indeed, we obtain from \eqref{20251209-PDEandODE} (where $\{ \psi_{0,t} \}_{t\in[0,T]}$ is measure-preserving) that
\begin{align*}
    \int_{\Omega}  |\theta(t)|^p dx = \int_{\Omega}  | \theta^0( \psi_{0,t}(x) ) |^p dx
    = \int_{\Omega}  | \theta^0( x ) |^p dx,
    ~ t \in [0,T], ~p \in [1,+\infty], 
\end{align*}
which leads to \eqref{20251209-ConservationOfLpNorm}. 

Finally, we show that when $\theta(0), \rho(T) \in L^2(\Omega)$,
\begin{align}\label{20251209-ForwardBackwardPair}
    \int_{\Omega}  \theta(t) \rho(t) dx \equiv \text{constant},~  t \in[0,T].
\end{align}
Because equations (\ref{eq:state}) and (\ref{eq:adjoint}) are the same, the above can be directly checked from \eqref{20251209-PDEandODE} (where $\{ \psi_{0,t} \}_{t\in[0,T]}$ is measure-preserving).

\subsection{Existence of optimal controls}
\label{Appendix-ExistenceOfOptimalControl}
Since $J$ is nonnegative, we can take a minimizing sequence $( \mathbf{v}_k )_{k\geq 1} \subset L^2((0,T);U)$ of the cost functional $J$ so that
\begin{align*}
    \lim_{k\rightarrow +\infty}   J( \mathbf{v}_k)  =  \inf_{\mathbf{v} \in L^2((0,T);U)}  J(\mathbf{v}).
\end{align*}
Then, by \eqref{eq:cost}, this sequence is bounded in $L^2((0,T);U)$. Thus, there is a subsequence, still denoted by the same way, and a function $\mathbf{v}^* \in L^2((0,T); U)$ so that
\begin{align}\label{20251227-MinimizingSequence}
    \mathbf{v}_k \rightharpoonup  \mathbf{v}^*
      \text{ weakly in } L^2((0,T);U)
    \text{ as }  k\rightarrow +\infty.
\end{align}
Furthermore, because of \eqref{eq:v_finite} and \eqref{20251004-ControlConstraint}, we obtain
\begin{align}\label{20251227-UniformC1Bound}
    \sup_{k\geq 1} \| \mathbf{v}_k \|_{ L^2((0,T);C^1(\overline{\Omega}; \mathbb R^d) ) }
    <  + \infty.    
\end{align}

Next, we claim that the sequence $( \theta(\cdot; \mathbf{v}_k) )_{k\geq 1}$ is relatively compact in the space $C([0,T]; L^2(\Omega))$. The key point of the proof is to use the expression \eqref{20251209-PDEandODE}. Write $\{ \psi_{s,t}^{ \mathbf{u} } \}_{s,t\in[0,T]}$ for the flow generated by a vector field $\mathbf{u}$ in $L^2((0,T); U)$. By computations, we determine from \eqref{20251227-UniformC1Bound} that for some $C>0$, 
\begin{align*}
  \sup_{k\geq 1}  | \psi^{ \mathbf{v}_k }_{0,t_1}(x_1)   - \psi^{ \mathbf{v}_k }_{0,t_2}(x_2) |
    \leq  C(|t_1-t_2|^{ \frac{1}{2} } + |x_1-x_2|),
    ~\forall\, x_1,x_2 \in \Omega,
    ~\forall\,t_1,t_2\in[0,T].
\end{align*}
From this and \eqref{20251209-PDEandODE}, one can check that the sequence $( \theta(\cdot; \mathbf{v}_k) )_{k\geq 1}$ is equicontinuous in $C([0,T]; L^2(\Omega))$ and   relatively compact in $L^2(\Omega)$ when taking values at each time $t$ (here the ``$L^2$-version'' of the Ascoli-Arzel\`{a} theorem is used, see \cite[Theorem \@4.26]{brezis2010functional}). Then, by the Ascoli-Arzel\`{a} theorem, this sequence is relatively compact in $C([0,T]; L^2(\Omega))$, which proves the claim.

Now, write $f^*$ for the limit of a subsequence of the sequence $( \theta(\cdot; \mathbf{v}_k) )_{k\geq 1}$ (still denoted by the same way)  in $C([0,T]; L^2(\Omega))$. From this and \eqref{20251227-MinimizingSequence}, we determine that for each $\varphi \in C_0^{\infty}([0,T) \times \Omega)$,  
\begin{align*}
  \int_0^T \int_{\Omega}  f^* (\partial_t \varphi + \mathbf{v}^* \cdot \nabla \varphi) dx dt
=& \lim_{k \rightarrow +\infty}  \int_0^T \int_{\Omega}  \theta(t; \mathbf{v}_k) (\partial_t \varphi + \mathbf{v}_{k} \cdot \nabla \varphi) dx dt
\nonumber\\
=& - \int_{\Omega}  \theta^0(x) \varphi(0,x) dx,
\end{align*}
which means that $f^*$ is a weak solution to equation \eqref{eq:state} with the same initial data and velocity field as $\theta(\cdot;\mathbf{v}^*)$. Thus,
\begin{align*}
    \theta(\cdot;\mathbf{v}^*) = f^* = \lim_{k \rightarrow +\infty}
    \theta(\cdot; \mathbf{v}_k) 
\text{ in } C([0,T]; L^2(\Omega)).
\end{align*}
Then, due to weak lower semicontinuity of $J$, we find
\begin{align*}
    J( \mathbf{v}^* )  \leq   \liminf_{k \rightarrow +\infty} J( \mathbf{v}_k)  =  \inf_{\mathbf{v} \in L^2((0,T);U)}  J(\mathbf{v}),
\end{align*}
which implies that $\mathbf{v}^*$ is an optimal control of $J$. This ends the proof.

\subsection{Well-posedness of (\ref{eq:eta})}
\label{Appendix-WellPosenessOfEllpiticEquation}

A function $\eta \in H^1(\Omega)$ with $\int_{\Omega} \eta dx =0$ is said to be a solution to equation \eqref{eq:eta} if  $\eta$ satisfies
\begin{align}\label{20251206-WellPosedness}
\langle \nabla \eta, \nabla \varphi \rangle_{L^2(\Omega; \mathbb R^d)}
=  \langle \theta(T;\mathbf{v}), \varphi \rangle_{L^2(\Omega)}
 ,~\forall\, \varphi \in  H^1(\Omega). 
\end{align}
Write $\dot{H}^{1}(\Omega)$ for the space of all the zero-mean functions in $H^1(\Omega)$.  First, we aim to show equation \eqref{eq:eta} has a solution in $\dot{H}^{1}(\Omega)$. To this end, define 
\begin{align*}
    a(f,g) := \langle \nabla f, \nabla g \rangle_{L^2(\Omega; \mathbb R^d)},~f,g \in \dot{H}^{1}(\Omega). 
\end{align*}
This functional is bilinear, symmetric, and bounded over $\dot{H}^{1}(\Omega)$. Furthermore, by the Poincar\'{e}-Wirtinger inequality (see \cite[Page\@ 312]{brezis2010functional}), it is also coercive over $\dot{H}^{1}(\Omega)$. Then, by the Lax-Milgram theorem (see \cite[Corollary\@ 5.8]{brezis2010functional}), there is a unique function $\eta^* \in \dot{H}^{1}(\Omega)$ so that
\begin{align*}
\langle \nabla \eta^*, \nabla \varphi \rangle_{L^2(\Omega; \mathbb R^d)}
= a(\eta^*, \varphi)
=  \langle \theta(T;\mathbf{v}), \varphi \rangle_{L^2(\Omega)}
 ,~\forall\, \varphi \in  \dot{H}^1(\Omega). 
\end{align*}
Since $\int_{\Omega} \theta(T;\mathbf{v}) dx = \int_{\Omega} \theta^0 dx =0$ (by \eqref{20251209-PDEandODE}), the above implies $\eta^*$ satisfies equation \eqref{20251206-WellPosedness}, which proves the existence of solutions to equation \eqref{eq:eta}. 

Next, we show the uniqueness of the solution to equation \eqref{eq:eta}. For this purpose, let $\eta_1, \eta_2 \in \dot{H}^1(\Omega)$ be any two solutions. Then, by \eqref{20251206-WellPosedness}, we have
\begin{align*}
\langle \nabla \eta_1, \nabla \varphi \rangle_{L^2(\Omega; \mathbb R^d)}
= \langle \nabla \eta_2, \nabla \varphi \rangle_{L^2(\Omega; \mathbb R^d)}
=\langle \theta(T;\mathbf{v}), \varphi \rangle_{L^2(\Omega)}
 ,~\forall\, \varphi \in  H^1(\Omega), 
\end{align*}
which implies
\begin{align*}
\langle \nabla (\eta_1 - \eta_2), \nabla \varphi \rangle_{L^2(\Omega; \mathbb R^d)}
= 0 ,~\forall\, \varphi \in  H^1(\Omega). 
\end{align*}
Since $\int_{\Omega} \eta_i dx = 0$ ($i=1,2$), the above indicates that $\eta_1 = \eta_2$ over $\Omega$, which proves the desired uniqueness.  This completes the proof.

\subsection{Characterization (\ref{20251010-AnotherFormOfCostFunctional})}
\label{Appendix-NewFormOfJ}

To prove \eqref{20251010-AnotherFormOfCostFunctional}, it suffices to show 
\begin{align}\label{20251124-Estiamte-3}
     \| f \|^2_{ \dot{H}^{-1}(\Omega) } = 
    \langle f, \eta(f)  \rangle_{L^2(\Omega)}
\text{ for each }   f\in L^2(\Omega)
\text{ with } \int_{\Omega} f dx =0,
\end{align}
where $\eta(f)$ the solution to equation \eqref{eq:eta} with $\theta$ replaced by $f$. To this end, take a function $f\in L^2(\Omega)$ with $\text{ with } \int_{\Omega} f dx =0$. By \eqref{eq:eta} (see also \eqref{20251206-WellPosedness}), we find
\begin{align}\label{20251124-Estiamte-2}
 \langle f, \varphi \rangle_{L^2(\Omega)}
 = \langle \nabla \eta(f), \nabla \varphi \rangle_{L^2(\Omega; \mathbb R^d)}
 ,~\forall\, \varphi \in  H^1(\Omega). 
\end{align}
This, together with  \eqref{neg_norm}, implies
\begin{align}\label{20251124-Estiamte-1}
   \| f \|_{ \dot{H}^{-1}(\Omega) } 
   \leq& \sup_{ \|\nabla\varphi \|_{L^2(\Omega; \mathbb R^d)} \leq 1} 
   \langle f, \varphi \rangle_{L^2(\Omega)}
   \nonumber\\
   =& \sup_{ \|\nabla\varphi \|_{L^2(\Omega; \mathbb R^d)} \leq 1}
   \langle \nabla \eta(f), \nabla \varphi \rangle_{L^2(\Omega; \mathbb R^d)}
\leq \| \nabla \eta(f) \|_{L^2(\Omega; \mathbb R^d)}. 
\end{align}
At the same time, it follows from \eqref{20251124-Estiamte-2} (with $\varphi=\eta(f)$) and \eqref{neg_norm} that
\begin{align*}
    \|\nabla \eta(f) \|_{L^2(\Omega; \mathbb R^d)}^2
= \langle f, \eta(f)  \rangle_{L^2(\Omega)} 
\leq  \| f \|_{ \dot{H}^{-1}(\Omega) } \| \nabla \eta(f) \|_{L^2(\Omega; \mathbb R^d)}.
\end{align*}
From this and \eqref{20251124-Estiamte-1}, we determine that
\begin{align*}
    \| f \|_{ \dot{H}^{-1}(\Omega) }= \| \nabla \eta(f) \|_{L^2(\Omega; \mathbb R^d)},
\end{align*}
which, along with \eqref{20251124-Estiamte-2} (with $\varphi=\eta(f)$) again, leads  \eqref{20251124-Estiamte-3}. This ends the proof of \eqref{20251010-AnotherFormOfCostFunctional}.

\subsection{Gradient computation (\ref{20251006-GraidentOfJ})}
\label{Appendix-Gradient}

Let $\mathbf{v}  \in   L^2( (0,T); U)$. We will prove the existence of  $J'(\mathbf{v})$ and compute it below. For this purpose, fix another velocity field $\mathbf{w}  \in   L^2( (0,T); U)$ and define
\begin{align}\label{20251208-NewPerturbatedControl}
    \mathbf{v}_{\varepsilon} := \mathbf{v} +  \varepsilon \mathbf{w},
    ~\varepsilon \in \mathbb R.
\end{align}
Write $\theta_{\varepsilon}$ for the solution to equation \eqref{eq:state} with the same initial data $\theta^0$ and the velocity field $\mathbf{v}$ replaced by $\mathbf{v}_{\varepsilon}$. Denote by  $\eta_{\varepsilon}$ the solution to equation \eqref{eq:eta} with $\theta$ replaced by $\theta_{\varepsilon}$. For simplicity, write
\begin{align*}
    \delta\theta_{\varepsilon} := \frac{1}{\varepsilon} (\theta_{\varepsilon} - \theta)
   ~\text{and}~
\delta\eta_{\varepsilon} := \frac{1}{\varepsilon} (\eta_{\varepsilon} - \eta), ~\varepsilon \in \mathbb R. 
\end{align*}
By the definition of $J$ in (OP), we obtain that for each $\varepsilon \in \mathbb R$, 
\begin{align}\label{20251124-GradientOfJ}
\frac{1}{\varepsilon} \big(
    J(\mathbf{v}_{\varepsilon})  -  J(\mathbf{v})
\big)
=&  
\frac{1}{2} \big[
\big\langle \theta(T), \delta\eta_{\varepsilon} \big\rangle_{L^2(\Omega)}
+ \big\langle \delta\theta_{\varepsilon}(T), \eta \big\rangle_{L^2(\Omega)}
+ \varepsilon \big\langle \delta\theta_{\varepsilon} (T), \delta\eta_{\varepsilon} \big\rangle_{L^2(\Omega)}
\big]
\nonumber\\
&+  \gamma \langle \mathbf{v}, \mathbf{w} \rangle_{L^2( (0,T); U)}
+ \frac{\gamma}{2}  \varepsilon\| \mathbf{w} \|_{L^2( (0,T); U)}^2
\nonumber\\
=& \big\langle \delta\theta_{\varepsilon}(T), \eta \big\rangle_{L^2(\Omega)}
+ \gamma \langle \mathbf{v}, \mathbf{w} \rangle_{L^2( (0,T); U)}
\nonumber\\
& + \frac{\varepsilon}{2} \big\langle \delta\theta_{\varepsilon} (T), \delta\eta_{\varepsilon} \big\rangle_{L^2(\Omega)}
+ \frac{\gamma}{2}  \varepsilon\| \mathbf{w} \|_{L^2( (0,T); U)}^2
. 
\end{align}
Here, we used the fact that $\big\langle \theta(T), \delta\eta_{\varepsilon} \big\rangle_{L^2(\Omega)}
= \big\langle \delta\theta_{\varepsilon}(T), \eta \big\rangle_{L^2(\Omega)}$ (deduced from \eqref{20251206-WellPosedness} with $(\theta,\varphi)=(\theta, \delta\eta_{\varepsilon})$ and $(\theta,\varphi)=(\delta\theta_{\varepsilon}, \eta)$ applied twice).

Next, we claim
\begin{align}\label{20251124-WeakConvergenceOfStates}
    \theta_{\varepsilon}  \rightharpoonup
    \theta  \text{ weakly in } L^2((0,T);L^2(\Omega))
    \text{ as }  \varepsilon \rightarrow 0.
\end{align}
To this end, arbitrarily take a subsequence $\{ \varepsilon_k \}_{k\geq 1}$ with $\lim_{k\rightarrow+\infty} \varepsilon_k =0$. Then, by 
\eqref{20251209-PDEandODE} and \eqref{20260105-AssumptionsOnInitialData}, 
the sequence $\{ \theta_{\varepsilon_k} \}_{k\geq 1}$ is bounded in $L^2((0,T);L^2(\Omega))$. Hence, there is a subsequence of $\{ \varepsilon_k \}_{k\geq 1}$, still denoted by the same way, and a function $\hat\theta \in L^2((0,T);L^2(\Omega))$ so that 
\begin{align*}
    \theta_{\varepsilon_k}  \rightharpoonup
    \hat\theta  \text{ weakly in } L^2((0,T);L^2(\Omega))
    \text{ as }  k \rightarrow +\infty.
\end{align*}
Since $\theta_{\varepsilon_k}$ verifies equation \eqref{eq:state} with $\mathbf{v}$ replaced by $\mathbf{v}_{\varepsilon_k}$, the above weak convergence, together with the strong convergence of $\{ \mathbf{v}_{\varepsilon_k} \}_{k\geq1}$ (see \eqref{20251208-NewPerturbatedControl}), yields that for each $\varphi \in C_0^{\infty}([0, T) \times \Omega)$,  
\begin{align*}
  \int_0^T \int_{\Omega}  \hat\theta (\partial_t \varphi + \mathbf{v} \cdot \nabla \varphi) dx dt
= \lim_{k \rightarrow +\infty}  \int_0^T \int_{\Omega}  \theta_{\varepsilon_k} (\partial_t \varphi + \mathbf{v}_{\varepsilon_k} \cdot \nabla \varphi) dx dt
= - \int_{\Omega}  \theta^0(x) \varphi(0,x) dx,
\end{align*}
which means that $\hat\theta$ is a weak solution to equation \eqref{eq:state} with the same initial data and velocity field as $\theta$. Thus, $\hat\theta = \theta$  (by Lemma \ref{20251209-lemma-WellPosednessOfTransport}). 
Because the subsequence $\{ \varepsilon_k \}_{k\geq 1}$ was arbitrarily taken, we conclude that \eqref{20251124-WeakConvergenceOfStates} holds.

Now, we study both $\delta\theta_{\varepsilon}(T)$ and $\delta\eta_{\varepsilon} $. Since both $\theta_{\varepsilon}$ and $\theta$ verify equation \eqref{eq:state}, it is clear that $\delta\theta_{\varepsilon}$  satisfies the following equation
\begin{align}\label{20251124-DifferenceEquation}
    \partial_t \delta\theta_{\varepsilon}     + \text{div}( \mathbf{v} \delta\theta_{\varepsilon} ) 
= - \text{div}(\mathbf{w} \theta_{\varepsilon}),
~ t\in (0,T);
~~\delta\theta_{\varepsilon}|_{t=0} = 0.
\end{align}
For each $z \in H^1(\Omega)$, denote by $\rho(\cdot;z)$ the solution to the adjoint equation \eqref{eq:adjoint} with $\eta$ replaced by $z$. By an analog of \eqref{20251209-PDEandODE} for equation \eqref{eq:adjoint}, there is a  $C>0$ so that
\begin{align}\label{20251124-H1Regularity}
\| \rho(\cdot;z) \|_{C([0,T];H^1(\Omega))}  \leq  C \|z\|_{H^1(\Omega)},
~\forall\, z \in H^1(\Omega). 
\end{align}
This, together with  \eqref{20251124-DifferenceEquation}, yields that for each $z \in H^1(\Omega)$,
\begin{align*}
& \langle \delta\theta_{\varepsilon}(T), z \rangle_{L^2(\Omega)}
= \langle \delta\theta_{\varepsilon}(T), \rho(T;z) \rangle_{L^2(\Omega)}
- \langle \delta\theta_{\varepsilon}(0), \rho(0;z) \rangle_{L^2(\Omega)}
\nonumber\\
=&  \int_0^T \big\langle
    - \text{div}(\mathbf{w} \theta_{\varepsilon}), \rho(t;z)
\big\rangle_{(H^1(\Omega))', H^1(\Omega)} dt
= \int_0^T 
    \theta_{\varepsilon} \nabla \rho(t;z) \cdot \mathbf{w}(t)
dt.
\end{align*}
Here, we used the no-penetration boundary condition of $\mathbf{w}$ (as well as a density argument on $\theta_{\varepsilon}$) in the last step. Then, we obtain from \eqref{20251124-WeakConvergenceOfStates} and \eqref{20251124-H1Regularity} that the sequence $\{ \delta\theta_{\varepsilon}(T) \}_{\varepsilon \in \mathbb R}$ is bounded in $(H^1(\Omega))'$, and that 
\begin{align}\label{20251124-ConvergenceOfPair}
    \lim_{\varepsilon \rightarrow 0}
    \langle \delta\theta_{\varepsilon}(T), \eta \rangle_{L^2(\Omega)}
= \int_0^T 
    \theta \nabla \rho(t;\eta) \cdot \mathbf{w}(t)
dt.
\end{align}
Furthermore, since each function $\delta\eta_{\varepsilon}=\eta(\delta\theta_{\varepsilon}(T))$ satisfies \eqref{eq:eta} (as well as \eqref{20251206-WellPosedness}), the boundedness of the sequence $\{ \delta\eta_{\varepsilon}(T) \}_{\varepsilon \in \mathbb R}$ in $H^1(\Omega)$ follows from that of $\{ \delta\theta_{\varepsilon}(T) \}_{\varepsilon \in \mathbb R}$  in $(H^1(\Omega))'$.

Finally, since $\mathbf{w}  \in   L^2( (0,T); U)$ was arbitrarily taken, we deduce from \eqref{20251124-GradientOfJ} and \eqref{20251124-ConvergenceOfPair} (as well as the boundedness of $\{ \delta\theta_{\varepsilon}(T) \}_{\varepsilon \in \mathbb R}$  and $\{ \delta\eta_{\varepsilon} \}_{\varepsilon \in \mathbb R}$) that $J'(\mathbf{v})$ exists in the G\^{a}teaux sense. Furthermore, it holds that for each $\mathbf{w}  \in   L^2( (0,T); U)$, 
\begin{align*}
 \langle J'(\mathbf{v}), \mathbf{w}  \rangle_{L^2( (0,T); U)}
=&  
\int_0^T \big\langle  
    \theta \nabla \rho(t;\eta), \mathbf{w}(t)
\big\rangle_{L^2(\Omega;\mathbb R^d)} dt
+  \gamma \langle \mathbf{v}, \mathbf{w} \rangle_{L^2( (0,T); U)}
\nonumber\\
=& \int_0^T \big\langle 
    \gamma \mathbf{v}(t) + \theta \nabla \rho(t;\eta), \mathbf{w}(t)
\big\rangle_{L^2(\Omega;\mathbb R^d)} dt.
\end{align*}
This leads to \eqref{20251006-GraidentOfJ} and completes the proof.

\subsection{Properties of the discrete Laplacian $L_h$}
\label{Appendix-DiscreteEllipticEquation}

\begin{lem}\label{20251209-lemma-PropertiesOfLh}
The operator $L_h$ in \eqref{20251011-DiscreteLaplacian} is self-adjoint and invertible over the finite dimension space $X_h^0$ (given by \eqref{20251208-SubspaceOfXh}). 
\end{lem}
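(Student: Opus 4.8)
The plan is to establish both properties at once through a single discrete summation-by-parts identity that exhibits $L_h$ as the operator associated with a symmetric, positive semidefinite bilinear form. First I would compute, for arbitrary $\eta_h=(\eta_{h,K})_{K\in\mathcal T_h}$ and $\xi_h=(\xi_{h,K})_{K\in\mathcal T_h}$ in $X_h$, the pairing $\langle L_h\eta_h,\xi_h\rangle_{X_h}$. Unwinding the definitions \eqref{20251011-DiscreteLaplacian} and \eqref{20251008-InnerProductOfXh}, the weight $|K|$ in the inner product cancels the factor $1/|K|$ in $L_h$, leaving a double sum over cells $K$ and their interior faces $\Gamma$. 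The crucial step is to regroup this as a sum over interior faces: each interior face $\Gamma=\partial K\cap\partial L$ is visited exactly twice (once from $K$, once from $L$), and since $|\Gamma|$ and $d_{K,L}=d_{L,K}$ are symmetric in the two cells while the increments $(\eta_{h,L}-\eta_{h,K})$ and $(\eta_{h,K}-\eta_{h,L})$ are opposite, the two contributions combine into a single symmetric term. Writing $L:=L(K,\Gamma)$ and letting the sum run over interior faces, this yields
\[
\langle L_h\eta_h,\xi_h\rangle_{X_h}
=\sum_{\Gamma}\frac{|\Gamma|}{d_{K,L}}\,(\eta_{h,L}-\eta_{h,K})(\xi_{h,L}-\xi_{h,K}),
\]
which is manifestly symmetric under $\eta_h\leftrightarrow\xi_h$, giving self-adjointness of $L_h$ on $X_h$ (and hence on any invariant subspace).

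Next I would draw two consequences from this identity. Setting $\xi_h=\eta_h$ gives $\langle L_h\eta_h,\eta_h\rangle_{X_h}=\sum_{\Gamma}\frac{|\Gamma|}{d_{K,L}}(\eta_{h,L}-\eta_{h,K})^2\ge 0$, so $L_h$ is positive semidefinite; moreover this quadratic form vanishes precisely when $\eta_{h,L}=\eta_{h,K}$ across every interior face, i.e.\ when $\eta_h$ is constant on the (connected) cell-adjacency graph of the mesh. Thus $\ker L_h$ consists exactly of the constant vectors. Setting $\xi_h\equiv 1$ shows $\langle L_h\eta_h,1\rangle_{X_h}=0$ for every $\eta_h$, which is precisely $M_h(L_h\eta_h)=0$; hence $L_h$ maps $X_h$, and in particular $X_h^0$, into $X_h^0$, so the restriction $L_h|_{X_h^0}$ is a well-defined self-adjoint endomorphism of $X_h^0$.

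Finally I would conclude invertibility by a dimension count. The constant vector $1$ satisfies $\langle 1,1\rangle_{X_h}=\sum_{K\in\mathcal T_h}|K|=|\Omega|>0$, so no nonzero constant lies in $X_h^0$; combined with $\ker L_h=\mathrm{span}\{1\}$ this gives $\ker(L_h|_{X_h^0})=\{0\}$. Since $L_h|_{X_h^0}$ is a linear self-map of the finite-dimensional space $X_h^0$ with trivial kernel, it is bijective, hence invertible, completing the proof.

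The main obstacle is the face-regrouping step: one must verify carefully that summing over cells and their interior faces double-counts each interior face with matching geometric weights $|\Gamma|$ and $d_{K,L}$ and opposite increments, so that the cross terms assemble into the symmetric edge form above. This relies on the single-valued interior-face structure of the admissible mesh, with boundary faces correctly excluded by the restriction $\Gamma\not\subset\partial\Omega$ in \eqref{20251011-DiscreteLaplacian}. A secondary point needing care is the connectivity of the mesh adjacency graph, required to conclude that the kernel reduces to global constants rather than to functions that are only locally constant on disconnected pieces; this follows from $\Omega$ being connected and $\mathcal T_h$ being a conforming partition.
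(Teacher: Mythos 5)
Your proposal is correct and follows essentially the same route as the paper's (sketched) proof: self-adjointness from the symmetric face-wise bilinear form obtained by summation by parts, and invertibility from the fact that the quadratic form $\langle L_h\eta_h,\eta_h\rangle_{X_h}$ vanishes only for constants, which are excluded from $X_h^0$ by the zero-mean condition. You supply somewhat more detail than the paper (the full bilinear identity, the explicit kernel characterization via mesh connectivity, and the verification that $L_h$ maps into $X_h^0$), but the underlying argument is the same.
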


\begin{proof}
We just give the sketch of the proof. By the definition of $L_h$ in \eqref{20251011-DiscreteLaplacian}, one can directly check that it maps $X_h^0$ to $X_h^0$ and is self-adjoint. The invertibility of $L_h$ can be proved by similar arguments as the proof of \cite[Lemma\@ 10.1]{EymardFVM} (there another kind of meshes is applied), which is equivalent to the injectivity of $L_h$. Indeed, we let $L_h \eta_h =0$ for some $\eta_h \in X_h^0$, and then see from \eqref{20251011-DiscreteLaplacian} (as well as \eqref{20251008-InnerProductOfXh}) that 
\begin{align*}
   0 = \langle L_h \eta_h, \eta_h \rangle_{X_h} =   \frac{1}{2}
   \sum_{K \in \mathcal T_h} 
\sum_{\Gamma \in \mathcal F_K, \Gamma \not\subset \partial\Omega} 
\frac{|\Gamma|}{ d_{K, L(K,\Gamma)} }  ( \eta_{h, L(K,\Gamma)} - \eta_{h, K} )^2, 
\end{align*}
which implies $\eta_h \in X_h^0$ is constant and so is zero due to its zero-mean. Therefore, $L_h$ is invertible over $X_h^0$. This completes the proof. 
\end{proof}

\subsection{Proof of Lemma \ref{20251008-lemma-AntisymmetricDiscreteDivergence}}
\label{sec:proof-lemma-AntisymmetricDiscreteDivergence}

\begin{proof}
First, we aim to show that $D_{ \mathbf{u} }$ is antisymmetric over $X_h$. For this purpose, take two arbitrary vectors $\mathbf{y}_h := (y_K)_{K\in \mathcal T_h}$ and  $ \mathbf{z}_h := (z_K)_{K\in \mathcal T_h}$ from $X_h$. By the definition of $D_{ \mathbf{u} }$ in \eqref{20251007-DiscreteDivergence} as well as \eqref{20251008-InnerProductOfXh} and \eqref{20250930-FluxOnFace}, we obtain
\begin{align}\label{20251008-ComputationsForDiscreteDivergence}
& \big\langle 
   D_{ \mathbf{u} } \mathbf{y}_h,  \mathbf{z}_h
\big\rangle_{X_h}
= \sum_{ K \in \mathcal T_h }   
\left(
\sum_{ \Gamma \in \mathcal F_K,  \,\Gamma \not\subset \partial\Omega }     F_{K,\Gamma}( \mathbf{u},  \mathbf{y}_h  )
\right)     z_K
\nonumber\\
=&  \sum_{ K \in \mathcal T_h }   
\sum_{ \Gamma \in \mathcal F_K,  \,\Gamma \not\subset \partial\Omega } 
\frac{1}{2}  \big( z_K  y_K + z_K  y_{L(K,\Gamma)} \big)
    \int_{ \Gamma }  \mathbf{u} \cdot \mathbf{n}_K d \sigma
    \\
=& \frac{1}{2} \sum_{ K \in \mathcal T_h }   z_K  y_K
\left(
\sum_{ \Gamma \in \mathcal F_K,  \,\Gamma \not\subset \partial\Omega } 
    \int_{ \Gamma }  \mathbf{u} \cdot \mathbf{n}_K d \sigma
\right)
 - \frac{1}{2} \sum_{ K \in \mathcal T_h }   
\sum_{ \Gamma \in \mathcal F_K,  \,\Gamma \not\subset \partial\Omega } 
   z_K  y_{L(K,\Gamma)} 
    \int_{ \Gamma }  \mathbf{u} \cdot \mathbf{n}_{L(K,\Gamma)} d \sigma. 
    \nonumber
\end{align}
Meanwhile, since $\mathbf{u} \in U$, we see from \eqref{20251004-ControlConstraint} that $\mathbf{u} \cdot \mathbf{n}_{\Omega} = 0$ on $\partial\Omega$ and div$\,\mathbf{u} = 0$ in $\Omega$, which yields that for each $K \in \mathcal T_h$, 
\begin{align}\label{20251009-ComputationsForDiscreteDivergence-3}
\sum_{ \Gamma \in \mathcal F_K,  \,\Gamma \not\subset \partial\Omega } 
    \int_{ \Gamma }  \mathbf{u} \cdot \mathbf{n}_K d \sigma
= \sum_{ \Gamma \in \mathcal F_K } 
    \int_{ \Gamma }  \mathbf{u} \cdot \mathbf{n}_K d \sigma
= \int_{ \partial K }  \mathbf{u} \cdot \mathbf{n}_K d \sigma
= \int_K \text{div}\,\mathbf{u}  \,dx
= 0.
\end{align}
This, together with \eqref{20251008-ComputationsForDiscreteDivergence}, implies
\begin{align}\label{20251009-ComputationsForDiscreteDivergence-2}
\big\langle 
    D_{ \mathbf{u} } \mathbf{y}_h,  \mathbf{z}_h
\big\rangle_{X_h}
= - \frac{1}{2} \sum_{ K \in \mathcal T_h }   
\sum_{ \Gamma \in \mathcal F_K,  \,\Gamma \not\subset \partial\Omega } 
   z_K  y_{L(K,\Gamma)} 
    \int_{ \Gamma }  \mathbf{u} \cdot \mathbf{n}_{L(K,\Gamma)} d \sigma. 
\end{align}
At the same time, for each control volume $L \in \mathcal T_h$ and each face $\Gamma \in \mathcal F_L $ with $\Gamma \not\subset \partial\Omega$, there is a unique $K\in \mathcal T_h$ so that 
\begin{align*}
    \Gamma \in \mathcal F_K     ~\text{and}~
    L = L(K,\Gamma)   ~(\text{i.e., } K=L(L,\Gamma) ),
\end{align*}
which yields 
\begin{align*}
  \sum_{ K \in \mathcal T_h }   
\sum_{ \Gamma \in \mathcal F_K,  \,\Gamma \not\subset \partial\Omega } 
z_K  y_{L(K,\Gamma)} 
    \int_{ \Gamma }  \mathbf{u} \cdot \mathbf{n}_{L(K,\Gamma)} d \sigma
= \sum_{ L \in \mathcal T_h }   
\sum_{ \Gamma \in \mathcal F_L,  \,\Gamma \not\subset \partial\Omega } 
z_{L(L,\Gamma)}  y_{L} 
    \int_{ \Gamma }  \mathbf{u} \cdot \mathbf{n}_L d \sigma.
\end{align*}
This, together with \eqref{20251009-ComputationsForDiscreteDivergence-2} and \eqref{20251009-ComputationsForDiscreteDivergence-3} (with $K$ replaced by $L$), implies
\begin{align*}
\big\langle 
    D_{ \mathbf{u} } \mathbf{y}_h,  \mathbf{z}_h
\big\rangle_{X_h}
=& - \frac{1}{2} \sum_{ L \in \mathcal T_h }   
\sum_{ \Gamma \in \mathcal F_L,  \,\Gamma \not\subset \partial\Omega } 
z_{L(L,\Gamma)}  y_{L}
    \int_{ \Gamma }  \mathbf{u} \cdot \mathbf{n}_L d \sigma
    \nonumber\\
=& - \sum_{ L \in \mathcal T_h }   
\sum_{ \Gamma \in \mathcal F_L,  \,\Gamma \not\subset \partial\Omega } 
\frac{1}{2}  \big( z_L  y_L + z_{L(L,\Gamma)}  y_{L} \big)
    \int_{ \Gamma }  \mathbf{u} \cdot \mathbf{n}_L d \sigma
\nonumber\\
=& - \sum_{ L \in \mathcal T_h }    y_L
\left(
\sum_{ \Gamma \in \mathcal F_L,  \,\Gamma \not\subset \partial\Omega } 
\frac{z_L   + z_{L(L,\Gamma)}}{2}  
    \int_{ \Gamma }  \mathbf{u} \cdot \mathbf{n}_L d \sigma
\right). 
\end{align*}
Then, by the definition of $D_{ \mathbf{u} }$ in \eqref{20251007-DiscreteDivergence} as well as \eqref{20250930-FluxOnFace} and \eqref{20251008-InnerProductOfXh}, we determine
\begin{align*}
\big\langle 
    D_{ \mathbf{u} } \mathbf{y}_h,  \mathbf{z}_h
\big\rangle_{X_h}
= - \sum_{ L \in \mathcal T_h }    y_L   F_{L,\Gamma}( \mathbf{u},  \mathbf{z}_h  )
= - \big\langle 
    \mathbf{y}_h,  D_{ \mathbf{u} } \mathbf{z}_h
\big\rangle_{X_h}. 
\end{align*}
Since $\mathbf{y}_h, \mathbf{z}_h $ are arbitrary in $X_h$, the above leads to the desired conclusion. 

Next, we verify the invertibility of the linear operator $I_h + D_{\mathbf{u}}$ on $X_h$. Since $X_h$ is finite-dimensional, it suffices to show that $0$ is not an eigenvalue of $I_h + D_{\mathbf{u}}$. 

Arguing by contradiction, assume that $0$ is an eigenvalue. Then there exists an eigenvector $\mathbf{y}_h \in X_h\setminus\{0\}$ such that
\[
    (I_h + D_{\mathbf{u}})\mathbf{y}_h = 0
    \qquad \text{in } X_h.
\]
Using that $D_{\mathbf{u}}$ is antisymmetric, we obtain
\begin{equation*}
\langle \mathbf{y}_h, \mathbf{y}_h \rangle_{X_h}
= \langle \mathbf{y}_h, \mathbf{y}_h \rangle_{X_h}
  + \langle \mathbf{y}_h, D_{\mathbf{u}} \mathbf{y}_h \rangle_{X_h} \\
= \langle \mathbf{y}_h, (I_h + D_{\mathbf{u}})\mathbf{y}_h \rangle_{X_h}
 = 0,
\end{equation*}
which implies $\mathbf{y}_h = 0$ in $X_h$, a contradiction. Hence, $0$ is not an eigenvalue of $I_h + D_{\mathbf{u}}$, and therefore $I_h + D_{\mathbf{u}}$ is invertible on $X_h$. This completes the proof.
\end{proof}

\subsection{Convergence of the numerical scheme}
\label{Appendix-state_convergence}

This subsection presents the convergence of the Crank-Nicolson cell-centered finite-volume scheme \eqref{eq:CN_cell} to the original equation \eqref{eq:state}. 

\begin{prop}
\label{prop:state_convergence}
Let $T>0$ and  $\mathbf v\in L^2((0,T); U)$. 
Let $h>0$ and $\mathcal T_h$ be an admissible mesh as in Section~\ref{sec:space_discrete}. Let  $\Delta t := T/N_t$ with $N_t \in \mathbb N^+$  and  let $\{(\theta_K^{(n)})_{K \in \mathcal T_h} \}_{n=0}^{N_t}\subset X_h$ be the solution of equation \eqref{eq:CN_cell}  where   
\begin{align}\label{20260107-VelocityCoefficients}
    v_i^{(n)} := \fint_{n \Delta t}^{(n+1) \Delta t} \langle \mathbf{v}(s), \mathbf{b}_i\rangle_{L^2(\Omega; \mathbb R^d)}  ds,
    ~i=1,\ldots,m,
    ~n=0,\ldots,N_t-1.
\end{align}
Then, when $h \to 0^+$ and $\Delta t \to 0^+$, the following piecewise-constant time--space reconstruction
\begin{align}\label{20260107-ReconstructionFromDiscretization}
    \bar\theta_{h,\Delta t}(t,x):= \sum_{n=0}^{N_t} \sum_{K \in \mathcal T_h} 
\chi_{[n\Delta t, \,(n+1)\Delta t)}(t)  \,\chi_K(x)\, \theta^{(n)}_K,
~t\in [0,T],  ~x\in \Omega
\end{align}
converges to $\theta(T;\mathbf v)$ strongly in $L^2((0,T)\times\Omega)$.  
Furthermore, it holds that
\begin{align}\label{20260107-StrongConvergenceOfFinalState}
    \bar\theta_{h,\Delta t}(T,\cdot) \to \theta(T;\mathbf v)
\text{ strongly in }  L^2(\Omega)
\text{ as }  h,\Delta t\to0, 
\end{align}
\end{prop}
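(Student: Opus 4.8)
The plan is to deduce the strong $L^2(\Omega)$-convergence of the final-time iterate $\theta_h^{(N_t)} = \bar\theta_{h,\Delta t}(T,\cdot)$ from two separate facts: the weak convergence $\theta_h^{(N_t)} \rightharpoonup \theta(T;\mathbf{v})$ in $L^2(\Omega)$, and the convergence of norms $\|\theta_h^{(N_t)}\|_{X_h} \to \|\theta(T;\mathbf{v})\|_{L^2(\Omega)}$. In a Hilbert space these two together force strong convergence, so the proof splits accordingly. The norm convergence is essentially free: the discrete energy conservation \eqref{20251008-EnergyConservation} gives $\|\theta_h^{(N_t)}\|_{X_h} = \|\theta_h^{(0)}\|_{X_h}$, the right-hand side converges to $\|\theta^0\|_{L^2(\Omega)}$ because $\theta_h^{(0)}$ is the cell-average projection of $\theta^0\in L^\infty(\Omega)$, and the continuous $L^2$-conservation \eqref{20251209-ConservationOfLpNorm} (with $p=2$) gives $\|\theta^0\|_{L^2(\Omega)} = \|\theta(T;\mathbf{v})\|_{L^2(\Omega)}$. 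Hence $\|\theta_h^{(N_t)}\|_{X_h} \to \|\theta(T;\mathbf{v})\|_{L^2(\Omega)}$.

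The substantive step is the weak convergence at the final time, which I would extract from the space--time convergence established in the first part of the proposition together with a time-equicontinuity estimate. Fix $\varphi \in C^\infty(\overline{\Omega})$ and let $\varphi_h$ denote its cell averages. Testing \eqref{eq:CN_cell} against $\varphi_h$ and using the antisymmetry of $D_{\mathbf{b}_i}$ (Lemma~\ref{20251008-lemma-AntisymmetricDiscreteDivergence}) gives
\begin{align*}
\big\langle \theta_h^{(n+1)} - \theta_h^{(n)}, \varphi_h \big\rangle_{X_h}
= \Delta t \sum_{i=1}^m v_i^{(n)} \Big\langle \tfrac{\theta_h^{(n+1)} + \theta_h^{(n)}}{2},\, D_{\mathbf{b}_i}\varphi_h \Big\rangle_{X_h}.
\end{align*}
Since $\|D_{\mathbf{b}_i}\varphi_h\|_{X_h}$ is bounded uniformly in $h$ (by the consistency of the central-flux divergence on smooth functions, $D_{\mathbf{b}_i}\varphi_h \to \mathbf{b}_i\cdot\nabla\varphi$) while $\|\theta_h^{(n)}\|_{X_h} = \|\theta_h^{(0)}\|_{X_h}$ by \eqref{20251008-EnergyConservation}, summing over the indices $n$ in a window $[s,t]$, applying Cauchy--Schwarz, and using $\sum_n \Delta t\,|v_i^{(n)}|^2 \le \|\mathbf{v}\|_{L^2((0,T);U)}^2$ yields a uniform Hölder-$\tfrac12$ modulus of continuity
\begin{align*}
\big| \big\langle \theta_h^{(m)} - \theta_h^{(\ell)}, \varphi_h \big\rangle_{X_h} \big|
\le C_\varphi\,\|\theta^0\|_{L^2(\Omega)}\,\|\mathbf{v}\|_{L^2((0,T);U)}\,\big(|t-s|+\Delta t\big)^{1/2}.
\end{align*}
Thus the scalar functions $t \mapsto \langle \bar\theta_{h,\Delta t}(t,\cdot), \varphi \rangle_{L^2(\Omega)}$ are uniformly bounded and equicontinuous (the difference between pairing against $\varphi$ and against $\varphi_h$ is $O(\|\varphi-\varphi_h\|_{L^2})$ uniformly in $t$), so by the Arzel\`a--Ascoli theorem they are relatively compact in $C([0,T])$. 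The already-established space--time $L^2$-convergence forces every uniform limit to coincide with $t \mapsto \langle \theta(t;\mathbf{v}), \varphi\rangle_{L^2(\Omega)}$, which is continuous since $\theta(\cdot;\mathbf{v}) \in C([0,T];L^2(\Omega))$ (Lemma~\ref{20251209-lemma-WellPosednessOfTransport}). Evaluating the resulting uniform convergence at $t=T$ gives $\langle \theta_h^{(N_t)}, \varphi \rangle_{X_h} \to \langle \theta(T;\mathbf{v}), \varphi\rangle_{L^2(\Omega)}$; as $C^\infty(\overline\Omega)$ is dense in $L^2(\Omega)$ and $\{\theta_h^{(N_t)}\}$ is $L^2$-bounded, this is precisely the sought weak convergence.

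Combining the weak convergence with the norm convergence of the first paragraph yields $\theta_h^{(N_t)} \to \theta(T;\mathbf{v})$ strongly in $L^2(\Omega)$, i.e. \eqref{20260107-StrongConvergenceOfFinalState}. I expect the main obstacle to be exactly the uniform consistency estimate $D_{\mathbf{b}_i}\varphi_h \to \mathbf{b}_i\cdot\nabla\varphi$ for smooth $\varphi$ (equivalently, the uniform-in-$h$ bound on $\|D_{\mathbf{b}_i}\varphi_h\|_{X_h}$): the discrete divergence carries a formal factor $1/|K|$ that must be tamed by the cancellation of central fluxes across shared faces and by the admissible-mesh hypotheses \eqref{20251111-PropertiesOfMesh}, in the spirit of \cite{EymardFVM}. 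This is also the estimate underpinning the space--time convergence I have invoked, so a careful standalone treatment of that consistency is the real technical crux. A clean alternative that sidesteps any time-regularity of $\mathbf{v}$ is to run the duality identity \eqref{20251008-ForwardBackwardConsistency} backward from smooth terminal data $\varphi$, reducing the final-time weak convergence to convergence of the adjoint scheme on a \emph{smooth} target; this shifts, but does not eliminate, the same flux-consistency difficulty.
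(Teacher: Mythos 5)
Your argument for the final-time claim \eqref{20260107-StrongConvergenceOfFinalState} is sound and shares the paper's overall skeleton: both you and the paper reduce strong convergence to (i) weak convergence and (ii) convergence of norms, the latter coming for free from the discrete energy conservation \eqref{20251008-EnergyConservation} together with the continuous $L^2$-conservation, and both invoke the Radon--Riesz/uniform-convexity property of $L^2$. Where you differ is in step (i): the paper tests the scheme against a \emph{time-independent} $\psi_h$, telescopes the Crank--Nicolson updates into a single discrete weak identity from $n=0$ to $n=N_t$, passes to the limit in the right-hand side using the space--time strong convergence and the consistency estimate for $D_{\mathbf b_i}$ on smooth functions, and identifies any weak accumulation point of $\{\theta_h^{(N_t)}\}$ with $\theta(T;\mathbf v)$ through the weak formulation. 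You instead establish a uniform H\"older-$\tfrac12$ modulus for $t\mapsto\langle\bar\theta_{h,\Delta t}(t,\cdot),\varphi\rangle$ and apply Arzel\`a--Ascoli. Your route is correct (the $(|t-s|+\Delta t)^{1/2}$ modulus is an asymptotic equicontinuity, which suffices as $\Delta t\to 0$) and in fact yields slightly more, namely uniform-in-time convergence of the paired observables; the paper's route is shorter and avoids compactness in time altogether. Both hinge on the same estimate you correctly single out as the crux, namely the uniform bound and consistency of $D_{\mathbf b_i}\varphi_h$ for smooth $\varphi$ under the admissible-mesh hypotheses \eqref{20251111-PropertiesOfMesh}; this is exactly the paper's estimate \eqref{20260107-UsefulEstiamte}.

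The genuine gap is that you prove only half of the proposition. The first assertion --- strong convergence of $\bar\theta_{h,\Delta t}$ to $\theta(\cdot;\mathbf v)$ in $L^2((0,T)\times\Omega)$ --- is not a hypothesis you may invoke; it is part of what must be shown, and your final-time argument leans on it in an essential way (it is what identifies the Arzel\`a--Ascoli limit as $t\mapsto\langle\theta(t;\mathbf v),\varphi\rangle$). The paper proves it by the same weak-plus-norms mechanism: the $L^2((0,T)\times\Omega)$ norms converge because $\|\bar\theta_{h,\Delta t}(t)\|_{X_h}=\|\theta_h^{(0)}\|_{X_h}$ for every $t$, and weak convergence follows by testing against $\varphi\in C_0^\infty([0,T)\times\Omega)$, summing by parts in the discrete time variable, and using the flux-consistency estimate to identify every weak accumulation point with the unique weak solution of \eqref{eq:state} (Lemma~\ref{20251209-lemma-WellPosednessOfTransport}). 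Since you already have all the ingredients (energy conservation, antisymmetry of $D_{\mathbf b_i}$, the consistency estimate), this is not a conceptual obstacle, but as written your proposal is incomplete without it; you should also supply the standalone proof of the consistency estimate itself, which you correctly flag but do not carry out.
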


\begin{proof}
The strong convergence of $\{ \bar\theta_{h,\Delta t} \}$ in $L^2((0,T)\times\Omega)$ will be done by the weak convergence, plus the convergence of the norms, of this sequence. 
First,  we know from \eqref{20251008-MassConservation} that
\begin{align*}
    \|\theta_h^{(n)}\|_{X_h}=\|\theta_h^{(0)}\|_{X_h},
~ n=0,\dots,N_t.
\end{align*}
By this and \eqref{20260107-ReconstructionFromDiscretization}, the sequence $\{ \bar\theta_{h,\Delta t} \}$ satisfies that for each $t \in [0,T]$, 
\begin{align}\label{20260107-ConvergenceOfNorm}
    \|\bar\theta_{h,\Delta t} (t)  \|_{X_h}
    = \|\theta_h^{(0)}\|_{X_h}
    \rightarrow  \| \theta^0 \|_{L^2(\Omega)}
    \text{ as } h \rightarrow 0^+. 
\end{align}
In particular, it is bounded in the space $L^2((0,T)\times\Omega)$. 

Next, we claim that  
\begin{align}\label{20260107-WeakConvergence}
    \bar\theta_{h,\Delta t}  \rightharpoonup  \theta(T;\mathbf v)
\text{ weakly in }  L^2((0,T)\times\Omega)
\text{ as }  h,\Delta t \rightarrow 0^+. 
\end{align}
To this end, arbitrarily take a subsequence with a weak limit  in $L^2((0,T)\times\Omega)$, denoted by $\theta^*$. Arbitrarily take a function $\varphi\in C_0^\infty([0,T)\times\Omega)$ and define
\begin{align}\label{20260107-DiscreteTestFunction}
\varphi_h^{(n)} := 
\bigg( \frac{1}{|K|}\int_K \varphi(n \Delta t,x) dx
\bigg)_{K\in\mathcal T_h}
\in X_h,
~ n=0,\dots,N_t.
\end{align}
From the anti-symmetry of $D_{\mathbf{b}_i}$ (see Lemma \ref{20251008-lemma-AntisymmetricDiscreteDivergence}), we rewrite equation \eqref{eq:CN_cell} in the following discrete weak form:
\begin{align}\label{20260107-DiscreteWeakForm}
- \big\langle 
    \theta_h^{(0)}, \varphi_h^{(0)} 
\big\rangle_{X_h}
=& \sum_{n=1}^{N_t-1} \Delta t   \Big\langle \theta_h^{(n)},   
      \frac{ 
        \varphi_h^{(n)} - \varphi_h^{(n-1)} 
    }{ \Delta t}
\Big\rangle_{X_h} 
\nonumber\\
& \quad\quad\quad\quad
+  \sum_{n=0}^{N_t-1} \Delta t
 \sum_{i=1}^m  v_i^{(n)} 
\Big\langle
    \frac{ \theta_h^{(n)} + \theta_h^{(n+1)}  }{2},
  D_{\mathbf{b}_i} \varphi_h^{(n)} 
\Big\rangle_{X_h}.   
\end{align}
At the same time, by computations as well as \eqref{20251111-PropertiesOfMesh}, there is a constant $C>0$ (independent of $h$) so that
\begin{align}\label{20260107-UsefulEstiamte}
\sup_{ K \in \mathcal T_h }
\Big| D_{\mathbf{b}_i} f_h - 
  \fint_K \text{div}( 
        \mathbf{b}_i f 
    )  dx
\Big|
\leq C h \| f \|_{C^2( \overline{\Omega})},
~\forall\,  f \in C^2( \overline{\Omega}),
\end{align}
where $f_h$ is defined by the way as in \eqref{20260107-DiscreteTestFunction} with $\varphi$ replaced by $f$. 
This, together with \eqref{20260107-DiscreteWeakForm}, \eqref{20260107-ReconstructionFromDiscretization} and \eqref{eq:v_finite}, yields
\begin{align*}
 -\int_\Omega \theta^0(x)  \varphi(0,x) dx =  
    \int_0^T \int_\Omega \theta^* \big( 
    \partial_t\varphi + \text{div}( 
        \mathbf{v} \varphi
    )
\big)
dx dt.
\end{align*}
Since $\varphi\in C_0^\infty([0,T)\times\Omega)$ was arbitrarily taken, the above implies that $\theta^*$ is the weak solution to equation \eqref{eq:state}, which is $\theta(\cdot; \mathbf{v})$. Note that $\theta^*$ is any accumulation point of the bounded sequence $\{ \bar\theta_{h,\Delta t} \}$ in the weak topology of $L^2((0,T)\times\Omega)$. 
Therefore, \eqref{20260107-WeakConvergence} is true. 

Now, because  the space $L^2((0,T)\times\Omega)$ is uniformly convex, by \cite[Proposition 3.32]{brezis2010functional}, the desired strong convergence of $\{ \bar\theta_{h,\Delta t} \}$ in $L^2((0,T)\times\Omega)$ follows from \eqref{20260107-WeakConvergence} and 
\eqref{20260107-ConvergenceOfNorm}. 

Finally, we show \eqref{20260107-StrongConvergenceOfFinalState} by a similar idea. For this purpose, let $\psi \in C_0^{\infty}(\Omega)$ and define $\psi_h$ in a similar way as in \eqref{20260107-DiscreteTestFunction} with $\varphi$ replaced by $\psi$. 
From  the anti-symmetry of $D_{\mathbf{b}_i}$ (see Lemma \ref{20251008-lemma-AntisymmetricDiscreteDivergence}), we rewrite equation \eqref{eq:CN_cell} in another discrete weak form:
\begin{align*}
\big\langle 
    \theta_h^{(N_t)}, \psi_h
\big\rangle_{X_h}
- \big\langle 
    \theta_h^{(0)}, \psi_h
\big\rangle_{X_h}
=& \sum_{n=0}^{N_t-1} \Delta t  
 \sum_{i=1}^m  v_i^{(n)} 
\Big\langle
    \frac{ \theta_h^{(n)} + \theta_h^{(n+1)}  }{2},
  D_{\mathbf{b}_i} \psi_h
\Big\rangle_{X_h}.
\end{align*}
Because of the strong convergence of $\{ \bar\theta_{h,\Delta t} \}$ in $L^2((0,T)\times\Omega)$, the above, together with \eqref{20260107-ReconstructionFromDiscretization}, \eqref{20260107-UsefulEstiamte} and \eqref{eq:v_finite}, yields that any weak accumulation point $\theta^*_T$ of the sequence $\{ \bar\theta_{h,\Delta t}(T,\cdot) \}$ in $L^2(\Omega)$ satisfies
\begin{align*}
\int_\Omega \theta^*_T \psi dx
-\int_\Omega \theta^0 \psi dx =  
    \int_0^T \int_\Omega \theta(t;\mathbf{v})  
     \text{div}( 
        \mathbf{v} \psi
    )
dx dt.
\end{align*}
Since $\psi \in C_0^{\infty}(\Omega)$ was arbitrarily chosen, the last equality implies that  $\theta^*_T$ must be equal to $\theta(T;\mathbf{v})$. This, along with \eqref{20260107-ConvergenceOfNorm} (with $t=T$) and  \cite[Proposition 3.32]{brezis2010functional}, yields \eqref{20260107-StrongConvergenceOfFinalState}. 
This completes the proof.

\end{proof}

\subsection{Convergence of discrete cost functionals (\ref{20251008-DiscreteFunctional})}
\label{Appendix-ConvDiscreteObjective}

This subsection is devoted to the convergence of discrete cost functional, where extra assumptions on the mesh are required by the two-point flux approximation
finite-volume Laplacian \eqref{20251011-DiscreteLaplacian} (see  \cite[Definitions 10.1 and 9.1]{EymardFVM}). 

\begin{prop}\label{prop:ConvDiscreteObjective}
Use the same notations as in Proposition \ref{prop:state_convergence}. 
Suppose that the mesh $\mathcal T_h$ satisfies the following extra conditions:
\begin{itemize}
\item a family of points $\{x_K\}_{K\in\mathcal T_h}$ with $x_K\in K$ and a family of points
      $\{x_\Gamma\}_{\Gamma\in\mathcal E_h}$ with $x_\Gamma\in \Gamma$ such that
      \begin{itemize}
      \item if $\Gamma=K|L$ is an interior face, then the segment $[x_K,x_L]$ is orthogonal to $\Gamma$
            and intersects $\Gamma$ at $x_\Gamma$,
      \item if $\Gamma\subset\partial\Omega$ is a boundary face of $K$, then the segment $[x_K,x_\Gamma]$
            is orthogonal to $\Gamma$.
      \end{itemize}
\end{itemize}
Then,  it holds that $\lim_{h,\Delta t\to0}
\widetilde J_h(\mathbf v_h) = J(\mathbf v)$.
\end{prop}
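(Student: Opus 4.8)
The plan is to split $\widetilde J_h$ according to \eqref{20251008-DiscreteFunctional} into the control-cost (Tikhonov) term and the mix-norm term, and show that each converges separately to the corresponding summand in the representation \eqref{20251010-AnotherFormOfCostFunctional} of $J(\mathbf v)$.

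For the control-cost term I would set $c_i(t):=\langle\mathbf v(t),\mathbf b_i\rangle_{L^2(\Omega;\mathbb R^d)}$, so that orthonormality of $\{\mathbf b_i\}_{i=1}^m$ gives $\|\mathbf v(t)\|_{L^2(\Omega;\mathbb R^d)}^2=\sum_{i=1}^m|c_i(t)|^2$. The coefficients in \eqref{20260107-VelocityCoefficients} are exactly the time-averages $v_i^{(n)}=\fint_{t_n}^{t_{n+1}}c_i$, so that $\Delta t\sum_{n=0}^{N_t-1}|v_i^{(n)}|^2=\|P_{\Delta t}c_i\|_{L^2(0,T)}^2$, where $P_{\Delta t}$ denotes the $L^2(0,T)$-orthogonal projection onto functions that are piecewise constant on the time grid. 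Since $P_{\Delta t}c_i\to c_i$ in $L^2(0,T)$ as $\Delta t\to0$, the norms converge, and summing over $i$ yields $\frac{\gamma}{2}\Delta t\sum_{n}\sum_{i}|v_i^{(n)}|^2\to\frac{\gamma}{2}\int_0^T\|\mathbf v(t)\|_{L^2(\Omega;\mathbb R^d)}^2\,dt$. This part is routine.

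For the mix-norm term the first observation is that the discrete pairing is literally the $L^2(\Omega)$-pairing of the piecewise-constant reconstructions, $\langle\theta_h^{(N_t)},\eta_h\rangle_{X_h}=\int_\Omega\bar\theta_h^{(N_t)}\,\bar\eta_h\,dx$. By Proposition~\ref{prop:state_convergence}, $\bar\theta_h^{(N_t)}\to\theta(T;\mathbf v)$ strongly in $L^2(\Omega)$, so since a product of a strongly convergent and a weakly convergent $L^2$-sequence passes to the limit, it suffices to show that the reconstruction $\bar\eta_h$ of the discrete-Poisson solution $\eta_h=L_h^{-1}\theta_h^{(N_t)}$ converges (weakly in $L^2(\Omega)$) to the solution $\eta$ of \eqref{eq:eta} with data $\theta(T;\mathbf v)$. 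Granting this, $\langle\theta_h^{(N_t)},\eta_h\rangle_{X_h}\to\langle\theta(T;\mathbf v),\eta\rangle_{L^2(\Omega)}$, and combined with the control-cost limit this is exactly $J(\mathbf v)$ via \eqref{20251010-AnotherFormOfCostFunctional}.

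The convergence $\bar\eta_h\to\eta$ is the main obstacle, and it is precisely what forces the extra (orthogonality/admissibility) mesh hypotheses: they make the two-point flux defining $L_h$ a consistent approximation of the normal derivative. I would establish it by the standard finite-volume programme of \cite{EymardFVM}. Testing the discrete equation against $\eta_h$ and using the symmetric bilinear form from Lemma~\ref{20251209-lemma-PropertiesOfLh} gives the energy identity $\langle L_h\eta_h,\eta_h\rangle_{X_h}=\langle\theta_h^{(N_t)},\eta_h\rangle_{X_h}$; together with the discrete Poincar\'e--Wirtinger inequality on $X_h^0$ and the uniform bound $\|\theta_h^{(N_t)}\|_{X_h}=\|\theta_h^{(0)}\|_{X_h}\le\|\theta^0\|_{L^2(\Omega)}$ coming from energy conservation \eqref{20251008-EnergyConservation}, this yields a uniform bound on the discrete $H^1$-seminorm of $\eta_h$. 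The discrete Rellich--Kolmogorov compactness theorem then provides a subsequence along which $\bar\eta_h\to\eta^\ast$ strongly in $L^2(\Omega)$ and the discrete gradient converges weakly, with $\eta^\ast\in\dot H^1(\Omega)$. Passing to the limit in the discrete weak formulation against a smooth zero-mean test function, using flux consistency on the left-hand side and the strong convergence of $\bar\theta_h^{(N_t)}$ on the right, identifies $\eta^\ast$ as the weak solution \eqref{20251206-WellPosedness}, hence $\eta^\ast=\eta$; uniqueness upgrades this to convergence of the full sequence. The only nonroutine points are invoking the discrete Poincar\'e and discrete compactness estimates and the two-point flux consistency in the correct form for the Neumann problem, all of which are available from \cite[Sec.~9--10]{EymardFVM} under the stated mesh conditions.
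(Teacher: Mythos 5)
Your proposal is correct and follows essentially the same route as the paper: split $\widetilde J_h$ into the Tikhonov term (handled by the $L^2(0,T)$-projection/time-averaging argument) and the pairing $\langle\theta_h^{(N_t)},\eta_h\rangle_{X_h}$, use the strong convergence \eqref{20260107-StrongConvergenceOfFinalState} from Proposition~\ref{prop:state_convergence}, establish convergence of the discrete Neumann solution $\eta_h$, and conclude via the representation \eqref{20251010-AnotherFormOfCostFunctional}. The only difference is that the paper simply cites \cite[Thm.~10.3]{EymardFVM} for the convergence of the two-point-flux Neumann scheme, whereas you sketch the underlying discrete energy/Poincar\'e/compactness/flux-consistency argument that that theorem encapsulates.
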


\begin{proof}
By similar arguments as in \cite[Thm.~10.3]{EymardFVM}, we find from \eqref{20260107-StrongConvergenceOfFinalState} that the solution $\eta_h = ( \eta_{h,K})_{K \in \mathcal T_h} $ to the finite-volume Neumann scheme
\eqref{20251011-DiscreteLaplacian} satisfies
\begin{align*}
\lim_{\Delta t \rightarrow 0^+}
    \lim_{h \rightarrow 0^+}
\sum_{K \in \mathcal T_h}  \eta_{h,K} \chi_K
= \eta( \theta(T; \mathbf{v}) ) 
\text{ strongly in }  L^2(\Omega). 
\end{align*}
This implies
\begin{align*}
  \lim_{h,\Delta t \rightarrow 0^+}  \langle \theta_h^{N_t}, \eta_h \rangle_{X_h}
  = \big\langle \theta(T; \mathbf{v}), \eta( \theta(T; \mathbf{v})) \big\rangle_{L^2(\Omega)}.
\end{align*}
At the same time, by \eqref{20260107-VelocityCoefficients} and \eqref{eq:v_finite}, it is clear that
\begin{align*}
    \lim_{\Delta t \rightarrow 0^+}
    \Delta t  \sum_{n=0}^{N_t-1} \sum_{i=1}^m  | v_i^{(n)}|^2
= \int_0^T  \sum_{i=1}^m \langle \mathbf{v}(s), \mathbf{b}_i\rangle_{L^2(\Omega; \mathbb R^d)}^2  dt
= \int_0^T \| \mathbf{v} \|_{L^2(\Omega; \mathbb R^d)}^2 dt.
\end{align*}
The last two equalities, together with \eqref{20251008-DiscreteFunctional} and \eqref{20251010-AnotherFormOfCostFunctional}, leads to the desired conclusion. This completes the proof.

\end{proof}

\section*{Acknowledgment}
The first author is partially supported by NSF DMS-2111486, DMS-2205117 and AFOSR FA9550-23-1-0675. This work was initiated during the author's visit at the Chair for Dynamics, Control, Machine Learning and Numerics, Friedrich-Alexander-Universit\" at Erlangen N\"urnberg, Germany, under the support of Humboldt Research Fellowship for Experienced Researchers program from the Alexander von Humboldt Foundation.

The second author is supported by the Alexander von Humboldt Professorship program and the Deutsche Forschungsgemeinschaft (DFG, German Research Foundation) under project C07 of the Sonderforschungsbereich/Transregio 154 ``Mathematical Modelling, Simulation and Optimization Using the Example of Gas Networks" (project ID: 239904186).

The third author is supported by the National Natural Science Foundation of China under grants 12171359 and the Humboldt Research Fellowship for Experienced Researchers program from the Alexander von Humboldt Foundation.

The last author has been funded by the ERC Advanced Grant CoDeFeL (ERC-2022-ADG-101096251), the grants TED2021-131390B-I00-DasEl of MINECO, and PID2023-146872OB-I00-DyCMaMod of MICIU (Spain); the Alexander von Humboldt Professorship program; the European Union's Horizon Europe MSCA project ModConFlex (HORI\-ZON-MSCA-2021-DN-01(project 101073558); the Transregio 154 Project ``Mathematical Modelling, Simulation and Optimization Using the Example of Gas Networks" of the DFG; the AFOSR 24IOE027 project; and the Madrid Government-UAM Agreement for the Excellence of the University Research Staff in the context of the V PRICIT (Regional Programme of Research and Technological Innovation).

\bibliographystyle{abbrv}
\bibliography{refs_mixing}

\end{document}